\newtheorem{thm}{Theorem}[section]
\newtheorem{prop}[thm]{Proposition}
\newtheorem{lem}[thm]{Lemma}
\newtheorem{cor}[thm]{Corollary}
\newtheorem{conj}[thm]{Conjecture}
\theoremstyle{definition}
\newtheorem{dfn}[thm]{Definition}
\theoremstyle{remark}
\newtheorem{rem}{Remark}
\newtheorem*{acknowledgment}{Acknowledgements}
\newcommand{\C}{\mathbb{C}}
\newcommand{\R}{\mathbb{R}}
\newcommand{\Z}{\mathbb{Z}}
\newcommand{\RR}{\mathcal{R}}
\renewcommand{\H}{\mathcal{H}}
\newcommand{\E}{\mathbf{E}}
\newcommand{\Twist}{\mathfrak{Twist}}
\newcommand{\U}{\mathfrak{U}}
\newcommand{\Th}{\mathrm{Th}}
\newcommand{\Map}{\mathrm{Map}}
\newcommand{\Pin}{\mathrm{Pin}}
\newcommand{\pt}{\mathrm{pt}}
\newcommand{\KO}{K\!O}
\newcommand{\KR}{K\!R}
\title[A variant of $K$-theory and topological T-duality]
{A variant of $K$-theory and topological T-duality 
for Real circle bundles}
\author[K. Gomi]{Kiyonori Gomi}
\address{
Department of Mathematical Sciences,
Shinshu University, 
3--1--1 Asahi, Matsumoto, Nagano 390-8621,
Japan.}
\email{kgomi@math.shinshu-u.ac.jp}
\date{}
\begin{document}

\begin{abstract}
For a space with involutive action, there is a variant of $K$-theory. Motivated by T-duality in type II orbifold string theory, we establish that a twisted version of the variant enjoys a topological T-duality for Real circle bundles, i.e.\ circle bundles with real structure. 
\end{abstract}

\maketitle

\tableofcontents


\section{Introduction}


\subsection{A variant of $K$-theory}

Topological $K$-theory has various variants. Among them, the variant $K_\pm(X)$ concerned to this paper is that introduced in Witten's paper \cite{W}. While $K_\pm(X)$ is originally introduced in a context of string theory, it also appears in Rosenberg's K\"{u}nneth formula for equivariant $K$-theory \cite{R2} in a different notation. As is pointed out in \cite{A-H}, we can regard $K_\pm(X)$ as a twisted equivariant $K$-theory \cite{FHT1}. We can also regard $K_\pm$ as a part of an $RO(\Z/2)$-graded equivariant cohomology theory, as in \cite{May1}.

\medskip

The definition of $K_\pm(X)$ itself is very simple: For a space $X$ with an involutive action $\tau : X \to X$, i.e.\ a $\Z/2$-action, $K^n_\pm(X)$ is defined through the $\Z/2$-equivariant $K$-theory \cite{Seg} as follows:
$$
K^n_\pm(X) = K^{n+1}_{\Z/2}(X \times \tilde{I}, X \times \partial \tilde{I}),
$$ 
where $\tilde{I} = [-1, 1]$, and $\Z/2$ acts on $X \times \tilde{I}$ by $(x, t) \mapsto (\tau(x), -t)$. The variant $K^n_\pm(X)$ obeys the Bott periodicity clearly, and fits into the exact sequence:
$$
\begin{CD}
K^1_\pm(X) @<<< K^1(X) @<f<< K^1_{\Z/2}(X) \\
@VVV @. @AAA \\
K^0_{\Z/2}(X) @>f>> K^0(X) @>>> K^0_\pm(X),
\end{CD}
$$
where $f$ is to forget the $\Z/2$-actions. From this, if the $\Z/2$-action is trivial, then
$$
K^n_\pm(X) \cong K^{n+1}(X).
$$
Other basic properties are summarized in the body of this paper.


\subsection{Main theorem}

Our main theorem is a version of `topological T-duality'. This duality originates from T-duality in string theory, and is first discussed by Bouwkneght, Evslin and Mathai \cite{BEM}. Nowadays the duality is generalized to various cases by many authors. Its simplest form following Bunke and Schick \cite{B-S} is as follows: Let $(E, h)$ be a pair consisting of a (principal) circle bundle $\pi : E \to X$ and a cohomology class $h \in H^3(E; \Z)$. Then there exists another pair $(\hat{E}, \hat{h})$ consisting of a circle bundle $\hat{\pi} : \hat{E} \to X$ and $\hat{h} \in H^3(\hat{E}; \Z)$ such that:
\begin{align*}
c_1(\hat{E}) &= \pi_*h, &
c_1(E) &= \hat{\pi}_*\hat{h}, &
K^{h + n}(E) &\cong K^{\hat{h} + n - 1}(\hat{E}),
\end{align*}
where $\pi_* : H^3(E, \Z) \to H^2(X; \Z)$ and $\hat{\pi}_* : H^3(\hat{E}; \Z) \to H^2(X; \Z)$ are the push-forward along the projections, $K^{h + *}(E)$ and $K^{h + *}(\hat{E})$ twisted $K$-theory \cite{D-K,R1}.

\medskip

In our main theorem, circle bundles are generalized to \textit{Real circle bundles}, and twisted $K$-theory to $\Z/2$-equivariant twisted $K$-theory and its variant. 

By a \textit{Real circle bundle}, we mean a principal $S^1$-bundle $\pi : E \to X$ on a space $X$ with $\Z/2$-action such that a lift $\tau : E \to E$ of the $\Z/2$-action on $X$ satisfies $\tau^2(\xi) = \xi$ and $\tau(\xi u) = \tau(\xi) u^{-1}$ for all $\xi \in E$ and $u \in S^1$. This notion is essentially equivalent to that of \textit{Real line bundles}, i.e.\ complex line bundles with real structure in the sense of \cite{A}. If we construct an invariant Hermitian metric on a Real line bundle by average, then its unit sphere bundle gives rise to a Real circle bundle. Conversely, there is the Real line bundle associated to a Real circle bundle. As usual, the choice of a Hermitian metric does not matter in 
classifications.

As is well-known, circle bundles (or complex line bundles) on a space $X$ are classified by the second integral cohomology $H^2(X) = H^2(X; \Z)$. There is an equivariant analogue of this fact (\cite{H-Y}), and $\Z/2$-equivariant circle bundles are classified by the equivariant cohomology $H^2_{\Z/2}(X) = H^2_{\Z/2}(X; \Z)$ in the sense of Borel. There also exists a Real analogue: Real (principal) circle bundles, or equivalently Real (complex) line bundles, are classified by a variant of ordinary cohomology
$$
H^2_\pm(X) 
= H^3_{\Z/2}(X \times \tilde{I}, X \times \partial \tilde{I}; \Z).
$$
This classification is due to Kahn \cite{K}, in view of the fact that the Thom isomorphism identifies $H^2_\pm(X)$ with the equivariant cohomology with local coefficients $H^2_{\Z/2}(X; \Z(1))$, where $\Z(1)$ is the group $\Z$ with the $\Z/2$-action $n \mapsto -n$. We write $c_1^R(E) \in H^2_\pm(X)$ for the cohomology class classifying a Real circle bundle $\pi : E \to X$. Then we get the following Gysin sequence:
$$
\cdots \to
H^{n-2}_{\pm}(X) \overset{c_1^R(E) \cup }{\to}
H^{n}_{\Z/2}(X) \overset{\pi^*}{\to}
H^{n}_{\Z/2}(E) \overset{\pi_*}{\to}
H^{n-1}_{\pm}(X) \to
\cdots.
$$

Now, one of our main theorems concerns a \textit{pair} $(E, h)$ consisting of a Real circle bundle $\pi : E \to X$ and a cohomology class $h \in H^3_{\Z/2}(E)$. By an isomorphism of pairs $\phi : (E, h) \to (E', h')$, we mean an isomorphism of Real circle bundle $ \phi : E \to E'$ covering the identity of $X$ such that $\phi^*h' = h$.

\begin{thm} \label{thm:main_pair}
Let $X$ be a space with $\Z/2$-action.

\begin{itemize}
\item[(a)]
For any pair $(E, h)$ on $X$, there exists another pair $(\hat{E}, \hat{h})$ such that:
\begin{align*}
c_1^R(\hat{E}) &= \pi_*h, &
c_1^R(E) &= \hat{\pi}_*\hat{h}, &
c_1^R(E) \cup c_1^R(\hat{E}) &= 0, &
p^*h &= \hat{p}^*\hat{h},
\end{align*}
where $p$ and $\hat{p}$ are the projections from $E \times_X \hat{E} = \pi^*\hat{E} = \hat{\pi}^*E$:
$$
\begin{array}{c@{}c@{}c@{}c@{}c}
 &  & E \times_{X} \hat{E} & & \\
 & {}^p \swarrow &  & \searrow {}^{\hat{p}} & \\
E & &  & & \hat{E} \\
 & {}_\pi \searrow & & \swarrow {}_{\hat{\pi}} & \\
 &  & X. & & 
\end{array}
$$
We call $(\hat{E}, \hat{h})$ a pair T-dual to $(E, h)$.

\item[(b)]
The isomorphism class of a pair $(\hat{E}, \hat{h})$ T-dual to $(E, h)$ is unique.

\item[(c)]
The isomorphism class of a pair $(\hat{E}, \hat{h})$ T-dual to $(E, h)$ depends only on the isomorphism class of $(E, h)$.

\item[(d)]
Let $(E, h)$ be a pair on $X$, and $(\hat{E}, \hat{h})$ its T-dual pair. Then $(\hat{E}, \hat{h} + \hat{\pi}^*\eta)$ is a pair T-dual to $(E, h + \pi^*\eta)$ for any $\eta \in H^3_{\Z/2}(X; \Z)$.

\item[(e)]
Let $\mathfrak{Pair}(X)$ denote the isomorphism classes of pairs on $X$. The assignment of a T-dual pair $(E, h) \mapsto (\hat{E}, \hat{h})$ induces a natural $\Z/2$-action on $\mathfrak{Pair}(X)$ compatible with the action of $H^3_{\Z/2}(X; \Z)$ in (d).
\end{itemize}
\end{thm}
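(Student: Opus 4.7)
My overall strategy is to mirror Bunke--Schick's proof of topological T-duality in the classical setting, with equivariant and $\pm$-cohomology taking the place of ordinary and equivariant cohomology wherever the Real Gysin sequence requires it. The key inputs are Kahn's classification of Real circle bundles by $H^2_\pm(X)$, both flavors of the Real Gysin sequence (the one stated in the excerpt and its twisted analogue obtained by swapping $H^*_{\Z/2}$ with $H^*_\pm$ everywhere), the projection formula $\pi_*(\pi^*\alpha \cup \beta) = \alpha \cup \pi_*\beta$, and base change for the pullback square $E \times_X \hat E \to \hat E, E, X$.

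For (a), I first take $\hat E$ to be the Real circle bundle with $c_1^R(\hat E) = \pi_* h$, unique up to isomorphism by Kahn. The vanishing $c_1^R(E) \cup c_1^R(\hat E) = 0$ is a one-line projection-formula computation
\[
c_1^R(E) \cup \pi_* h \;=\; \pi_*\bigl(\pi^* c_1^R(E) \cup h\bigr) \;=\; 0,
\]
since $\pi^* c_1^R(E) = 0$ by the twisted Real Gysin sequence for $\pi$. Exactness of the Real Gysin sequence for $\hat\pi$ in degree $3$ then yields some $\hat h_0 \in H^3_{\Z/2}(\hat E)$ with $\hat\pi_* \hat h_0 = c_1^R(E)$. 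The subtle remaining step is to adjust $\hat h_0$ by an element of $\hat\pi^* H^3_{\Z/2}(X)$ so that $p^* h = \hat p^* \hat h$ also holds. I set $d_0 := p^* h - \hat p^* \hat h_0$, verify $\hat p_* d_0 = 0$ using base change $\hat p_* p^* = \hat\pi^* \pi_*$ together with the Gysin vanishing $\hat\pi^* c_1^R(\hat E) = 0$, apply Gysin exactness for $\hat p$ to write $d_0 = \hat p^* e$ for some $e \in H^3_{\Z/2}(\hat E)$, and then chase the Gysin sequence for $\hat\pi$ to arrange $\hat\pi_* e = 0$, so that $\hat h := \hat h_0 + e$ meets both requirements.

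Parts (b)--(e) then follow more directly. In (b), any two T-duals of $(E, h)$ have the same $c_1^R = \pi_* h$, so their underlying Real bundles are isomorphic by Kahn; under any such identification the gap between twists lies in $\hat\pi^* H^3_{\Z/2}(X)$ by Gysin, and the condition $p^* h = \hat p^* \hat h$ together with an appropriate gauge adjustment of the bundle isomorphism forces this gap to vanish. Part (c) is naturality of the construction in (a). For (d), $\pi_* \pi^* = 0$ (since integration along the $S^1$-fiber annihilates pullbacks) shows $\pi_*(h + \pi^*\eta) = \pi_* h$, so $\hat E$ is unchanged, and the other three conditions transform exactly as $\hat h \mapsto \hat h + \hat\pi^*\eta$ predicts, thanks to the identity $p^*\pi^* = \hat p^* \hat\pi^*$. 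For (e), the four defining conditions in (a) are manifestly symmetric under $(E, h) \leftrightarrow (\hat E, \hat h)$, so a T-dual of a T-dual is the original pair; combined with the uniqueness from (b), this yields a well-defined involution on $\mathfrak{Pair}(X)$, whose $H^3_{\Z/2}(X)$-equivariance is precisely (d).

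The main obstacle I expect is the coordination step in (a): producing a \emph{single} $\hat h$ that satisfies both $\hat\pi_* \hat h = c_1^R(E)$ and $p^* h = \hat p^* \hat h$ simultaneously, rather than each condition in isolation. Every ingredient (Gysin, projection formula, base change) is standard, but the two cohomology theories $H^*_{\Z/2}$ and $H^*_\pm$ interleave in the Real Gysin sequences in a way that demands care, and one must check that the final diagram chase actually produces an $e$ with both $\hat p^* e = d_0$ and $\hat\pi_* e = 0$.
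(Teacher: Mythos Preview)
Your approach is genuinely different from the paper's, and the difference is precisely at the point you flagged as the ``main obstacle''. The paper does \emph{not} chase Gysin sequences to coordinate the two conditions on $\hat h$. Instead it introduces the sphere bundle $S = S(R_E \oplus R_{\hat E})$ and the projective bundle $P = P(R_E \oplus R_{\hat E})$, uses Leray--Hirsch on $P$ and Gysin for $\varpi : S \to P$ to produce a single class $\Th \in H^3_{\Z/2}(S)$ with $i^*\Th = h$, and then \emph{defines} $\hat h := \hat i^*\Th$. The identity $p^*h = \hat p^*\hat h$ is then automatic from the explicit equivariant homotopy $i\circ p \sim \hat i \circ \hat p$ inside $S$. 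This packaging is what makes both conditions $\hat\pi_*\hat h = c_1^R(E)$ and $p^*h = \hat p^*\hat h$ hold simultaneously without any further adjustment.

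Your Gysin chase, by contrast, has a genuine gap at the step ``chase the Gysin sequence for $\hat\pi$ to arrange $\hat\pi_* e = 0$''. From $d_0 = \hat p^* e$ and $p_* d_0 = 0$ you get $\pi^*\hat\pi_* e = 0$, hence $\hat\pi_* e \in c_1^R(E)\cup H^0_{\Z/2}(X)$. The freedom in the choice of $e$ is $\ker\hat p^* = \hat\pi^*c_1^R(E)\cup H^1_\pm(\hat E)$, which by the projection formula changes $\hat\pi_* e$ by $c_1^R(E)\cup \hat\pi_* H^1_\pm(\hat E)$. But the Gysin sequence for $\hat\pi$ shows that $\hat\pi_* : H^1_\pm(\hat E) \to H^0_{\Z/2}(X)$ has image only $\ker(\cup\,c_1^R(\hat E))$, not all of $H^0_{\Z/2}(X)$; so you cannot in general force $\hat\pi_* e = 0$ by this adjustment alone. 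The sphere-bundle argument (or equivalently the Leray--Serre spectral sequence for $q: E\times_X\hat E \to X$, which is how the paper handles the uniqueness in its Lemma~\ref{lem:key_to_uniqueness}) is doing real work here that a Gysin-only chase does not capture.

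The same missing lemma undermines your argument for (b). You correctly observe that two T-duals differ by some $\hat\pi^*\eta$ with $\eta \in H^3_{\Z/2}(X)$, and that $\hat p^*\hat\pi^*\eta = q^*\eta = 0$. But $q^*\eta = 0$ does \emph{not} imply $\hat\pi^*\eta = 0$, nor that $\hat\pi^*\eta$ is killed by a gauge transformation. The paper proves (Lemma~\ref{lem:key_to_uniqueness}, via the spectral sequences for $\hat\pi$, $q$, and $\hat p$) the sharper statement that any $\mu \in H^3_{\Z/2}(\hat E)$ with $\hat\pi_*\mu = 0$ and $\hat p^*\mu = 0$ has the specific form $\mu = \hat\pi^*(c_1^R(E)\cup\alpha)$ for some $\alpha \in H^1_\pm(X)$; only then does Lemma~\ref{lem:automorphism_of_pair} (the gauge-transformation formula $\Phi^*\hat h = \hat h + \hat\pi^*(\hat\pi_*\hat h \cup a(\phi))$) finish the job. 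Your ``appropriate gauge adjustment'' is exactly this, but it is not a triviality. The same lemma is also what the paper uses for (e), where your symmetry argument again presupposes uniqueness in this strong form.
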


Given $h \in H^3_{\Z/2}(X) = H^3_{\Z/2}(X; \Z)$, we write $K^{h + n}_{\Z/2}(X)$ for the twisted equivariant $K$-theory. According to its definition, the variant also gets twisted: $K^{h + n}_\pm(X)$. 

Generally, we can also consider a twist by $H^1_{\Z/2}(X; \Z/2)$. Such twists will be omitted for simplicity in the present paper, except for a special case: Let $\underline{\R}_1$ be the real line bundle $X \times \R$ with the $\Z/2$-action $\tau(x, \xi) = (\tau(x), - \xi)$, and $w_1^{\Z/2}(\underline{\R}_1) \in H^1_{\Z/2}(X; \Z/2)$ its equivariant first Stiefel-Whitney class. Then, by the Thom isomorphism theorem, we have
\begin{align*}
K^{w_1^{\Z/2}(\underline{\R}_1) + h + n}_{\Z/2}(X) 
&\cong K^{h + n}_{\pm}(X), &
K^{w_1^{\Z/2}(\underline{\R}_1) + h + n}_{\pm}(X) 
&\cong K^{h + n}_{\Z/2}(X),
\end{align*}
which allows us to handle the twist by $w_1^{\Z/2}(\underline{\R}_1)$ implicitly. 

We now state the other main theorem:

\begin{thm} \label{thm:main_T_transformation}
Let $(E, h)$ be a pair on a finite $\Z/2$-CW complex $X$, and $(\hat{E}, \hat{h})$ a pair T-dual to $(E, h)$. For any $n \in \Z$, there exist $K^*_{\Z/2}(X)$-module isomorphisms
\begin{align*}
T &: \ K^{h + n}_{\Z/2}(E) \to K^{\hat{h} + n - 1}_{\pm}(\hat{E}), &
T &: \ K^{h + n}_{\pm}(E) \to K^{\hat{h} + n - 1}_{\Z/2}(\hat{E}).
\end{align*}
\end{thm}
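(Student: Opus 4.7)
The plan is to follow the correspondence-space method of Bunke-Schick \cite{B-S}, adapted to the Real equivariant setting. Form the correspondence space $Y = E \times_X \hat{E}$ with its two projections $p: Y \to E$ and $\hat{p}: Y \to \hat{E}$, and define the T-transformation as $T = \hat{p}_* \circ p^*$. The relation $p^*h = \hat{p}^*\hat{h}$ from Theorem \ref{thm:main_pair}(a) ensures that $p^*$ maps $K^{h+n}_{\Z/2}(E)$ into $K^{\hat{p}^*\hat{h} + n}_{\Z/2}(Y)$; then $\hat{p}: Y \to \hat{E}$, being the pull-back Real circle bundle $\hat{\pi}^*E$, admits a Gysin pushforward that shifts $K_{\Z/2}$-theory to $K_\pm$-theory and drops the degree by one, producing $T: K^{h+n}_{\Z/2}(E) \to K^{\hat{h}+n-1}_\pm(\hat{E})$. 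The second version of $T$ is defined identically after swapping $K_{\Z/2}$ and $K_\pm$, and $K^*_{\Z/2}(X)$-linearity follows from the naturality of $p^*$ together with the projection formula for $\hat{p}_*$.

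To prove $T$ is an isomorphism, I would first establish the twisted Gysin long exact sequence for a Real circle bundle $q: Z \to B$ with twist $\sigma \in H^3_{\Z/2}(B)$, alternating between $K_{\Z/2}$ and $K_\pm$:
\begin{align*}
\cdots \to K^{\sigma+n+1}_{\Z/2}(B) \xrightarrow{q^*} K^{q^*\sigma+n+1}_{\Z/2}(Z) \xrightarrow{q_*} K^{\sigma+n}_\pm(B) \xrightarrow{\cup c_1^R(Z)} K^{\sigma+n+2}_{\Z/2}(B) \to \cdots,
\end{align*}
together with its sibling obtained by interchanging $K_{\Z/2}$ and $K_\pm$; both arise from the Thom isomorphism for the associated Real line bundle, whose equivariant Thom class lives in the $\pm$-variant by virtue of $w_1^{\Z/2}(\underline{\R}_1)$. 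Applying this to $p: Y \to E$ and to $\hat{p}: Y \to \hat{E}$ produces two Gysin sequences sharing the middle group $K^{p^*h + n}_{\Z/2}(Y) = K^{\hat{p}^*\hat{h} + n}_{\Z/2}(Y)$. The factorization $T = \hat{p}_* \circ p^*$ then fits into a ladder comparison between these two sequences; commutativity of the squares reduces to base change $\hat{\pi}^*\pi_* = \hat{p}_*p^*$ for the fiber square, the projection formula, and the relations $\pi_*h = c_1^R(\hat{E})$, $\hat{\pi}_*\hat{h} = c_1^R(E)$, and $c_1^R(E)\cup c_1^R(\hat{E}) = 0$ from Theorem \ref{thm:main_pair}(a). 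The five-lemma then delivers that $T$ is an isomorphism, and the companion version is treated identically.

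The principal difficulty is constructing the twisted Gysin sequence with the correct alternation between $K_{\Z/2}$ and $K_\pm$, and verifying commutativity of each square in the ladder --- the T-duality relations must enter precisely at the ``cup with Chern class'' differentials for the two sides to line up, and tracking the twist shifts through every arrow requires care. Should the ladder comparison prove opaque, a fallback is an induction on the finite $\Z/2$-CW structure of $X$ via Mayer-Vietoris, reducing to the case of a single equivariant cell over which both Real circle bundles trivialize and the assertion can be checked by direct computation.
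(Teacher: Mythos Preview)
Your definition of $T$ is essentially the paper's, but you skip a genuine subtlety: the pushforward $\hat{p}_*$ along the Real circle bundle $\hat{p}:Y\to\hat{E}$ lands not in $K^{\hat{h}+n-1}_\pm(\hat{E})$ but in $K^{\hat{h}+\hat{\pi}^*W_3^{\Z/2}(E)+n-1}_\pm(\hat{E})$, because the Thom isomorphism for a Real line bundle involves the twist $W_3^{\Z/2}$ (see Proposition~\ref{prop:Pin_c_structure}). The paper absorbs this extra twist using $W_3^{\Z/2}(\hat{\pi}^*E)=\hat{\pi}^*(\hat{\pi}_*\hat{h}\cup t^{1/2})$ together with the action of the automorphism $-1:\hat{E}\to\hat{E}$ on $\hat{h}$ (Lemma~\ref{lem:automorphism_of_pair}); without this step $T$ does not have the stated target.

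Your main strategy --- a ladder between the Gysin sequences for $p$ and for $\hat{p}$ with the five-lemma --- does not work as written. The two sequences share only the term $K^*(Y)$, where the vertical map is the identity; at every other position the vertical map would have to be $T$ itself (or a companion of it), which is precisely what you are trying to show is an isomorphism. The five-lemma needs four of five vertical maps already known to be isomorphisms, and you have only one. There is no evident way to rescue this: the twist $h$ on $E$ is not pulled back from $X$, so you cannot replace the $p,\hat{p}$ sequences by Gysin sequences for $\pi,\hat{\pi}$ over $X$ with matching base terms either.

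Your fallback --- induction on the $\Z/2$-CW structure --- is exactly the route the paper takes, and it is the right one. But you underestimate the base case: over a fixed point the pair trivializes to $(\tilde{S}^1,0)$ on each side, and one must check that $T:\mathbb{K}^*(\tilde{S}^1)\to\mathbb{K}^*(\tilde{S}^1)$ is an isomorphism. This is not formal; the paper devotes all of Section~\ref{sec:torus} to computing the ring $\mathbb{K}^*(\tilde{T}^2)$ explicitly, identifying $T$ with $a\mapsto(\pi_2)_*((1+t\chi_1\chi_2)\pi_1^*a)$, and verifying by hand on generators that it is bijective (Corollary~\ref{cor:fourier_transform}). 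The inductive step then proceeds via the five-lemma applied to the long exact sequences of the CW pairs $(Y_{k+1},Y_k)$, not via Gysin sequences.
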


\medskip

The notion of a $\Z/2$-CW complex is a generalization of that of a CW complex \cite{May1}. In general, for a compact Lie group $G$, a $G$-CW complex is a space with $G$-action constructed from $G$-cells and attaching $G$-maps as in the case of a usual CW complex. A $G$-cell is a $G$-space of the form $G/H \times e^d$, where $H \subset G$ is a closed subgroup, $e^d$ is a usual $d$-dimensional cell, and $G$ acts on $G/H$ by the left multiplication but on $e^d$ trivially. A compact smooth manifold with smooth $G$-action provides an example of a finite $G$-CW complex.

\medskip

If $\Z/2$ acts on $X$ freely, then a Real (principal) circle bundle on $X$ yields a (general) circle bundle on $X/\Z_2$ in the sense of Baraglia \cite{Bar2}, and our main theorem recovers his topological T-duality for such circle bundles.

\medskip

The proof of our main theorem is topological: To prove Theorem \ref{thm:main_pair}, we employ ideas of Baraglia \cite{Bar2} as well as Bunke and Schick \cite{B-S}. We are not using the idea of a classifying space in \cite{B-S}, but such an approach is possible (see Subsection \ref{subsec:universal_pair}). The proof of Theorem \ref{thm:main_T_transformation} is parallel to that in \cite{B-S}, except for a modification of the T-dual transformation due to the fact that a Real circle bundle is not necessarily equivariantly $K$-orientable.


\subsection{Background from string}

We here try to explain the background of the main theorem: orbifolding of string theory. 

It is widely recognized \cite{W} that the Ramond-Ramond charges of D-branes in the superstring theory of type IIA, IIB and type I live in the even $K$-theory $K^0(X)$, the odd $K$-theory $K^1(X)$, and $\KO$-theory $\KO(X)$, respectively. T-duality is one of dualities relating superstring theories (see e.g.\ \cite{V}). For instance, IIA and IIB theories are T-dual on the toroidal compactifications along $1$-dimension. This means that IIA and IIB theory, which are formulated on $\R^{10}$ originally, give rise to `equivalent' theories on $\R^9 \times S^1$. This equivalence is compatible with the $K$-theoretic description of D-brane charges, since there is an isomorphism \cite{Hori}:
$$
K^n(X \times S^1) \cong K^{n-1}(X \times S^1).
$$
The work of Bouwkneght, Evslin and Mathai \cite{BEM} came from an attempt to generalize the above relation of $K$-theory on the trivial circle bundle $X \times S^1 \to X$ to non-trivial bundles in a way consistent with other background of string theory.

\medskip

Aside from a compactification, there is another recipe to produce a theory from string theory, called \textit{orbifolding} (see \cite{Dab,Sen}). Roughly, this recipe is to take into account a symmetry of a superstring theory. An action of a group on the spacetime $\R^{10}$ gives rise to a symmetry. Apart from such an `external' symmetry, there also exist `internal' symmetries. For example, we can switch the orientation of a string. This $\Z/2$-symmetry, which makes sense in IIB theory, is often denoted by $\Omega$. Another internal symmetry is the $\Z/2$-symmetry $(-1)^{F_L}$ in type II theory, which acts according to the left moving spacetime fermion number. In general, we use a mixture of these symmetries to orbifold. In the case where $\Omega$ is included, an orbifolding is called an orientifolding.

Upon orbifolding, the home of D-brane charges, i.e.\ $K$-theory, is to be modified accordingly \cite{W}: For the external symmetry given by a group action, the modification of $K$-theory is equivariant $K$-theory. For $\Omega$, the modification is $\KR$-theory \cite{A}. Then, for $(-1)^{F_L}$, the modification is the variant $K_\pm$.

Note that orbifolding happens to relate string theories also. For example, the orientifolding of IIB theory by $\Omega$ gives rise to type I theory, and the orbifolding of IIB theory by $(-1)^{F_L}$ to IIA theory. These relations of theories are compatible with those of $K$-theories: If $\Z/2$ acts on a space $X$ trivially, then there are isomorphisms $\KR^n(X) \cong \KO^n(X)$ and $K^n_\pm(X) \cong K^{n+1}(X)$.

\medskip

Now, we come to the point. The compatibility of dualities and orbifolding is an issue of physicists, and is tested in various cases. As a case where the compatibility is valid \cite{Dab,G-S,Sen}, we consider the $2k$-dimensional torus $T^{2k} = \R^{2k}/\Z^{2k}$ with the $\Z/2$-action $I_{2k} : \vec{x} \to - \vec{x}$. Then, by T-duality, the orbifolding of IIA theory compactified on $T^{2k}$ by $I_{2k}$ is related to the orbifolding of IIB theory compactified on $T^{2k}$ by $(-1)^{F_L}I_{2k}$, the mixture of the spatial symmetry $I_{2k}$ and the internal symmetry $(-1)^{F_L}$. In this case, the compatibility of $K$-theories is also verified in \cite{G-S}. Actually, for any $\Z/2$-space $X$ and the torus $T^\ell$ with the $\Z/2$-action $\vec{x} \mapsto - \vec{x}$, a repeated use of the Gysin sequence provides us the isomorphisms:
\begin{align*}
K^n_{\Z/2}(X \times T^\ell) &\cong
(K^n_{\Z/2}(X) \oplus K^{n-1}_\pm(X))^{\oplus 2^{\ell-1}}, \\
K^n_{\pm}(X \times T^\ell) &\cong
(K^n_\pm(X) \oplus K^{n-1}_{\Z/2}(X))^{\oplus 2^{\ell-1}}.
\end{align*}

In view of these isomorphisms and the original topological T-duality, it is now natural to anticipate our topological T-duality.


\subsection{Outline of the paper}

This paper is roughly divided into two parts: The former part reviews $H_\pm$ and $K_\pm$, and the latter proves the main theorems.

In Section \ref{sec:H_pm}, we begin with Borel's equivariant cohomology (with local coefficients), and then review basic properties of $H_\pm$ including the Thom isomorphism theorem. The Chern classes of Real vector bundles are also reviewed here. In Section \ref{sec:K_pm}, we recall equivariant twisted $K$-theory briefly and then introduce the twisted version of $K_\pm$. Though is parallel to $H_\pm$, some basic properties of $K_\pm$ are summarized for later convenience. After these preliminary, in Section \ref{sec:torus}, we determine the ring structure of $K^*_{\Z/2}(T^2) \oplus K^*_\pm(T^2)$, whose corollary is necessary for the proof of Theorem \ref{thm:main_T_transformation}. Then, in Section \ref{sec:topological_T_duality}, we prove our main theorems. An example illustrating the theorems, a construction of the classifying space for pairs, and a proposal of a possible topological T-duality are also given in this section. Finally, in Appendix \ref{sec:appendix}, we prove Kahn's classification of Real line bundles \cite{K}, supplying some details omitted in his original French paper.


\subsection{Convention}

Throughout the paper, we denote by $\Z/2 = \{ 0, 1 \}$ the cyclic group of order $2$. We also use the notation $\Z_2 = \{ \pm 1 \}$ to mean $\Z/2$, in the context where the group product is treated multiplicative, or in order to suppress notations. The trivial $1$-dimensional real representation of $\Z/2$ will be denoted by $\R_0$, while the non-trivial one by $\R_1$ or $\tilde{\R}$. The interval $[-1, 1] \subset \R_1$ will be written as $\tilde{I}$. 

Similarly, the trivial $1$-dimensional complex representation of $\Z/2$ will be denoted by $\C_0$ and the non-trivial one by $\C_1$. The representation ring $R(\Z/2)$, generated by complex representations of $\Z/2$, will be identified with $R = \Z[t]/(t^2 - 1)$. The ideals $I$ and $J$ in $R$ are defined by $I = (1 - t)$ and $J = (1 + t)$. Some details of these ideals can be found in the appendix of \cite{MD-R}.

All spaces in this paper are assumed to be locally contractible, paracompact and completely regular, as in \cite{FHT1}. For a $\Z/2$-space $X$, i.e.\ a space equipped with a $\Z/2$-action, we write $\tau : X \to X$ for the action of the  non-trivial element in $\Z/2$. According to the context, we also use the notation $g x$ to mean the point $x \in X$ acted by $g \in \Z_2$. Given another space $Y$ with $\Z/2$-action, the direct product $X \times Y$ is always given the $\Z/2$-action $(x, y) \mapsto (\tau(x), \tau(y))$. We will write $\mathrm{pt}$ for the space consisting of a single point, on which $\Z/2$ acts trivially. For a (real or complex) representation $V$ of $\Z/2$, we write $\underline{V}$ for the $\Z/2$-equivariant vector bundle on $X$ such that its underlying space is $X \times V$ and $\Z/2$ acts by $\tau(x, v) = (\tau(x), \tau(v))$. Basically, $S^1$ stands for the circle with the trivial $\Z/2$-action, and $\tilde{S}^1$ for the circle $S^1 \subset \C$ with the $\Z/2$-action $u \mapsto \bar{u} = u^{-1}$.

Finally, we use `Real' to mean the real structure in the sense of \cite{A}.


\section{A variant of cohomology: $H_\pm$}
\label{sec:H_pm}

This section reviews $H_\pm$: After recalling Borel's equivariant cohomology (with local coefficients), we review the definition of $H_\pm$, its basic properties, its interpretation as a cohomology with local coefficients, the Thom isomorphism for Real line bundles, and the Chern classes of Real vector bundles.

\subsection{Review of Borel's equivariant cohomology}

To begin with, we recall Borel's equivariant cohomology: Let $E\Z_2 \to B\Z_2$ denote the universal principal $\Z/2$-bundle. For a space $X$ with $\Z/2$-action, we write $E\Z_2 \times_{\Z_2} X$ for the quotient of $E\Z_2 \times X$ by the $\Z/2$-action $\tau(\xi, x) = (\tau(\xi), \tau(x))$. Then, for $n \in \Z$, we define the $\Z/2$-equivariant cohomology group to be the singular cohomology group of $E\Z_2 \times_{\Z_2} X$:
$$
H^n_{\Z/2}(X) = H^n_{\Z/2}(X; \Z) = H^n(E\Z_2 \times_{\Z_2} X; \Z).
$$
The cup product in singular cohomology induces that in the equivariant cohomology $H^*_{\Z/2}(X)$, making it into a (graded-commutative) ring. The collapsing map $X \to \mathrm{pt}$ induces the projection $E\Z_2 \times_{\Z/2} X \to B\Z_2$, and hence a ring homomorphism $H^*_{\Z/2}(\mathrm{pt}) \to H^*_{\Z/2}(X)$. With this ring homomorphism and the cup product, $H^*_{\Z/2}(X)$ gives rise to a module over the ring $H^n_{\Z/2}(\mathrm{pt}) = H^*(B\Z_2) \cong \Z[t]/(2t)$, where $t \in H^2(B\Z_2)$.

\medskip

Since the singular cohomology constitutes a (generalized) cohomology theory, the equivariant cohomology also constitutes a (generalized) $\Z/2$-equivariant cohomology theory: The relative cohomology $H^*_{\Z/2}(X, Y)$ is defined for a pair $(X, Y)$ consisting of a $\Z/2$-space $X$ and its invariant closed subspace $Y$. Then the homotopy axiom, the excision axiom, the exactness axiom as well as the additivity axiom hold true. 

Associated to this $\Z/2$-equivariant cohomology, the reduced cohomology theory also makes sense: In the case where a $\Z/2$-space $X$ has a fixed point $\pt \in X$, the reduced theory $\tilde{H}^n_{\Z/2}(X)$ is defined as the kernel of the homomorphism $H^n_{\Z/2}(X) \to H^n_{\Z/2}(\mathrm{pt})$ induced from the inclusion $\pt \to X$. Then we have the natural decomposition $H^n_{\Z/2}(X) \cong \tilde{H}^n_{\Z/2}(X) \oplus H^n_{\Z/2}(\mathrm{pt})$ as usual. We also have an isomorphism $\tilde{H}^n_{\Z/2}(\Sigma X) \cong \tilde{H}^{n-1}_{\Z/2}(X)$, where we let $\Z/2$ act on an interval $I$ trivially and define the reduced suspension $\Sigma X$ to be $\Sigma X = X \wedge S^1 = (X \times I)/(\mathrm{pt} \times I \cup X \times \partial I)$.

\medskip

By the similar argument as above, $H^*_{\Z/2}(X)$ enjoys the Thom isomorphism theorem for $\Z/2$-equivariant real vector bundles admitting $\Z/2$-equivariant orientation, such as $\Z/2$-equivariant complex vector bundles. From the Thom isomorphism theorem, the Gysin exact sequence is derived.

Notice that, for a $\Z/2$-equivariant real vector bundle $V \to X$, the obstruction to the equivariant orientability is the equivariant first Stiefel-Whitney class $w^{\Z/2}_1(V) \in H^1_{\Z/2}(X; \Z/2) = H^1_{\Z/2}(E\Z_2 \times_{\Z_2} X; \Z/2)$. In the case where $V$ is equivariantly non-orientable, the Thom isomorphism theorem and the Gysin sequence involve equivariant cohomology with local coefficients.

\subsection{Equivariant cohomology with local coefficients}
\label{subsec:cohomology_with_local_coefficients}

As the coefficients of the usual singular cohomology are generalized to local coefficients (see \cite{FHT4,Spa} and \cite{Bar2} for example), the coefficients of Borel's equivariant cohomology are also generalized to local coefficients. 

Let $X$ be a space with $\Z/2$-action. In general, a module $\mathcal{M}$ over the group $\pi_1(E\Z_2 \times_{\Z_2} X)$ defines a local system on $E\Z_2 \times_{\Z_2} X$. Using this local system, we define the equivariant cohomology with local coefficients in $\mathcal{M}$:
$$
H^n_{\Z/2}(X; \mathcal{M}) = H^n(E\Z_2 \times_{\Z_2} X; \mathcal{M}).
$$

We are particularly interested in modules $\mathcal{Z}$ whose underlying groups are $\Z$. In this case, the module structure on $\mathcal{Z}$ defines a homomorphism of groups $\pi_1(E\Z_2 \times_{\Z_2} X) \to \Z/2$, and vice verse. Such homomorphisms are in one to one correspondence with elements in $H^1_{\Z/2}(X; \Z/2)$. Thus, identifying the local system $\mathcal{Z}$ with an element $\omega \in H^1_{\Z/2}(X; \Z/2)$, we put
$$
H^{\omega + n}_{\Z/2}(X) = H^n_{\Z/2}(X; \mathcal{Z}).
$$
This notation, which regards a local system as a kind of grading, is justified by the existence of the cup product
$$
\cup : \ H^{\omega + n}_{\Z/2}(X) \times H^{\omega' + n'}_{\Z/2}(X)
\longrightarrow H^{\omega + \omega' + n + n'}_{\Z/2}(X).
$$
The cohomology $H^1_{\Z/2}(X; \Z/2)$ classifies $\Z/2$-equivariant real line bundles $L \to X$ through their equivariant first Stiefel-Whitney class. Thus, such a line bundle $L$ also plays a role of a local system. In this case, we write
$$
H^{L + n}_{\Z/2}(X) = H^{w^{\Z/2}_1(L) + n}_{\Z/2}(X).
$$

With the notation above, the Thom isomorphism theorem is generalized as follows: Let $\pi : V \to X$ be a $\Z/2$-equivariant real vector bundle of rank $r$. We write $\det V$ for the equivariant real line bundle given by the determinant line bundle of $V$. Then, for any $n \in \Z$, there is an isomorphism of $H^*_{\Z/2}(X)$-modules
$$
H^{L + n}_{\Z/2}(X) 
\cong H^{\pi^*(L \otimes \det V) + n + r}_{\Z/2}(D(V), S(V)),
$$
where $D(V)$ and $S(V)$ are the disk bundle and the sphere bundle associated to a $\Z/2$-invariant Riemannian metric on $V$. As usual, the Thom class $\Phi_{\Z/2}(V) \in H^{\pi^*\det V + r}_{\Z/2}(D(V), S(V))$ of $V$ is the image of $1 \in H^0_{\Z/2}(X)$ under the Thom isomorphism. Then the Thom isomorphism $H^{L + n}_{\Z/2}(X) \to H^{\pi^*(L \otimes \det V) + n + r}_{\Z/2}(D(V), S(V))$ is realized as $x \mapsto \pi^*x \cup \Phi_{\Z/2}(V)$.

\bigskip

On a space $X$ with $\Z/2$-action, there always exists a particular local system $\Z(m)$ for each $m \in \Z$. This appears in a number of works such as \cite{K,P-S}, and is defined to be the group $\Z$ on that $\pi_1(E\Z_2 \times_{\Z_2} X)$ acts through the natural homomorphism in the homotopy exact sequence for the fibration $X \to E\Z_2 \times_{\Z_2} X \to B\Z_2$:
$$
\pi_1(X) \to
\pi_1(E\Z_2 \times_{\Z_2} X) \to
\pi_1(B\Z_2) = \Z/2.
$$
The local system $\Z(m)$ corresponds to the $\Z/2$-equivariant real line bundle $\underline{\R}_m$ such that its underlying real line bundle is $X \times \R$ and the $\Z/2$-action is $\tau(x, r) = (\tau(x), (-1)^m r)$. Note that $\underline{\R}_m$ on $X$ is the pull-back of $\underline{\R}_m$ on $\pt$.

Taking $\Z(m)$ or equivalently $\underline{\R}_m$ as a local system, we get
$$
H^{\underline{\R}_m + n}_{\Z/2}(X) = H^n_{\Z/2}(X; \Z(m)).
$$
These cohomology groups assemble to give the ring $\bigoplus_{m, n} H^{\underline{\R}_m + n}_{\Z/2}(X)$ graded by the real representation ring $RO(\Z/2) = \Z \oplus \Z$. We can also regard that this ring is graded by $\Z \oplus \Z/2$ due to the periodicity $\Z(m + 2) = \Z(m)$ or $\underline{\R}_{m+2} = \underline{\R}_m$.

\medskip

It should be noticed that $\underline{\R}_1$ may become trivial depending on the base space $X$. For example, if $X = Y \sqcup Y$ is the disjoint union of two copies of a space $Y$ and $\Z/2$ acts on $X$ by exchanging the copies, then $w^{\Z/2}_1(\underline{\R}_1) = 0$. On the other hand, if $X$ is connected, then $w^{\Z/2}_1(\underline{\R}_1) \neq 0$.

\bigskip

For computations of $H^*_{\Z/2}(X; \Z(m))$, useful is the Serre spectral sequence for the fibration $X \to E\Z_2 \times_{\Z_2} X \to B\Z_2$, which reads
$$
E_2^{p, q} = H^p_{\mathrm{group}}(\Z_2; H^q(X; \Z) \otimes \Z(m))
\Longrightarrow H^*_{\Z/2}(X; \Z(m)).
$$
In the above, $H^*_{\mathrm{group}}(\Z_2; \mathcal{M})$ stands for the \textit{group cohomology} \cite{Bro} of $\Z_2$ with its coefficients in a $\Z/2$-module $\mathcal{M}$, and the cohomology $H^q(X; \Z)$ is regarded as a $\Z/2$-module by the $\Z/2$-action induced from that on $X$. It is know that:
\begin{align*}
H^n_{\mathrm{group}}(\Z_2; \Z(0))
&\cong
\left\{
\begin{array}{cl}
\Z, & (n = 0) \\
\Z/2, & (\mbox{$n > 0$ even}) \\
0, & (\mbox{otherwise})
\end{array}
\right. \\
H^n_{\mathrm{group}}(\Z_2; \Z(1))
&\cong
\left\{
\begin{array}{cl}
\Z/2, & (\mbox{$n > 0$ odd}) \\
0. & (\mbox{otherwise})
\end{array}
\right.
\end{align*}

\subsection{Definition of $H_\pm$}
\label{subsec:definition_variant}

\begin{dfn}
For a space $X$ with $\Z/2$-action and $n \in \Z$, we put
$$
H^n_\pm(X) =
H^n_\pm(X; \Z) = 
H^{n+1}_{\Z/2}
(X \times \tilde{I}, X \times \partial{\tilde{I}}; \Z),
$$
where $\tilde{I} = [-1, 1]$ is endowed with the $\Z/2$-action $t \mapsto -t$.
\end{dfn}

By the definition based on the $\Z/2$-equivariant cohomology, $H^n_\pm(X)$ is a (graded) module over the ring $H^*_{\Z/2}(\mathrm{pt})$. Moreover, $H^n_\pm(X)$ is a (graded) module over the ring $H^*_{\Z/2}(X)$. We can further introduce a graded commutative multiplication
$$
H^m_\pm(X) \times H^n_\pm(X) \to H^{m+n}_{\Z/2}(X).
$$
This is the composition of 
\begin{align*}
H^m_\pm(X) \times H^n_\pm(X) 
&=
H^{m+1}_{\Z/2}(X \times \tilde{I}, X \times \partial \tilde{I}) \times
H^{n+1}_{\Z/2}(X \times \tilde{I}, X \times \partial \tilde{I}) \\
&\to
H^{m + n + 2}_{\Z/2}(X \times \tilde{I}^2, X \times \partial \tilde{I}^2) \\
&\cong 
H^{m + n}_{\Z/2}(X),
\end{align*}
in which the Thom isomorphism for the equivariant real vector bundle $\underline{\C}_1 \cong \underline{\R}_1 \oplus \underline{\R}_1$ is applied. By construction, the multiplication is compatible with the structure of the module over $H^*_{\Z/2}(X)$. Hence the ring $H^*_{\Z/2}(X)$ and the module $H^*_\pm(X)$ combine to form a $\Z \oplus \Z/2$-graded ring
$$
\mathbb{H}^*(X) = H^*_{\Z/2}(X) \oplus H^*_\pm(X).
$$

\medskip

In the case where $Y \subset X$ is an invariant closed subspace, we define the relative group $H^n_{\pm}(X, Y)$ as follows:
$$
H^n_\pm(X, Y) = H^n_\pm(X, Y; \Z)
= H^{n + 1}_{\Z/2}(X \times \tilde{I}, 
Y \times \tilde{I} \cup X \times \partial \tilde{I}; \Z).
$$
Using this relative version, we can recover $H^*_{\Z/2}$ from $H^*_\pm$:

\begin{lem} \label{lem:recover_equivariant}
For any space $X$ with $\Z/2$-action and $n \in \Z$, we have a natural $H^*_{\Z/2}(X)$-module isomorphism
$$
H^n_{\Z/2}(X) \cong 
H^{n+1}_\pm(X \times \tilde{I}, X \times \partial \tilde{I}).
$$
\end{lem}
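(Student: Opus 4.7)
The plan is to unfold the relative definition of $H^*_\pm$ and reduce to the Thom isomorphism for the $\Z/2$-equivariant complex line bundle $\underline{\C}_1 \to X$. Substituting the pair $(X \times \tilde{I}, X \times \partial \tilde{I})$ into the definition of the relative $H^{n+1}_\pm$ yields
$$H^{n+1}_\pm(X \times \tilde{I}, X \times \partial \tilde{I}) = H^{n+2}_{\Z/2}\bigl(X \times \tilde{I}^2, X \times \partial(\tilde{I}^2); \Z\bigr),$$
where $\tilde{I}^2$ carries the diagonal $\Z/2$-action $(t,s) \mapsto (-t, -s)$ and $\partial(\tilde{I}^2) = \partial \tilde{I} \times \tilde{I} \cup \tilde{I} \times \partial \tilde{I}$.

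Next I would note that $\R_1 \oplus \R_1 \cong \C_1$ as real $\Z/2$-representations, so radial rescaling gives a $\Z/2$-equivariant deformation retraction from $\tilde{I}^2$ onto the unit disk $D(\C_1) \subset \C_1$, and consequently an equivariant homotopy equivalence of pairs
$$(X \times \tilde{I}^2, X \times \partial(\tilde{I}^2)) \simeq (D(\underline{\C}_1), S(\underline{\C}_1)).$$
Viewed as a real rank-$2$ bundle, $\underline{\C}_1$ has determinant $\det_\R \underline{\C}_1 \cong \underline{\R}_1 \otimes \underline{\R}_1 \cong \underline{\R}_0$, the equivariantly trivial line bundle. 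Therefore the generalized Thom isomorphism of Subsection \ref{subsec:cohomology_with_local_coefficients}, applied with $V = \underline{\C}_1$ and $L = 0$, carries no local-coefficient twist and reads
$$\cup \, \Phi_{\Z/2}(\underline{\C}_1) : H^n_{\Z/2}(X) \xrightarrow{\ \cong\ } H^{n+2}_{\Z/2}(D(\underline{\C}_1), S(\underline{\C}_1)).$$
Composing with the homotopy equivalence above produces the asserted isomorphism.

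Since the resulting map is realized as cup product with the fixed Thom class $\Phi_{\Z/2}(\underline{\C}_1)$, it is automatically a homomorphism of $H^*_{\Z/2}(X)$-modules; naturality in $X$ follows from the naturality of the Thom class under $\Z/2$-equivariant pull-back. No step presents a serious obstacle; the only point deserving a moment's care is the verification that $\det_\R \underline{\C}_1$ is equivariantly trivial, as this is what lets the Thom isomorphism be applied in its untwisted form and causes the degrees to match.
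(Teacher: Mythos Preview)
Your proof is correct and follows essentially the same approach as the paper: unfold the definition of $H^{n+1}_\pm(X \times \tilde{I}, X \times \partial \tilde{I})$ to obtain $H^{n+2}_{\Z/2}(X \times \tilde{I}^2, X \times \partial \tilde{I}^2)$, identify this pair with $(D(\underline{\C}_1), S(\underline{\C}_1))$, and apply the Thom isomorphism for $\underline{\C}_1$. Your additional remark that $\det_\R \underline{\C}_1 \cong \underline{\R}_0$ makes explicit why no local-coefficient twist appears, though the paper simply invokes the Thom isomorphism for the equivariant complex line bundle $\underline{\C}_1$ directly (complex bundles being automatically equivariantly orientable).
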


\begin{proof}
The Thom isomorphism for $\underline{\C}_1$ and the definition of $H^*_\pm$ give:
\begin{align*}
H^n_{\Z/2}(X) 
&\cong
H^{n+2}_{\Z/2}(D(\underline{\C}_1), S(\underline{\C}_1)) \\
&\cong H^{n+2}_{\Z/2}(X \times \tilde{I}^2, X \times \partial \tilde{I}^2) 
\cong H^{n+1}_\pm(X \times \tilde{I}, X \times \partial \tilde{I}),
\end{align*}
which respects the $H^*_{\Z/2}(X)$-module structures.
\end{proof}

\subsection{Basic property of $H_\pm$}

Because of the definition via $H^*_{\Z/2}$, the cohomology groups $H^*_\pm(X)$ constitute a $\Z/2$-equivariant cohomology theory as well: The homotopy, excision, exactness and additivity axioms are satisfied. There also exists the corresponding reduced theory $\tilde{H}^*_\pm(X) =  H^*_\pm(X, \pt)$ for $\Z/2$-spaces with fixed point. As usual, we have $H^n_\pm(\Sigma X) \cong H^{n-1}_\pm(X)$. It also holds that $\tilde{H}^n_\pm(\tilde{\Sigma} X) \cong \tilde{H}^{n-1}_{\Z/2}(X)$ and $\tilde{H}^n_{\Z/2}(\tilde{\Sigma}X) \cong \tilde{H}^{n-1}_\pm(X)$, where $\tilde{\Sigma} X = X \wedge \tilde{S}^1 = (X \times \tilde{I})/(\pt \times \tilde{I} \cup X \times \partial \tilde{I})$.

\medskip

To compute $H^*_\pm$, the following is useful:

\begin{prop} \label{prop:basic_exact_sequence}
For any space $X$ with $\Z/2$-action, there are natural exact sequences of modules over $H^*_{\Z/2}(X)$:
\begin{gather*}
\cdots \to
H^{n-1}_{\pm}(X) \overset{\delta}{\to}
H^{n}_{\Z/2}(X) \overset{f}{\to}
H^{n}(X) \to
H^{n}_{\pm}(X) \overset{\delta}{\to}
\cdots, \\
\cdots \to
H^{n-1}_{\Z/2}(X) \overset{\delta'}{\to}
H^{n}_{\pm}(X) \overset{f'}{\to}
H^{n}(X) \to
H^{n}_{\Z/2}(X) \overset{\delta'}{\to}
\cdots,
\end{gather*}
where $f$ and $f'$ are the map forgetting the $\Z/2$-action. 
\end{prop}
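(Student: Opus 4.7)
The plan is to derive both sequences from the long exact sequence of the same pair $(X \times \tilde{I}, X \times \partial \tilde{I})$, applying it once in the theory $H^*_{\Z/2}$ and once in $H^*_\pm$. Both are $\Z/2$-equivariant cohomology theories (as noted at the start of Subsection 2.4), so each admits a long exact sequence of pairs, and all one then needs is an identification of the three terms in each sequence.

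For the first sequence, I would start from
$$
\cdots \to H^n_{\Z/2}(X \times \tilde{I}, X \times \partial \tilde{I}) \to H^n_{\Z/2}(X \times \tilde{I}) \to H^n_{\Z/2}(X \times \partial \tilde{I}) \to \cdots
$$
and identify each term. The relative group is $H^{n-1}_{\pm}(X)$ by the very definition of $H_\pm$. The middle term is $H^n_{\Z/2}(X)$ by equivariant homotopy invariance, since the projection $X \times \tilde{I} \to X$ is a $\Z/2$-equivariant homotopy equivalence with the fixed point $0 \in \tilde{I}$ giving a section. For the third term, $\Z/2$ acts freely on $\partial \tilde{I}$, and the map $E\Z_2 \times X \to E\Z_2 \times_{\Z_2} (X \times \partial \tilde{I})$ sending $(e, x) \mapsto [e, x, +1]$ is a homeomorphism (every orbit has a unique representative with last coordinate $+1$), so $H^n_{\Z/2}(X \times \partial \tilde{I}) \cong H^n(X)$. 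For the second sequence I would apply the same pair to $H^*_\pm$: Lemma \ref{lem:recover_equivariant} identifies the relative term as $H^{n-1}_{\Z/2}(X)$, homotopy invariance gives $H^n_\pm(X \times \tilde{I}) \cong H^n_\pm(X)$, and the same free-orbit argument, combined with the non-equivariant suspension isomorphism, shows
$$
H^n_\pm(X \times \partial \tilde{I}) \cong H^{n+1}(X \times \tilde{I}, X \times \partial \tilde{I}) \cong H^n(X).
$$

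The main technical point is that the middle maps become the forgetful homomorphisms $f$ and $f'$. This is a naturality check: under the identification $E\Z_2 \times_{\Z_2} (X \times \partial \tilde{I}) \cong E\Z_2 \times X$, the composite of the inclusion $X \times \partial \tilde{I} \hookrightarrow X \times \tilde{I}$ with the retraction $X \times \tilde{I} \to X$ corresponds on Borel constructions to the principal $\Z/2$-quotient $E\Z_2 \times X \to E\Z_2 \times_{\Z_2} X$, whose pull-back on cohomology is the forgetful map $H^*_{\Z/2}(X) \to H^*(X)$. The same diagram, smashed with $(\tilde{I}, \partial \tilde{I})$ and combined with the Thom isomorphism used in Lemma \ref{lem:recover_equivariant}, handles the $H^*_\pm$ version and identifies $f'$ as forgetful. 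Compatibility with the $H^*_{\Z/2}(X)$-module structures is automatic from the naturality of the cup product, so once these identifications are in place, both exact sequences follow at once.
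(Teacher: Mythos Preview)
Your proof is correct and follows essentially the same route as the paper: both sequences come from the long exact sequence of the pair $(X \times \tilde{I}, X \times \partial \tilde{I})$, taken first in $H^*_{\Z/2}$ and then in $H^*_\pm$, with the three terms identified exactly as you describe (definition of $H_\pm$, equivariant contractibility of $\tilde{I}$, Lemma~\ref{lem:recover_equivariant}, and freeness of the $\Z/2$-action on $\partial\tilde{I}$). In fact you supply more detail than the paper does on why the restriction maps become the forgetful maps $f$ and $f'$, which the paper simply asserts.
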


\begin{proof}
The first exact sequence directly follows from the exact sequences for the pair $(X \times \tilde{I}, X \times \partial \tilde{I})$ in $H^*_{\Z/2}$. For the second one, we similarly consider the exact sequence for the pair in $H^*_\pm$:
$$
\cdots \to
H^{n}_{\pm}(X \times \tilde{I}, X \times \partial \tilde{I}) \to
H^{n}_{\pm}(X \times \tilde{I}) \to
H^{n}_{\pm}(X \times \partial \tilde{I}) \to
\cdots.
$$
We have $H^{n}_{\pm}(X \times \tilde{I}, X \times \partial \tilde{I}) \cong H^{n-1}_{\Z/2}(X)$ by Lemma \ref{lem:recover_equivariant}, and $H^{n}_{\pm}(X \times \tilde{I}) \cong H^n_\pm(X)$ since $\tilde{I}$ is equivariantly contractible. We also have a module isomorphism
$$
H^n_{\pm}(X \times \partial \tilde{I}) 
= H^{n + 1}_{\Z/2}(X \times \partial \tilde{I} \times \tilde{I},
X \times \partial \tilde{I} \times \partial \tilde{I})
\cong H^{n+1}(X \times \tilde{I}, X \times \partial \tilde{I})
\cong H^n(X),
$$
since $\Z/2$ acts on $\partial \tilde{I}$ freely.
\end{proof}

Using the proposition above, we can determine the cohomology ring of $\pt$:

\begin{prop}
We have a ring isomorphism 
$$
\mathbb{H}^*(\pt)
= H^*_{\Z/2}(\pt) \oplus H^*_{\pm}(\pt) 
\cong \Z[t^{\frac{1}{2}}]/(2t^{\frac{1}{2}}),
$$
where $t^{\frac{1}{2}}$ corresponds to the additive generator of $H^1_{\pm}(\mathrm{pt}) = \Z/2$. In low degree:
$$
\begin{array}{|c|c|c|c|c|c|c|}
\hline
 & 0 & 1 & 2 & 3 & 4 & 5 \\
\hline
H^n_{\Z/2}(\mathrm{pt}) &
\Z & 0 & \Z/2 & 0 & \Z/2 & 0 \\
\hline
H^n(\mathrm{pt}) &
\Z & 0 & 0 & 0 & 0 & 0 \\
\hline
H_\pm^n(\mathrm{pt}) &
0 & \Z/2 & 0 & \Z/2 & 0 & \Z/2 \\
\hline
\end{array}
$$
\end{prop}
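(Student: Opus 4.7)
The plan is to deduce the additive structure from Proposition \ref{prop:basic_exact_sequence} and then identify the ring structure using the Thom isomorphism of Subsection \ref{subsec:cohomology_with_local_coefficients}.

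First I would specialize the first exact sequence of Proposition \ref{prop:basic_exact_sequence} to $X = \pt$. Since $H^n(\pt) = 0$ for $n \ge 1$ and $H^*_{\Z/2}(\pt) = H^*(B\Z/2;\Z) = \Z[t]/(2t)$ with $t \in H^2$ is classical, the sequence in degree $n=0$ reduces to $0 \to \Z \xrightarrow{f} \Z \to H^0_\pm(\pt) \to 0$ with $f$ the identity, giving $H^0_\pm(\pt) = 0$; and in each degree $n \ge 1$ the sequence collapses to an isomorphism $H^n_\pm(\pt) \xrightarrow{\sim} H^{n+1}_{\Z/2}(\pt)$, which reproduces the tabulated groups.

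For the ring structure, let $s \in H^1_\pm(\pt) \cong \Z/2$ be the generator, so $2s = 0$. Via the Thom isomorphism for $\underline{\R}_1$, identify $H^n_\pm(\pt) \cong H^n_{\Z/2}(\pt;\Z(1)) = H^n(B\Z/2;\Z^w)$; under this identification $s$ corresponds to the generator $z$ of $H^1(B\Z/2;\Z^w) \cong \Z/2$. Because $\underline{\C}_1 \cong \underline{\R}_1 \oplus \underline{\R}_1$ and Thom classes are multiplicative under direct sums, the multiplication on $\mathbb{H}^*(\pt)$ defined in Subsection \ref{subsec:definition_variant} matches the ordinary cup product on the total twisted cohomology $\bigoplus_m H^*_{\Z/2}(\pt;\Z(m))$. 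Hence $s^2 = t$ is equivalent to $z^2 \neq 0$ in $H^2(B\Z/2;\Z) \cong \Z/2$, which I would establish by a standard Bockstein/mod-$2$ argument: the coefficient sequence $0 \to \Z^w \xrightarrow{\cdot 2} \Z^w \to \Z/2 \to 0$ together with $H^0(B\Z/2;\Z^w) = H^2(B\Z/2;\Z^w) = 0$ forces the mod-$2$ reduction $H^1(B\Z/2;\Z^w) \to H^1(B\Z/2;\Z/2)$ to be an isomorphism, sending $z$ to the generator $w$; hence $z^2$ reduces to $w^2 \neq 0$ in $\Z/2[w] = H^*(B\Z/2;\Z/2)$, and since mod-$2$ reduction is also an isomorphism on $H^2(B\Z/2;\Z) \cong \Z/2$, one concludes $z^2 = t$.

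With $s^2 = t$ in hand, $s^{2k} = t^k$ and $s^{2k+1} = t^k s$ generate the respective $\Z/2$ summands of $\mathbb{H}^*(\pt)$ for all $k \ge 0$, and $2s = 0$ propagates to $2s^n = 0$ in every positive degree, yielding the claimed ring isomorphism $\mathbb{H}^*(\pt) \cong \Z[s]/(2s)$ with $s$ in the role of $t^{1/2}$. The main obstacle I anticipate is the compatibility assertion, namely that the multiplication defined in Subsection \ref{subsec:definition_variant} really agrees under the Thom identification with the ordinary cup product on $\bigoplus_m H^*_{\Z/2}(\pt;\Z(m))$; once that is set up, the Bockstein calculation is routine and the rest is bookkeeping with the exact sequence.
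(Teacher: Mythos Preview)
Your proposal is correct. The additive computation via the first long exact sequence of Proposition~\ref{prop:basic_exact_sequence} is exactly what the paper does. For the ring structure, however, you take a genuinely different route: you pass immediately to the identification $H^n_\pm(\pt) \cong H^n(B\Z/2;\Z^w)$ via the Thom isomorphism for $\underline{\R}_1$, match the product on $\mathbb{H}^*(\pt)$ with the cup product on $\bigoplus_m H^*(B\Z/2;\Z(m))$ using multiplicativity of Thom classes, and then settle $s^2=t$ by a Bockstein/mod-$2$ reduction to the known ring $H^*(B\Z/2;\Z/2)=\Z/2[w]$. The paper instead argues internally, without yet invoking the local-coefficient interpretation: it sets up a commutative square relating the two multiplications with the connecting maps $\delta,\delta'$ of Proposition~\ref{prop:basic_exact_sequence}, reads off $\delta'(s^2) = s\cdot\delta(s) = st \neq 0$, and uses that $\delta'$ is an isomorphism in this degree. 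Your approach is cleaner once the product compatibility is granted, and in fact anticipates Proposition~\ref{prop:interpretation} and the remark following it, where the paper records exactly the identity $\Phi_{\Z/2}(\underline{\R}_1)^2 = \Phi_{\Z/2}(\underline{\C}_1)$ you need; the paper's approach postpones that compatibility, trading it for a short diagram chase that stays within the definitions of Subsection~\ref{subsec:definition_variant}.
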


\begin{proof}
Recall the ring isomorphism $H^*_{\Z/2}(\mathrm{pt}) \cong \Z[t]/(2t)$. The first exact sequence in Proposition \ref{prop:basic_exact_sequence} immediately gives
$$
H^n_{\pm}(\mathrm{pt}) =
\left\{
\begin{array}{cl}
\Z/2, & (n = 1, 3, 5, \ldots) \\
0. & (\mbox{otherwise})
\end{array}
\right.
$$
Further, $\delta$ in the exact sequence is a map of modules over $H^*_{\Z/2}(\mathrm{pt})$, which determines the module structure on $H^*_\pm(\mathrm{pt})$. Explicitly, if we write $s \in H^1_\pm(\mathrm{pt})$ for the non-trivial element, then $t^{2i}s \in H^{2i+1}_\pm(\mathrm{pt})$ is the additive generator. Now, to determine the ring structure in question, it suffices to determine whether $s^2 \in H^2_{\Z/2}(\mathrm{pt})$ is trivial or not. For this aim, we consider the following diagram:
$$
\begin{CD}
H^1_{\pm}(\mathrm{pt}) \times H^1_{\pm}(\mathrm{pt}) @>{1 \times \delta}>>
H^1_{\pm}(\mathrm{pt}) \times H^2_{\Z/2}(\mathrm{pt}) \\
@VVV @VVV \\
H^2_{\Z/2}(\mathrm{pt}) @>{\delta'}>> H^3_\pm(\mathrm{pt}),
\end{CD}
$$
where the vertical maps are the multiplications. Recalling the definitions of $\delta$, $\delta'$ and $H^*_\pm$, we can show the commutativity of the diagram above. Hence we get $\delta'(s^2) = st \neq 0$. Because $\delta'$ is an isomorphism in the present case, we conclude $s^2 \neq 0$, so that $s^2 = t$. Setting $s = t^{1/2}$, we complete the proof.
\end{proof}

\bigskip

Because of its definition by means of equivariant cohomology, the Thom isomorphism for $\Z/2$-equivariantly orientable real vector bundle in $H^*_{\Z/2}$ is directly imported to $H^*_{\pm}$. Accordingly, there exists the Gysin sequence for sphere bundles.

\subsection{$H_\pm$ as a cohomology with local coefficients}

We now state the interpretation of $H^*_\pm$ as an equivariant cohomology with local coefficients.

\begin{prop} \label{prop:interpretation}
Let $X$ be a space with $\Z/2$-action. For any $n \in \Z$, we have an isomorphism of $H^*_{\Z/2}(X)$-modules:
$$
H^n_\pm(X) \cong H^{\underline{\R}_1 + n}_{\Z/2}(X) = H^n_{\Z/2}(X; \Z(1)).
$$
\end{prop}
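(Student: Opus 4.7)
The plan is to deduce the identification directly from the generalized Thom isomorphism stated earlier in Subsection \ref{subsec:cohomology_with_local_coefficients}, applied to the equivariant real line bundle $\underline{\R}_1 \to X$. The key observation is that the pair $(X \times \tilde{I}, X \times \partial \tilde{I})$ entering the definition of $H^n_\pm(X)$ is precisely the pair $(D(\underline{\R}_1), S(\underline{\R}_1))$ associated to a $\Z/2$-invariant metric on $\underline{\R}_1$, so the defining group of $H^n_\pm(X)$ is already in Thom-isomorphism form.

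First, I would set $V = \underline{\R}_1$ (so $r = 1$) and choose the twisting line bundle to be $L = \underline{\R}_1$ in the generalized Thom isomorphism
\[
H^{L + n}_{\Z/2}(X)
\;\cong\; H^{\pi^*(L \otimes \det V) + n + r}_{\Z/2}(D(V), S(V)).
\]
Since $V$ is a line bundle, $\det V = V = \underline{\R}_1$, so $L \otimes \det V = \underline{\R}_1 \otimes \underline{\R}_1$. I would next verify that this tensor product is equivariantly isomorphic to the trivial equivariant line bundle $\underline{\R}_0$: the underlying line bundle $X \times \R$ is of course trivial, and the $\Z/2$-action multiplies fibers by $(-1) \cdot (-1) = +1$, so the twisting drops out. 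Consequently the right-hand side becomes
\[
H^{n+1}_{\Z/2}\bigl(X \times \tilde{I}, X \times \partial \tilde{I}\bigr),
\]
which by definition is $H^n_\pm(X)$.

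Finally, I would note that both sides of the stated isomorphism are $H^*_{\Z/2}(X)$-modules in a canonical way: the Thom isomorphism is given by cup product with the Thom class $\Phi_{\Z/2}(\underline{\R}_1)$ and is therefore $H^*_{\Z/2}(X)$-linear by construction. Thus the resulting isomorphism $H^{\underline{\R}_1 + n}_{\Z/2}(X) \cong H^n_\pm(X)$ automatically respects module structures, and naturality follows from naturality of the Thom class. There is no real obstacle here; the only step that deserves care is the explicit identification $\underline{\R}_1 \otimes \underline{\R}_1 \cong \underline{\R}_0$ as equivariant bundles, which is what makes the twist self-cancel and lets the Thom isomorphism land in untwisted equivariant cohomology.
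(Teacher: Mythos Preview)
Your proposal is correct and is essentially the same argument as the paper's: apply the generalized Thom isomorphism to the rank-one bundle $\underline{\R}_1$, use $\underline{\R}_1 \otimes \underline{\R}_1 \cong \underline{\R}_0$ so that the twist cancels, and identify $(D(\underline{\R}_1), S(\underline{\R}_1))$ with $(X \times \tilde{I}, X \times \partial \tilde{I})$ to recover the definition of $H^n_\pm(X)$. The paper's version is terser, but the content is identical.
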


\begin{proof}
The Thom isomorphism for $\underline{\R}_1$ gives
$$
H^{\R_1 + n}_{\Z/2}(X) 
\cong 
H^{n + 1}_{\Z/2}(D(\underline{\R}_1), 
S(\underline{\R}_1)) \\
\cong
H^{n + 1}_{\Z/2}(X \times \tilde{I}, 
X \times \partial \tilde{I}).
$$
The definition of $H^*_\pm$ completes the proof.
\end{proof}

The cup product in equivariant cohomology with local coefficients is compatible with the multiplication defined in Subsection \ref{subsec:definition_variant} through the isomorphism in Proposition \ref{prop:interpretation}, because $\Phi_{\Z/2}(\underline{\R}_1) \Phi_{\Z/2}(\underline{\R}_1) = \Phi_{\Z/2}(\underline{\C}_1)$. Therefore the graded ring $\mathbb{H}^*(X)$ is isomorphic to $\oplus_{m, n} H^n_{\Z/2}(X; \Z(m))$.

\medskip

As a simple application of Proposition \ref{prop:interpretation}, we have:

\begin{prop}
For any space $X$ with $\Z/2$-action and $n \in \Z$, the maps $\delta$ and $\delta'$ in Proposition \ref{prop:basic_exact_sequence} have the following realizations:
\begin{align*}
\delta &: \ H^n_\pm(X) \to H^{n+1}_{\Z/2}(X), &
&x \mapsto x \cup t^{1/2}, \\
\delta' &: H^n_{\Z/2}(X) \to H^{n+1}_\pm(X), &
&x \mapsto x \cup t^{1/2},
\end{align*}
where $t^{1/2} \in H^1_{\pm}(\mathrm{pt}) = \Z/2$ is the generator.
\end{prop}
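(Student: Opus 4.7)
The plan is to identify both $\delta$ and $\delta'$ as cup products with the Thom class of a Real line bundle, composed with the passage from relative to absolute cohomology. Throughout, write $\Phi = \Phi_{\Z/2}(\underline{\R}_1) \in H^{\underline{\R}_1 + 1}_{\Z/2}(\tilde{I}, \partial\tilde{I})$ for the Thom class over $\pt$; the Thom class on any $X$ is then its pullback under $X \to \pt$.

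For $\delta$, I would first observe that the first exact sequence of Proposition \ref{prop:basic_exact_sequence} comes from the long exact sequence of the pair $(X \times \tilde{I}, X \times \partial\tilde{I})$ in $H^*_{\Z/2}$, so $\delta$ is literally the restriction
$$j^* : H^n_{\Z/2}(X \times \tilde{I}, X \times \partial\tilde{I}) \longrightarrow H^n_{\Z/2}(X \times \tilde{I}) \cong H^n_{\Z/2}(X).$$
Via the Thom isomorphism used in Proposition \ref{prop:interpretation}, an element $x \in H^{n-1}_\pm(X) \cong H^{\underline{\R}_1 + n - 1}_{\Z/2}(X)$ corresponds to $\pi^*x \cup \Phi$ in the source of $j^*$. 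Hence
$$\delta(x) = j^*(\pi^*x \cup \Phi) = x \cup j^*\Phi,$$
and the task reduces to identifying $j^*\Phi \in H^1_\pm(X)$ with the pullback of $t^{1/2} \in H^1_\pm(\pt)$.

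By naturality of Thom classes under pullback it suffices to compute $j^*\Phi$ over $\pt$. The long exact sequence of $(\tilde{I}, \partial\tilde{I})$ with local coefficients $\Z(1)$, combined with the standard computations $H^0_{\Z/2}(\partial\tilde{I}; \Z(1)) = \Z$ (since $\Z/2$ acts freely on $\partial\tilde{I}$), $H^0_{\Z/2}(\tilde{I}; \Z(1)) = 0$, and $H^1_{\Z/2}(\tilde{I}; \Z(1)) = \Z/2$, forces $H^1_{\Z/2}(\tilde{I}, \partial\tilde{I}; \Z(1)) = \Z$ with $j^*$ being the reduction modulo $2$. The Thom class $\Phi$ generates this $\Z$, so $j^*\Phi$ is the generator of $H^1_\pm(\pt) = \Z/2$, which is $t^{1/2}$ by definition.

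For $\delta'$, the same strategy applies, using the long exact sequence in $H^*_\pm$ appearing in the proof of Proposition \ref{prop:basic_exact_sequence}. Unfolding the definition of $H^*_\pm$ identifies
$$H^n_\pm(X \times \tilde{I}, X \times \partial\tilde{I}) \cong H^{n+1}_{\Z/2}(X \times \tilde{I}^2, X \times \partial(\tilde{I}^2)),$$
which via the Thom isomorphism for $\underline{\C}_1 \cong \underline{\R}_1 \oplus \underline{\R}_1$ becomes $H^{n-1}_{\Z/2}(X)$. Writing the Thom class as a Whitney product $\Phi_{\Z/2}(\underline{\C}_1) = \Phi_s \cup \Phi_t$, one for each factor of $\tilde{I}$, the map $\delta'$ applies $j^*$ to the first factor to yield $\pi^* t^{1/2}$, while the second factor $\Phi_t$ survives to furnish the Thom identification $H^n_\pm(X) \cong H^{n+1}_{\Z/2}(X \times \tilde{I}, X \times \partial\tilde{I})$. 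The net formula is $\delta'(x) = x \cup t^{1/2}$. The main subtlety throughout is the bookkeeping of the local coefficient system $\Z(1)$ through the several Thom isomorphisms and restriction maps; once the Whitney product formula and naturality of $\Phi$ are in hand, both claims collapse to the single computation $j^*\Phi = t^{1/2}$ on $\pt$.
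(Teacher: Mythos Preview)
Your argument is correct and is essentially an unpacking of the paper's one-line proof. The paper observes that, under the identification of Proposition~\ref{prop:interpretation}, the two exact sequences of Proposition~\ref{prop:basic_exact_sequence} \emph{are} the Gysin sequences for the sphere bundle $S(\underline{\R}_1) \to X$, whence $\delta$ and $\delta'$ are cup product with the Euler class $\chi_{\Z/2}(\underline{\R}_1) = t^{1/2}$. Your computation of $j^*\Phi$ over $\pt$ is precisely the standard fact that the restriction of the Thom class to the zero section is the Euler class, and your treatment of $\delta'$ via the Whitney product $\Phi_{\Z/2}(\underline{\C}_1) = \Phi_s \cup \Phi_t$ is the same mechanism applied to the second sequence. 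So both proofs rest on the same identification; the paper's version is shorter only because it invokes the Gysin formalism by name rather than rederiving it.
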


\begin{proof}
By Proposition \ref{prop:interpretation}, we can identify the exact sequences in Proposition \ref{prop:basic_exact_sequence} with the Gysin sequences for the sphere bundle $S(\underline{\R}_1) \to X$, and the generator $t^{1/2}$ with the Euler class $\Phi_{\Z/2}(\underline{\R}_1)$. 
\end{proof}

\begin{cor}
The compositions
\begin{gather*}
H^n_\pm(X) \overset{\delta}{\to} 
H^{n+1}_{\Z/2}(X) \overset{\delta'}{\to} 
H^{n + 2}_\pm(X), \\
H^n_{\Z/2}(X) \overset{\delta'}{\to} 
H^{n+1}_{\pm}(X) \overset{\delta}{\to} 
H^{n + 2}_{\Z/2}(X)
\end{gather*}
are the multiplication of $t \in H^2_{\Z/2}(\mathrm{pt})$.
\end{cor}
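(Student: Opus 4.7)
The proof proposal is essentially a one-line calculation: combine the realization of $\delta$ and $\delta'$ as cup product with $t^{1/2}$ (just established in the preceding proposition) with the ring relation $(t^{1/2})^2 = t$ in $\mathbb{H}^*(\mathrm{pt})$ (established earlier in the computation of the cohomology ring of a point).

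More explicitly, for $x \in H^n_\pm(X)$, apply the preceding proposition twice to get $\delta'(\delta(x)) = \delta'(x \cup t^{1/2}) = (x \cup t^{1/2}) \cup t^{1/2}$, then use associativity of the cup product in the $\Z \oplus \Z/2$-graded ring $\mathbb{H}^*(X)$ together with the module-over-$\mathbb{H}^*(\mathrm{pt})$ structure to rewrite this as $x \cup (t^{1/2} \cup t^{1/2}) = x \cup t$. The same chain applies verbatim starting from $x \in H^n_{\Z/2}(X)$, giving $\delta(\delta'(x)) = x \cup t$. The only thing one must verify is that the associativity formula being invoked is valid in the mixed setting: that $\mathbb{H}^*(X)$ really is associative, which in particular allows $(x \cup t^{1/2}) \cup t^{1/2} = x \cup (t^{1/2} \cup t^{1/2})$ across the $H^*_\pm \times H^*_\pm \to H^*_{\Z/2}$ pairing composed with pullback of $t^{1/2}$ from $\mathrm{pt}$.

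The only potential subtlety is making sure that the mixed multiplications are consistent. The pairing $H^m_\pm(X) \times H^n_\pm(X) \to H^{m+n}_{\Z/2}(X)$ was defined via the Thom isomorphism for $\underline{\C}_1 \cong \underline{\R}_1 \oplus \underline{\R}_1$, while the $H^*_{\Z/2}(X)$-module structure on $H^*_\pm(X)$ comes directly from the relative cup product in equivariant singular cohomology; both are instances of one graded multiplication on $\mathbb{H}^*(X)$, and associativity among them is inherited from associativity of the equivariant cup product (combined with the multiplicativity $\Phi_{\Z/2}(\underline{\R}_1)\Phi_{\Z/2}(\underline{\R}_1) = \Phi_{\Z/2}(\underline{\C}_1)$ of Thom classes, which was already invoked). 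Since the generator $t^{1/2} \in H^1_\pm(\mathrm{pt})$ pulls back to $X$ along the collapse map, no further issue arises, and the corollary follows immediately.

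I do not expect a genuine obstacle: all the ingredients (the explicit formulas for $\delta, \delta'$, the relation $s^2 = t$, and associativity of $\mathbb{H}^*$) are in place. The only care needed is to phrase the argument so that it is clear which instance of the multiplication in $\mathbb{H}^*(X)$ is being used at each step.
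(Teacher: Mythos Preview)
Your proposal is correct and matches the paper's approach exactly: the paper states this as a corollary with no proof, treating it as immediate from the preceding proposition (realization of $\delta,\delta'$ as cup with $t^{1/2}$) together with the relation $(t^{1/2})^2 = t$ in $\mathbb{H}^*(\mathrm{pt})$. Your discussion of associativity is a reasonable sanity check but is not an obstacle, as you note.
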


It is well-known that the equivariant cohomology of a space $X$ with \textit{free} $\Z/2$-action is $H^n_{\Z/2}(X) \cong H^n(X/\Z_2) = H^n(X/\Z_2; \Z)$. Another simple application of Proposition \ref{prop:interpretation} gives the corresponding result for $H^*_\pm$:

\begin{lem} 
For a space $X$ with free $\Z/2$-action, there exists a natural isomorphism of groups
$$
H^n_\pm(X) \cong H^{\underline{\R}_1/\Z_2 + n}(X/\Z_2),
$$
where $\underline{\R}_1/\Z_2 \to X/\Z_2$ is the real line bundle given by taking the quotient of the equivariant real line bundle $\underline{\R}_1 \to X$.
\end{lem}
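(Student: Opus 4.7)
The plan is to combine Proposition \ref{prop:interpretation} with the familiar fact that for a free $\Z/2$-action the Borel construction is weakly equivalent to the orbit space, and then to track the local system across this equivalence.

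First I would invoke Proposition \ref{prop:interpretation} to rewrite
$$
H^n_\pm(X) \cong H^{\underline{\R}_1 + n}_{\Z/2}(X)
= H^{w^{\Z/2}_1(\underline{\R}_1) + n}(E\Z_2 \times_{\Z_2} X),
$$
so that the task becomes comparing the right-hand side with $H^{w_1(\underline{\R}_1/\Z_2) + n}(X/\Z_2)$. Because $\Z/2$ acts freely on $X$, the diagonal action on $E\Z_2 \times X$ is free, and the projection onto the second factor descends to a map
$$
p : E\Z_2 \times_{\Z_2} X \longrightarrow X/\Z_2,
$$
which is a fiber bundle with fiber $E\Z_2$. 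Since $E\Z_2$ is contractible, $p$ is a weak homotopy equivalence, and under our standing hypotheses on $X$ it induces isomorphisms on cohomology with any local coefficients.

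Next I would verify that $p$ identifies the two local systems. The equivariant real line bundle $\underline{\R}_1 \to X$ determines, by the Borel construction, a real line bundle $E\Z_2 \times_{\Z_2} \underline{\R}_1 \to E\Z_2 \times_{\Z_2} X$ whose first Stiefel-Whitney class is by definition $w^{\Z/2}_1(\underline{\R}_1)$. Since $\Z/2$ acts freely on $X$, the bundle $\underline{\R}_1/\Z_2 \to X/\Z_2$ makes sense, and the obvious map $E\Z_2 \times_{\Z_2} \underline{\R}_1 \to \underline{\R}_1/\Z_2$ covers $p$ and is fiberwise an isomorphism. Hence
$$
p^*\bigl(\underline{\R}_1/\Z_2\bigr) \cong E\Z_2 \times_{\Z_2} \underline{\R}_1,
$$
and in particular $p^*w_1(\underline{\R}_1/\Z_2) = w^{\Z/2}_1(\underline{\R}_1)$. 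Combining this with the weak equivalence $p$ yields the desired natural isomorphism
$$
H^{w^{\Z/2}_1(\underline{\R}_1) + n}(E\Z_2 \times_{\Z_2} X)
\cong H^{w_1(\underline{\R}_1/\Z_2) + n}(X/\Z_2)
= H^{\underline{\R}_1/\Z_2 + n}(X/\Z_2).
$$

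The only delicate point — and the step I would take the most care over — is this identification of local systems, i.e.\ checking that the homomorphism $\pi_1(E\Z_2 \times_{\Z_2} X) \to \Z/2$ classifying $\underline{\R}_1$ coincides with the pullback along $p_*$ of the homomorphism $\pi_1(X/\Z_2) \to \Z/2$ classifying $\underline{\R}_1/\Z_2$. Everything else is routine: Proposition \ref{prop:interpretation} handles the definition, and the contractibility of $E\Z_2$ handles the geometric replacement of the Borel construction by the orbit space.
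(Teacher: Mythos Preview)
Your proposal is correct and follows essentially the same approach as the paper: invoke Proposition \ref{prop:interpretation} and then use that for a free action the Borel construction is equivalent to the quotient, so the local system descends. The paper's proof is just the two-line version of what you wrote (``Since the $\Z/2$-action is free, everything descends to the quotient''); your expansion of the descent step, including the identification $p^*(\underline{\R}_1/\Z_2)\cong E\Z_2\times_{\Z_2}\underline{\R}_1$, is exactly the content hidden in that sentence.
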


We remark that the isomorphism above makes $H^{\underline{\R}_1/\Z_2 + n}(X/\Z_2)$ into a module over $H^*_{\Z/2}(X)$. We remark also that $\underline{\R}_1/\Z_2 \cong X \times_{\Z_2} \R_1$.

\begin{proof}
By Proposition \ref{prop:interpretation}, we have $H^n_\pm(X) \cong H^{\underline{\R}_1 + n}_{\Z/2}(X)$. Since the $\Z/2$-action is free, everything descend to the quotient.
\end{proof}

\subsection{Thom isomorphism for Real line bundle}

Let $X$ be a space with $\Z/2$-action, and $\pi : R \to X$ a Real line bundle, that is, a complex line bundle with real structure. Forgetting its complex structure, we get a real vector bundle of rank $2$, which we denote by $R_{\R}$. Though $R$ is not equivariant as a complex vector bundle, the real vector bundle $R_{\R}$ is equivariant. We thus get the determinant bundle $\det_{\R}R = \mathrm{det}R_{\R}$, which is a $\Z/2$-equivariant real vector bundle of rank $1$.

\begin{lem} \label{lem:determinant}
For any Real line bundle $\pi : R \to X$ over a space $X$ with $\Z/2$-action, there is a natural isomorphism of $\Z/2$-equivariant real vector bundles:
$$
\mathrm{det}_{\R} R \cong \underline{\R}_1.
$$
In particular, $w_1^{\Z/2}(R) = w_1^{\Z/2}(\underline{\R}_1)$ in $H^1_{\Z/2}(X; \Z/2)$.
\end{lem}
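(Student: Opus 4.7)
The plan is to write down an explicit isomorphism $\det_{\R} R \to \underline{\R}_1$ after choosing a compatible Hermitian metric. Unpacking the structure, a Real structure on the complex line bundle $R$ is an antilinear involution $\tau : R \to R$ covering the involution on $X$, so for $r \in R_x$ and $\lambda \in \C$ we have $\tau(\lambda r) = \bar{\lambda}\, \tau(r) \in R_{\tau(x)}$. Multiplication by $i$ provides an $\R$-linear endomorphism $J$ of $R_{\R}$, and from antilinearity $\tau_{\R} \circ J = -J \circ \tau_{\R}$.

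First, I would produce a Hermitian metric $h$ on $R$ which is compatible with the Real structure, in the sense that $h(\tau(r), \tau(r')) = \overline{h(r, r')}$. This is obtained by averaging an arbitrary Hermitian metric $h_0$: the formula
\[
h(r, r') = \tfrac{1}{2}\bigl(h_0(r, r') + \overline{h_0(\tau(r), \tau(r'))}\bigr)
\]
defines a Hermitian metric with the desired compatibility. Such metrics form a non-empty convex set, so the construction below will be canonical up to contractible choice.

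Next, I would define $\phi : \det_{\R} R \to \underline{\R}_1$ fibrewise as follows. For $r \in R_x$ nonzero, the pair $\{r, Jr\}$ is an $\R$-basis of $R_x$, so $r \wedge Jr$ generates $(\det_{\R} R)_x$. Set
\[
\phi_x(r \wedge Jr) = \bigl(x,\, h(r, r)\bigr) \in \underline{\R}_1|_x.
\]
A direct computation shows that if $r' = (a+bi)r$, then $r' \wedge Jr' = (a^2+b^2)\, r \wedge Jr$ while $h(r', r') = (a^2+b^2)\, h(r, r)$, so $\phi_x$ is well-defined and linear, and it is an isomorphism because $h(r, r) > 0$ for $r \neq 0$. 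To check $\Z/2$-equivariance, note that $\tau_{\R}(Jr) = -J\tau(r)$, so
\[
\tau_{\R}(r \wedge Jr) = \tau(r) \wedge \tau(Jr) = -\,\tau(r) \wedge J\tau(r),
\]
which is sent by $\phi_{\tau(x)}$ to $(\tau(x), -h(\tau(r),\tau(r))) = (\tau(x), -h(r,r))$; this matches the $\Z/2$-action $(x, t) \mapsto (\tau(x), -t)$ on $\underline{\R}_1$, using the compatibility $h(\tau(r),\tau(r)) = \overline{h(r,r)} = h(r,r)$.

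The equality $w_1^{\Z/2}(R) = w_1^{\Z/2}(\underline{\R}_1)$ is then immediate, since $w_1^{\Z/2}(R)$ is by definition $w_1^{\Z/2}(\det_{\R} R)$ and this is an invariant of the equivariant isomorphism class. The only mild subtlety is the antilinear-versus-linear bookkeeping for the Real structure on $R$ and the induced sign $\tau_{\R} J = -J\tau_{\R}$, which is what forces the $-1$ in the fibre and produces $\underline{\R}_1$ rather than $\underline{\R}_0$; everything else is a routine unwinding of definitions.
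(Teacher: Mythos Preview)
Your proof is correct and follows essentially the same approach as the paper: both pick a $\Z/2$-invariant Hermitian metric, use the frame $\{r, Jr\}$ to trivialize $\det_{\R} R$, and then verify equivariance via the antilinearity relation $\tau J = -J\tau$, which produces the sign that identifies the fibre with $\R_1$ rather than $\R_0$. Your map $r\wedge Jr \mapsto h(r,r)$ is just the inverse of the paper's section $1 \mapsto v\wedge Jv$ with $|v|=1$, and you are a bit more explicit about the averaging construction of the metric and the well-definedness check.
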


\begin{proof}
If we forget about the $\Z/2$-action, then the determinant bundle $\det_{\R}R$ admits a canonical section. Explicitly, the value of the section at $x \in X$ is $v \wedge Jv \in \mathrm{det}_{\R}R$, where $v \in R$ is any unit norm element (with respect to a $\Z/2$-invariant Hermitian metric) and $J : R \to R$ is the multiplication of $i \in \C$. This section defines an isomorphism $\det_{\R}R \cong \underline{\R}_1$ of the underlying bundles. Recalling that $R$ is a Real vector bundle, we compute the $\Z/2$-action on the section to have
$$
\tau(v \wedge Jv) 
= \tau(v) \wedge \tau(Jv) 
= \tau(v) \wedge (- J \tau(v))
= - \tau(v) \wedge J \tau(v).
$$
This proves that the isomorphism $\det_{\R}R \cong \underline{\R}_1$ is $\Z/2$-equivariant.
\end{proof}

\begin{thm}[Thom isomorphism]
Let $X$ be a space with $\Z/2$-action. For any Real line bundle $\pi : R \to X$ and $n \in \Z$, we have natural isomorphisms of modules over $H^*_{\Z/2}(X)$:
\begin{align*}
H^n_{\Z/2}(X) &\cong H^{n + 2}_{\pm}(D(R), S(R)), &
H^n_{\pm}(X) &\cong H^{n + 2}_{\Z/2}(D(R), S(R)).
\end{align*}
If we write $\Phi_{\pm}(R) \in H^2_{\pm}(D(R), S(R))$ for the image of $1 \in H^0_{\Z/2}(X)$, then the isomorphisms are given by $x \mapsto \pi^*x \cup \Phi_{\pm}(R)$.
\end{thm}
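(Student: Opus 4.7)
The plan is to reduce both isomorphisms to the already-established equivariant Thom isomorphism with local coefficients, by using Lemma \ref{lem:determinant} to identify $\det_{\R}R$ with $\underline{\R}_1$ and Proposition \ref{prop:interpretation} to translate between $H^*_\pm$ and the equivariant cohomology twisted by $\underline{\R}_1$.

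First, I would note that Proposition \ref{prop:interpretation} has a straightforward relative version: for any invariant closed subspace $Y \subset X$, the Thom isomorphism for $\underline{\R}_1$ gives
$$
H^{\underline{\R}_1 + n}_{\Z/2}(X, Y) \cong H^{n+1}_{\Z/2}(X \times \tilde{I}, Y \times \tilde{I} \cup X \times \partial \tilde{I}) = H^n_\pm(X, Y),
$$
with the module structure preserved. Regarding the Real line bundle $R$ as a $\Z/2$-equivariant real vector bundle $R_{\R}$ of rank $2$, the general Thom isomorphism recalled in Subsection \ref{subsec:cohomology_with_local_coefficients} yields, for any local system $L$,
$$
H^{L + n}_{\Z/2}(X) \cong H^{\pi^*(L \otimes \det_{\R} R) + n + 2}_{\Z/2}(D(R), S(R)).
$$

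Next, I apply Lemma \ref{lem:determinant}, which identifies $\det_{\R}R \cong \underline{\R}_1$. Setting $L$ trivial gives
$$
H^n_{\Z/2}(X) \cong H^{\pi^*\underline{\R}_1 + n + 2}_{\Z/2}(D(R), S(R)) \cong H^{n+2}_\pm(D(R), S(R)),
$$
where the last step uses the relative version of Proposition \ref{prop:interpretation} on $(D(R), S(R))$, noting that $\underline{\R}_1$ on $D(R)$ is the pullback $\pi^*\underline{\R}_1$ from $X$. Setting $L = \underline{\R}_1$ and using $\underline{\R}_1 \otimes \underline{\R}_1 \cong \underline{\R}_0$ (the trivial equivariant line bundle), combined once more with Proposition \ref{prop:interpretation} on the left, gives
$$
H^n_\pm(X) \cong H^{\underline{\R}_1 + n}_{\Z/2}(X) \cong H^{n+2}_{\Z/2}(D(R), S(R)).
$$

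Finally, defining $\Phi_\pm(R) \in H^2_\pm(D(R), S(R))$ as the image of $1 \in H^0_{\Z/2}(X)$ under the first isomorphism, the realization $x \mapsto \pi^*x \cup \Phi_\pm(R)$ is inherited from the corresponding formula in the equivariant Thom isomorphism (the Thom class in the $L = 0$ case transports to $\Phi_\pm(R)$), and for the second isomorphism the same holds via the compatibility of cup products and the equality $\Phi_{\Z/2}(\underline{\R}_1) \cup \Phi_{\Z/2}(\underline{\R}_1) = \Phi_{\Z/2}(\underline{\C}_1)$ already used in Subsection \ref{subsec:definition_variant}. The $H^*_{\Z/2}(X)$-module structure is preserved because cup products are $H^*_{\Z/2}(X)$-linear. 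The only delicate point is to check that the identifications of local systems (notably $\pi^*\underline{\R}_1$ on $D(R)$ versus $\underline{\R}_1$ on $D(R)$ coming from the base $D(R)$ itself) agree canonically, but since $\underline{\R}_1$ on any $\Z/2$-space is pulled back from $\mathrm{pt}$, this is automatic and is not a real obstacle.
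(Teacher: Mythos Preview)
Your proof is correct and follows essentially the same route as the paper: the paper's one-line argument invokes exactly the three ingredients you use---the equivariant Thom isomorphism with local coefficients, Lemma~\ref{lem:determinant}, and Proposition~\ref{prop:interpretation}---and you have simply spelled out the details (including the relative form of Proposition~\ref{prop:interpretation}) that the paper leaves implicit.
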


\begin{proof}
This is a direct consequence of the Thom isomorphism theorem in equivariant cohomology with local coefficients, Lemma \ref{lem:determinant} and Proposition \ref{prop:interpretation}.
\end{proof}

In the same way as usual, we define the Euler class $\chi_R(R) \in H^2_\pm(X)$ of a Real line bundle $\pi : R \to X$ to be the image of $1 \in H^0_{\Z/2}(X)$ under the composition of
$$
H^0_{\Z/2}(X) \overset{\mathrm{Thom}}{\to} 
H^2_\pm(D(R), S(R)) \to
H^2_\pm(D(R)) \cong 
H^2_\pm(X).
$$
Similarly, we define the push-forward $\pi_* : H^n_\pm(S(R)) \to H^{n-1}_{\Z/2}(X)$ along $\pi: S(R) \to X$ to be the composition of
$$
H^n_\pm(S(R)) \to
H^{n+1}_\pm(D(R), S(R)) \overset{\mathrm{Thom}}{\to}
H^{n -1}_{\Z/2}(X).
$$
The push-forward $\pi_* : H^n_{\Z/2}(S(R)) \to H^{n-1}_{\pm}(X)$ is defined in the same way. Now, considering the exact sequence for the pair $(D(R), S(R))$, we get:

\begin{cor}[Gysin sequence]
For any Real line bundle $\pi : R \to X$, we have natural exact sequences of $H^*_{\Z/2}(X)$-modules:
\begin{gather*}
\cdots \to
H^{n-2}_{\Z/2}(X) \overset{\chi_R(R) \cup }{\to}
H^{n}_{\pm}(X) \overset{\pi^*}{\to}
H^{n}_{\pm}(S(R)) \overset{\pi_*}{\to}
H^{n-1}_{\Z/2}(X) \to
\cdots, \\
\cdots \to
H^{n-2}_{\pm}(X) \overset{\chi_R(R) \cup }{\to}
H^{n}_{\Z/2}(X) \overset{\pi^*}{\to}
H^{n}_{\Z/2}(S(R)) \overset{\pi_*}{\to}
H^{n-1}_{\pm}(X) \to
\cdots.
\end{gather*}
\end{cor}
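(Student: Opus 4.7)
The plan is to derive each Gysin sequence by feeding the Thom isomorphism just proved into the long exact sequence of the pair $(D(R), S(R))$, taken once in $H^*_\pm$ and once in $H^*_{\Z/2}$.

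For the first sequence, I start with the long exact sequence of the pair in $H^*_\pm$:
$$\cdots \to H^n_\pm(D(R), S(R)) \overset{i^*}{\to} H^n_\pm(D(R)) \overset{j^*}{\to} H^n_\pm(S(R)) \overset{\partial}{\to} H^{n+1}_\pm(D(R), S(R)) \to \cdots$$
The zero section $s_0 : X \to D(R)$ is a $\Z/2$-equivariant homotopy equivalence with homotopy inverse $\pi$, so $\pi^*$ identifies $H^n_\pm(X)$ with $H^n_\pm(D(R))$. Combined with the Thom isomorphism $H^n_\pm(D(R), S(R)) \cong H^{n-2}_{\Z/2}(X)$, this converts the pair sequence into the six-term pattern claimed.

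It remains to identify the three maps. The connecting map $\partial$ postcomposed with the Thom isomorphism is, by the very definition of $\pi_*$ given just before the statement, equal to $\pi_* : H^n_\pm(S(R)) \to H^{n-1}_{\Z/2}(X)$. The restriction $j^*$, after identifying $D(R) \simeq X$ via $\pi$, is exactly $\pi^* : H^n_\pm(X) \to H^n_\pm(S(R))$. For the first map, given $x \in H^{n-2}_{\Z/2}(X)$, its image under the Thom isomorphism is $\pi^* x \cup \Phi_\pm(R)$; restricting to $D(R)$ and pulling back along $s_0$ I use $s_0^* \pi^* = \mathrm{id}$, $s_0^* \Phi_\pm(R) = \chi_R(R)$ (the definition of the Euler class), and naturality of the cup product to obtain $x \cup \chi_R(R)$, as required.

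The second exact sequence is obtained by exactly the same argument, starting from the long exact sequence of the pair in $H^*_{\Z/2}$ and using the other half of the Thom isomorphism, $H^n_{\Z/2}(D(R), S(R)) \cong H^{n-2}_\pm(X)$; the stated definition of $\pi_* : H^n_{\Z/2}(S(R)) \to H^{n-1}_\pm(X)$ makes the connecting map land correctly. All identifications respect the $H^*_{\Z/2}(X)$-module structure thanks to the module compatibility built into the Thom isomorphism theorem. There is no serious obstacle: once Thom and the pair sequence are in hand, both Gysin sequences unroll mechanically, and the only item requiring any care is the unwinding of the first map as Euler-class multiplication, which is pinned down by $s_0^* \Phi_\pm(R) = \chi_R(R)$.
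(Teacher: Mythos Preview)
Your proof is correct and follows exactly the approach the paper indicates: the paper simply says ``Now, considering the exact sequence for the pair $(D(R), S(R))$, we get:'' before stating the corollary, and you have spelled out precisely this derivation, including the identification of the three maps via the definitions of $\pi_*$ and $\chi_R(R)$ given just before the statement.
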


\medskip

In the statements above, $H^*_{\Z/2}$ and $H^*_\pm$ are dealt with separately, so that two Thom isomorphisms and two Gysin sequences are. If we combine them, then we get a single Thom isomorphism theorem and a single Gysin exact sequence in $\mathbb{H}^*$ respecting the $\mathbb{H}^*(X)$-module structures.

\medskip

Let $\tilde{S}^1$ be the unit circle $S^1 \subset \C$ whose $\Z/2$-action is $u \mapsto \bar{u} = u^{-1}$. For any space $X$ with $\Z/2$-action, the direct product $X \times \tilde{S}^1$ is the total space of the trivial Real circle bundle $\pi : X \times \tilde{S}^1 \to X$. Because of the equivariant section $i : X \to X \times \tilde{S}^1$, ($i(x) = (x, \pt)$), the Gysin sequences reduce to the split exact sequences:
\begin{gather*}
0 \to 
H^n_{\Z/2}(X) \overset{\pi^*}{\to} 
H^n_{\Z/2}(X \times \tilde{S}^1) \overset{\pi_*}{\to}
H^{n-1}_{\pm}(X) \to
0, \\
0 \to 
H^n_{\pm}(X) \overset{\pi^*}{\to}
H^n_{\pm}(X \times \tilde{S}^1) \overset{\pi_*}{\to}
H^{n-1}_{\Z/2}(X) \to
0,
\end{gather*}
so that there are natural $H^*_{\Z/2}(X)$-module isomorphisms:
\begin{align*}
H^n_{\Z/2}(X \times \tilde{S}^1) &\cong 
H^n_{\Z/2}(X) \oplus H^{n-1}_\pm(X), \\
H^n_{\pm}(X \times \tilde{S}^1) &\cong 
H^n_{\pm}(X) \oplus H^{n-1}_{\Z/2}(X),
\end{align*}
which are also compatible with the $\mathbb{H}^*(X)$-module structures.

The next lemma, which we use later, is an application of the above splitting.

\begin{lem} \label{lem:cohomology_circle_with_flip}
The following holds true:
\begin{itemize}
\item[(a)]
We have a ring isomorphism
$$
\mathbb{H}^*(\tilde{S}^1)
=
H^*_{\Z/2}(\tilde{S}^1) \oplus H^*_{\pm}(\tilde{S}^1)
\cong 
\Z[t^{\frac{1}{2}}, \chi]/(2t^{\frac{1}{2}}, \chi^2 - t^{\frac{1}{2}}\chi),
$$
where $t^{\frac{1}{2}} \in H^1_\pm(\mathrm{pt}) \cong \Z/2$, and $\chi \in \tilde{H}^1_{\pm}(S^1_{\mathrm{flip}}) \cong \Z$ is the unique element such that $\pi_* \chi = 1$. In low degree, we have:
$$
\begin{array}{|c|c|c|c|c|c|}
\hline
 & n = 0 & n = 1 & n = 2 & n = 3 & n = 4 \\
\hline
H^n_{\Z/2}(\tilde{S}^1) &
\Z & 0 & \Z_2 t^{\frac{1}{2}} \chi \oplus \Z_2 t & 0 & 
\Z_2 t^{\frac{3}{2}} \chi \oplus \Z_2 t^2 \\
\hline
H^n(\tilde{S}^1) &
\Z & \Z f'(\chi) & 0 & 0 & 0 \\
\hline
H_\pm^n(\tilde{S}^1) &
0 & \Z \chi \oplus \Z_2 t^{\frac{1}{2}} & 0 & 
\Z_2 t\chi \oplus \Z_2 t^{\frac{3}{2}} & 0 \\
\hline
\end{array}
$$

\item[(b)]
The equivariant map $\nu : \tilde{S}^1 \to \tilde{S}^1$, ($\nu (u) = - u$) acts on $\mathbb{H}^*(\tilde{S}^1)$ by 
\begin{align*}
\nu^* \chi &= \chi + t^{1/2}, &
\nu^* t^{1/2} &= t^{1/2}.
\end{align*}

\end{itemize}

\end{lem}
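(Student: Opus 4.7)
The plan is to first extract the additive structure of $\mathbb{H}^*(\tilde S^1)$ from the splittings that appear immediately before the lemma, and then reduce the whole statement---both the ring relation in (a) and the action of $\nu^*$ in (b)---to computing the value of $i_{-1}^*\chi \in H^1_\pm(\pt) = \Z/2$, where $i_{-1}:\pt \to \tilde S^1$ sends $\pt \mapsto -1$. Applying those splittings with $X = \pt$ to the trivial Real circle bundle $\tilde S^1 = \pt \times \tilde S^1$ identifies $\mathbb{H}^n(\tilde S^1)$ with $\mathbb{H}^n(\pt) \oplus \chi\cdot\mathbb{H}^{n-1}(\pt)$ via $\pi^*$ and cup-with-$\chi$, split by the basepoint inclusion $i:\pt \to \tilde S^1$, $i(\pt) = 1$ (so that $i^*\chi = 0$ and $\pi_*\chi = 1$). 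This exhibits $\mathbb{H}^*(\tilde S^1)$ as a free $\mathbb{H}^*(\pt) = \Z[t^{1/2}]/(2t^{1/2})$-module of rank $2$ on $\{1, \chi\}$, from which the displayed low-degree table reads off immediately.

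Given this additive description, the only nontrivial ring datum left is $\chi^2 \in H^2_{\Z/2}(\tilde S^1) = \Z/2\cdot t \oplus \Z/2\cdot t^{1/2}\chi$. Writing $\chi^2 = at + b\,t^{1/2}\chi$, the restriction $i^*$ gives $0 = (i^*\chi)^2 = at$, forcing $a = 0$, while $i_{-1}^*$ gives $(i_{-1}^*\chi)^2 = b\,t^{1/2}\cdot i_{-1}^*\chi$; hence once $i_{-1}^*\chi = t^{1/2}$ is known, this reads $t = bt$, so $b = 1$ and $\chi^2 = t^{1/2}\chi$. For (b), $\nu^* t^{1/2} = t^{1/2}$ is automatic since $t^{1/2}$ is pulled back from $\pt$; writing $\nu^*\chi = m\chi + n t^{1/2}$, the naturality identity $i^*\nu^* = (\nu\circ i)^* = i_{-1}^*$ gives $n t^{1/2} = i_{-1}^*\chi = t^{1/2}$, and $\pi_*\nu^*\chi = \pi_*\chi = 1$ gives $m = 1$, so $\nu^*\chi = \chi + t^{1/2}$.

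The remaining input $i_{-1}^*\chi = t^{1/2}$ I would prove via Mayer--Vietoris for the invariant open cover $A = \tilde S^1 \setminus \{-1\}$, $B = \tilde S^1 \setminus \{1\}$. Each of $A,B$ equivariantly deformation retracts onto its fixed point, so $H^*_\pm(A) \cong H^*_\pm(B) \cong H^*_\pm(\pt)$; the intersection $A \cap B$ has free $\Z/2$-action with contractible orbit space, so by the free-action identification of $H^*_\pm$ with the ordinary cohomology of the quotient twisted by $\underline{\R}_1/\Z_2$ one gets $H^n_\pm(A \cap B) \cong H^n(\pt)$. The relevant portion of the Mayer--Vietoris sequence therefore reduces to
\[
0 \to H^0_\pm(A \cap B) \to H^1_\pm(\tilde S^1) \xrightarrow{\ \rho\ } H^1_\pm(A) \oplus H^1_\pm(B) \to 0,
\]
i.e.\ $0 \to \Z \to \Z \oplus \Z/2 \to (\Z/2)^2 \to 0$, with $\rho = (i^*, i_{-1}^*)$ after the identifications. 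In the basis $\{\chi, t^{1/2}\}$ one computes $\rho(\chi) = (0, i_{-1}^*\chi)$ and $\rho(t^{1/2}) = (t^{1/2}, t^{1/2})$; surjectivity of $\rho$ onto $(\Z/2)^2$ forces these two images to span, hence $i_{-1}^*\chi \neq 0$, i.e.\ $i_{-1}^*\chi = t^{1/2}$.

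The main difficulty is precisely this last determination: the long exact sequence of the pair $(\tilde S^1, \{\pm 1\})$ together with naturality is consistent with either possible value of $i_{-1}^*\chi$ purely on orders of groups, so one cannot avoid a global argument; Mayer--Vietoris on a cover separating the two fixed points is what supplies the surjectivity constraint that pins the answer down.
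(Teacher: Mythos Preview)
Your proof is correct and follows essentially the same route as the paper: both obtain the additive structure from the split Gysin sequence and then use Mayer--Vietoris for an invariant cover separating the two fixed points, deducing from surjectivity onto $H^1_\pm(U)\oplus H^1_\pm(V)\cong(\Z/2)^2$ that $\chi$ restricts to $(0,t^{1/2})$, which pins down both $\chi^2=t^{1/2}\chi$ and the action of $\nu^*$. Your organization---isolating $i_{-1}^*\chi$ as the single unknown and reading off (a) and (b) from it---is a clean way to package exactly the computation the paper performs.
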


\begin{proof}
We choose a fixed point $p \in \tilde{S}^1$, which is $+1$ or $-1$. Setting $i(\pt) = p$, we define a section $i : \pt \to \tilde{S}^1$ of the projection $\pi : \tilde{S}^1 \to \pt$. We use this section to split the Gysin sequence. If we choose $\chi \in \tilde{H}^1_\pm(\tilde{S}^1)$ such that $\pi_* \chi = 1$, then we have the following basis of $\mathbb{H}^*(\tilde{S}^1)$ respecting the $\mathbb{H}^*(\pt)$-module structure:
\begin{align*}
H^0_{\Z/2}(\tilde{S}^1) &\cong \Z, &
H^{2k}_{\Z/2}(\tilde{S}^1) &\cong 
\Z_2 t^{k} \oplus \Z_2 t^{\frac{2k - 1}{2}}\chi, \\
H^1_\pm(\tilde{S}^1) &\cong \Z_2 t^{\frac{1}{2}} \oplus \Z \chi, &
H^{2k+1}_\pm(\tilde{S}^1) &\cong 
\Z_2 t^{\frac{2k + 1}{2}} \oplus \Z_2 t^k \chi,
\end{align*}
where $k \ge 1$. To determine $\chi^2$, we cover $\tilde{S}^1$ by two intervals $U$ and $V$ such that $U \cong V \cong \pt$ and $U \cap V \cong \partial \tilde{I}$ equivariantly. In the Mayer-Vietoris sequence for $\{ U, V \}$, we find the following surjections:
\begin{align*}
 & H^1_{\pm}(\tilde{S}^1) \to H^1_\pm(U) \oplus H^1_\pm(V) 
= \Z_2 t^{1/2} \oplus \Z_2 t^{1/2}, \\
 & H^2_{\Z/2}(\tilde{S}^1) \to H^2_{\Z/2}(U) \oplus H^2_{\Z/2}(V)
= \Z_2 t \oplus \Z_2 t.
\end{align*}
The image of $t^{1/2} \in H^1_\pm(\tilde{S}^1)$ is $(t^{1/2}, t^{1/2}) \in H^1_\pm(U \sqcup V)$, whereas the image of $\chi \in H^1_\pm(\tilde{S}^1)$ is $(0, t^{1/2}) \in H^1_\pm(U \sqcup V)$, provided that $U$ contains the fixed point $p$. From these expressions, we conclude $\chi^2 = t^{1/2}\chi$, completing the proof of (a). The expressions above also prove (b).
\end{proof}


The Thom isomorphism theorem for Real line bundles is readily generalized to Real vector bundles $\pi : V \to X$ of rank $r > 1$. In the case where $r$ is odd, the Thom isomorphism is similar to the case of $r = 1$, since $\mathrm{det}_\R V = \underline{\R}_1$. However, in the case where $r$ is even, we have $\mathrm{det}_\R V = \underline{\R}_0$. Therefore the Thom isomorphisms have the following form:
\begin{align*}
H^n_{\Z/2}(X) &\cong H^{n + 2r}_{\Z/2}(D(V), S(V)), &
H^n_{\pm}(X) &\cong H^{n + 2r}_{\pm}(D(R), S(R)).
\end{align*}
Accordingly, the Gysin exact sequences are
\begin{gather*}
\cdots \to
H^{n-2r}_{\Z/2}(X) \overset{\chi_R(V) \cup }{\to}
H^{n}_{\Z/2}(X) \overset{\pi^*}{\to}
H^{n}_{\Z/2}(S(V)) \overset{\pi_*}{\to}
H^{n-2r+1}_{\Z/2}(X) \to
\cdots, \\
\cdots \to
H^{n-2r}_{\pm}(X) \overset{\chi_R(V) \cup }{\to}
H^{n}_{\pm}(X) \overset{\pi^*}{\to}
H^{n}_{\pm}(S(V)) \overset{\pi_*}{\to}
H^{n-2r+1}_{\pm}(X) \to
\cdots,
\end{gather*}
where the Euler class $\chi_R(V)$ lives in $H^{2r}_{\Z/2}(X)$.

\subsection{Chern class of Real vector bundle}
\label{subsec:chern_class_of_Real_vector_bundle}

For a Real vector bundle $\pi : V \to X$ over a $\Z/2$-space $X$, the notion of Chern classes is formulated in \cite{K,P-S}. In this paper, we will write the $i$th Chern classes of $V$ as
$$
c_i^R(V) \in H^{2i}_{\Z/2}(X; \Z(i)).
$$
We will regard $c_{2j+1}^R(V) \in H^{2j + 1}_\pm(X)$ by using Proposition \ref{prop:interpretation} freely. These Chern classes and the usual Chern classes are compatible under the operation of forgetting $\Z/2$-actions, that is, $f$ and $f'$ carry $c_i^R(V)$ to $c_i(V) \in H^{2i}(X)$.

Most of the properties of these Chern classes $c_i^R(V)$ are parallel to those of the usual Chern classes. For example, the total Chern class $c^R(V) = 1 + c_1^R(V) + c_2^R(V) + \cdots$ obeys $c^R(V \oplus V) = c^R(V)c^R(V')$ for another Real vector bundle $V'$. Also, if $V$ is of rank $r$, then $\chi_R(V) = c_{2r}^R(V)$. Further, the following classification of Real line bundles is possible:

\begin{prop}[\cite{K}] \label{prop:classify_Real_circle_bundle}
Let $X$ be a space with $\Z/2$-action. The first Chern class $c_1^R$ induces an isomorphism from the group of isomorphism classes of Real line bundles on $X$ to $H^2_{\Z/2}(X; \Z(1)) \cong H^2_\pm(X)$.
\end{prop}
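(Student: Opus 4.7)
The plan is to derive this classification from the equivariant exponential sheaf sequence, mimicking the classical classification of complex line bundles by $H^2(-;\Z)$ but with the appropriate twist by $\Z(1)$.

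First, I would identify the group of isomorphism classes of Real line bundles on $X$ with the first equivariant sheaf cohomology $H^1_{\Z/2}(X; \mathcal{U}(1)(1))$, where $\mathcal{U}(1)(1)$ denotes the sheaf of continuous $U(1)$-valued functions equipped with the $\Z/2$-action $u \mapsto u^{-1}$. Explicitly, a $\Z/2$-invariant good open cover $\{U_\alpha\}$ of $X$ together with a $\Z/2$-invariant Hermitian metric produces transition functions $g_{\alpha\beta}$ of a Real line bundle satisfying the usual cocycle identity along with the Real condition $g_{\tau(\alpha)\tau(\beta)}(\tau x) = \overline{g_{\alpha\beta}(x)}$; this is precisely the equivariant \v{C}ech cocycle condition for $\mathcal{U}(1)(1)$, and two Real line bundles are isomorphic if and only if the corresponding cocycles are cohomologous. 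The tensor product of Real line bundles corresponds to the multiplication of cocycles, so this identification is a group isomorphism.

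Next, I would invoke the equivariant short exact sequence of sheaves
$$
0 \longrightarrow \Z(1) \longrightarrow \mathcal{C}(\R(1))
\xrightarrow{\,\exp(2\pi i\,\cdot\,)\,} \mathcal{U}(1)(1) \longrightarrow 0,
$$
where $\mathcal{C}(\R(1))$ is the sheaf of continuous $\R$-valued functions carrying the twisted action $f \mapsto -\tau^* f$. Since $X$ is paracompact, $\mathcal{C}(\R(1))$ is soft and hence has vanishing equivariant cohomology in positive degrees, so the long exact sequence supplies a connecting isomorphism
$$
H^1_{\Z/2}(X; \mathcal{U}(1)(1)) \xrightarrow{\,\sim\,} H^2_{\Z/2}(X; \Z(1)),
$$
which combined with Proposition \ref{prop:interpretation} identifies the target with $H^2_\pm(X)$.

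Finally, I would verify that the composite map agrees with $c_1^R$ as defined in \cite{K,P-S}: by construction $c_1^R$ of a Real line bundle is the obstruction class extracted from exactly this boundary homomorphism, so the last step reduces to matching definitions. The main technical obstacle will be the correct handling of the equivariant sheaf cohomology and the softness of $\mathcal{C}(\R(1))$; one option is to develop $\Z/2$-equivariant soft resolutions directly on $X$, while the more expedient route is to transport the short exact sequence to ordinary sheaf cohomology with local coefficients on the Borel space $E\Z_2 \times_{\Z_2} X$, where softness reduces to the paracompactness of that space and the connecting isomorphism becomes the familiar one for ordinary line bundles twisted by a local system.
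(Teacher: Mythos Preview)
Your proposal is correct and follows essentially the same strategy as the paper: classify Real line bundles by $H^1$ of the twisted $U(1)$-valued sheaf, then use the twisted exponential sequence together with the acyclicity of the twisted $\R$-valued sheaf to pass to $H^2_{\Z/2}(X;\Z(1))$. The paper implements the equivariant sheaf cohomology concretely via the simplicial space $\Z_2^\bullet \times X$ (the bar construction) rather than either of your two suggested routes, and proves the vanishing $H^{n\ge 1}(\Z_2^\bullet\times X;\underline{\tilde{\R}}_\bullet)=0$ by combining the spectral sequence (whose $E_2^{p,q}$ with $q>0$ vanishes by ordinary softness) with an explicit averaging to kill the group-cohomology column $E_2^{p,0}$; this last step is exactly the subtlety you flag, since ``soft'' alone does not give equivariant acyclicity without using that $\R$ is uniquely $2$-divisible. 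The paper also checks at the end that the resulting class agrees with $c_1^R$ by comparing on $\C P^\infty$, where $f'$ is injective, rather than purely by unwinding definitions.
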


We give a proof of this classification in Appendix \ref{sec:appendix}. 

As is mentioned in Introduction, the notion of Real line bundles and that of Real circle bundles are essentially the same. Hence the first Chern class $c_1^R$ will also be adapted to Real circle bundles.

\smallskip

As in the case of line bundles without real structure, there exists the classifying space for Real line bundles (see for example \cite{HHP} for a proof of this fact): Concretely, the classifying space for Real line bundles is $\C P^\infty = \varinjlim_n \C P^n$ with the $\Z/2$-action induced from the complex conjugation on $\C P^n = (\C^{n+1} \backslash \{ 0 \})/\C^*$, and the universal line bundle on $\C P^\infty$ gives rise to the universal Real line bundle. Any Real line bundle $R$ on a $\Z/2$-space $X$ is isomorphic to the pull-back of the universal Real line bundle under a $\Z/2$-equivariant map $\phi : X \to \C P^\infty$, and hence $c_1^R(R)$ is the pull-back of the first Chern class of the universal Real line bundle.

\bigskip

Given a Real line bundle $\pi : R \to X$ on a space $X$ with $\Z/2$-action, its underlying $\Z/2$-equivariant real vector bundle $R_{\R}$ of rank $2$ has the third integral Stiefel-Whitney class $W_3^{\Z/2}(R) = W_3^{\Z/2}(R_\R) \in H^3_{\Z/2}(X; \Z)$. This characteristic class is the obstruction to the existence of $\Z/2$-equivariant $\Pin^c(2)$-structure on $R_{\R}$. (See for example \cite{L-M} about $\Pin^c$-groups and $\Pin^c$-structures.)

\begin{prop} \label{prop:Pin_c_structure}
For any Real line bundle $\pi : R \to X$, we have:
$$
W_3^{\Z/2}(R) = \delta (c_1^R(R)).
$$
\end{prop}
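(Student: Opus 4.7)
\emph{Plan.} I would reduce the claim to the classical relation $W_3 = \beta(w_2)$, where $\beta$ is the integral Bockstein. Two ingredients are required: (i) an identification $\delta = \beta \circ r_2$, where $r_2 : H^n_{\Z/2}(X;\Z(1)) \to H^n_{\Z/2}(X;\Z/2)$ is mod-$2$ reduction; and (ii) the characteristic-class identity $r_2(c_1^R(R)) = w_2^{\Z/2}(R_\R)$. Once both are in place, using Proposition \ref{prop:interpretation} to view $c_1^R(R)$ as an element of $H^2_{\Z/2}(X;\Z(1))$ yields
$$
\delta(c_1^R(R)) = \beta(r_2(c_1^R(R))) = \beta(w_2^{\Z/2}(R_\R)) = W_3^{\Z/2}(R).
$$

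For (i), I would construct the morphism of short exact sequences of $\Z[\Z_2]$-modules
$$
\begin{CD}
0 @>>> \Z @>>> \Z[\Z_2] @>>> \Z(1) @>>> 0 \\
@. @| @VV{\epsilon}V @VV{r}V @. \\
0 @>>> \Z @>{\times 2}>> \Z @>>> \Z/2 @>>> 0
\end{CD}
$$
in which the top row is $n \mapsto (n,n)$ followed by $(a,b) \mapsto a - b$, the augmentation is $\epsilon(a,b) = a+b$, and $r : \Z(1) \to \Z/2$ is mod-$2$ reduction (which is $\Z_2$-equivariant because $-1 \equiv 1 \pmod 2$). Both squares commute, the left because $n \mapsto (n,n) \mapsto 2n$, and the right because $a - b \equiv a + b \pmod 2$. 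The top row represents the nontrivial class in $H^1_{\mathrm{group}}(\Z_2;\Z(1)) \cong \Z/2$, which under the identification of Proposition \ref{prop:interpretation} is $t^{1/2} \in H^1_\pm(\pt)$; hence its connecting homomorphism is cup product with $t^{1/2}$, which is exactly $\delta$ by the realization given just after Proposition \ref{prop:interpretation}. Naturality of long exact sequences of coefficients then yields $\delta = \beta \circ r_2$.

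For (ii), by Lemma \ref{lem:determinant} we have $\det_\R R \cong \underline{\R}_1$, so the $\Z_2$-equivariant Euler class of the real rank-$2$ bundle $R_\R$ lives naturally in $H^{\underline{\R}_1 + 2}_{\Z/2}(X) \cong H^2_\pm(X)$. A direct check against the classifying-space description recalled in Subsection \ref{subsec:chern_class_of_Real_vector_bundle} identifies this Euler class with $c_1^R(R)$. The classical fact that the Euler class of an oriented real rank-$2$ bundle reduces mod $2$ to $w_2$ transfers to the twisted equivariant setting via the twisted Thom isomorphism, giving $r_2(c_1^R(R)) = w_2^{\Z/2}(R_\R)$. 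The main delicate point of the whole argument is precisely this step: one must keep the local-coefficient twist $\Z(1)$ coming from $\det_\R R$ consistent across the definition of $c_1^R$, the twisted Thom class, and the comparison $\chi \equiv w_2 \pmod 2$, but once the twist is aligned the identity is a direct transcription of the non-equivariant calculation.
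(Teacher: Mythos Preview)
Your proof is correct but follows a genuinely different route from the paper. The paper argues extrinsically: it observes that both $W_3^{\Z/2}$ and $\delta$ define natural transformations $H^2_\pm(-)\to H^3_{\Z/2}(-)$, reduces by naturality to the classifying space $\C P^\infty$ (where $H^2_\pm\cong\Z$ and $H^3_{\Z/2}\cong\Z/2$), and then decides the single remaining bit by producing an explicit Real line bundle $R\to S^1$ (trivial $\Z/2$-action) for which an honest $\Pin^c(2)$ cocycle computation shows $W_3^{\Z/2}(R)\neq 0$. Your argument is intrinsic: it invokes the classical identity $W_3=\beta w_2$ (valid in Borel cohomology since equivariant characteristic classes are ordinary classes on the Borel construction), establishes $\delta=\beta\circ r_2$ via the morphism of coefficient sequences $0\to\Z\to\Z[\Z_2]\to\Z(1)\to 0$ mapping to $0\to\Z\to\Z\to\Z/2\to 0$, and identifies $r_2(c_1^R(R))=w_2^{\Z/2}(R_\R)$ through the twisted Thom class. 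What your approach buys is that it bypasses the somewhat laborious hands-on $\Pin^c(2)$ calculation entirely; what the paper's approach buys is that it never needs to justify the coefficient-sequence description of $\delta$ or the twisted Euler-class identification $\chi_R(R)=c_1^R(R)$, the latter of which you treat rather lightly. That identification is fine (both classes map to $c_1$ under $f'$, and $f':H^2_\pm(\C P^\infty)\to H^2(\C P^\infty)$ is an isomorphism by Lemma~\ref{lem:cohomology_CP_infty}), but it deserves an explicit sentence rather than ``a direct check.''
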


As a result, we see that $W_3^{\Z/2}(R)$ does not generally agrees with $W_3^{\Z/2}(\underline{\R}_1) = 0$. ($W_3^{\Z/2}(\underline{\R}_1) = 0$ follows from $H^3_{\Z/2}(\pt; \Z) = 0$.)

\smallskip

We show a few lemmas to prove Proposition \ref{prop:Pin_c_structure}.

\begin{lem} \label{lem:cohomology_circle_with_trivial_action}
Let $S^1$ be the circle with the trivial $\Z/2$-action. Then we have the following ring isomorphism:
$$
\mathbb{H}^*(S^1) 
= H^*_{\Z/2}(S^1) \oplus H^*_\pm(S^1)
\cong \Z[t^{1/2}, e]/(2t^{1/2}, e^2),
$$
where $t^{1/2} \in H^1_\pm(\pt; \Z) \cong \Z/2$ and $e \in H^1_{\Z/2}(S^1) \cong \Z$. In low degree:
$$
\begin{array}{|c|c|c|c|c|c|}
\hline
 & n = 0 & n = 1 & n = 2 & n= 3 & n = 4 \\
\hline
H^n_{\Z/2}(S^1) &
\Z & \Z e & \Z_2 t & \Z_2 te & \Z_2 t^2 \\
\hline
H^n(S^1) &
\Z & \Z f(e) & 0 & 0 & 0 \\
\hline
H^n_\pm(S^1) &
0 & \Z_2 t^{\frac{1}{2}} & \Z_2 t^{\frac{1}{2}}e & \Z_2 t^{\frac{3}{2}} & 
\Z_2 t^{\frac{3}{2}}e \\
\hline
\end{array}
$$
\end{lem}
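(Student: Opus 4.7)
The plan is to exploit the triviality of the $\Z/2$-action on $S^1$ to reduce every computation to a Künneth splitting over $\mathbb{H}^*(\mathrm{pt})$.

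First, since $\Z/2$ acts trivially on $S^1$, the Borel construction factors as a direct product: $E\Z_2 \times_{\Z_2} S^1 = B\Z_2 \times S^1$. Because $H^*(S^1)$ is free abelian, the Künneth theorem gives a ring isomorphism $H^*_{\Z/2}(S^1) \cong H^*(B\Z_2) \otimes H^*(S^1) \cong (\Z[t]/(2t)) \otimes (\Z[e]/(e^2))$, where $e \in H^1_{\Z/2}(S^1)$ is the pullback of a generator of $H^1(S^1)$ along the projection $B\Z_2 \times S^1 \to S^1$. In particular $f(e)$ generates $H^1(S^1)$, as required.

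Second, for $H^*_\pm(S^1)$ I would apply the same trick to the defining identification $H^n_\pm(S^1) = H^{n+1}_{\Z/2}(S^1 \times \tilde I, S^1 \times \partial \tilde I)$. Triviality of the action on the $S^1$ factor splits the Borel construction as $E\Z_2 \times_{\Z_2}(S^1 \times \tilde I) \cong S^1 \times (E\Z_2 \times_{\Z_2} \tilde I)$, compatibly with boundaries, and the relative Künneth theorem yields $H^*_\pm(S^1) \cong H^*(S^1) \otimes H^*_\pm(\mathrm{pt})$ as $H^*_{\Z/2}(\mathrm{pt})$-modules. Combined with the previous step and the already-computed ring $\mathbb{H}^*(\mathrm{pt}) = \Z[t^{1/2}]/(2t^{1/2})$, this yields $\mathbb{H}^*(S^1) \cong H^*(S^1) \otimes \mathbb{H}^*(\mathrm{pt}) \cong \Z[t^{1/2},e]/(2t^{1/2}, e^2)$, from which the low-degree table can be read off.

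The main obstacle I anticipate is verifying that the Künneth splitting is compatible with the full ring structure on $\mathbb{H}^*$, in particular with the multiplication $H^m_\pm \times H^n_\pm \to H^{m+n}_{\Z/2}$ from Subsection \ref{subsec:definition_variant}, which is defined using the Thom isomorphism for $\underline{\C}_1$. This should follow by naturality, since $\underline{\C}_1$ on $S^1$ is the pullback of $\underline{\C}_1$ on $\mathrm{pt}$, so its Thom class is as well. To pin down the only product not forced by graded commutativity and the $\mathbb{H}^*(\mathrm{pt})$-module structure, namely $e^2 \in H^2_{\Z/2}(S^1)$, it suffices to observe that $e$ is the pullback of $e \in H^1(S^1)$ along $p \colon B\Z_2 \times S^1 \to S^1$, so $e^2 = p^*(e^2) = 0$ because $H^2(S^1) = 0$. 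This gives the relation $e^2 = 0$ asserted in the ring presentation and completes the proof.
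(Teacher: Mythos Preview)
Your proposal is correct and is essentially the same argument as the paper's: both use the K\"unneth formula, exploiting that $H^*(S^1)$ is torsion-free, to get $H^*_{\Z/2}(S^1) \cong H^*_{\Z/2}(\pt) \otimes H^*(S^1)$ and $H^*_\pm(S^1) \cong H^*_\pm(\pt) \otimes H^*(S^1)$. You spell out more of the details (the product splitting of the Borel construction, the relative K\"unneth for the $H_\pm$ part, and the check that $e^2=0$), but the underlying idea is identical.
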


\begin{proof}
Since $H^*(S^1; \Z)$ is of torsion free, an application of the K\"{u}nneth formula to the definition of the equivariant cohomology gives $H^*_{\Z/2}(S^1) \cong H^*_{\Z/2}(\pt) \otimes_{\Z} H^*(S^1)$. In the same way, we have $H^*_\pm(S^1) \cong H^*_{\pm}(\pt) \otimes_{\Z} H^*(S^1)$.
\end{proof}

Let $R \to S^1$ be the Real line bundle such that its underlying line bundle is $R = S^1_{\mathrm{triv}} \times \C$ and $\Z/2$ acts by $\tau(u, z) \mapsto (u, u \bar{z})$, where we regard $u \in S^1 \subset \C$.

\begin{lem} 
$W_3^{\Z/2}(R) \neq 0$ in $H^3_{\Z/2}(S^1_{\mathrm{triv}}; \Z) = \Z/2$.
\end{lem}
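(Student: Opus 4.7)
The plan is to compute $W_3^{\Z/2}(R)$ directly from its definition as the integer Bockstein $\beta(w_2^{\Z/2}(R_\R))$ of the second equivariant Stiefel--Whitney class. To get at $w_2^{\Z/2}(R_\R)$, I will split $R_\R$ equivariantly into a sum of two real line bundles and then apply the Whitney sum formula.

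The key geometric input is that at each $u \in S^1$ the action $z \mapsto u\bar z$ on the fiber $\{u\}\times\C$ is an antilinear involution of $\C$, so its $\pm 1$-eigenspaces are $\R \cdot u^{1/2}$ and $\R \cdot i u^{1/2}$ respectively. These eigenspaces assemble into equivariant real line subbundles $L_+, L_- \subset R_\R$ giving an equivariant decomposition $R_\R \cong L_+ \oplus L_-$. Since the local section $u \mapsto u^{1/2}$ returns to its negative after one loop around $S^1$, both $L_+$ and $L_-$ are, as underlying real line bundles, isomorphic to the M\"obius line bundle $M \to S^1$; equivariantly $\Z/2$ acts trivially on fibers of $L_+$ (the $+1$-eigenspace), so $L_+$ is the pullback of $M$ with trivial $\Z/2$-action, while $L_- \cong L_+ \otimes \underline{\R}_1$ since $\Z/2$ acts by $-1$ on fibers of $L_-$.

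Let $e_2$ and $w$ denote the mod $2$ reductions of $e \in H^1_{\Z/2}(S^1)$ and $t^{1/2} \in H^1_\pm(\pt)$; by Lemma \ref{lem:cohomology_circle_with_trivial_action} these form a $\Z/2$-basis of $H^1_{\Z/2}(S^1; \Z/2)$. Then $w_1^{\Z/2}(L_+) = e_2$ and $w_1^{\Z/2}(L_-) = e_2 + w$, so the Whitney sum formula together with $e_2^2 = 0$ yields
$$
w_2^{\Z/2}(R_\R) = w_1^{\Z/2}(L_+) \cdot w_1^{\Z/2}(L_-) = e_2(e_2 + w) = w e_2.
$$
Using $\mathrm{Sq}^1 = \rho \circ \beta$, the Cartan formula, $\mathrm{Sq}^1(w) = w^2$ and $\mathrm{Sq}^1(e_2) = 0$, I then obtain $\mathrm{Sq}^1(w e_2) = w^2 e_2$, which is nonzero in $H^3_{\Z/2}(S^1; \Z/2)$. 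Hence $\beta(w e_2)$ has nonzero mod $2$ reduction and must be the generator $te$ of $H^3_{\Z/2}(S^1; \Z) = \Z/2$, so $W_3^{\Z/2}(R) = te \neq 0$.

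The main obstacle I expect is the equivariant decomposition $R_\R = L_+ \oplus L_-$ together with the correct identification of each summand, both as a M\"obius bundle and with the right $\Z/2$-action on fibers; once this is in place, the Stiefel--Whitney bookkeeping and the Bockstein computation are routine.
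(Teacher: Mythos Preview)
Your proof is correct and takes a genuinely different route from the paper's. The paper computes $W_3^{\Z/2}(R)$ by a direct obstruction-theoretic argument: it writes down the frame bundle $F = S^1 \times O(2)$ explicitly, chooses a concrete candidate lift $\phi : \Z_2 \times S^1 \to \Pin^c(2)$ of the $\Z/2$-action to the $\Pin^c(2)$-structure, extracts the resulting group $2$-cocycle $\zeta$ with values in $\Map(S^1, U(1))$, and checks that its image in $H^2_{\mathrm{group}}(\Z_2; H^1(S^1;\Z)) \cong H^3_{\Z/2}(S^1;\Z)$ is nontrivial. Your argument instead exploits the equivariant splitting $R_\R \cong L_+ \oplus L_-$ into eigenbundles of the fiberwise involution, reads off $w_2^{\Z/2}(R_\R) = w e_2$ from the Whitney formula, and then detects nontriviality of the integral Bockstein via $\mathrm{Sq}^1$.

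What each buys: the paper's calculation is self-contained---it needs no Steenrod operations and makes the obstruction cocycle concrete---at the cost of a fair amount of explicit $\Pin^c(2)$ algebra. Your approach is considerably shorter and more structural, and it immediately yields the identification $W_3^{\Z/2}(R) = te$ via standard mod~$2$ bookkeeping; it also generalises more readily, since the same eigenbundle splitting works for any Real line bundle over a space with trivial $\Z/2$-action. The one point worth making completely explicit in your write-up is the identification of $w$ with the generator of $H^1(B\Z_2;\Z/2)$ (equivalently, that $w_1^{\Z/2}(\underline{\R}_1) = w$), since the phrase ``mod~$2$ reduction of $t^{1/2}$'' passes through the coefficient map $\Z(1) \to \Z/2$ and the reader should see why that map hits the generator.
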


\begin{proof}
The spectral sequence
$$
E_2^{p, q} = H^p_{\mathrm{group}}(\Z_2; H^q(S^1; \Z))
\Longrightarrow H^*_{\Z/2}(S^1)
$$
degenerates at $E_2$, as is seen by comparing the computation of $H^*_{\Z/2}(S^1)$ so far. Hence we have
$$
H^3_{\Z/2}(X; \Z) \cong E_2^{2, 1}
= H^2_{\mathrm{group}}(\Z_2; H^1(S^1; \Z)).
$$
With this isomorphism, $W^{\Z/2}_3(R)$ is computed as follows: We choose an invariant Hermitian metric on $R$. If we ignore the group action, then the unoriented frame bundle of $R_\R$ is $F = S^1 \times O(2)$, and there is the unique $\Pin^c(2)$-structure $\tilde{F} = S^1 \times \Pin^c(2)$. We here try to lift the $\Z/2$-action on $F$ to that on $\tilde{F}$. A candidate of a lift is expressed as $g \cdot (u, \tilde{\eta}) = (u, \phi(g; u) \tilde{\eta})$ for $g \in \Z_2$ and $(u, \tilde{\eta}) \in \tilde{F}$ by using a map $\phi : \Z_2 \times S^1 \to \Pin^c(2)$. Then we get a group $2$-cocycle $\zeta$ of $\Z_2$ with values in the group $\Map(S^1, U(1))$ of $U(1)$-valued functions by the formula
$$
\phi(g_1; u) \phi(g_2; u)
= \zeta(g_1, g_2; u) \phi(g_1g_2; u).
$$
The natural map $\Map(S^1, U(1)) \to H^1(S^1; \Z)$ induces a homomorphism in the group cohomology $H^2_{\mathrm{group}}(\Z_2; \Map(S^1, U(1))) \to H^2_{\mathrm{group}}(\Z_2; H^1(S^1; \Z))$. The image of $[\zeta] \in H^2_{\mathrm{group}}(\Z_2; \Map(S^1, U(1)))$ under the homomorphism is $W_3^{\Z/2}(R_{\R})$.

Now, we carry out the computation above concretely. The $\Z/2$-action on $F$ induced from that on $R$ is identified with $\tau(e^{i\theta}, g) = (e^{i\theta}, r(\theta)\epsilon g)$, where
\begin{align*}
r(\theta) 
&=
\left(
\begin{array}{rr}
\cos \theta & - \sin \theta \\
\sin \theta & \cos \theta
\end{array}
\right), &
\epsilon
&=
\left(
\begin{array}{rr}
-1 & 0 \\
0 & 1
\end{array}
\right).
\end{align*}
To construct a candidate of a lift, we recall the group $\Pin(2)$:
\begin{align*}
\Pin(2) &=
\{ \cos \theta + \sin \theta e_1e_2 \in Cl(2) |\ 
\theta \in \R/2\pi \Z \} \\
& \quad
\cup
\{ \cos \theta e_1 + \sin \theta e_2 \in Cl(2) |\ \theta \in \R/2\pi \Z \},
\end{align*}
where the Clifford algebra $Cl(2)$ is the algebra over $\R$ generated by $e_1$ and $e_2$ subject to the relations $e_1^2 = e^2 = -1$ and $e_1e_2 + e_2e_1 = 0$. The $\Z/2$-covering $\varpi : \Pin(2) \to O(2)$ is given by
\begin{align*}
\varpi(\cos \theta + \sin \theta e_1e_2)
&=
r(2\theta), &
\varpi(\cos \theta e_1 + \sin \theta e_2)
&=
r(2\theta) \epsilon
\end{align*}
The group $\Pin^c(2)$ is then given by
$$
\Pin^c(2) = (\Pin(2) \times U(1))/\Z_2
$$
where $\Z_2$ acts on $\Pin(2) \times U(1)$ by $(g, z) \mapsto (-g, -z)$. We write $[g, z] \in \Pin^c(2)$ for the element represented by $(g, z)$. Now, we define $\phi : \Z_2 \times S^1 \to \Pin^c(2)$ by
\begin{align*}
\phi(1; e^{i\theta}) &= 1, &
\phi(-1; e^{i\theta}) &=
[\cos \frac{\theta}{2} e_1 + \sin\frac{\theta}{2} e_2, e^{i\theta/2}] 
[e_1e_2, 1].
\end{align*}
We easily see $\Z_2 \times \tilde{F} \to \tilde{F}$, $(g, u, \tilde{\eta}) \mapsto (u, \phi(g; u)\tilde{\eta})$ coves the $\Z/2$-action $\Z_2 \times F \to F$. The group $2$-cocycle $\zeta : \Z_2 \times \Z_2 \times S^1 \to \R$ computed from $\phi$ is then
\begin{align*}
\zeta(1, 1; u) &= \zeta(1, -1; u) = \zeta(-1, 1; u) = 1, &
\zeta(-1, -1; u) &= -u,
\end{align*}
which is mapped to a non-trivial class in $H^2_{\mathrm{group}}(\Z_2; H^1(S^1; \Z)) = \Z/2$.
\end{proof}

\begin{lem} \label{lem:cohomology_CP_infty}
Let $\C P^\infty$ be the classifying space of Real line bundles. Then we have the following ring isomorphism:
$$
\mathbb{H}^*(\C P^\infty) 
= H^*_{\Z/2}(\C P^\infty) \oplus H^*_\pm(\C P^\infty)
\cong \Z[t^{1/2}, c]/(2t^{1/2}),
$$
where $t^{1/2} \in H^1_\pm(\pt; \Z) \cong \Z/2$, and $c \in H^2_{\Z/2}(\C P^\infty) \cong \Z$ is the first Chern class of the universal Real line bundle. In low degree, we have:
$$
\begin{array}{|c|c|c|c|c|c|}
\hline
 & n = 0 & n = 1 & n = 2 & n = 3 & n = 4  \\
\hline
H^n_{\Z/2}(\C P^\infty) &
\Z & 0 & \Z_2 t & \Z_2 t^{\frac{1}{2}}c & \Z c^2 \oplus \Z_2 t^2  \\
\hline
H^n(\C P^\infty) &
\Z & 0 & \Z f'(c) & 0 & \Z f(c^2)  \\
\hline
H^n_\pm(\C P^\infty) &
0 & \Z_2 t^{\frac{1}{2}} & \Z c & \Z_2 t^{\frac{3}{2}} & \Z_2 tc  \\
\hline
\end{array}
$$
\end{lem}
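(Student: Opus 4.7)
My plan is to compute $\mathbb{H}^*(\C P^\infty)$ by means of the Serre spectral sequence for the Borel fibration $\C P^\infty \to E\Z_2 \times_{\Z_2} \C P^\infty \to B\Z_2$ with local coefficients $\Z(m)$ for $m = 0, 1$, whose abutments are $H^*_{\Z/2}(\C P^\infty)$ and, via Proposition \ref{prop:interpretation}, $H^*_\pm(\C P^\infty)$.

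First I would identify the $\Z/2$-module structure of $H^q(\C P^\infty; \Z)$. Complex conjugation acts on $\C P^1 \cong S^2$ as orientation reversal, hence by $-1$ on $H^2(\C P^\infty) = \Z\cdot x$; multiplicativity then gives $(-1)^k$ on $H^{2k} = \Z\cdot x^k$, and odd-degree cohomology vanishes. Thus $H^{2k}(\C P^\infty) \otimes \Z(m) \cong \Z(k + m)$ as $\Z/2$-modules. Feeding this into the tables for $H^*_{\mathrm{group}}(\Z_2; \Z(n))$ recalled in Subsection \ref{subsec:cohomology_with_local_coefficients} produces the $E_2$-pages, and reading off total degrees reproduces exactly the additive content of the table in the statement.

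The next step is degeneration. Because $E_2^{p, q}$ is concentrated in even $q$, every differential $d_r$ with $r$ even vanishes automatically (it shifts $q$ by the odd number $r - 1$). To dispose of the odd differentials, I would exhibit enough permanent cocycles. The universal class $c = c_1^R(\gamma) \in H^2_\pm(\C P^\infty)$ restricts on the fibre to the usual Chern class $x \in H^2(\C P^\infty)$ via $f'$, so it hits the generator of $E_\infty^{0, 2}$ in the $m = 1$ sequence; by multiplicativity, $c^j$ detects the $\Z$-summand of $E_\infty^{0, 2j}$ in the sequence of appropriate parity. The pullbacks of $t \in H^2_{\Z/2}(\pt)$ and $t^{1/2} \in H^1_\pm(\pt)$ along $\C P^\infty \to \pt$ fill up $E_\infty^{p, 0}$, and their products $t^{a/2} c^j$ exhaust every remaining $\Z/2$-entry of the bigraded $E_2$, forcing the collapse.

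Finally I would pin down the ring structure. From $\mathbb{H}^*(\pt) \cong \Z[t^{1/2}]/(2t^{1/2})$ the pulled-back generator $t^{1/2}$ satisfies $2t^{1/2} = 0$ and $(t^{1/2})^2 = t$, so there is a canonical ring homomorphism $\Phi : \Z[t^{1/2}, c]/(2t^{1/2}) \to \mathbb{H}^*(\C P^\infty)$; the collapse step shows $\Phi$ is surjective in every degree. A direct count of monomials shows the domain has, in each degree, the same abelian-group structure as the codomain, and since a surjection between isomorphic finitely generated abelian groups is automatically an isomorphism (the Hopfian property), $\Phi$ is an isomorphism of graded rings. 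The main obstacle is the degeneration step: it depends on verifying that $c$ is genuinely detected at $E_\infty^{0, 2}$ rather than being pushed into higher filtration, which reduces to the fact that $c$ restricts to the standard $x$ on the fibre, a property built into the construction of $c_1^R$ in Subsection \ref{subsec:chern_class_of_Real_vector_bundle}.
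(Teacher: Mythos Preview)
Your approach is sound and reaches the same conclusion as the paper, but by a different route. The paper does not argue degeneration by hand: it invokes the fact that $\C P^\infty$ with complex conjugation is a \emph{spherical conjugation complex} in the sense of \cite{HHP}, and then cites the general collapse result for such spaces from \cite{P-S}. Your argument is more self-contained: you identify the $\Z/2$-module $H^{2k}(\C P^\infty)\cong\Z(k)$, observe that the bigraded $E_2$ is generated multiplicatively over $E_2^{*,0}=\mathbb{H}^*(\pt)$ by the single class in $E_2^{0,2}(1)$, and kill all differentials by exhibiting $c$ and $t^{1/2}$ as permanent cycles. This is a perfectly good substitute for the cited collapse theorem, and arguably more transparent in this particular example. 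For the ring structure the paper instead identifies the low-degree generators and then appeals to the exact sequences of Proposition~\ref{prop:basic_exact_sequence}, whereas you build a single ring map $\Phi$ and argue it is an isomorphism; both are fine.

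There is one genuine imprecision in your final paragraph. From $E_2=E_\infty$ you only know the \emph{associated graded} of $\mathbb{H}^n(\C P^\infty)$, not its isomorphism type, so the sentence ``the domain has, in each degree, the same abelian-group structure as the codomain'' is not yet justified: a priori two adjacent $\Z/2$-entries in a column could assemble to a $\Z/4$ in the abutment, and then the Hopfian step would not apply as stated. The fix is immediate and already implicit in what you wrote. Filter the source $\Z[t^{1/2},c]/(2t^{1/2})$ by declaring $t^{a/2}c^j$ to lie in filtration $\ge a$; then $\Phi$ is filtration-preserving, and your computation that $t^{a/2}c^j$ is detected at $E_\infty^{a,2j}$ as a generator says exactly that $\mathrm{gr}(\Phi)$ is an isomorphism. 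A filtered map between groups with finite filtrations that induces an isomorphism on associated graded is itself an isomorphism (five-lemma induction on filtration length), so $\Phi$ is an isomorphism without any appeal to Hopfian properties or prior knowledge of the extension.
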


\begin{proof}
It is well-known that $H^*(\C P^\infty; \Z) \cong \Z[c]$, where $c \in H^2(\C P^\infty; \Z) \cong \Z$ is the first Chern class of the universal line bundle. The $\Z/2$-space $\C P^\infty$ is an example of a spherical conjugation complex \cite{HHP}. For such a space, the spectral sequence reviewed in Subsection \ref{subsec:cohomology_with_local_coefficients} degenerates at the $E_2$-term \cite{P-S}. Hence $\mathbb{H}^*(\C P^\infty) \cong \Z[t^{1/2}, c]/(2t^{1/2})$ at least as an abelian group. Clearly, $H^1_\pm(\C P^\infty) \cong \Z/2$ has $t^{1/2} \in H^1_\pm(\pt)$ as its basis. The generator $c \in H^2_\pm(\C P^\infty; \Z) \cong \Z$ agrees with the first Chern class of the universal Real line bundle, because $f : H^2_\pm(\C P^\infty) \to H^2(\C P^\infty)$ is the identity. Using Proposition \ref{prop:basic_exact_sequence}, we can verify that $\mathbb{H}^*(\C P^\infty) \cong \Z[t^{1/2}, c]/(2t^{1/2})$ is actually a ring isomorphism.
\end{proof}

\begin{proof}[The proof of Proposition \ref{prop:Pin_c_structure}]
For a space $X$ with $\Z/2$-action, the assignment of $W_3^{\Z/2}(R) \in H^3_{\Z/2}(X; \Z)$ to a Real line bundle $R \to X$ defines a natural map $W_3^{\Z/2} : H^2_\pm(X) \to H^3_{\Z/2}(X; \Z)$. Proposition \ref{prop:Pin_c_structure} is equivalent to $W_3^{\Z/2} = \delta$. By the naturality, this will be established by showing that $W_3^{\Z/2} : H^2_{\pm}(\C P^\infty) \to H^3_{\Z/2}(\C P^\infty; \Z)$ is a non-trivial map $\Z \to \Z/2$, i.e.\ the reduction mod $2$. For its proof, we use the commutative diagram from the naturality
$$
\begin{CD}
H^2_{\pm}(\C P^\infty) @>{W_3^{\Z/2}}>> H^3_{\Z/2}(\C P^\infty; \Z) \\
@V{\phi_\pm^*}VV @VV{\phi_{\Z/2}^*}V \\
H^2_{\pm}(S^1) @>{W_3^{\Z/2}}>> 
H^3_{\Z/2}(S^1; \Z),
\end{CD}
$$
where $\phi^*_\pm$ and $\phi_{\Z/2}^*$ are the pull-back under a map $\phi : S^1 \to \C P^\infty$ classifying the Real line bundle $R \to S^1$. Because $W_3^{\Z/2}(R) \neq 0$, this Real line bundle is non-trivial, hence so is $\phi^*_\pm : \Z \to \Z/2$. Clearly, $\phi^*_{\Z/2} : \Z/2 \to \Z/2$ is the identity. By Lemma \ref{lem:cohomology_circle_with_trivial_action}, the map $W_3^{\Z/2} : H^2_{\pm}(S^1; \Z) \to H^3_{\Z/2}(S^1; \Z)$ turns out to be the identity $\Z/2 \to \Z/2$ as well, so that $W_3^{\Z/2} : H^2_{\pm}(\C P^\infty) \to H^3_{\Z/2}(\C P^\infty; \Z)$ is non-trivial.
\end{proof}


\section{A variant of $K$-theory: $K_\pm$}
\label{sec:K_pm}

This section reviews $K_\pm$ and its twisted version. The properties of $K_\pm$ are basically parallel to those of $H_\pm$, but the differences are to be noticed.

\subsection{Review of equivariant twisted $K$-theory}

Twisted $K$-theory is originally invented in \cite{D-K,R1}. Equivariant twisted $K$-theory is in a sense equivariant $K$-theory with local coefficients. We call the datum playing the role of a local system a \textit{twist}. Applying the formulation in \cite{FHT1} to the groupoid associated to a space $X$ with $\Z/2$-action, we have the monoidal category of (equivariant, graded) twists $\Twist_{\Z/2}(X)$. The group $\pi_0(\Twist_{\Z/2}(X))$ of isomorphism classes of twists on $X$ fits into the exact sequence:
$$
1 \to 
H^3_{\Z/2}(X; \Z) \to
\pi_0(\Twist_{\Z/2}(X)) \to
H^1_{\Z/2}(X; \Z/2) \to
1.
$$
This exact sequence splits as a set, but not as a group: its extension class 
$$
H^1_{\Z/2}(X; \Z/2) \times H^1_{\Z/2}(X; \Z/2) \to H^3_{\Z/2}(X; \Z)
$$ 
is given by $(\alpha, \alpha') \mapsto \beta(\alpha \cup \alpha')$, where $\beta : H^2_{\Z/2}(X; \Z/2) \to H^3_{\Z/2}(X; \Z)$ is the Bockstein homomorphism associated to the exact sequence of coefficients $\Z \to \Z \to \Z/2$. We write $\Twist^0_{\Z/2}(X)$ for the subcategory of twists consisting of `ungraded twists', i.e.\ twists classified by $H^3_{\Z/2}(X; \Z)$. On any twist $\tau \in \Twist_{\Z/2}(X)$, a ${\Z/2}$-equivariant line bundle acts as an automorphism, preserving the subcategory $\Twist^0_{\Z/2}(X)$. Actually, in this subcategory, the automorphism group of a twist $\tau \in \Twist^0_{\Z/2}(\Z/2)$ is isomorphic to the equivariant cohomology $H^2_{\Z/2}(X; \Z)$, which classifies  $\Z/2$-equivariant line bundles \cite{H-Y}.

\medskip

Given a twist $\tau \in \Twist_{\Z/2}(X)$ and $n \in \Z$, we have the $\Z/2$-equivariant twisted $K$-group $K^{\tau + n}_{\Z/2}(X)$. If twists are isomorphic, then the corresponding twisted $K$-groups are isomorphic. Accordingly, we often use the notation $K^{h + n}_{\Z/2}(X)$ for $h \in \pi_0(\Twist_{\Z/2}(X)) \cong H^1_{\Z/2}(X; \Z/2) \times H^3_{\Z/2}(X; \Z)$, keeping track of the isomorphism class of twists only. If $h = 0$, then the twisted equivariant $K$-group recovers the usual equivariant $K$-group. By means of a multiplication
$$
K^{h + n}_{\Z/2}(X) \times K^{h' + n'}_{\Z/2}(X) \longrightarrow
K^{h + h' + n + n'}_{\Z/2}(X),
$$
the group $\bigoplus_n K^{h + n}_{\Z/2}(X)$ gives rise to a graded module over the equivariant $K$-ring $K^*_{\Z/2}(X)$. In particular, $K^{h + n}_{\Z/2}(X)$ is always a module over the representation ring $K^0_{\Z/2}(\pt) \cong R = \Z[t]/(t^2 - 1)$ of $\Z/2$.

\smallskip

The equivariant twisted $K$-groups form a ${\Z/2}$-equivariant cohomology theory on a suitable category: We can formulate the relative group $K^{\tau + n}_{\Z/2}(X, Y)$ for a pair $(X, Y)$ and a twist $\tau \in \Twist_{\Z/2}(X)$. Then the homotopy axiom, the excision axiom, the exactness axiom and the additivity axiom hold true. Further, the Bott periodicity is satisfied: $K^{\tau+ n}_{\Z/2}(X, Y) \cong K^{\tau + n + 2}_{\Z/2}(X, Y)$. 

\smallskip

For a $\Z/2$-equivariant real vector bundle $\pi : V \to X$ of rank $r$ and $\tau \in \Twist_{\Z/2}(X)$, the Thom isomorphism for equivariant twisted $K$-theory is the following isomorphism of $K^*_{\Z/2}(X)$-modules \cite{FHT1}:
$$
K^{\tau + n}_{\Z/2}(X) \cong 
K^{\pi^*(\tau + \tau(V)) + n + r}_{\Z/2}(D(V), S(V)),
$$
where $\tau(V) \in \Twist_{\Z/2}(X)$ is a twist classified by 
$$
(w_1^{\Z/2}(V), W_3^{\Z/2}(V)) 
\in H^1_{\Z/2}(X; \Z/2) \times H^3_{\Z/2}(X; \Z).
$$
The Thom isomorphism is expressed as $x \mapsto \pi^*x \cup \Phi_{\Z/2}(V)$ by means of the Thom class $\Phi_{\Z/2}(V) \in K^{\pi^*\tau(V) + r}_{\Z/2}(D(V), S(V))$, which is the image of $1 \in K^0_{\Z/2}(X)$ under the Thom isomorphism. Once the Thom isomorphism is established, the Euler class $\chi_{\Z/2}(V) \in K^{\tau(V) + r}_{\Z/2}(X)$, the push-forward along $\pi : S(V) \to X$ and the Gysin exact sequence follow as usual.

\smallskip

In the above, we only concerned $\Z/2$-equivariant case. Forgetting the group action, we have the same story of twisted $K$-theory $K^{\tau + n}(X)$ on a space $X$.

\subsection{The variant $K_\pm$ and its twisted version}

\begin{dfn}
Let $X$ be a space with $\Z/2$-action. For $\tau \in \Twist^0_{\Z/2}(X)$ and $n \in \Z$, we define 
$$
K^{\tau + n}_{\pm}(X)
= K^{\tau + n + 1}_{\Z/2}(X \times \tilde{I}, X \times \partial \tilde{I}).
$$
When $\tau$ is trivial, we just write $K^n_\pm(X) = K^{\tau + n}_{\pm}(X)$.
\end{dfn}

As in the case of $H_\pm$, the group $K^{\tau + *}_{\pm}(X)$ is a module over $K^*_{\Z/2}(X)$ and hence $K^0_{\Z/2}(\pt) \cong R$. We can further define the multiplication
$$
K^{\tau + n}_{\pm}(X) \times K^{\tau' + n'}_{\pm}(X) \longrightarrow
K^{\tau + \tau' + n + n'}_{\Z/2}(X)
$$
in the same way as in Subsection \ref{subsec:definition_variant}. We put
$$
\mathbb{K}^{\tau + *}(X) = K^{\tau + *}_{\Z/2}(X) \oplus K^{\tau + *}_{\pm}(X).
$$
which is graded by $\Z \oplus \Z$ with double periodicity, i.e.\ $\Z/2 \oplus \Z/2$. Then $\mathbb{K}^{\tau + *}(X)$ is a module over the ring $\mathbb{K}^*(X)$ as well as $\mathbb{K}^*(\pt)$.

\medskip

In the case where $X$ contains an invariant closed subspace $Y$, we define
$$
K^{\tau + n}_\pm(X, Y)
= K^{\tau + n + 1}_{\pm}(X \times \tilde{I}, 
Y \times \tilde{I} \cup X \times \partial \tilde{I}).
$$
Using this relative version, we can again recover $K^{\tau + *}_{\Z/2}(X)$ from $K^{\tau + *}_{\pm}(X)$:
$$
K^{\tau + n}_{\Z/2}(X)
\cong K^{\tau + n + 1}_{\pm}(X \times \tilde{I}, X \times \partial \tilde{I}).
$$

\medskip

As in the case of $H_\pm$, we can interpret $K^{\tau + n}_\pm(X)$ as a twisted $K$-theory:

\begin{lem} \label{prop:interpretation_K}
For any $\tau \in \Twist^0_{\Z/2}(X)$ and $n \in \Z$, there is a natural isomorphism of $K^*(X)$-modules
$$
K^{\tau(\underline{\R}) + \tau + n}_{\Z/2}(X)
\cong K^{\tau + n}_\pm(X).
$$
\end{lem}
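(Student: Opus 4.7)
The plan is to realize the claimed isomorphism as a single application of the Thom isomorphism in equivariant twisted $K$-theory. The crucial observation is that the pair $(X \times \tilde{I}, X \times \partial \tilde{I})$ appearing in the definition of $K^{\tau + n}_\pm(X)$ is precisely the Thom pair $(D(\underline{\R}_1), S(\underline{\R}_1))$ of the equivariant real line bundle $\underline{\R}_1 \to X$. Applying the Thom isomorphism for $V = \underline{\R}_1$ (rank $r = 1$) with starting twist $\tau + \tau(\underline{\R}_1)$ on $X$, I obtain
\begin{equation*}
K^{\tau(\underline{\R}_1) + \tau + n}_{\Z/2}(X)
\;\cong\;
K^{\pi^*(\tau + 2\tau(\underline{\R}_1)) + n + 1}_{\Z/2}
\bigl(X \times \tilde{I},\, X \times \partial \tilde{I}\bigr),
\end{equation*}
where the module structures match because the isomorphism is multiplication by the Thom class $\Phi_{\Z/2}(\underline{\R}_1)$.

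The main (and really only nontrivial) step is to show that $2\tau(\underline{\R}_1) = 0$ in $\pi_0(\Twist_{\Z/2}(X))$; this is where I expect the argument to require some care because the group law on $\pi_0(\Twist_{\Z/2}(X))$ involves the Bockstein extension class. The assignment $V \mapsto \tau(V)$ is additive under direct sums: combining $w_1(V \oplus V') = w_1(V) + w_1(V')$ with $W_3(V \oplus V') = W_3(V) + W_3(V') + \beta(w_1(V) \cup w_1(V'))$ exactly matches the multiplication rule in $\pi_0(\Twist_{\Z/2}(X))$, giving $\tau(V \oplus V') = \tau(V) + \tau(V')$. Applying this with $V = V' = \underline{\R}_1$,
\begin{equation*}
2\,\tau(\underline{\R}_1) \;=\; \tau(\underline{\R}_1 \oplus \underline{\R}_1) \;=\; \tau(\underline{\C}_1),
\end{equation*}
using the evident $\Z/2$-equivariant identification of underlying real rank-$2$ bundles $\underline{\R}_1 \oplus \underline{\R}_1 \cong \underline{\C}_1$. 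Now $\underline{\C}_1$ is a $\Z/2$-equivariant complex line bundle, so its complex structure provides a canonical equivariant orientation and $\Pin^c$-structure on the underlying real bundle; hence $w_1^{\Z/2}(\underline{\C}_1) = 0$ and $W_3^{\Z/2}(\underline{\C}_1) = 0$, making $\tau(\underline{\C}_1)$ the trivial twist. Therefore $2\tau(\underline{\R}_1) = 0$.

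Inserting this vanishing into the Thom isomorphism yields
\begin{equation*}
K^{\tau(\underline{\R}_1) + \tau + n}_{\Z/2}(X)
\;\cong\;
K^{\tau + n + 1}_{\Z/2}\bigl(X \times \tilde{I},\, X \times \partial \tilde{I}\bigr)
\;=\; K^{\tau + n}_\pm(X),
\end{equation*}
which is the desired isomorphism. Naturality in $X$ is inherited from naturality of the Thom class, and compatibility with the $K^*(X)$-module (in fact $K^*_{\Z/2}(X)$-module) structure holds by construction.
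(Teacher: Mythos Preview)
Your proposal is correct and takes essentially the same approach as the paper: the paper's proof is the single sentence ``Thom isomorphism for $\underline{\R}_1$ establishes the lemma,'' and you have simply unpacked this, in particular making explicit the verification that $2\tau(\underline{\R}_1) = 0$ in $\pi_0(\Twist_{\Z/2}(X))$ (the paper performs the identical computation later, in the proof of the Thom isomorphism for Real line bundles, where it notes $\beta(w_1^{\Z/2}(\underline{\R}_1)\cup w_1^{\Z/2}(\underline{\R}_1)) = 0$).
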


\begin{proof}
Thom isomorphism for $\underline{\R}_1$ establishes the lemma.
\end{proof}

Because $\Phi_{\Z/2}(\underline{\R}_1) \Phi_{\Z/2}(\underline{\R}_1) = \Phi_{\Z/2}(\underline{\C}_1)$, the multiplication
$$
K^{\tau + n}_\pm(X) \times K^{\tau + n'}_\pm(X) \longrightarrow
K^{\tau + \tau' + n + n'}_{\Z/2}(X)
$$
defined by using the product in twisted $K$-groups via the lemma above agrees with that defined in the same way as in Subsection \ref{subsec:definition_variant}.

\subsection{Axiom of $K_\pm$}

The groups $K^{\tau + n}_\pm(X)$ again constitute a $\Z/2$-equivariant cohomology theory, so that the homotopy axiom, the excision axiom, the exactness axiom, and the additivity axiom hold true. 

For later convenience, we here state the axioms precisely: We write $(X, \tau_X)$ for a pair consisting of a space with $\Z/2$-action and a twist $\tau \in \Twist^0_{\Z/2}(X)$. A map $(f, F) : (X, \tau_X) \to (Y, \tau_Y)$ consists of a $\Z/2$-equivariant maps $f : X \to Y$ and an isomorphism of twists $F : \tau_X \to f^*\tau_Y$. A homotopy from $(f_0, F_0) : (X, \tau_X) \to (Y, \tau_Y)$ to $(f_1, F_1) : (X, \tau_X) \to (Y, \tau_Y)$ means a map $(\tilde{f}, \tilde{F}) : (X \times [0, 1], \pi^*\tau_X) \to (Y, \tau_Y)$ such that $(\tilde{f}, \tilde{F})|_{X \times \{ i \}} = (f_i, F_i)$ for $i = 0, 1$, where $\Z/2$ acts on the interval $[0, 1]$ trivially, and $\pi : X \times [0, 1] \to X$ is the projection.

\begin{prop}[Homotopy axiom]
For a homotopy $(\tilde{f}, \tilde{F}) : (X \times [0, 1], \pi^*\tau_X) \to (Y, \tau_Y)$, we have $(f_0, F_0)^* = (f_1, F_1)^*$, so that the following diagram commutes:
$$
\begin{CD}
K^{\tau_Y + n}_\pm(Y) @>{f_0^*}>> K^{f_0^*\tau_Y + n}_\pm(X) \\
@V{f_1^*}VV @VV{F_0^*}V \\
K^{f_1^*\tau_Y + n}_\pm(X) @>{F_1^*}>> K^{\tau_X + n}_\pm(X).
\end{CD}
$$
\end{prop}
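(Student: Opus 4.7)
The plan is to deduce this homotopy axiom for $K_\pm$ directly from the already-established homotopy axiom for the equivariant twisted $K$-theory $K_{\Z/2}$, by exploiting the very definition $K^{\tau + n}_\pm(X) = K^{\tau + n + 1}_{\Z/2}(X \times \tilde{I}, X \times \partial \tilde{I})$. Since pullback maps in $K_\pm$ are by construction pullbacks of relative equivariant twisted $K$-groups on the pairs $(X \times \tilde{I}, X \times \partial \tilde{I})$, it suffices to produce from the given data an honest homotopy of equivariant twisted pairs at the $K_{\Z/2}$-level and then invoke the standard homotopy axiom.

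First I would construct the enlarged homotopy by crossing everything with $\tilde{I}$. From $(\tilde{f}, \tilde{F}) : (X \times [0, 1], \pi^*\tau_X) \to (Y, \tau_Y)$ one obtains the $\Z/2$-equivariant map
\[
\tilde{f} \times \mathrm{id}_{\tilde{I}} : (X \times \tilde{I}) \times [0, 1] \longrightarrow Y \times \tilde{I},
\]
which is equivariant because $\Z/2$ acts on $[0, 1]$ trivially and on the two $\tilde{I}$-factors by the same flip. This map carries the invariant subspace $X \times \partial \tilde{I} \times [0, 1]$ into $Y \times \partial \tilde{I}$, so it defines a homotopy in the category of $\Z/2$-equivariant twisted pairs from $(f_0 \times \mathrm{id}_{\tilde{I}}, F_0)$ to $(f_1 \times \mathrm{id}_{\tilde{I}}, F_1)$. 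The twist side is immediate: the twist $\pi^*\tau_X$ on $X \times [0, 1]$ is pulled back from $X$, hence its further pullback to $X \times \tilde{I} \times [0, 1]$ agrees canonically with the pullback of $\tau_X$ from the $X$-factor, and the isomorphism $\tilde{F}$ extends by identity along $\tilde{I}$ to an isomorphism of twists over the enlarged product.

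Second, I would apply the homotopy axiom for the equivariant twisted $K$-theory $K_{\Z/2}$ (valid on relative groups in the formulation of \cite{FHT1}) to conclude
\[
(f_0 \times \mathrm{id}_{\tilde{I}}, F_0)^* \;=\; (f_1 \times \mathrm{id}_{\tilde{I}}, F_1)^*
\]
as homomorphisms $K^{\tau_Y + n + 1}_{\Z/2}(Y \times \tilde{I}, Y \times \partial \tilde{I}) \to K^{\tau_X + n + 1}_{\Z/2}(X \times \tilde{I}, X \times \partial \tilde{I})$. Unwinding the defining identifications, this equality is precisely the commutativity of the diagram claimed for $K^{\tau + n}_\pm$.

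I do not anticipate any genuine obstacle: the proof is essentially a bookkeeping reduction, with the only point requiring attention being the coherent handling of the twist isomorphism $\tilde{F}$ under the product with $\tilde{I}$, which is automatic because the ambient twists are pulled back from the $X$- and $Y$-factors.
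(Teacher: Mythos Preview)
Your proposal is correct and matches the paper's approach: the paper does not give a separate proof of this proposition but simply notes, just before stating it, that ``the groups $K^{\tau + n}_\pm(X)$ again constitute a $\Z/2$-equivariant cohomology theory'' because of the definition via $K^*_{\Z/2}$. Your argument---cross with $\tilde{I}$ and invoke the homotopy axiom for relative $K_{\Z/2}$---is precisely the unwinding that makes this remark explicit.
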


Let $\tilde{f} : X \times [0, 1] \to Y$ be a $\Z/2$-equivariant map, and $\tau_Y \in \Twist^0_{\Z/2}(Y)$ a twist. We put $f_j = \tilde{f} \circ \iota_j$ for $j = 0, 1$, where $\iota_j : X \to X \times [0, 1]$ is the inclusion $\iota_j(x) = (x, j)$. Then we have two twists $f_j^*\tau_Y \in \Twist^0_{\Z/2}(X)$.

\begin{cor} \label{cor:homotopy_construct_isomorphism}
In the notation above, there uniquely exists an isomorphism of twists $u(\tilde{f}) : f_0^*\tau_Y \to f_1^*\tau_Y$ such that the following diagram commutes:
$$
\begin{CD}
K^{\tau_Y + n}_\pm(Y) @>{f_0^*}>> K^{f_0^*\tau_Y + n}_\pm(X) \\
@V{f_1^*}VV @| \\
K^{f_1^*\tau_Y + n}_\pm(X) @>{u(\tilde{f})^*}>> K^{f_0^*\tau_Y + n}_\pm(X).
\end{CD}
$$
\end{cor}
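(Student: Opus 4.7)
I plan to deduce the corollary directly from the homotopy axiom by upgrading the bare map $\tilde{f}$ to a map of pairs whose twist data is the identity at time $0$, and then reading off the behavior at time $1$.

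First I observe that $\iota_0 : X \hookrightarrow X \times [0,1]$ is an equivariant homotopy equivalence (since $[0,1]$ carries the trivial $\Z/2$-action), so $\iota_0^*$ induces isomorphisms on $H^3_{\Z/2}(-;\Z)$ and $H^1_{\Z/2}(-;\Z/2)$ and therefore on $\pi_0(\Twist^0_{\Z/2}(-))$. Since both $\pi^* f_0^*\tau_Y$ and $\tilde{f}^*\tau_Y$ pull back under $\iota_0$ to $f_0^*\tau_Y$, they are isomorphic as twists on $X \times [0,1]$. I pick any isomorphism $\tilde{F}_0 : \pi^* f_0^*\tau_Y \to \tilde{f}^*\tau_Y$; its time-$0$ restriction $\alpha := \iota_0^*\tilde{F}_0$ is an automorphism of $f_0^*\tau_Y$ and so corresponds to an equivariant line bundle on $X$ via $\mathrm{Aut}(f_0^*\tau_Y) \cong H^2_{\Z/2}(X;\Z)$. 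Composing $\tilde{F}_0$ with $(\pi^*\alpha)^{-1}$ yields $\tilde{F}:\pi^* f_0^*\tau_Y \to \tilde{f}^*\tau_Y$ with $\iota_0^*\tilde{F}=\mathrm{id}$. Now $(\tilde{f},\tilde{F}):(X\times[0,1],\pi^* f_0^*\tau_Y)\to(Y,\tau_Y)$ is a genuine homotopy from $(f_0,\mathrm{id})$ to $(f_1,\iota_1^*\tilde{F})$, and the homotopy axiom delivers the identity $f_0^* = (\iota_1^*\tilde{F})^*\circ f_1^*$. I take $u(\tilde{f}):=\iota_1^*\tilde{F}$; this is exactly the diagram claimed in the corollary.

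For uniqueness, any second choice $\tilde{F}'$ with $\iota_0^*\tilde{F}'=\mathrm{id}$ differs from $\tilde{F}$ by an automorphism of $\tilde{f}^*\tau_Y$ classified by an equivariant line bundle $M$ on $X\times[0,1]$ whose restriction $\iota_0^*M$ is trivial. Homotopy invariance of $H^2_{\Z/2}(-;\Z)$ forces $M$ itself to be trivial, hence so is $\iota_1^*M$; thus $\iota_1^*\tilde{F}$ and $\iota_1^*\tilde{F}'$ agree as isomorphisms of twists up to the natural 2-isomorphism. The main obstacle is precisely this categorical bookkeeping: ``isomorphism of twists'' in the statement must be understood up to the equivalence that identifies isomorphisms differing by an automorphism inducing the identity on twisted $K$-theory. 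Modulo this, the induced map $u(\tilde{f})^*$—and hence the content of the corollary—is unambiguously determined.
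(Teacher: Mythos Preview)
Your proof is correct and follows the same approach as the paper: both argue that $\iota_0^*$ induces a bijection on $\pi_0(\Twist^0_{\Z/2})$, choose an isomorphism $\tilde{F}:\pi^*f_0^*\tau_Y\to\tilde{f}^*\tau_Y$ normalized to the identity at time $0$, apply the homotopy axiom, and set $u(\tilde{f})=\iota_1^*\tilde{F}$; uniqueness comes from $\mathrm{Aut}(\pi^*f_0^*\tau_Y)\cong\mathrm{Aut}(f_0^*\tau_Y)\cong H^2_{\Z/2}(X)$ via $\iota_0^*$. Your closing caveat about ``up to 2-isomorphism'' is unnecessary in the paper's framework, since there $\mathrm{Aut}(\tau)$ is already \emph{defined} to be $H^2_{\Z/2}(X)$, so ``trivial automorphism'' and ``identity'' coincide by convention.
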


\begin{proof}
By the homotopy equivalence $X \times [0, 1] \simeq X$, the inclusion $\iota_0$ induces an isomorphism $\pi_0(\Twist^0_{\Z/2}(X)) \cong \pi_0(\Twist^0_{\Z/2}(X \times [0, 1]))$. In view of this fact, the twists $\pi^*f_0^*\tau_Y, \tilde{f}^*\tau_Y \in \Twist^0_{\Z/2}(X \times [0, 1])$ are isomorphic. Hence there is an isomorphism $\tilde{F} : \pi^*f_0^*\tau_Y \to \tilde{f}^*\tau_Y$. Two such isomorphisms differ by an automorphism of $\pi_*f_0^*\tau_Y$. Since $\mathrm{Aut}(\pi_*f_0^*\tau_Y) \cong \mathrm{Aut}(f_0^*\tau_Y) \cong H^2_{\Z/2}(X)$ canonically, the requirement that $\tilde{F}|_{X \times \{ 0 \}}$ is the identity rules out the ambiguity of $\tilde{F}$. Now, an application of the homotopy axiom to $(\tilde{f}, \tilde{F}) : (X \times [0, 1], \tilde{f}^*\pi_Y) \to (Y, \tau_Y)$ leads to the present corollary, where $u(\tilde{f}) = \tilde{F}|_{X \times \{ 1 \}}$.
\end{proof}

The remaining axioms are as follows:

\begin{prop}
The following holds true.
\begin{itemize}
\item
(Excision axiom)
Let $A$ and $B$ are invariant closed subspaces in a space $X$ with $\Z/2$-action. For any twist $\tau \in \Twist^0_{\Z/2}(X)$ and $n \in \Z$, the inclusion $A \to A \cup B$ induces a natural isomorphism of $K^*_{\Z/2}(X)$-modules:
$$
K^{\tau + n}_\pm(A \cup B, A) \cong K^{\tau + n}_\pm(A, A \cap B),
$$
where these $K$-groups are regarded as $K^*_{\Z/2}(X)$-modules through the inclusions $A \cup B \to X$ and $A \to X$.

\item
(Exactness axiom)
Let $X$ be a space with $\Z/2$-action and $Y \subset X$ an invariant closed subspace. For any twist $\tau \in \Twist^0_{\Z/2}(X)$ and $n \in \Z$, we have a natural exact sequence of $K^*_{\Z/2}(X)$-modules
$$
\cdots \to
K^{n-1}_\pm(Y) \to
K^n_\pm(X, Y) \to
K^n_\pm(X) \to
K^n_\pm(Y) \to
\cdots
$$

\item
(Additivity axiom)
For a family of spaces $X_\lambda$ and twists $\tau_\lambda \in \Twist^0_{\Z/2}(X_\lambda)$, the inclusion induces a natural isomorphism
$$
K^{\sqcup_\lambda \tau_\lambda + n}_\pm(\sqcup_\lambda X_\lambda)
\cong \bigoplus_\lambda K^{\tau_\lambda + n}_\pm(X_\lambda).
$$
\end{itemize}
\end{prop}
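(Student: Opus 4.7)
The plan is to reduce each of the three axioms to the corresponding axiom for equivariant twisted $K$-theory $K^{\tau+*}_{\Z/2}$, using only the defining identity
$$
K^{\tau+n}_\pm(X,Y) = K^{\tau+n+1}_{\Z/2}\bigl(X\times \tilde{I},\ Y\times \tilde{I}\cup X\times \partial \tilde{I}\bigr)
$$
together with the fact that $\tilde{I}$ is equivariantly contractible. No new equivariant machinery is introduced; everything is formal unpacking.

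For excision, I would unfold the relative groups as
$$
K^{\tau+n}_\pm(A\cup B, A) = K^{\tau+n+1}_{\Z/2}\bigl((A\cup B)\times \tilde{I},\ A\times \tilde{I} \cup (A\cup B)\times \partial \tilde{I}\bigr),
$$
$$
K^{\tau+n}_\pm(A, A\cap B) = K^{\tau+n+1}_{\Z/2}\bigl(A\times \tilde{I},\ (A\cap B)\times \tilde{I} \cup A\times \partial \tilde{I}\bigr),
$$
and then apply the excision axiom in $K^{\tau+*}_{\Z/2}$ to the invariant closed subspaces $A\times \tilde{I}$ and $B\times \tilde{I}$ of $(A\cup B)\times \tilde{I}$. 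The resulting isomorphism is induced by the inclusion $A\to A\cup B$ and respects the $K^*_{\Z/2}(X)$-module structure because the corresponding excision isomorphism in $K^{\tau+*}_{\Z/2}$ does.

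For exactness, the key move is to apply the long exact sequence of the triple
$$
(A,B,C) = \bigl(X\times \tilde{I},\ Y\times \tilde{I}\cup X\times \partial \tilde{I},\ X\times \partial \tilde{I}\bigr)
$$
in $K^{\tau+*}_{\Z/2}$. By definition, $K^{\tau+n+1}_{\Z/2}(A,B) = K^{\tau+n}_\pm(X,Y)$ and $K^{\tau+n+1}_{\Z/2}(A,C) = K^{\tau+n}_\pm(X)$. The third term of the triple is identified via excision (applied to $Y\times \tilde{I} \subset B$ against $X\times \partial \tilde{I}$, leaving the complement of $Y\times \mathrm{int}(\tilde{I})$) as
$$
K^{\tau+n+1}_{\Z/2}(B,C) \cong K^{\tau+n+1}_{\Z/2}\bigl(Y\times \tilde{I},\ Y\times \partial \tilde{I}\bigr) = K^{\tau+n}_\pm(Y).
$$
Splicing these three identifications into the triple's long exact sequence yields the sequence in the statement; the connecting map of the triple becomes the boundary map $K^{n-1}_\pm(Y)\to K^n_\pm(X,Y)$, and naturality of everything in $K^{\tau+*}_{\Z/2}$ gives $K^*_{\Z/2}(X)$-linearity.

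Additivity is immediate: unpack the definition on both sides and apply additivity in $K^{\tau+*}_{\Z/2}$, using that $\sqcup_\lambda(X_\lambda\times \tilde{I}) = (\sqcup_\lambda X_\lambda)\times \tilde{I}$ and similarly for boundaries. There is no serious obstacle here; the only point that needs attention is the excision identification in the exactness axiom, where one must check that the inclusion $Y\times \tilde{I} \hookrightarrow Y\times \tilde{I}\cup X\times \partial \tilde{I}$ truly satisfies the hypotheses of the excision axiom already verified for $K_\pm$ (and equivalently for $K_{\Z/2}$) — for locally contractible, paracompact, completely regular spaces this is standard.
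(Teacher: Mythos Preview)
Your proposal is correct and is precisely the argument the paper has in mind: the paper states these axioms without proof, treating them as immediate consequences of the definition $K^{\tau+n}_\pm(X,Y) = K^{\tau+n+1}_{\Z/2}(X\times\tilde{I}, Y\times\tilde{I}\cup X\times\partial\tilde{I})$ together with the corresponding axioms for $K^{\tau+*}_{\Z/2}$. One small refinement: in your excision step, the two closed subspaces to which you apply excision in $K^{\tau+*}_{\Z/2}$ should be $A\times\tilde{I}$ and $B\times\tilde{I}\cup(A\cup B)\times\partial\tilde{I}$ (not just $B\times\tilde{I}$), so that their union and intersection match the relative pairs appearing in the definition of $K^{\tau+n}_\pm$; with that adjustment everything goes through exactly as you describe.
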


\subsection{Basic property of $K_\pm$}

We summarize here the other basic properties.

\begin{prop} \label{prop:basic_exact_sequence_K}
For any space $X$ with $\Z/2$-action and $\tau \in \Twist^0_{\Z/2}(X)$, there are natural exact sequences of modules over $K^*_{\Z/2}(X)$:
\begin{gather*}
\cdots \to
K^{\tau + n-1}_{\pm}(X) \overset{\delta}{\to}
K^{\tau + n}_{\Z/2}(X) \overset{f}{\to}
K^{\tau + n}(X) \to
K^{\tau + n}_{\pm}(X) \overset{\delta}{\to}
\cdots, \\
\cdots \to
K^{\tau + n-1}_{\Z/2}(X) \overset{\delta'}{\to}
K^{\tau + n}_{\pm}(X) \overset{f'}{\to}
K^{\tau + n}(X) \to
K^{\tau + n}_{\Z/2}(X) \overset{\delta'}{\to}
\cdots,
\end{gather*}
where $f$ and $f'$ are the map forgetting the $\Z/2$-action. 
\end{prop}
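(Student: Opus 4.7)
The plan is to imitate the proof of Proposition \ref{prop:basic_exact_sequence} for $H_\pm$, importing it into the equivariant twisted $K$-theoretic setting by using the axioms of equivariant twisted $K$-theory together with the definition of $K_\pm$ and the recovery isomorphism $K^{\tau + n}_{\Z/2}(X) \cong K^{\tau + n + 1}_{\pm}(X \times \tilde{I}, X \times \partial \tilde{I})$ established earlier.

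For the first exact sequence, I would apply the exactness axiom in equivariant twisted $K$-theory to the pair $(X \times \tilde{I}, X \times \partial \tilde{I})$ with the pulled-back twist $\pi^*\tau$, getting the long exact sequence
$$
\cdots \to K^{\pi^*\tau + n}_{\Z/2}(X \times \tilde{I}, X \times \partial \tilde{I})
\to K^{\pi^*\tau + n}_{\Z/2}(X \times \tilde{I})
\to K^{\pi^*\tau + n}_{\Z/2}(X \times \partial \tilde{I})
\to \cdots.
$$
By definition, the first term is $K^{\tau + n - 1}_\pm(X)$; by homotopy invariance (since $\tilde{I}$ is equivariantly contractible), the second is $K^{\tau + n}_{\Z/2}(X)$; and since $\Z/2$ acts freely on $\partial \tilde{I}$ by exchanging the two endpoints, the third identifies with $K^{\tau + n}(X)$ via the equivariant quotient. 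Under these identifications, the restriction map $K^{\pi^*\tau + n}_{\Z/2}(X \times \tilde{I}) \to K^{\pi^*\tau + n}_{\Z/2}(X \times \partial \tilde{I})$ factors through the forgetful map; one checks this by composing with pullback along a section $X \to X \times \partial \tilde{I}$ sending $x \mapsto (x, 1)$, which is a homotopy inverse to the quotient. This yields the first sequence.

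For the second exact sequence, I would analogously apply the exactness axiom of $K_\pm$ (already established) to the same pair and twist:
$$
\cdots \to K^{\pi^*\tau + n}_\pm(X \times \tilde{I}, X \times \partial \tilde{I})
\to K^{\pi^*\tau + n}_\pm(X \times \tilde{I})
\to K^{\pi^*\tau + n}_\pm(X \times \partial \tilde{I})
\to \cdots.
$$
The first term is $K^{\tau + n - 1}_{\Z/2}(X)$ by the recovery isomorphism, and the second is $K^{\tau + n}_\pm(X)$ by homotopy invariance. For the third, unwinding the definition gives
$$
K^{\pi^*\tau + n}_\pm(X \times \partial \tilde{I})
= K^{\pi^*\tau + n + 1}_{\Z/2}(X \times \partial \tilde{I} \times \tilde{I},\ X \times \partial \tilde{I} \times \partial \tilde{I}),
$$
and since $\Z/2$ acts freely on the first $\partial \tilde{I}$, this descends to $K^{\tau + n + 1}(X \times \tilde{I}, X \times \partial \tilde{I}) \cong K^{\tau + n}(X)$ by the (non-equivariant) suspension isomorphism, producing the claimed identification.

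The main obstacle is bookkeeping: matching the boundary/restriction maps coming from the two long exact sequences with the maps $\delta, \delta', f, f'$ as advertised, and ensuring naturality with respect to $\tau$ throughout. In particular one must check that under the chosen isomorphisms the edge map in each sequence agrees with the forgetful map (using that a free $\Z/2$-space $X \times \partial \tilde{I}$ has equivariant $K$-theory equal to ordinary $K$-theory of $X$, realized through pullback along a non-equivariant section). The $K^*_{\Z/2}(X)$-module structure on each term is preserved at every step, since all identifications used (exactness axiom, homotopy invariance, Thom isomorphism for $\underline{\R}_1$ hidden in the recovery isomorphism) are $K^*_{\Z/2}(X)$-linear.
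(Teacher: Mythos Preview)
Your proposal is correct and follows essentially the same approach as the paper: the paper's proof simply says that the argument for Proposition~\ref{prop:basic_exact_sequence} applies without change, and that argument is precisely the one you outline---the long exact sequence of the pair $(X \times \tilde{I}, X \times \partial \tilde{I})$ in $K^*_{\Z/2}$ for the first sequence and in $K^*_\pm$ for the second, with the three terms identified via the definition of $K_\pm$, equivariant contractibility of $\tilde{I}$, and the free $\Z/2$-action on $\partial \tilde{I}$. Your additional care in tracking the twist $\pi^*\tau$ and in matching the restriction maps with the forgetful maps $f, f'$ is appropriate and does not deviate from the paper's intended route.
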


\begin{proof}
The argument in the proof of Proposition \ref{prop:basic_exact_sequence_K} applies without change. 
\end{proof}

\begin{prop}
We have $R$-module isomorphisms
\begin{align*}
K^0_{\Z/2}(\pt) &\cong R, &
K^1_{\Z/2}(\pt) &= 0, \\
K^0_{\pm}(\pt) &= 0, &
K^1_{\pm}(\pt) &\cong R/J.
\end{align*}
Further, we have a ring isomorphism
\begin{align*}
\mathbb{K}^*(\pt)
&= K^*_{\Z/2}(\pt) \oplus K^*_{\pm}(\pt)
\cong \Z[t, \sigma]/(t^2 - 1, \sigma^2 - 1 + t, (1 + t)\sigma) \\
&\cong \Z[\sigma]/(\sigma^3 - 2\sigma),
\end{align*}
where $\sigma \in K^1_\pm(\pt)$ corresponds to the Euler class $\chi_{\Z/2}(\underline{\R}_1) \in K^{\tau(\underline{\R}_1) + 1}_{\Z/2}(\pt)$ through the isomorphism in Lemma \ref{prop:interpretation_K}.
\end{prop}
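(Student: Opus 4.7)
The plan is to first determine each summand of $\mathbb{K}^*(\pt)$ as an $R$-module by feeding the standard answers for $K^*_{\Z/2}(\pt)$ and $K^*(\pt)$ into the long exact sequence of Proposition \ref{prop:basic_exact_sequence_K}, and then to pin down the ring structure by interpreting $\sigma$ as an Euler class via Lemma \ref{prop:interpretation_K}.

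First, I would take the classical identifications $K^0_{\Z/2}(\pt) \cong R(\Z/2) = R$, $K^1_{\Z/2}(\pt) = 0$, $K^0(\pt) = \Z$, and $K^1(\pt) = 0$. The forgetful map $f : R \to \Z$ sends a representation to its dimension, so $t \mapsto 1$; in particular $f$ is surjective with kernel $I = (1 - t)$. Substituting into
\[
\cdots \to K^{n-1}_{\pm}(\pt) \xrightarrow{\delta} K^n_{\Z/2}(\pt) \xrightarrow{f} K^n(\pt) \to K^n_{\pm}(\pt) \to \cdots,
\]
the surjectivity of $f$ at $n = 0$ forces $K^0_{\pm}(\pt) = 0$, while the $n = 1$ portion degenerates to $0 \to K^1_{\pm}(\pt) \to R \xrightarrow{f} \Z$, so $K^1_{\pm}(\pt) \cong I$ as an $R$-module. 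The identity $(1 + t)(1 - t) = 0$ shows that the natural surjection $R/J \twoheadrightarrow I$ sending $1 \mapsto 1 - t$ is an isomorphism, yielding $K^1_{\pm}(\pt) \cong R/J$.

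For the ring structure, Lemma \ref{prop:interpretation_K} lets me identify $\sigma$ with the Euler class $\chi_{\Z/2}(\underline{\R}_1) \in K^{\tau(\underline{\R}_1)+1}_{\Z/2}(\pt)$. The product on $\mathbb{K}^*(\pt)$ is built from the Thom isomorphism for $\underline{\C}_1 \cong \underline{\R}_1 \oplus \underline{\R}_1$, and multiplicativity of Thom classes $\Phi_{\Z/2}(\underline{\R}_1)\cdot\Phi_{\Z/2}(\underline{\R}_1) = \Phi_{\Z/2}(\underline{\C}_1)$ passes to Euler classes. Hence $\sigma^2 \in K^2_{\Z/2}(\pt) = R$ is identified with $\chi_{\Z/2}(\underline{\C}_1)$. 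The standard Koszul-complex description of the $K$-theoretic Euler class of a complex line bundle gives $\chi_{\Z/2}(\underline{\C}_1) = 1 - [\C_1^*] = 1 - t$, so $\sigma^2 = 1 - t$. The remaining relation $(1 + t)\sigma = 0$ is automatic from the module isomorphism $K^1_{\pm}(\pt) \cong R/J$ established above.

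Finally, the equivalence of the two presentations is a purely algebraic check. From $\sigma^2 = 1 - t$ I can eliminate $t = 1 - \sigma^2$; then $t^2 - 1 = (1 - \sigma^2)^2 - 1 = \sigma(\sigma^3 - 2\sigma)$ and $(1 + t)\sigma = (2 - \sigma^2)\sigma = -(\sigma^3 - 2\sigma)$, so every relation collapses to $\sigma^3 = 2\sigma$, giving $\mathbb{K}^*(\pt) \cong \Z[\sigma]/(\sigma^3 - 2\sigma)$. The main obstacle I anticipate is the computation $\sigma^2 = 1 - t$: one must carefully verify that the product $K^1_{\pm}(\pt) \times K^1_{\pm}(\pt) \to K^2_{\Z/2}(\pt)$ defined via the Thom isomorphism for $\underline{\C}_1$ agrees with the equivariant twisted product under Lemma \ref{prop:interpretation_K}, and then fix the sign convention for the $K$-theoretic Euler class of $\underline{\C}_1$.
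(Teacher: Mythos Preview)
Your proposal is correct and follows essentially the same approach as the paper's proof: the $R$-module computations come from the long exact sequence of Proposition~\ref{prop:basic_exact_sequence_K}, and the ring structure is determined by the identity $\sigma^2 = \chi_{\Z/2}(\underline{\R}_1)^2 = \chi_{\Z/2}(\underline{\C}_1) = 1 - t$. Your anticipated obstacle---the compatibility of the two products under Lemma~\ref{prop:interpretation_K}---is exactly what the paper records immediately after that lemma via $\Phi_{\Z/2}(\underline{\R}_1)\Phi_{\Z/2}(\underline{\R}_1) = \Phi_{\Z/2}(\underline{\C}_1)$, so your more detailed treatment of this point and of the algebraic equivalence of the two presentations simply fills in what the paper leaves as routine.
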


\begin{proof}
The $R$-modules structures are straightforward by Proposition \ref{prop:basic_exact_sequence_K}. To determine the ring structure, it suffices to compute $\sigma^2$. Since $\chi_{\Z/2}(\underline{\R}_1)\chi_{\Z/2}(\underline{\R}_1) = \chi_{\Z/2}(\underline{\C}_1) = 1 - t$, we see $\sigma^2 = 1 - t$.
\end{proof}

\begin{prop}
For any space $X$ with $\Z/2$-action, $\tau \in \Twist^0_{\Z/2}(X)$ and $n \in \Z$, the maps $\delta$ and $\delta'$ in Proposition \ref{prop:basic_exact_sequence_K} have the following realizations:
\begin{align*}
\delta &: \ K^{\tau + n}_\pm(X) \to K^{\tau + n+1}_{\Z/2}(X), &
&x \mapsto x \cup \sigma, \\
\delta' &: K^{\tau + n}_{\Z/2}(X) \to K^{\tau + n+1}_\pm(X), &
&x \mapsto x \cup \sigma.
\end{align*}
Therefore the compositions
\begin{gather*}
K^{\tau + n}_\pm(X) \overset{\delta}{\to} 
K^{\tau + n+1}_{\Z/2}(X) \overset{\delta'}{\to} 
K^{\tau + n}_\pm(X), \\
K^{\tau +n}_{\Z/2}(X) \overset{\delta'}{\to} 
K^{\tau +n+1}_{\pm}(X) \overset{\delta}{\to} 
K^{\tau + n}_{\Z/2}(X)
\end{gather*}
are the multiplication of $1 - t \in K^0_{\Z/2}(\mathrm{pt}) = R$.
\end{prop}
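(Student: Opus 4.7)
The plan is to imitate the proof of the cohomological analogue given a few pages earlier: identify the two exact sequences of Proposition~\ref{prop:basic_exact_sequence_K} with Gysin sequences for the $\Z/2$-equivariant real line bundle $\underline{\R}_1 \to X$, observe that the connecting map in a Gysin sequence is cup product with the equivariant Euler class, and use Lemma~\ref{prop:interpretation_K} to identify $\chi_{\Z/2}(\underline{\R}_1)$ with $\sigma$. Once this is done the composition statement is immediate, since $\sigma \cup \sigma = \sigma^2 = 1 - t$ by the preceding proposition.

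Setting up the identifications, the sphere bundle of $\underline{\R}_1$ is $\pi : X \times \partial \tilde{I} \to X$, on which $\Z/2$ acts freely in the fibre direction with quotient canonically $X$; moreover the twist $\tau(\underline{\R}_1)$ restricted to $X \times \partial \tilde{I}$ becomes trivial because the action there is free. Consequently $K^{\tau + n}_{\Z/2}(X \times \partial \tilde{I}) \cong K^{\tau + n}(X)$ with $\pi^*$ corresponding to the forgetful map $f$, and a similar calculation, using the definition of $K_\pm$ together with Bott periodicity, gives $K^{\tau + n}_\pm(X \times \partial \tilde{I}) \cong K^{\tau + n}(X)$ with $\pi^*$ corresponding to $f'$. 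Now apply the long exact sequence of the pair $(X \times \tilde{I}, X \times \partial \tilde{I})$ in $K^{\tau + *}_{\Z/2}$: the relative group is $K^{\tau + n-1}_\pm(X)$ by the very definition of $K_\pm$, while the absolute group on $X \times \tilde{I}$ is $K^{\tau + n}_{\Z/2}(X)$ by equivariant contractibility of $\tilde{I}$, so combining these reproduces the first exact sequence of Proposition~\ref{prop:basic_exact_sequence_K}. On the other hand this same long exact sequence is the Gysin sequence of $\underline{\R}_1$ (using $2\tau(\underline{\R}_1) = \tau(\underline{\C}_1) = 0$, since $\underline{\C}_1$ is an equivariant complex line bundle, hence equivariantly $K$-oriented), whose connecting map is cup product with $\chi_{\Z/2}(\underline{\R}_1) \in K^{\tau(\underline{\R}_1) + 1}_{\Z/2}(X)$. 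By construction this Euler class, pulled back from the point, corresponds under Lemma~\ref{prop:interpretation_K} to $\sigma \in K^1_\pm(\pt)$, giving $\delta = \cup \sigma$.

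An entirely parallel argument, starting from the long exact sequence of the same pair $(X \times \tilde{I}, X \times \partial \tilde{I})$ but taken in $K^{\tau + *}_\pm$ and using the recovery isomorphism $K^{\tau + n}_\pm(X \times \tilde{I}, X \times \partial \tilde{I}) \cong K^{\tau + n-1}_{\Z/2}(X)$ stated right after the definition of the relative $K_\pm$-group, produces the second exact sequence and realises $\delta'$ as cup product with $\sigma$ via the Gysin sequence of $\underline{\R}_1$ in $K^{\tau + *}_\pm$. The main place requiring care is keeping the various twists aligned: one must check that the Thom isomorphisms implicit in the definition of $K_\pm$ interact consistently with the Thom/Gysin formalism for $\underline{\R}_1$ so that the Euler class appears in its natural normalisation rather than a shifted or twisted variant, which amounts to the compatibility $\Phi_{\Z/2}(\underline{\R}_1) \cup \Phi_{\Z/2}(\underline{\R}_1) = \Phi_{\Z/2}(\underline{\C}_1)$ already exploited in Subsection~\ref{subsec:definition_variant}. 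Modulo this bookkeeping the identifications are formal, and the compositions $\delta \circ \delta'$ and $\delta' \circ \delta$ are then cup product with $\sigma^2 = 1 - t$, completing the corollary.
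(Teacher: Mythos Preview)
Your proposal is correct and follows exactly the approach the paper intends: the paper's own proof is the single line ``The same argument as in the case of $H_\pm$ can be adapted,'' and that earlier argument is precisely the identification of the exact sequences with the Gysin sequences for $S(\underline{\R}_1) \to X$, together with the identification of the generator with the Euler class. You have simply written out in detail what the paper leaves implicit.
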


\begin{proof}
The same argument as in the case of $H_\pm$ can be adapted.
\end{proof}

\medskip

\begin{lem} 
Let $X$ be a space with free $\Z/2$-action. For any $\tau \in \Twist^0_{\Z/2}(X)$ and $n \in \Z$, there exists a natural isomorphism of groups
$$
K^{\tau + n}_\pm(X) \cong 
K^{\bar{\tau} + \tau(\underline{\R}_1/\Z_2) + n}(X/\Z_2),
$$
where $\bar{\tau} \in \Twist^0(X/\Z_2)$ corresponds to $\tau$ through $\Twist^0_{\Z/2}(X) \cong \Twist^0(X/\Z_2)$.
\end{lem}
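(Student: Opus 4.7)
The plan is to reduce the lemma to Lemma \ref{prop:interpretation_K} together with the standard descent of equivariant $K$-theory along a free action.

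First, I would apply Lemma \ref{prop:interpretation_K}, which gives a natural isomorphism
$$
K^{\tau + n}_\pm(X) \cong K^{\tau(\underline{\R}_1) + \tau + n}_{\Z/2}(X).
$$
of $K^*_{\Z/2}(X)$-modules. This replaces the variant $K_\pm$ by an ordinary equivariant twisted $K$-group on the same space $X$.

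Next, since the $\Z/2$-action on $X$ is free, the quotient map $q : X \to X/\Z_2$ is a principal $\Z/2$-bundle, and pulling back along $q$ induces an equivalence of groupoids between $X/\Z_2$ (with trivial action on itself) and the action groupoid $X/\!/\Z_2$. By the Morita invariance of (twisted) $K$-theory of groupoids used in \cite{FHT1}, this equivalence induces an isomorphism $\Twist^0_{\Z/2}(X) \cong \Twist^0(X/\Z_2)$ (already invoked in the statement of the lemma as $\tau \mapsto \bar{\tau}$), together with natural isomorphisms $K^{\sigma + n}_{\Z/2}(X) \cong K^{\bar{\sigma} + n}(X/\Z_2)$ for every $\sigma \in \Twist^0_{\Z/2}(X)$. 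Applying this to $\sigma = \tau(\underline{\R}_1) + \tau$, I obtain
$$
K^{\tau(\underline{\R}_1) + \tau + n}_{\Z/2}(X)
\cong K^{\overline{\tau(\underline{\R}_1)} + \bar{\tau} + n}(X/\Z_2).
$$

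The final step is to identify $\overline{\tau(\underline{\R}_1)}$ with $\tau(\underline{\R}_1/\Z_2)$ in $\Twist^0(X/\Z_2)$. This is the small point to verify: by naturality, the twist $\tau(V)$ associated to a $\Z/2$-equivariant real vector bundle $V$ on $X$ descends under the Morita equivalence to the twist $\tau(V/\Z_2)$ of the quotient real vector bundle on $X/\Z_2$, because both twists are determined by the characteristic classes $(w_1, W_3)$ of the bundle and these classes are compatible with the isomorphism $H^*_{\Z/2}(X) \cong H^*(X/\Z_2)$ afforded by the free action. Applying this with $V = \underline{\R}_1$ completes the chain of isomorphisms and yields
$$
K^{\tau + n}_\pm(X) \cong K^{\bar{\tau} + \tau(\underline{\R}_1/\Z_2) + n}(X/\Z_2).
$$

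I do not foresee a genuine obstacle; the only care needed is to make sure the identification of descended twists in the last step is handled at the level of the monoidal category of twists, so that the isomorphism is natural, rather than merely an equality of isomorphism classes.
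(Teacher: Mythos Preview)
Your proposal is correct and follows essentially the same route as the paper. The paper's proof is a three-sentence version of exactly what you wrote: it invokes the invariance of twisted $K$-theory under local equivalences from \cite{FHT1} to pass from the free $\Z/2$-space $X$ to $X/\Z_2$, and then cites Lemma~\ref{prop:interpretation_K}; the identification $\overline{\tau(\underline{\R}_1)} = \tau(\underline{\R}_1/\Z_2)$ is left implicit. One small imprecision in your write-up: the combined twist $\tau(\underline{\R}_1) + \tau$ generally has nontrivial $H^1_{\Z/2}(X;\Z/2)$-component and so need not lie in $\Twist^0_{\Z/2}(X)$, but the local-equivalence invariance applies to the full category $\Twist_{\Z/2}(X)$, so this causes no difficulty.
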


\begin{proof}
The twisted $K$-theory of a local quotient groupoid is invariant under local equivalences \cite{FHT1}. The groupoid associated to the free $\Z/2$-space $X$ and the quotient space $X/\Z_2$ are in local equivalence. Now Proposition \ref{prop:interpretation_K} completes the proof.
\end{proof}

\subsection{Thom isomorphism for Real line bundle}

\begin{thm}[Thom isomorphism]
Let $X$ be a space with $\Z/2$-action, and $\pi : R \to X$ a Real line bundle. For $h \in H^3_{\Z/2}(X; \Z)$ and $n \in \Z$, we have natural isomorphisms of modules over $K^*_{\Z/2}(X)$:
\begin{align*}
K^{h + n}_{\Z/2}(X) 
&\cong K^{\pi^*(h + W_3^{\Z_2}(R)) + n}_{\pm}(D(R), S(R)), \\
K^{h + n}_{\pm}(X) 
&\cong K^{\pi^*(h + W_3^{\Z_2}(R)) + n}_{\Z/2}(D(R), S(R)).
\end{align*}
If we write $\Phi_{\pm}(R) \in K^{W_3^{\Z_2}(R)}_{\pm}(D(R), S(R))$ for the image of $1 \in K^0_{\Z/2}(X)$, then the isomorphisms are given by $x \mapsto \pi^*x \cup \Phi_{\pm}(R)$.
\end{thm}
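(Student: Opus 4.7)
The plan is to deduce both isomorphisms from the twisted equivariant Thom isomorphism recalled earlier, applied to the underlying real rank-$2$ $\Z/2$-equivariant vector bundle $R_\R$, together with Lemma \ref{lem:determinant} and Proposition \ref{prop:interpretation_K}.

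First I would invoke the general Thom isomorphism in $K^*_{\Z/2}$ for $V = R_\R$ to obtain
\[
K^{h + n}_{\Z/2}(X) \cong K^{\pi^*(h + \tau(R_\R)) + n + 2}_{\Z/2}(D(R), S(R)),
\]
and collapse $n+2$ to $n$ by Bott periodicity. The next step is to identify the twist $\tau(R_\R)$. By Lemma \ref{lem:determinant}, $w_1^{\Z/2}(R_\R) = w_1^{\Z/2}(\underline{\R}_1)$, and since $\underline{\R}_1$ is pulled back from a point we have $W_3^{\Z/2}(\underline{\R}_1) = 0$. Using the classification of ungraded twists together with the Whitney sum identity $\tau(V \oplus V') = \tau(V) + \tau(V')$ and the definition $W_3^{\Z/2}(R) := W_3^{\Z/2}(R_\R)$, this yields an isomorphism of twists
\[
\tau(R_\R) \simeq \tau(\underline{\R}_1) + W_3^{\Z/2}(R)
\]
in $\pi_0(\Twist^0_{\Z/2}(X))$. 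Substituting this and then applying Proposition \ref{prop:interpretation_K} to the pair $(D(R), S(R))$ with twist $\pi^*(h + W_3^{\Z/2}(R))$ converts the $\tau(\underline{\R}_1)$-summand into $K_\pm$, yielding the first isomorphism.

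For the second isomorphism I would begin by applying Proposition \ref{prop:interpretation_K} on $X$ to rewrite $K^{h+n}_\pm(X) \cong K^{h + \tau(\underline{\R}_1) + n}_{\Z/2}(X)$, then invoke the Thom iso exactly as above. The twist that arises upstairs is
\[
\pi^*\bigl(h + \tau(\underline{\R}_1) + \tau(\underline{\R}_1) + W_3^{\Z/2}(R)\bigr),
\]
and the two copies of $\tau(\underline{\R}_1)$ cancel because $\underline{\R}_1 \oplus \underline{\R}_1 \cong \underline{\C}_1$ is an equivariant complex line bundle, hence orientable with vanishing $W_3^{\Z/2}$, so $\tau(\underline{\R}_1) + \tau(\underline{\R}_1) = \tau(\underline{\C}_1) = 0$. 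The Thom class $\Phi_\pm(R)$ is then defined as the image of $1 \in K^0_{\Z/2}(X)$ under these compositions, and multiplicativity of the Thom class plus naturality of the twisted Thom iso guarantees that the resulting map is a $K^*_{\Z/2}(X)$-module isomorphism realized by $x \mapsto \pi^*x \cup \Phi_\pm(R)$.

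The main subtlety I expect is the twist arithmetic, because the sequence
\[
1 \to H^3_{\Z/2}(X;\Z) \to \pi_0(\Twist_{\Z/2}(X)) \to H^1_{\Z/2}(X;\Z/2) \to 1
\]
does not split as a sequence of groups (its extension class is $\beta(\alpha \cup \alpha')$), so one cannot freely add invariants componentwise. The honest justification of $\tau(R_\R) = \tau(\underline{\R}_1) + W_3^{\Z/2}(R)$ is: both twists map to the same element $w_1^{\Z/2}(\underline{\R}_1)$ under the right-hand map, so their difference lies in $H^3_{\Z/2}(X;\Z)$, and this difference is computed to be $W_3^{\Z/2}(R)$ via the Whitney sum formula for $\tau$ and the fact that $W_3^{\Z/2}(\underline{\R}_1) = 0$. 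Once this identification and the analogous cancellation $2\tau(\underline{\R}_1) = 0$ are established, the rest of the proof is a formal chain of Thom isomorphisms and the change-of-coefficients Proposition \ref{prop:interpretation_K}.
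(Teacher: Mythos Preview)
Your proposal is correct and follows essentially the same route as the paper: apply the twisted Thom isomorphism for the rank-$2$ bundle $R_\R$, compute the twist arithmetic $[\tau(\underline{\R}_1)] + [\tau(R_\R)] = (0, W_3^{\Z/2}(R))$ using Lemma~\ref{lem:determinant} and the extension cocycle $\beta(\alpha\cup\alpha')$, and then invoke Proposition~\ref{prop:interpretation_K}. The paper's proof is just a terser version of the same computation (it writes the twist sum out directly rather than phrasing it as a Whitney-sum identity, and absorbs your cancellation $2\tau(\underline{\R}_1)=0$ into the single line $\beta(w_1^{\Z/2}(\underline{\R}_1)^2)=0$, which holds because $\underline{\R}_1$ is pulled back from a point).
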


\begin{proof}
Noting Lemma \ref{lem:determinant}, we have
\begin{align*}
[\tau(\underline{\R}_1)] + [\tau(R)]
&= (w_1^{\Z/2}(\underline{\R}_1), 0) + (w_1^{\Z/2}(R), W_3^{\Z/2}(R)) \\
&= (0, W_3^{\Z/2}(R) 
+ \beta(w_1^{\Z/2}(\underline{\R}_1) \cup w_1^{\Z/2}(\underline{\R}_1)))
= (0, W_3^{\Z/2}(R))
\end{align*}
in $\pi_0(\Twist^0_{\Z/2}(X)) \cong H^1_{\Z/2}(X; \Z/2) \times H^3_{\Z/2}(X; \Z)$. Then the Thom isomorphism for equivariant twisted $K$-theory and Proposition \ref{prop:interpretation} complete the proof. 
\end{proof}

As before, the Euler class $\chi_R(R) \in K^{W_3^{\Z/2}(R) + 0}_\pm(X)$ of a Real line bundle $\pi : R \to X$ is defined to be the image of $1 \in K_{\Z/2}(X)$ under the composition of
$$
K_{\Z/2}(X) \overset{\mathrm{Thom}}{\to} 
K^{\pi^*W_3^{\Z_2}(R)}_\pm(D(R), S(R)) \to
K^{\pi^*W_3^{\Z_2}(R)}_\pm(D(R)) \cong 
K^{W_3^{\Z_2}(R)}_\pm(X).
$$
The push-forward $\pi_* : K^{\pi^*(h + W_3^{\Z/2}(R)) + n}_\pm(S(R)) \to K^{h + n-1}_{\Z/2}(X)$ along $\pi: S(R) \to X$ is defined to be the composition of
$$
K^{\pi^*(h + W_3^{\Z_2}(R)) + n}_\pm(S(R)) \to
K^{\pi^*(h + W_3^{\Z_2}(R)) +n+1}_\pm(D(R), S(R)) 
\overset{\mathrm{Thom}}{\to}
K^{h + n -1}_{\Z_2}(X),
$$
and $\pi_* : K^{\pi^*(h + W_3^{\Z/2}(R)) + n}_{\Z/2}(S(R)) \to K^{h + n-1}_{\pm}(X)$ similarly.

\begin{cor}[Gysin sequence]
Let $X$ be a space with $\Z/2$-action, and $\pi : R \to X$ a Real line bundle. For any $h \in H^3_{\Z/2}(X; \Z)$, we have natural exact sequences of $K^*_{\Z/2}(X)$-modules:
\begin{gather*}
\cdot\cdot \to
K^{h + n}_{\Z/2}(X) \overset{\chi_R(R)}{\to}
K^{h + W_3 + n}_{\pm}(X) \overset{\pi^*}{\to}
K^{\pi^*(h +  W_3) + n}_{\pm}(S(R)) \overset{\pi_*}{\to}
K^{h + n-1}_{\Z/2}(X) \to
\cdot\cdot, \\
\cdot\cdot \to
K^{h + n}_{\pm}(X) \overset{\chi_R(R)}{\to}
K^{h + W_3 +  n}_{\Z/2}(X) \overset{\pi^*}{\to}
K^{\pi^*(h + W_3) + n}_{\Z/2}(S(R)) \overset{\pi_*}{\to}
K^{h + n-1}_{\pm}(X) \to
\cdot\cdot,
\end{gather*}
where $W_3 = W_3^{\Z/2}(R)$.
\end{cor}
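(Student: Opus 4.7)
The plan is to derive both Gysin sequences from the long exact sequences of the pair $(D(R), S(R))$ in $K^{\tau+*}_{\pm}$ and $K^{\tau+*}_{\Z/2}$, translated through the two Thom isomorphisms just established. Concretely, for a twist of the form $\tau = \pi^*(h + W_3^{\Z/2}(R))$, the exactness axiom gives
\begin{equation*}
\cdots \to K^{\tau+n}_{\pm}(D(R), S(R)) \to K^{\tau+n}_{\pm}(D(R)) \to K^{\tau+n}_{\pm}(S(R)) \to K^{\tau+n+1}_{\pm}(D(R), S(R)) \to \cdots
\end{equation*}
and similarly with $\pm$ replaced by $\Z/2$. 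Since $\pi : D(R) \to X$ is a $\Z/2$-equivariant homotopy equivalence (via the zero section), the homotopy axiom together with Corollary \ref{cor:homotopy_construct_isomorphism} gives canonical isomorphisms $K^{\tau+n}_{?}(D(R)) \cong K^{h + W_3 + n}_{?}(X)$. The Thom isomorphisms identify the relative groups $K^{\tau+n}_{\pm}(D(R), S(R))$ with $K^{h + n}_{\Z/2}(X)$ and $K^{\tau+n}_{\Z/2}(D(R), S(R))$ with $K^{h + n}_{\pm}(X)$, after suitable degree shifts and Bott periodicity (noting $K^{h+n+1}_{?}\cong K^{h+n-1}_{?}$). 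Splicing these identifications into the two long exact sequences yields exact sequences of the stated shape.

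The remaining task is to identify the three types of maps. First, the connecting map of the pair, after Thom, becomes $\pi_*$ by the very definition of the push-forward given just above the corollary. Second, the pull-back $K^{\tau+n}_{?}(D(R)) \to K^{\tau+n}_{?}(S(R))$ becomes $\pi^* : K^{h+W_3+n}_{?}(X) \to K^{\pi^*(h+W_3)+n}_{?}(S(R))$ by the naturality of the identification $D(R) \simeq X$. Third, the map $K^{\tau+n}_{?}(D(R), S(R)) \to K^{\tau+n}_{?}(D(R))$ coming from the inclusion becomes cup product with $\chi_R(R)$: this uses that the Thom isomorphism is implemented by $x \mapsto \pi^*x \cup \Phi_{\pm}(R)$, so after restricting the Thom class to $D(R)$ and pulling back by the zero section one recovers exactly the Euler class $\chi_R(R) \in K^{W_3 + 0}_{\pm}(X)$ as defined above. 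The $K^*_{\Z/2}(X)$-module structure on every term is preserved because all three maps (pull-back, cup with an Euler class, push-forward) are $K^*_{\Z/2}(X)$-linear.

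The main subtlety is bookkeeping of the twists. One must check that
\begin{equation*}
[\tau(\underline{\R}_1)] + [\tau(R)] = (0, W_3^{\Z/2}(R))
\end{equation*}
in $\pi_0(\Twist^0_{\Z/2}(X))$, so that after applying Proposition \ref{prop:interpretation_K} to swap between $K_{\pm}$ and $K_{\Z/2}$, the twist appearing is exactly $h + W_3^{\Z/2}(R)$; but this has already been done in the proof of the Thom isomorphism itself, so it transports directly. Given this, both Gysin sequences follow formally from the two exact sequences of the pair $(D(R), S(R))$ rewritten via the two Thom isomorphisms.
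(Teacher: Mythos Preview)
Your proposal is correct and matches the paper's (implicit) approach: the paper does not write out a proof for this corollary, but from the surrounding text it is clear that the Gysin sequences are obtained exactly as you describe, by feeding the long exact sequence of the pair $(D(R),S(R))$ through the Thom isomorphism, the homotopy equivalence $D(R)\simeq X$, and the definitions of $\chi_R(R)$ and $\pi_*$ given just before the corollary. Your remark about Bott periodicity absorbing the degree shift is the one point the paper leaves completely unsaid, and it is handled correctly.
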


\smallskip

As in the case of ordinary cohomology, $K_{\Z/2}$ and $K_\pm$ combine to give a single Gysin sequence respecting the $\mathbb{K}^*(X)$-module structures.

\medskip

We will later use the following property of the push-forward:

\begin{prop}
Let $X$ be a space with $\Z/2$-action, $Y \subset X$ an invariant closed subspace, and $h \in H^3_{\Z/2}(X; \Z)$. For any Real line bundle $\pi : R \to X$, the push-forward is compatible with the exact sequences for pairs, i.e.\ the following diagram is commutative (up to sing): 
$$
\begin{array}{cc@{}c@{}c@{}c@{}c@{}c@{}cc}
\cdots & \to &
K^{\pi^*(h + W_3) + n}_{\Z/2}(S, S_Y) & \to &
K^{\pi^*(h + W_3) + n}_{\Z/2}(S) & \to &
K^{\pi^*(h + W_3) + n}_{\Z/2}(S_Y) & \to &
\cdots \\
 & & 
\downarrow \pi_* & &
\downarrow \pi_* & &
\downarrow \pi_* & & \\
\cdots & \to &
K^{h + n-1}_{\pm}(X, Y) &\to &
K^{h + n-1}_{\pm}(X) & \to &
K^{h + n-1}_{\pm}(Y) & \to &
\cdots
\end{array}
$$
where $W_3 = W_3^{\Z/2}(R)$, $S = S(R)$ and $S_Y = S(R)|_Y$. There also exists the commutative diagram with $\Z/2$ and $\pm$ exchanged:
$$
\begin{array}{cc@{}c@{}c@{}c@{}c@{}c@{}cc}
\cdots & \to &
K^{\pi^*(h + W_3) + n}_{\pm}(S, S_Y) & \to &
K^{\pi^*(h + W_3) + n}_{\pm}(S) & \to &
K^{\pi^*(h + W_3) + n}_{\pm}(S_Y) & \to &
\cdots \\
 & & 
\downarrow \pi_* & &
\downarrow \pi_* & &
\downarrow \pi_* & & \\
\cdots & \to &
K^{h + n-1}_{\Z/2}(X, Y) &\to &
K^{h + n-1}_{\Z/2}(X) & \to &
K^{h + n-1}_{\Z/2}(Y) & \to &
\cdots.
\end{array}
$$
\end{prop}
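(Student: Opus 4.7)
The plan is to reduce the desired compatibility to the naturality of the Thom isomorphism for Real line bundles combined with the naturality of the long exact sequence of a pair or triple. Abbreviate $D = D(R)$, $S = S(R)$, $D_Y = D(R|_Y)$, $S_Y = S(R|_Y)$, and $\tau = \pi^*(h + W_3^{\Z/2}(R))$.

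First I would assemble the relevant Thom isomorphisms. The preceding theorem provides $K^{h + n - 1}_\pm(X) \cong K^{\tau + n + 1}_{\Z/2}(D, S)$ and $K^{h + n - 1}_\pm(Y) \cong K^{\tau + n + 1}_{\Z/2}(D_Y, S_Y)$; the corresponding relative isomorphism $K^{h + n - 1}_\pm(X, Y) \cong K^{\tau + n + 1}_{\Z/2}(D, S \cup D_Y)$ is the relative Thom isomorphism, deduced from the absolute one by naturality under the pair inclusion $(D_Y, S_Y) \hookrightarrow (D, S)$ together with excision. (Bott periodicity $K^{h + n + 1}_\pm = K^{h + n - 1}_\pm$ is tacit here.) Under these three identifications, the vertical map $\pi_*$ in each column of the diagram is, by definition, the composition of the connecting homomorphism in the long exact sequence of the corresponding pair with the inverse of the appropriate Thom isomorphism.

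Next I would rewrite the two rows of the diagram intrinsically. After applying Thom, the bottom row becomes the long exact sequence of the triple $(D, S \cup D_Y, S)$ combined with the excision isomorphism $K^{\tau + *}_{\Z/2}(S \cup D_Y, S) \cong K^{\tau + *}_{\Z/2}(D_Y, S_Y)$. The top row is the long exact sequence of the pair $(S, S_Y)$, which by excision applied to the triple $(D, S \cup D_Y, D_Y)$ is isomorphic to the connecting portion $K^{\tau + *}_{\Z/2}(D, D_Y) \to K^{\tau + *}_{\Z/2}(S, S_Y) \to \cdots$ of that triple. The three vertical maps then become precisely the boundary homomorphisms of the pairs $(D, S)$, $(D, S \cup D_Y)$, and $(D_Y, S_Y)$, and the commutativity of the diagram up to sign reduces to the standard fact that the boundary maps arising from a filtration of a space fit together anti-commutatively, which is the underlying mechanism behind the exact couple giving rise to a spectral sequence.

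The main obstacle is sign bookkeeping: one must verify that the signs of the two boundary homomorphisms and of the Thom classes cancel to yield genuine commutativity rather than anti-commutativity, and in particular check that the relative Thom isomorphism is consistent with the absolute ones in the sense needed for the push-forwards to match. After this, the parallel statement with the labels $\Z/2$ and $\pm$ interchanged follows by applying the identical argument to the second Thom isomorphism of the preceding theorem, the only substantive change being a swap of the roles of $K_{\Z/2}$ and $K_\pm$ throughout.
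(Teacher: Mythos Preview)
Your proposal is correct and follows essentially the same route as the paper: both reduce the push-forward squares via the Thom isomorphism to a statement about boundary maps of pairs/triples in $K^{\tau+*}_{\Z/2}$ on the disk bundle, and both identify the crux as the (anti-)commutativity of the two connecting homomorphisms arising from the triad $(D(R); D_Y, S)$. The paper is slightly more explicit in that it first isolates the single nontrivial square (the one involving the connecting map $K^{\tau+n-1}_{\Z/2}(S_Y) \to K^{\tau+n}_{\Z/2}(S,S_Y)$) and then writes out the reduction to the excision/boundary square for $(\mathcal{X};\mathcal{A},\mathcal{B})$ by hand, whereas you package the same content as ``boundary maps from a filtration anti-commute''; but this is a difference of presentation, not of strategy.
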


\begin{proof}
Since the roles of $K_{\Z/2}$ and $K_\pm$ are interchangeable, we only consider the former case. Then the only non-trivial part is the commutativity of the diagram
$$
\begin{CD}
K^{\pi^*(h + W_3) + n-1}_{\Z/2}(S_Y) @>>> 
K^{\pi^*(h + W_3) + n}_{\Z/2}(S, S_Y) \\
@V{\pi_*}VV @VV{\pi_*}V \\
K^{h + n-2}_\pm(Y) @>>>
K^{h + n-1}_\pm(X, Y).
\end{CD}
$$
Because of the naturality of the Thom isomorphism, the commutativity of the above diagram follows from that of the following diagram by substituting $\mathcal{X} = D(R)$, $\mathcal{A} = D_Y$, $\mathcal{B} = S$ and $\sigma = \pi^*(h + W_3)$:
$$
\begin{CD}
K^{\sigma + n - 1}_{\Z/2}(\mathcal{A} \cap \mathcal{B}) @>>>
K^{\sigma + n}_{\Z/2}(\mathcal{B}, \mathcal{A} \cap \mathcal{B}) \\
@VVV @VVV \\
K^{\sigma + n}_{\Z/2}(\mathcal{A}, \mathcal{A} \cap \mathcal{B}) @>>> 
K^{\sigma + n + 1}_{\Z/2}(\mathcal{X}, \mathcal{A} \cup \mathcal{B}),
\end{CD}
$$
where the upper row comes from the exact sequence for the pair $(\mathcal{B}, \mathcal{A} \cap \mathcal{B})$, the lower row from the triad $(\mathcal{X}; \mathcal{A}, \mathcal{B})$, the left column from the pair $(\mathcal{A}, \mathcal{A} \cap \mathcal{B})$, and the right column from the triad $(\mathcal{X}; \mathcal{B}, \mathcal{A})$. The commutativity of the diagram further reduces to that of
$$
\begin{CD}
K^{\sigma + n-1}_{\Z/2}(\mathcal{A} \cap \mathcal{B}) @>>>
K^{\sigma + n}_{\Z/2}(\mathcal{B}, \mathcal{A} \cap \mathcal{B}) @>{\cong}>>
K^{\sigma + n}_{\Z/2}(\mathcal{A} \cup \mathcal{B}, \mathcal{A}) \\
@VVV @. @VVV \\
K^{\sigma + n}_{\Z/2}(\mathcal{A}, \mathcal{A} \cap \mathcal{B}) @>{\cong}>>
K^{\sigma + n}_{\Z/2}(\mathcal{A} \cup \mathcal{B}, \mathcal{B}) @>>>
K^{\sigma + n-1}_{\Z/2}(\mathcal{A} \cup \mathcal{B}),
\end{CD}
$$
where the homomorphisms in the above are the connecting homomorphisms in the exact sequences for pairs, or the excision isomorphism. In a direct manner, we can verify the commutativity of the last diagram (up to sign).
\end{proof}


\section{Computation on the torus}
\label{sec:torus}

We compute the equivariant $K$-theory and its variant for the $n$-dimensional torus. In particular, the case of $n = 1, 2, 3$ is studied in some detail.

\medskip

We write $\tilde{S}^1$ for the circle $S^1 \subset \C$ with its $\Z/2$-action given by complex conjugation. In view of the identification $\tilde{I}/\partial{I} \cong \tilde{S}^1$ induced by $\tilde{I} \to \tilde{S}^1$, ($t \mapsto \exp \pi i t$), we choose $-1 \in \tilde{S}^1$ as the base point $\pt \in \tilde{S}^1$. For $n \ge 1$, we put $\tilde{T}^n = (\tilde{S}^1)^n$.

\subsection{General case}

Let $X$ be any space with $\Z/2$-action, and $\pi : X \times \tilde{S}^1 \to X$ the trivial Real circle bundle. Because of the equivariant section $i : X \to X \times \tilde{S}^1$, ($i(x) = (x, \pt)$), the Gysin sequences reduce to the split exact sequences:
\begin{gather*}
0 \to 
K^n_{\Z/2}(X) \overset{\pi^*}{\to} 
K^n_{\Z/2}(X \times \tilde{S}^1) \overset{\pi_*}{\to}
K^{n-1}_{\pm}(X) \to
0, \\
0 \to 
K^n_{\pm}(X) \overset{\pi^*}{\to}
K^n_{\pm}(X \times \tilde{S}^1) \overset{\pi_*}{\to}
K^{n-1}_{\Z/2}(X) \to
0,
\end{gather*}
Thus, for any $n \in \Z$, there are natural $K^*_{\Z/2}(X)$-module isomorphisms:
\begin{align*}
K^n_{\Z/2}(X \times \tilde{S}^1) &\cong 
K^n_{\Z/2}(X) \oplus K^{n-1}_\pm(X), \\
K^n_{\pm}(X \times \tilde{S}^1) &\cong 
K^n_{\pm}(X) \oplus K^{n-1}_{\Z/2}(X).
\end{align*}
Further, these isomorphisms are compatible with the $\mathbb{K}^*(X)$-module structures.

\begin{prop} \label{prop:torus_general_case}
For $n \ge 1$, we have the following $R$-module isomorphisms
\begin{align*}
K^0_{\Z/2}(\tilde{T}^n) &\cong (R \oplus R/J)^{2^{n-1}}, &
K^1_{\Z/2}(\tilde{T}^n) &= 0, \\
K^0_\pm(\tilde{T}^n) &= 0, &
K^1_\pm(\tilde{T}^n) &\cong (R \oplus R/J)^{2^{n-1}}.
\end{align*}
\end{prop}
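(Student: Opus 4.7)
The plan is to prove the proposition by induction on $n$, using the natural splittings stated in the paragraph immediately preceding the proposition, which are themselves consequences of the Gysin sequence for the trivial Real circle bundle together with Bott periodicity.

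For the base case $n = 1$, I would take $X = \pt$ in the splittings
\begin{align*}
K^n_{\Z/2}(X \times \tilde{S}^1) &\cong K^n_{\Z/2}(X) \oplus K^{n-1}_\pm(X), \\
K^n_{\pm}(X \times \tilde{S}^1) &\cong K^n_{\pm}(X) \oplus K^{n-1}_{\Z/2}(X),
\end{align*}
and invoke the already-established computation of $\mathbb{K}^*(\pt)$, namely $K^0_{\Z/2}(\pt) \cong R$, $K^1_{\Z/2}(\pt) = 0$, $K^0_\pm(\pt) = 0$ and $K^1_\pm(\pt) \cong R/J$. Combined with Bott periodicity $K^{-1} \cong K^1$, this immediately yields the four claimed isomorphisms for $\tilde{T}^1 = \tilde{S}^1$ with $2^{1-1} = 1$ summand each.

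For the inductive step, assume the statement for some $n \ge 1$ and apply the splittings to $X = \tilde{T}^n$ with $\tilde{T}^{n+1} = \tilde{T}^n \times \tilde{S}^1$. For instance,
\begin{align*}
K^0_{\Z/2}(\tilde{T}^{n+1})
&\cong K^0_{\Z/2}(\tilde{T}^n) \oplus K^1_\pm(\tilde{T}^n)
\cong (R \oplus R/J)^{2^{n-1}} \oplus (R \oplus R/J)^{2^{n-1}}
= (R \oplus R/J)^{2^n},
\end{align*}
and analogously
\begin{align*}
K^1_\pm(\tilde{T}^{n+1}) &\cong K^1_\pm(\tilde{T}^n) \oplus K^0_{\Z/2}(\tilde{T}^n) \cong (R \oplus R/J)^{2^n}, \\
K^1_{\Z/2}(\tilde{T}^{n+1}) &\cong K^1_{\Z/2}(\tilde{T}^n) \oplus K^0_\pm(\tilde{T}^n) = 0, \\
K^0_\pm(\tilde{T}^{n+1}) &\cong K^0_\pm(\tilde{T}^n) \oplus K^1_{\Z/2}(\tilde{T}^n) = 0,
\end{align*}
where the $K^{-1}$'s arising in the splittings have been converted to $K^1$'s via Bott periodicity, and the induction hypothesis is used throughout.

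Since the proposition concerns only the abstract $R$-module structure (not the ring structure of $\mathbb{K}^*(\tilde{T}^n)$), there is no real obstacle: the argument is a mechanical unwinding of the splittings. The only point deserving care is the $R$-module structure on the summands appearing after splitting, but since the splittings are stated as $K^*_{\Z/2}(X)$-module isomorphisms (and hence in particular $R$-module isomorphisms via the projection $X \to \pt$), the identifications above are automatic.
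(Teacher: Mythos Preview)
Your proof is correct and follows exactly the paper's own argument: induction on $n$ via the splitting of the Gysin sequence for the trivial Real circle bundle $\tilde{T}^n = \tilde{T}^{n-1} \times \tilde{S}^1$, with the base case reducing to the known values on a point. The paper's proof is a one-sentence sketch of precisely this, so you have simply unpacked it.
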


Notice that the result of $K^*_{\Z/2}(\tilde{T}^n)$ is already known in \cite{MD-R}. (cf.\ \cite{G-S})

\begin{proof}
Regard $\tilde{T}^n = \tilde{T}^{n-1} \times \tilde{S}^1$ as the trivial Real circle bundle. Then the proposition follows from an induction based on the splitting of the Gysin sequence.
\end{proof}

\medskip

We here construct the section of the projection $\pi_* : K^n_{\Z/2}(X \times \tilde{S}^1) \to K^{n-1}_\pm(X)$ in a natural way: As before, let $i : X \to X \times \tilde{S}^1$ be the inclusion $i(x) = (x, \pt)$. Thanks to the projection $\pi : X \times \tilde{S}^1 \to X$, the exact sequence for the pair $(X \times \tilde{S}^1, X \times \pt)$ also reduces to a split exact sequence
$$
0 \to 
K^n_{\Z/2}(X \times \tilde{S}^1, X \times \pt) \overset{j^*}{\to}
K^n_{\Z/2}(X \times \tilde{S}^1) \overset{i^*}{\to}
K^n_{\Z/2}(X \times \pt) \to
0.
$$
Recall $\tilde{I}/\partial \tilde{I} \cong \tilde{S}^1$. We write $j^* : K^{n-1}_\pm(X) \to K^n_{\Z/2}(X \times \tilde{S}^1)$ for the composition of 
$$
K^{n-1}_\pm(X) = K^n_{\Z/2}(X \times \tilde{I}, X \times \partial \tilde{I})
\cong K^n_{\Z/2}(X \times \tilde{S}^1, X \times \pt)
\overset{j^*}{\to} K^n_{\Z/2}(X \times \tilde{S}^1)
$$
by an abuse of notations.

\begin{lem} 
The composition $\pi_* \circ j^*$ is the identity of $K^{n-1}_\pm(X)$:
$$
K^{n-1}_\pm(X) \overset{j^*}{\to}
K^n_{\Z/2}(X \times \tilde{S}^1) \overset{\pi_*}{\to}
K^{n-1}_\pm(X).
$$
\end{lem}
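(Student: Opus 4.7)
The plan is to compute $\pi_* \circ j^*$ by tracing through the definitions and applying the long exact sequence of the triple $(X \times \tilde{D}^2, X \times \tilde{S}^1, X \times \pt)$, where $\tilde{D}^2 \subset \tilde{\C}$ is the closed unit disk with conjugation action and the basepoint $\pt = -1 \in \tilde{S}^1$ is a fixed point.

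First I would unpack both maps. The equivariant quotient $q : (\tilde{I}, \partial \tilde{I}) \to (\tilde{S}^1, \pt)$ collapsing $\partial \tilde{I}$ to $\pt$ induces an excision isomorphism $q^* : K^n_{\Z/2}(X \times \tilde{S}^1, X \times \pt) \xrightarrow{\cong} K^n_{\Z/2}(X \times \tilde{I}, X \times \partial \tilde{I}) = K^{n-1}_\pm(X)$, so that $j^*$ is $(q^*)^{-1}$ followed by the forgetful map of the pair $(X \times \tilde{S}^1, X \times \pt)$. The push-forward $\pi_*$ is, by definition, the composition $\mathrm{Thom}^{-1} \circ \delta$, where $\delta$ is the connecting map of the pair $(X \times \tilde{D}^2, X \times \tilde{S}^1)$ and $\mathrm{Thom}$ is the Thom isomorphism for the trivial Real line bundle $R = X \times \tilde{\C}$.

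Next, the standard compatibility of connecting homomorphisms identifies the composite of the forgetful map with $\delta$ with the connecting map $\partial$ of the triple $(X \times \tilde{D}^2, X \times \tilde{S}^1, X \times \pt)$. Since $\{-1\} \subset \tilde{D}^2$ is an equivariant deformation retract via the straight-line homotopy $H(z, s) = (1-s) z - s$ (equivariant because $-1 \in \R$ is fixed by conjugation), the pair $(X \times \tilde{D}^2, X \times \pt)$ has vanishing relative $K$-theory, and the long exact sequence of the triple forces $\partial$ to be an isomorphism. Hence $\pi_* \circ j^* = \mathrm{Thom}^{-1} \circ \partial \circ (q^*)^{-1}$, and the lemma reduces to showing that $\partial \circ (q^*)^{-1}$ coincides with the Thom isomorphism.

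The main obstacle is this final identification. The strategy is to use the decomposition $\tilde{\C} \cong \underline{\R}_0 \oplus \underline{\R}_1$ of equivariant real vector bundles, which yields an equivariant homotopy equivalence of pairs $(\tilde{D}^2, \tilde{S}^1) \simeq (I \times \tilde{I}, \partial(I \times \tilde{I}))$ with $I = [0,1]$ carrying the trivial $\Z/2$-action. Under this equivalence the Thom class of $\tilde{\C}$ factors as an exterior product of Thom classes for $\underline{\R}_0$ and $\underline{\R}_1$: the Thom isomorphism for $\underline{\R}_1$ is tautologically the defining identification $K^{n-1}_\pm(X) = K^n_{\Z/2}(X \times \tilde{I}, X \times \partial \tilde{I})$, while the Thom isomorphism for $\underline{\R}_0$ is the standard equivariant suspension. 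Correspondingly, $\partial \circ (q^*)^{-1}$ decomposes as the same equivariant suspension applied to the defining identification, giving the required equality. The delicate step is matching these two kinds of connecting maps with the correct signs in the factorization.
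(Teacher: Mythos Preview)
Your proof is correct and follows essentially the same line as the paper's: both unwind $\pi_*\circ j^*$ as the connecting map for the pair $(X\times\tilde{D}^2,\,X\times\tilde{S}^1)$ composed with the inverse Thom isomorphism for the trivial Real line bundle, and both finish by factoring that Thom isomorphism through the splitting $\tilde{\C}\cong\underline{\R}_0\oplus\underline{\R}_1$, so that the $\underline{\R}_1$-piece is the defining identification of $K_\pm$ and the $\underline{\R}_0$-piece is the ordinary suspension, which matches the connecting map. The only presentational difference is that the paper first adjoins a fixed basepoint to $X$ and passes to reduced theories and smash products before carrying out this comparison, whereas you work directly with the long exact sequence of the triple $(X\times\tilde{D}^2,\,X\times\tilde{S}^1,\,X\times\pt)$; your route avoids the basepoint bookkeeping at the cost of having to state the equivalence $(\tilde{D}^2,\tilde{S}^1)\simeq(I\times\tilde{I},\partial(I\times\tilde{I}))$ explicitly, but the substantive identification is the same in both arguments.
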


\begin{proof}
Replacing $X$ by $X \sqcup \pt$ if necessary, we can assume $X$ admits a fixed point $p \in X$. The decompositions of $K_\pm$ and $K_{\Z/2}$ by means of the reduced theories $\tilde{K}_\pm$ and $\tilde{K}_{\Z/2}$ are compatible with $j^*$ and $\pi_*$. Hence it suffices to prove that the compositions of the following maps are the identity maps:
\begin{gather*}
\tilde{K}^{n-1}_\pm(X) \overset{j^*}{\to}
K^n_{\Z/2}(X \times \tilde{S}^1, p \times \tilde{S}^1) \overset{\pi_*}{\to}
\tilde{K}^{n-1}_\pm(X), \\
K^{n-1}_\pm(\pt) \overset{j^*}{\to}
K^n_{\Z/2}(\tilde{S}^1) \overset{\pi_*}{\to}
K^{n-1}_\pm(\pt).
\end{gather*}
Recalling the definition of the push-forward and the construction of the Thom isomorphism for Real line bundles, $\pi_* j^*$ in the upper row decomposes as follows:
\begin{align*}
\tilde{K}^{n-1}_\pm(X)
&\cong 
K^n_{\Z/2}(X \times \tilde{S}^1, X \times \pt \cup p \times \tilde{S}^1) \\
&\overset{j^*}{\to}
K^n_{\Z/2}(X \times \tilde{S}^1, p \times \tilde{S}^1) \quad
(\mbox{inclusion}) \\
&\to
K^{n+1}_{\Z/2}(X \times \tilde{D}^2, 
X \times \tilde{S}^1 \cup p \times \tilde{S}^1) \quad
(\mbox{connecting map}) \\
&\cong
K^{n}_{\Z/2}(X \times \tilde{I}, 
X \times \partial \tilde{I} \cup p \times \tilde{I}), \quad
(\mbox{Thom isomorphism})
\end{align*}
where $\tilde{D}^2 \subset \C_1$ is the unit disk, and the last Thom isomorphism is that for $\underline{\R}_0 \to X \times \tilde{I}$. Using the reduced theory, we can write the above maps as follows:
$$
\tilde{K}^n_{\Z/2}(X \wedge \tilde{S}^1)
\overset{j^*}{\to}
K^n_{\Z/2}(X \times \tilde{S}^1, p \times \tilde{S}^1)
\to
\tilde{K}^{n+1}_{\Z/2}(X \wedge \tilde{S}^1 \wedge S^1)
\cong
\tilde{K}^{n}_{\Z/2}(X \wedge \tilde{S}^1).
$$
The composition of the inclusion $j^*$ and the connecting homomorphism is the suspension isomorphism $\tilde{K}^n_{\Z/2}(X \wedge \tilde{S}^1) \cong \tilde{K}^{n+1}_{\Z/2}(X \wedge \tilde{S}^1 \wedge S^1)$. Since the suspension isomorphism is inverse to the Thom isomorphism for $\underline{\R}_0$, we conclude $\pi_*j^*$ is the identity. The remaining case can be shown in the same way.
\end{proof}

\subsection{A basis of $K^0_{\Z/2}(\tilde{S}^1)$}

For $i = 0, 1$, let $\underline{\C}_i = \tilde{S}^1 \times \C_i$ be the equivariant line bundle over $\tilde{S}^1$ constructed from the irreducible representation $\underline{\C}_i$. Let $L \to \tilde{S}^1$ be the equivariant line bundle such that its underlying line bundle is $L = S^1 \times \C$ and $\Z/2$ acts by $\tau(u, z) = (\bar{u}, - uz)$. By abuse of notations, we just write $\underline{\C}_0$, $\underline{\C}_1$ and $L$ to mean the elements they represent in $K^0_{\Z/2}(\tilde{S}^1)$.

We introduce a ring homomorphism
$$
F : \ K^0_{\Z/2}(\tilde{S}^1) \to K^0(S^1) \oplus R_{1} \oplus R_{-1}.
$$
In the above, $R_1 = R_{-1} = R$. The definition of $F$ is $F(x) = (f(x), x|_{1}, x|_{-1})$, where $f$ is to forget the $\Z/2$-action, and $x \mapsto x|_{\pm 1}$ is to focus on the representation of the fiber of a $\Z/2$-equivariant vector bundle at the fixed points $\pm 1 \in \tilde{S}^1$. Noting that $K^0(S^1) \cong \Z$, we can readily see that $F$ takes the following values:
$$
\begin{array}{c|c|cc}
 & K^0(\tilde{S}^1) & R_1 & R_{-1} \\
\hline
\underline{\C}_0 & 1 & 1 & 1 \\
\underline{\C}_1 & 1 & t & t \\
L & 1 & t & 1
\end{array}
$$

\begin{lem} \label{lem:basis_circle_equiv_K}
The following holds true:
\begin{itemize}
\item[(a)]
The group $K^0_{\Z/2}(\tilde{S}^1)$ is generated by $\underline{\C}_0$, $\underline{\C}_1$ and $L$. In particular, respecting the $R$-module structure, we have the following basis:
$$
K^0_{\Z/2}(\tilde{S}^1)
= 
\overbrace{\Z \underline{\C}_0 \oplus \Z \underline{\C}_1}^R \oplus 
\overbrace{\Z (\underline{\C}_0  - L)}^{R/J}.
$$

\item[(b)]
The ring homomorphism $F$ is injective.

\item[(c)]
We have the following relations in $K^0_{\Z/2}(\tilde{S^1})$:
\begin{align*}
L^2 &= \underline{\C}_0, &
\underline{\C}_1 L &= - L + \underline{\C}_0 + \underline{\C}_1.
\end{align*}

\item[(d)]
There are ring isomorphisms:
\begin{align*}
K^0_{\Z/2}(\tilde{S}^1) 
&\cong \Z[t, L]/(t^2 - 1, L^2 - 1, (1 + t)(1 - L)), \\
&\cong \Z[t, \ell]/(t^2 - 1, \ell^2 - 2\ell, (1 + t)\ell),
\end{align*}
where $L$ and $\ell$ are related by $\ell = 1 - L$.
\end{itemize}
\end{lem}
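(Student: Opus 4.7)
The plan is to reduce everything to a calculation in $R \oplus R$ via the fixed-point restriction map, whose injectivity I establish first by Mayer--Vietoris. Cover $\tilde{S}^1$ by two invariant equivariantly contractible open neighbourhoods $U, V$ of the fixed points $+1, -1$ respectively, so that $U \cap V$ is equivariantly homotopy equivalent to the free orbit $\partial \tilde{I} \cong \Z_2$. Then $K^0_{\Z/2}(U) \cong K^0_{\Z/2}(V) \cong R$ via restriction to the respective fixed point, $K^0_{\Z/2}(U \cap V) \cong K^0(\mathrm{pt}) \cong \Z$, and $K^{-1}_{\Z/2}(U \cap V) = 0$. The Mayer--Vietoris sequence produces an injection
$$
(\ \cdot\, |_{+1},\ \cdot\, |_{-1}) : K^0_{\Z/2}(\tilde{S}^1) \hookrightarrow R \oplus R
$$
whose image is $\{(\alpha, \beta) \mid \mathrm{aug}(\alpha) = \mathrm{aug}(\beta)\}$, where $\mathrm{aug}: R \to \Z$ is the augmentation $a + bt \mapsto a + b$. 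Since this map is a direct summand of $F$, part (b) follows.

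For (a), I would next read off the $\Z/2$-action on fibres at $\pm 1$ to obtain $\underline{\C}_0 \mapsto (1,1)$, $\underline{\C}_1 \mapsto (t,t)$, $L \mapsto (t,1)$, hence $\underline{\C}_0 - L \mapsto (1-t, 0)$. Because $(1+t)(1-t) = 0$ in $R$, injectivity forces $(1+t)(\underline{\C}_0 - L) = 0$, while $\underline{\C}_0$ restricts to $1 \in R$ and thus is a free $R$-generator. Combined with Proposition \ref{prop:torus_general_case}, which supplies the $R$-module isomorphism $K^0_{\Z/2}(\tilde{S}^1) \cong R \oplus R/J$, and a rank count confirming that the $R$-span of $(1,1)$ and $(1-t, 0)$ in $R \oplus R$ exhausts the described image, the direct-sum decomposition in (a) follows.

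For (c), I verify each relation after applying $F$ and invoke (b). Explicitly, $F(L)^2 = (t,1)^2 = (1,1) = F(\underline{\C}_0)$ gives $L^2 = \underline{\C}_0$, and
$$
F(\underline{\C}_1 L) = (t,t)(t,1) = (1, t) = -(t,1) + (1,1) + (t,t) = F(-L + \underline{\C}_0 + \underline{\C}_1)
$$
gives the second relation. For (d), these relations define a surjective ring homomorphism from $\Z[t, L]/(t^2-1, L^2-1, (1+t)(1-L))$ onto $K^0_{\Z/2}(\tilde{S}^1)$; changing the $R$-basis from $\{1, L\}$ to $\{1,\, 1-L\}$ on the source shows it already has the $R$-module structure $R \oplus R/J$, matching (a) and forcing the surjection to be an isomorphism. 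Setting $\ell = 1 - L$ converts the relations via $\ell^2 = 1 - 2L + L^2 = 2\ell$ and $(1+t)\ell = 0$ to the second presentation.

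The step needing the most care is the identification of the Mayer--Vietoris restriction $K^0_{\Z/2}(U) \cong R \to K^0_{\Z/2}(U \cap V) \cong \Z$ with the augmentation $a + bt \mapsto a + b$: this comes down to noting that over a free $\Z/2$-orbit any equivariant line bundle is trivialisable by a single orbit-frame, so $\underline{\C}_0$ and $\underline{\C}_1$ become isomorphic and only the total rank survives. With that point settled, the remainder is direct bookkeeping in $R \oplus R$.
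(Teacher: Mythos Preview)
Your proof is correct and follows essentially the same route as the paper: both use the Mayer--Vietoris sequence for the two-arc cover to embed $K^0_{\Z/2}(\tilde{S}^1)$ into $R \oplus R$ as the kernel of the augmentation-difference map, verify that $\underline{\C}_0, \underline{\C}_1, L$ span this kernel, and then deduce the relations in (c) and the presentation in (d) from the injectivity of $F$. Your only cosmetic difference is the order of exposition (you extract (b) before (a)) and an appeal to Proposition~\ref{prop:torus_general_case} for the $R$-module type, which the paper instead reads off directly from the basis.
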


\begin{proof}
For (a), let $K$ be the free abelian group generated by $\underline{\C}_0$, $\underline{\C}_1$ and $L$, and $\iota : K \to \tilde{K}^0_{\Z/2}(\tilde{S}^1)$ the natural homomorphism. To prove that $\iota$ is bijective, we use the Mayer-Vietoris sequence: We define $U$ and $V$ as follows:
\begin{align*}
U &= \{ \exp 2\pi i \theta \in \tilde{S}^1 |\ 1/4 \le \theta \le 3/4 \}, &
V &= \{ \exp 2\pi i \theta \in \tilde{S}^1 |\ -1/4 \le \theta \le 1/4 \}.
\end{align*}
Apparently, $U$ and $V$ are equivariantly contractible, and $U \cap V \cong \partial \tilde{I}$. The Mayer-Vietoris sequence and Proposition \ref{prop:torus_general_case} provide us:
$$
0 \to 
K^0_{\Z/2}(\tilde{S}^1) \overset{i^*}{\to}
K^0_{\Z/2}(U \sqcup V) \overset{\Delta}{\to}
K^0_{\Z/2}(U \cap V) \to
0,
$$
where $\Delta : R \oplus R \to \Z$ is $\Delta(a + b t, a' + b't) = a + b - a' - b'$. We can verify $i^* : \iota(K) \to \mathrm{Ker}\Delta$ is bijective. Hence so is $\iota$, and an additive basis of $K^0_{\Z/2}(\tilde{S}^1)$ is specified. From the values of $F$, the subgroup $\tilde{K}^0_{\Z/2}(\tilde{S}^1)$ is generated by $\underline{\C}_0 - L_1$, which induces the basis respecting the $R$-module structure in (a). Now, (b) is verified directly. Using (b), we get (c), which leads to (d).
\end{proof}

\subsection{A basis of $K^0_{\Z/2}(\tilde{T}^2)$}
\label{subsec:basis_torus}

We write $\pi_i : \tilde{T}^2 \to \tilde{S}^1$ for the $i$th projection from $\tilde{T}^2 = \tilde{S}^1 \times \tilde{S}^1$. We define the equivariant line bundle $L_i \to \tilde{T}^2$ by the pull-back $L_i = \pi_i^*L$. We also define the equivariant line bundle $H' \to \tilde{T}^2$ by
$$
H' = S^1 \times \R \times \C/\sim,
$$
where $(u, x, z) \sim (u, x + n, u^nz)$ for $n \in \Z$. The $\Z/2$-action $\tau : H' \to H'$ is $\tau([u, x, z]) = [\bar{u}, -x, z]$ and the projection $\pi : H' \to \tilde{T}^2$ is $\pi([u, x, z]) = (u, \exp 2\pi i x)$. We then define $H \to \tilde{T}^2$ by $H = \underline{\C}_1L_1L_2H'$. If we forget about $\Z/2$-actions, then $c_1(H) = 1$ in $H^2(\tilde{T}^2; \Z) \cong \Z$ with respect to the standard orientation of $\tilde{T}^2$, and there is a ring isomorphism $K^0(\tilde{T}^2) \cong \Z[H]/(1 - H)^2$.

As before, we put $F(x) = (f(x), x|_{(i, j)})$ and define the ring homomorphism
$$
F : \
K^0_{\Z/2}(T^2) \longrightarrow
K^0(T^2) \oplus \bigoplus_{(i, j) = (\pm 1, \pm 1)} R_{(i, j)},
$$
where $R_{(i, j)} = R$. We have the following values of $F$:
$$
\begin{array}{c|c|cccc}
 & K^0(\tilde{T}^2) & R_{(1, 1)} & R_{(-1, 1)} & R_{(1, -1)} & R_{(-1, -1)} \\
\hline
\underline{\C}_0 & 1 & 1 & 1 & 1 & 1 \\
\underline{\C}_1 & 1 & t & t & t & t \\
H & H & t & 1 & 1 & 1 \\
\underline{\C}_1H & H & 1 & t & t & t \\
L_1 & 1 & t & 1 & t & 1 \\
L_2 & 1 & t & t & 1 & 1
\end{array}
$$

\begin{lem} \label{lem:additive_basis_circle_K}
The following holds true:
\begin{itemize}
\item[(a)]
The group $K^0_{\Z/2}(\tilde{T}^2)$ is generated by $\{ \underline{\C}_0, \underline{\C}_1, H, \underline{\C}_1H, L_1, L_2\}$. In particular, respecting the $R$-module structure, we have the following basis:
$$
\tilde{K}^0_{\Z/2}(\tilde{T}^2)
=
\overbrace{
\Z \underline{\C}_0 \oplus \Z \underline{\C}_1
}^R
\oplus
\overbrace{
\Z (\underline{\C}_0 - H) \oplus \Z \underline{\C}_1(\underline{\C}_0 - H) 
}^R
\oplus
\overbrace{\Z (\underline{\C}_0 - L_1)}^{R/J} \oplus 
\overbrace{\Z (\underline{\C}_0 - L_2)}^{R/J}.
$$

\item[(b)]
The ring homomorphism $F$ is injective.

\item[(c)]
The following relations are satisfied in $K^0_{\Z/2}(\tilde{T}^2)$.
\begin{align*}
(\underline{\C}_0 + \underline{\C}_1)
(\underline{\C}_0 - L_1) &= 0, &
(\underline{\C}_0 + \underline{\C}_1)
(\underline{\C}_0 - L_2) &= 0, \\
(\underline{\C}_0 - H) (\underline{\C}_1 - L_1) &= 0, &
(\underline{\C}_0 - H) (\underline{\C}_1 - L_2) &= 0, \\
(\underline{\C}_0 - H)(\underline{\C}_1 - H) &= 0, &
(\underline{\C}_0 - L_1)
(\underline{\C}_0 - L_2)
&=
(\underline{\C}_0 - \underline{\C}_1)(\underline{\C}_0 - H).
\end{align*}

\end{itemize}
\end{lem}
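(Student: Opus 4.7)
The plan is to model the proof on that of Lemma \ref{lem:basis_circle_equiv_K}: establish the $R$-module structure in (a) by reducing to the circle case through an iterated Gysin / Mayer--Vietoris argument, verify (b) by a direct calculation of $F$ on the proposed generators, and then deduce (c) from the injectivity of $F$.

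For (a), I would apply the split short exact sequence for the trivial Real circle bundle $\pi_2 : \tilde{T}^2 = \tilde{S}^1 \times \tilde{S}^1 \to \tilde{S}^1$ established at the start of Section \ref{sec:torus}:
\[
0 \to K^0_{\Z/2}(\tilde{S}^1) \xrightarrow{\pi_2^*} K^0_{\Z/2}(\tilde{T}^2) \xrightarrow{\pi_{2*}} K^{-1}_\pm(\tilde{S}^1) \to 0,
\]
together with the natural section $j^*$ constructed in Section \ref{sec:torus}. By Lemma \ref{lem:basis_circle_equiv_K}(a), the image of $\pi_2^*$ is the $R$-submodule $R\cdot\underline{\C}_0 \oplus (R/J)\cdot(\underline{\C}_0 - L_2)$. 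For the complementary summand $j^*K^{-1}_\pm(\tilde{S}^1)$, I would use Bott periodicity together with the analogous splitting to identify $K^{-1}_\pm(\tilde{S}^1) \cong K^1_\pm(\tilde{S}^1) \cong R \oplus R/J$, and then argue that the bundle $H$, whose non-equivariant first Chern class generates $H^2(\tilde{T}^2;\Z)$, lifts the free $R$-summand (giving $R\cdot(\underline{\C}_0 - H) \oplus R\cdot \underline{\C}_1(\underline{\C}_0 - H)$), while $L_1 = \pi_1^*L$ lifts the $R/J$-summand (giving $(R/J)\cdot(\underline{\C}_0 - L_1)$). This assembles to the asserted $R$-basis of $\tilde{K}^0_{\Z/2}(\tilde{T}^2)$.

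For (b), injectivity of $F$ follows from the table of values combined with the $R$-module structure established in (a). Explicitly, using the basis from (a), the matrix of $F$ restricted to the summands generated by $\underline{\C}_0$ and $\underline{\C}_0 - H$ is non-degenerate in the $K^0(\tilde{T}^2) \oplus R_{(1,1)}$ factor (since $H$ has non-trivial image in $K^0(\tilde{T}^2)$ and the restrictions to $(1,1)$ separate $R \cdot \underline{\C}_0$ from $R \cdot (\underline{\C}_0 - H)$); meanwhile, $\underline{\C}_0 - L_1$ and $\underline{\C}_0 - L_2$ are distinguished from one another and from the previous summands by their fixed-point data at $(-1,1)$ and $(1,-1)$ respectively. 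Given (a) and (b), each relation in (c) is verified by applying $F$ and checking the equality in each of the five target summands, a purely algebraic check in $R$ along the lines of $(1+t)(1-t) = 0$ in $R$ and $(1-H)^2 = 0$ in $K^0(\tilde{T}^2) \cong \Z[H]/(1-H)^2$.

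The main obstacle will be step (a), specifically the identification of which classes in $K^0_{\Z/2}(\tilde{T}^2)$ realize the generators of $j^*K^{-1}_\pm(\tilde{S}^1)$ under the Gysin splitting. Showing that $(\underline{\C}_0 - H)$ hits the free $R$-summand amounts to computing $\pi_{2*}(\underline{\C}_0 - H)$ via the Thom isomorphism for $\underline{\R}_1$ on the first factor and recognizing the result as a generator; similarly for $L_1$ on the torsion summand. An alternative that sidesteps this geometric bookkeeping is a Mayer--Vietoris computation with $U' = U \times \tilde{S}^1$ and $V' = V \times \tilde{S}^1$ (for $U, V \subset \tilde{S}^1$ as in Lemma \ref{lem:basis_circle_equiv_K}), where $U'\cap V' \cong \partial\tilde{I} \times \tilde{S}^1$ has free $\Z/2$-action with quotient $\tilde{S}^1$; this produces an explicit presentation from which the six generators and their $R$-relations can be read off directly, at the cost of a slightly longer but entirely mechanical calculation.
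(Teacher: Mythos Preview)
Your alternative Mayer--Vietoris route is exactly what the paper does, and the paper carries it out rather than your primary Gysin-splitting approach. One point you underestimate in calling it ``entirely mechanical'': unlike the circle case, the Mayer--Vietoris sequence for $\{U',V'\}$ with $U' = \tilde S^1 \times U$, $V' = \tilde S^1 \times V$ has a nonzero boundary term $K^1_{\Z/2}(U'\cap V') \cong \Z$ feeding into $K^0_{\Z/2}(\tilde T^2)$, so checking $i^*(K)=\mathrm{Ker}\,\Delta_{\Z/2}$ is not enough. The paper handles this by computing the kernel $K'$ of $i^*_{\Z/2}$ on the proposed subgroup $K$ (it is generated by $\underline{\C}_1H + H - \underline{\C}_1 - \underline{\C}_0$) and then comparing with the non-equivariant Mayer--Vietoris via the forgetful map to see that $K'$ already fills out the image of $K^1_{\Z/2}(U'\cap V')$; the injectivity of $\xi\circ f_{U'\cap V'}$ and the identification $\xi(f_{U'\cap V'}(K'))=2\Z H\subset K^0(\tilde T^2)$ are what close the argument.

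Your primary Gysin approach is viable in principle but, as you correctly flag, the obstacle is real: knowing from Proposition \ref{prop:torus_general_case} that $K^0_{\Z/2}(\tilde T^2)\cong (R\oplus R/J)^2$ abstractly, together with linear independence of your six classes via $F$, does not by itself give generation (a rank-$6$ subgroup of $\Z^6$ can have nontrivial index). You would need to compute $\pi_{2*}(\underline{\C}_0-H)$ and $\pi_{2*}(\underline{\C}_0-L_1)$ directly and identify them as generators of $K^1_\pm(\tilde S^1)\cong R\oplus R/J$. In the paper's logical order this computation (essentially Lemma~\ref{lem:additive_basis_circle_K_pm} and the push-forward lemma $\pi_*x=1$) is carried out \emph{after} and \emph{using} the present lemma, so invoking it here would be circular. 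One could in principle reverse the order and do the Thom-isomorphism bookkeeping first, but that is comparable in length to the Mayer--Vietoris argument and less self-contained. Parts (b) and (c) are handled exactly as you describe.
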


\begin{proof}
Let $K$ be the subgroup generated by $\{ \underline{\C}_0, \underline{\C}_1, H, \underline{\C}_1H, L_1, L_2\}$, and $\iota : K \to K^1_{\Z/2}(\tilde{T}^2)$ the natural inclusion. To prove that $\iota$ is bijective, we use the Mayer-Vietoris exact sequence: Let $U, V \subset \tilde{S}^1$ be as in the proof of Lemma \ref{lem:basis_circle_equiv_K}. We put $U' = \tilde{S}^1 \times U$ and $V' = \tilde{S}^1 \times V$. Then the Mayer-Vietoris sequence for $\{ U', V' \}$ and Proposition \ref{prop:torus_general_case} give us the exact sequence
$$
0 \to 
K^1_{\Z/2}(U' \cap V') \overset{\xi_{\Z/2}}{\to}
K^0_{\Z_2}(\tilde{T}^2) \overset{i^*_{\Z/2}}{\to}
K^0_{\Z/2}(U' \sqcup V') \overset{\Delta_{\Z/2}}{\to}
K^0_{\Z/2}(U' \cap V') \to
0,
$$
We can directly verify $i^*_{\Z/2}(K) = \mathrm{Ker}\Delta_{\Z/2}$. Bearing this fact in mind, we consider the following maps of exact sequences
$$
\begin{CD}
0 @>>> K' @>>> K @>>> \mathrm{Ker}\Delta_{\Z/2} @>>> 0 \\
@. @V{\iota'}VV @VV{\iota}V @| @. \\
0 @>>>
K^1_{\Z/2}(U' \cap V') @>{\xi_{\Z/2}}>> 
K^0_{\Z/2}(\tilde{T}^2) @>{i^*_{\Z/2}}>> \mathrm{Ker}\Delta_{\Z/2} @>>> 
0 \\
@. @V{f_{U' \cap V'}}VV @VV{f_{\tilde{T}^2}}V @VV{f}V @. \\
@. K^1(U' \cap V') @>\xi>> 
K^0(\tilde{T}^2) @>i^*>>
\mathrm{Ker}\Delta @>>>
0,
\end{CD}
$$
where $K'$ is the kernel of $i^*_{\Z/2}$ restricted to $K$, the injection $\iota'$ is induced from $\iota$, the third row comes from the Mayer-Vietoris sequence for usual $K$-theory, and $f$ are the maps forgetting $\Z/2$-actions. Now, the thing to be shown is $\iota'(K') = K^1_{\Z/2}(U' \cap V')$. A direct computation proves that $K'$ is generated by $\underline{\C}_1H + H - \underline{\C}_1 - \underline{\C}_0$. Using this information and the diagram above, we find 
$$
\xi(f_{U' \cap V'}(\iota'(K'))) = \xi(f_{U' \cap V'}(K^1_{\Z/2}(U' \cap V')))
= 2\Z H
$$ 
in $K^0(\tilde{T}^2) = \Z \underline{\C} \oplus \Z H$. Since $\xi \circ f_{U' \cap V'}$ is injective, we have $\iota'(K') = K^1_{\Z/2}(U' \cap V')$. Therefore $\iota$ is bijective, and $K = K^1_{\Z/2}(\tilde{T}^2)$. Because $L_1$ and $L_2$ are the pull-back of $L \to \tilde{S}^1$ under the projections $\pi_i : \tilde{T}^2 \to \tilde{S}^1$, we get the basis respecting the $R$-module structure of $K = K^1_{\Z/2}(\tilde{T}^2)$, completing the proof of (a). Once a basis is known, (b) is directly verified. Using (b), we can check (c).
\end{proof}


\begin{rem}
The idea of using the ring homomorphism $F$ comes from a work of Hajime Fujita, where finite group actions on Riemann surfaces are studied. The original proof of the injectivity of $F$ appeals to a localization formula of the equivariant index, by which we can further specify ring generators of equivariant $K$-theory.
\end{rem}

\subsection{A basis of $K^0_{\Z/2}(\tilde{T}^3)$}

Let $\pi_i : \tilde{T}^3 \to \tilde{S}^1$ be the $i$th projection from $\tilde{T}^3 = \tilde{S}^1 \times \tilde{S}^1 \times \tilde{S}^1$. We define the equivariant line bundle $L_i \to \tilde{T}^3$ by $L_i = \pi_i^*L$ for $i = 1, 2, 3$, and $H_{ij} \to \tilde{T}^3$ by $H_{ij} = (\pi_i, \pi_j)^*H$ for $i < j$. By the Atiyah-Hirzebruch spectral sequence, we find the group $K^0(\tilde{T}^3)$ is generated by $H_{12}, H_{23}, H_{13}$ and $\underline{\C}$. The multiplication in $K^0(\tilde{T}^3)$ can be computed through the Chern character: For instance, we have $(\underline{\C} - H_{12})(\underline{\C} - H_{23}) = 0$.

Setting $F(x) = (f(x), x|_{(i, j, k)})$ as before, we define the ring homomorphism
$$
F : \
K^0_{\Z/2}(\tilde{T}^3) \longrightarrow
K^0(\tilde{T}^3) \oplus 
\bigoplus_{(i, j, k) = (\pm 1, \pm 1, \pm 1)} R_{(i, j, k)},
$$
where $R_{(i, j, k)} = R$. A table of values of $F$ is in Figure \ref{fig:3_torus}.

\begin{figure}[htbp] 
$$
\begin{array}{c@{}|c|@{}c@{}c@{}c@{}c|@{}c@{}c@{}c@{}c}
K^0_{\Z/2} &
K^0 & 
\scriptstyle{(1, 1, 1)} & \scriptstyle{(-1, 1, 1)} & 
\scriptstyle{(1, -1, 1)} & \scriptstyle{(-1, -1, 1)} &
\scriptstyle{(1, 1, -1)} & \scriptstyle{(-1, 1, -1)} &
\scriptstyle{(1,-1,-1)} & \scriptstyle{(-1, -1, -1)} \\
\hline
\underline{\C}_0 & 1 & 
1 & 1 & 1 & 1 & 
1 & 1 & 1 & 1 \\
\underline{\C}_1 & 1 & 
t & t & t & t & 
t & t & t & t \\
\hline
H_{12} & H_{12} & 
t & 1 & 1 & 1 & 
t & 1 & 1 & 1 \\
H_{23} & H_{23} & 
t & t & 1 & 1 & 
1 & 1 & 1 & 1 \\
H_{13} & H_{13} & 
t & 1 & t & 1 & 
1 & 1 & 1 & 1 \\
\hline
\underline{\C}_1H_{12} & H_{12} & 
1 & t & t & t & 
1 & t & t & t \\
\underline{\C}_1H_{23} & H_{23} & 
1 & 1 & t & t & 
t & t & t & t \\
\underline{\C}_1H_{13} & H_{13} & 
1 & t & 1 & t & 
t & t & t & t \\
\hline
L_1 & 1 &
t & 1 & t & 1 &
t & 1 & t & 1 \\
L_2 & 1 &
t & t & 1 & 1 &
t & t & 1 & 1 \\
L_3 & 1 &
t & t & t & t &
1 & 1 & 1 & 1 \\
\hline
H_{12}L_3 & H_{12} &
1 & t & t & t &
t & 1 & 1 & 1
\end{array}
$$
\caption{The values of $F$ on $\tilde{T}^3$.}\label{fig:3_torus}
\end{figure}

\begin{lem} \label{lem:additive_basis_3_torus_K}
The following holds true:
\begin{itemize}
\item[(a)]
The group $K^0_{\Z/2}(\tilde{T}^3)$ is generated by:
$$
\{
\underline{\C}_0, \underline{\C}_1, 
H_{12}, H_{23}, H_{13}, 
\underline{\C}_1H_{12}, \underline{\C}_1H_{23}, \underline{\C}_1H_{13}, 
L_1, L_2, L_3, H_{12}L_3
\}.
$$
In particular, the following basis respects the $R$-module structure:
\begin{align*}
\tilde{K}^0_{\Z/2}(\tilde{T}^3)
&=
\overbrace{
\Z \underline{\C}_0 \oplus \Z \underline{\C}_1
}^R
\oplus
\overbrace{
\Z (\underline{\C}_0 - H_{12}) \oplus 
\Z \underline{\C}_1(\underline{\C}_0 - H_{12}) 
}^R \\
&
\oplus
\overbrace{
\Z (\underline{\C}_0 - H_{23}) \oplus 
\Z \underline{\C}_1(\underline{\C}_0 - H_{23}) 
}^R
\oplus
\overbrace{
\Z (\underline{\C}_0 - H_{13}) \oplus 
\Z \underline{\C}_1(\underline{\C}_0 - H_{13}) 
}^R \\
& 
\oplus
\overbrace{\Z (\underline{\C}_0 - L_1)}^{R/J} \oplus 
\overbrace{\Z (\underline{\C}_0 - L_2)}^{R/J} \oplus
\overbrace{\Z (\underline{\C}_0 - L_3)}^{R/J} \oplus
\overbrace{\Z (\underline{\C}_0 - H_{12})(\underline{\C}_0 - L_3)}^{R/J}.
\end{align*}

\item[(b)]
The ring homomorphism $F$ is injective.

\end{itemize}
\end{lem}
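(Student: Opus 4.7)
The strategy will follow the template used for Lemma \ref{lem:additive_basis_circle_K}, namely a Mayer--Vietoris argument combined with a comparison against the forgetful map. Write $\tilde{T}^3 = \tilde{T}^2 \times \tilde{S}^1$ and, with $U,V \subset \tilde{S}^1$ as in the proof of Lemma \ref{lem:basis_circle_equiv_K}, set $U'' = \tilde{T}^2 \times U$ and $V'' = \tilde{T}^2 \times V$. Both $U''$ and $V''$ are equivariantly homotopy equivalent to $\tilde{T}^2$, whereas $U'' \cap V'' \cong \tilde{T}^2 \times \partial \tilde{I}$ has free $\Z/2$-action on the $\partial \tilde{I}$-factor, so $K^n_{\Z/2}(U'' \cap V'') \cong K^n(\tilde{T}^2)$. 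Plugging $K^1_{\Z/2}(\tilde{T}^2) = 0$ from Proposition \ref{prop:torus_general_case} into the Mayer--Vietoris sequence yields the short exact sequence
$$
0 \to K^1(\tilde{T}^2) \overset{\xi_{\Z/2}}{\to} K^0_{\Z/2}(\tilde{T}^3) \overset{i^*_{\Z/2}}{\to} K^0_{\Z/2}(\tilde{T}^2)^{\oplus 2} \overset{\Delta_{\Z/2}}{\to} K^0(\tilde{T}^2) \to 0.
$$

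Let $K \subset K^0_{\Z/2}(\tilde{T}^3)$ be the subgroup generated by the twelve classes listed in (a), and let $\iota:K \to K^0_{\Z/2}(\tilde{T}^3)$ be the inclusion. The first task is to compute the restrictions of the generators to $U''$ and $V''$ (which amounts to evaluating at the fixed-fibers $\tilde{T}^2 \times \{+1\}$ and $\tilde{T}^2 \times \{-1\}$) using the basis of $K^0_{\Z/2}(\tilde{T}^2)$ from Lemma \ref{lem:additive_basis_circle_K}, and then check directly that $i^*_{\Z/2}(K) = \ker \Delta_{\Z/2}$. Note that the last generator $H_{12}L_3$ is included precisely in order to produce a class whose restrictions to $U''$ and $V''$ differ by a factor of $L_1$--type, matching $(1+t)$-torsion on both copies.

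Next, let $K' = K \cap \ker i^*_{\Z/2}$; then the task reduces to showing $\iota':K' \to K^1(\tilde{T}^2)$ (via $\xi_{\Z/2}^{-1}$) is surjective. Here we use the naturality diagram comparing with the forgetful map
$$
\begin{CD}
K^1(\tilde{T}^2) @>{\xi_{\Z/2}}>> K^0_{\Z/2}(\tilde{T}^3) \\
@VVV @VV{f_{\tilde{T}^3}}V \\
K^1(\tilde{T}^2) @>{\xi}>> K^0(\tilde{T}^3),
\end{CD}
$$
where the lower row comes from the Mayer--Vietoris sequence for ordinary $K$-theory. Since $K^1(\tilde{T}^2) \cong \Z^2$, it suffices to exhibit two elements of $K'$ whose forgetful images hit generators of $\xi(K^1(\tilde{T}^2)) \subseteq K^0(\tilde{T}^3)$. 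Candidates are $\underline{\C}_1 H_{12} + H_{12} - \underline{\C}_1 - \underline{\C}_0$ (pulled back from the $\tilde{T}^2$ case along $\pi_{12}$) and the analogous combination built from $H_{12}L_3$, and their ordinary Chern characters can be checked to generate the image of $\xi$ (modulo the obvious $2$-torsion observed already in the $\tilde{T}^2$ computation). This will establish $\iota$ is bijective, hence $K = K^0_{\Z/2}(\tilde{T}^3)$.

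Once additive generation is known, the $R$-module basis in (a) follows by recognizing $R$-summands (spanned by $x, \underline{\C}_1 x$ for classes supported on both fixed fibers) and $R/J$-summands (for $L_i$-type classes, annihilated by $1+t$), essentially pulled back from Lemmas \ref{lem:basis_circle_equiv_K} and \ref{lem:additive_basis_circle_K}. The injectivity statement (b) is then a finite check: restricting to the $R$-basis just produced and using the tabulated values of $F$ in Figure \ref{fig:3_torus}, one verifies that $F$ has trivial kernel. The main obstacle throughout is the bookkeeping in step two, especially the correct identification of $K \cap \ker i^*_{\Z/2}$; getting the precise generators (including the slightly non-obvious class $H_{12}L_3$) right is what makes $F$ come out injective in the end.
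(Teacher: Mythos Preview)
Your overall strategy is correct and matches the paper, which simply says the proof is ``exactly the same as in the case of $\tilde{T}^2$''. However, there is a concrete error in your identification of the generators of $K' = K \cap \ker i^*_{\Z/2}$.

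The element $\underline{\C}_1 H_{12} + H_{12} - \underline{\C}_1 - \underline{\C}_0$ is pulled back along $\pi_{12}$, hence constant in the third coordinate. Its restriction to each slice $U'' \simeq \tilde{T}^2$ and $V'' \simeq \tilde{T}^2$ therefore equals $\underline{\C}_1 H + H - \underline{\C}_1 - \underline{\C}_0 \in K^0_{\Z/2}(\tilde{T}^2)$, which is nonzero (indeed, in the $\tilde{T}^2$ proof this very element generated the kernel for a \emph{different} Mayer--Vietoris cut). So it does not lie in $K'$. The same issue afflicts any ``analogous combination'' built only from $H_{12}$ and $L_3$: you can check from the table that such combinations restrict to $\underline{\C}_1 H + H - \underline{\C}_1 - \underline{\C}_0$ on both slices as well.

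The correct candidates must be built from the bundles that are nontrivial in the direction you are cutting, namely $H_{13}$ and $H_{23}$. For $i = 1, 2$ the elements $\underline{\C}_1 H_{i3} + H_{i3} - \underline{\C}_1 - \underline{\C}_0$ do lie in $K'$: on the slice where $H_{i3}$ restricts to $\underline{\C}_0$ the expression vanishes trivially, while on the slice where $H_{i3}$ restricts to $L_i$ one uses the relation $\underline{\C}_1 L_i = -L_i + \underline{\C}_0 + \underline{\C}_1$ from Lemma~\ref{lem:basis_circle_equiv_K}(c). Their forgetful images $2(H_{i3} - 1) \in K^0(\tilde{T}^3)$ then feed into the comparison with the non-equivariant Mayer--Vietoris sequence exactly as in the $\tilde{T}^2$ argument. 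With this correction the rest of your outline goes through.
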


\begin{proof}
The proof is exactly the same as in the case of $\tilde{T}^2$, so is omitted.
\end{proof}

\subsection{The ring structure of $\mathbb{K}^*(\tilde{S}^1)$}

\begin{prop} \label{prop:ring_structure_KK_circle}
The following holds true:
\begin{itemize}
\item[(a)]
There is a ring isomorphism
$$
\mathbb{K}^*(\tilde{S}^1) 
= K^*_{\Z/2}(\tilde{S}^1) \oplus K^*_{\pm}(\tilde{S}^1)
\cong \Z[\sigma, \chi]/(\sigma^3 - 2\sigma, \chi^2 - \sigma \chi),
$$
where $\chi \in \tilde{K}^1_\pm(\tilde{S}^1) \cong R$ is the unique element whose image under the push-forward along the projection $\pi : \tilde{S}^1 \to \pt$ is $\pi_*\chi = 1$, and $\sigma \in K^1_\pm(\pt) \cong R/J$ is the generator.  

\item[(b)]
The elements $1, t, \sigma \chi \in K^0_{\Z/2}(\tilde{S}^1)$ have the following realization:
\begin{align*}
1 &= \underline{\C}_0, &
t &= \underline{\C}_1, &
\sigma \chi &= \underline{\C}_0 - L.
\end{align*}
The elements $\chi, t\chi, \sigma \in K^1_\pm(\tilde{S}^1)$ have the following realization through the injection $j^* : K^1_\pm(\tilde{S}^1) \to K^0_{\Z/2}(\tilde{T}^2)$:
\begin{align*}
j^*(\chi) &= \underline{\C}_0 - H, &
j^*(t\chi) &= \underline{\C}_1(\underline{\C}_0 - H), &
j^*(\sigma) &= \underline{\C}_0 - L_2.
\end{align*}
\end{itemize}
\end{prop}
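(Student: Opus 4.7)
The strategy is to derive the ring presentation (a) from the Gysin splittings together with the line-bundle multiplications in Lemma \ref{lem:basis_circle_equiv_K}, then deduce (b) by matching generators. By Proposition \ref{prop:torus_general_case} for $n = 1$, $\mathbb{K}^*(\tilde{S}^1)$ has total $\Z$-rank $6$, with canonical splittings $K^0_{\Z/2}(\tilde{S}^1) = \pi^*R \oplus j^*(R/J)$ and $K^1_\pm(\tilde{S}^1) = \pi^*(R/J) \oplus j^*R$ coming from the Gysin sequence. By construction $\chi = j^*(1) \in K^1_\pm(\tilde{S}^1)$ generates the $j^*$-summand, while the pullback $\pi^*\sigma$ generates the $\pi^*$-summand, so $\sigma$ and $\chi$ generate $\mathbb{K}^*(\tilde{S}^1)$ as a ring; the relation $\sigma^3 = 2\sigma$ is inherited from $\mathbb{K}^*(\pt)$, leaving only $\chi^2 = \sigma\chi$ to verify.

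For this central relation I first locate $\sigma\chi$ in the splitting of $K^0_{\Z/2}(\tilde{S}^1)$. The projection formula yields $\pi_*(\sigma\chi) = \pi_*(\pi^*\sigma \cdot \chi) = \sigma \cdot \pi_*\chi = \sigma$, while $i^*(\sigma\chi) = \sigma \cdot i^*\chi = 0$ since $\chi = j^*(1)$ lifts to the relative group $K^1_\pm(\tilde{S}^1, \pt)$. Hence $\sigma\chi$ lies in $j^*(R/J)$; matching with Lemma \ref{lem:basis_circle_equiv_K}(a) gives $\sigma\chi = \underline{\C}_0 - L$ after fixing the sign convention of $\sigma$. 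The same argument gives $i^*\chi^2 = (i^*\chi)^2 = 0$, so $\chi^2 = c \cdot \sigma\chi$ for some $c \in \Z$. To pin down $c$, I compute $(\sigma\chi)^2$ in two ways: using Lemma \ref{lem:basis_circle_equiv_K}(c), $(\underline{\C}_0 - L)^2 = \underline{\C}_0 - 2L + L^2 = 2(\underline{\C}_0 - L) = 2\sigma\chi$; using the ring structure, $(\sigma\chi)^2 = \sigma^2 \chi^2 = (1-t) \cdot c\sigma\chi = 2c\sigma\chi$ via $(1-t)\sigma = 2\sigma$ in $R/J$. Comparison gives $c = 1$, so $\chi^2 = \sigma\chi$. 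The ring presentation in (a) then follows by counting: the monomials $\sigma^a\chi^b$ with $0 \le a \le 2$, $0 \le b \le 1$ provide a $\Z$-basis of rank $6$ for $\Z[\sigma,\chi]/(\sigma^3 - 2\sigma, \chi^2 - \sigma\chi)$, matching $\mathbb{K}^*(\tilde{S}^1)$ additively.

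Part (b) is then essentially bookkeeping. The identifications $1 = \underline{\C}_0$, $t = \underline{\C}_1$, and $\sigma\chi = \underline{\C}_0 - L$ have already been used above. For the $\tilde{T}^2$ side, the $K^*_{\Z/2}(\tilde{S}^1)$-module linearity of $j^*$ reduces everything to verifying the single identity $j^*\chi = \underline{\C}_0 - H$: the class $\underline{\C}_0 - H$ is detected to lie in the $j^*R$-summand of $K^0_{\Z/2}(\tilde{T}^2)$ by restriction to $\tilde{S}^1 \times \{\pt\}$ (which yields zero by the values in Lemma \ref{lem:additive_basis_circle_K}) and by its non-equivariant first Chern class. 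The other two identities then follow: $j^*(t\chi) = t \cdot j^*\chi = \underline{\C}_1(\underline{\C}_0 - H)$, and $j^*(\sigma) = \pi_2^*(j^*_\pt\sigma) = \pi_2^*(\underline{\C}_0 - L) = \underline{\C}_0 - L_2$ by naturality of $j^*$ with respect to the projection $\tilde{S}^1 \to \pt$. The main obstacle I expect is the careful sign-tracking in the identifications $\sigma\chi = \underline{\C}_0 - L$ and $j^*\chi = \underline{\C}_0 - H$, which requires unwinding the Thom-isomorphism conventions defining the Gysin pushforward $\pi_*$ for Real line bundles; this is tedious but not conceptually deep.
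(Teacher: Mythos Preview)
Your approach to the central relation $\chi^2 = \sigma\chi$ is genuinely different from the paper's and more economical. The paper establishes this relation (Lemma \ref{lem:x_square}) by passing to $K^0_{\Z/2}(\tilde{T}^3)$: it embeds $\tilde{K}^1_\pm(\tilde{S}^1)$ into $K^0_{\Z/2}(\tilde{T}^2)$, writes the product $\chi \cdot \chi$ as a class in $K^0_{\Z/2}(\tilde{T}^3)$, and evaluates via the fixed-point homomorphism $F$; this requires the full basis analysis of $K^0_{\Z/2}(\tilde{T}^3)$ (Lemma \ref{lem:additive_basis_3_torus_K}). Your trick of comparing $(\sigma\chi)^2$ computed via $L^2 = \underline{\C}_0$ and via $\sigma^2 = 1-t$ stays entirely in $K^0_{\Z/2}(\tilde{S}^1)$ and avoids the three-torus altogether.

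There is, however, a gap you underestimate. Your argument yields only $\chi^2 = \epsilon\,\sigma\chi$ where $\epsilon = \pm 1$ is the sign in $\sigma\chi = \epsilon(\underline{\C}_0 - L)$. You cannot resolve this by ``fixing the sign convention of $\sigma$'': in the paper $\sigma$ is the Euler class $\chi_{\Z/2}(\underline{\R}_1)$, a canonical element, not an arbitrary generator. Determining $\epsilon$ amounts to showing $j^*_{\pt}\sigma = \underline{\C}_0 - L$, which is exactly Lemma \ref{lem:realize_sigma}: a direct identification of the Thom class of $\underline{\R}_1$ with the triple $(\underline{\C}_0,\underline{\C}_1,\eta)$. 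This is short but indispensable; once you invoke it, your argument for (a) is complete and cleaner than the paper's.

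For (b), the identification $j^*\chi = \underline{\C}_0 - H$ is more than bookkeeping. Your two conditions (vanishing on $\tilde{S}^1 \times \{\pt\}$, non-equivariant Chern class) show only that $\underline{\C}_0 - H$ lies in $j^*(\tilde{K}^1_\pm(\tilde{S}^1))$ and is primitive there, hence equals $j^*(u\chi)$ for some $u \in \{\pm 1, \pm t\}$. The forgetful map does not separate $\chi$ from $t\chi$ (since $t \mapsto 1$). The paper resolves this by identifying $c^*\Phi_{\Z/2}(\underline{\C}_1) = \underline{\C}_0 - H$ (Lemma \ref{lem:identify_thom_class_from_2_sphere}) and then unwinding the push-forward to show $\pi_* x = 1$ for $x$ with $j^*x = \underline{\C}_0 - H$; this is the step you would still need to supply.
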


\medskip

The proof requires a number of lemmas.

\begin{lem} \label{lem:realize_sigma}
Through the injection $j^* : K^1_\pm(\pt) \to K^0_{\Z/2}(\tilde{S}^1)$, the generator $\sigma \in K^1_\pm(\pt)$ corresponds to $\underline{\C}_0 - L \in K^0_{\Z/2}(\tilde{S}^1)$.
\end{lem}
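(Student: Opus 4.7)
The plan is to apply the injectivity of the ring homomorphism $F : K^0_{\Z/2}(\tilde{S}^1) \to K^0(S^1) \oplus R_{1} \oplus R_{-1}$ from Lemma \ref{lem:basis_circle_equiv_K}(b), reducing the identity $j^*(\sigma) = \underline{\C}_0 - L$ to checking that both sides have the same image under $F$.

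Two of the three components are immediate. The $K^0(S^1)$-component of $F(j^*(\sigma))$ vanishes by naturality of $j^*$ under forgetting the $\Z/2$-action, since $f'(\sigma)$ lies in $K^1(\mathrm{pt}) = 0$; and $f(\underline{\C}_0 - L) = 1 - 1 = 0$. The restriction $j^*(\sigma)|_{-1}$ to the basepoint also vanishes, because by construction the image of $j^*$ lies in $K^0_{\Z/2}(\tilde{S}^1, \{-1\})$ via the identification $\tilde{I}/\partial \tilde{I} \cong \tilde{S}^1$ with basepoint $-1$; and $(\underline{\C}_0 - L)|_{-1} = 0$ since $\tau(-1, z) = (-1, z)$ makes $L|_{-1} = \underline{\C}_0$.

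The decisive component is the restriction at the other fixed point $+1$. A direct inspection of the $\Z/2$-action $\tau(1, z) = (1, -z)$ shows $L|_{1} = \underline{\C}_1$, hence $(\underline{\C}_0 - L)|_{1} = 1 - t$. For $j^*(\sigma)|_{1}$, I trace through the Thom isomorphism of Lemma \ref{prop:interpretation_K}: the generator $\sigma \in K^1_\pm(\mathrm{pt}) = K^2_{\Z/2}(\tilde{I}, \partial \tilde{I})$ corresponds under this isomorphism to $\chi_{\Z/2}(\underline{\R}_1)$, and is therefore represented by $\pi^*\chi_{\Z/2}(\underline{\R}_1) \cup \Phi_{\Z/2}(\underline{\R}_1)$, where $\pi : \tilde{I} \to \mathrm{pt}$. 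Under the collapse $\tilde{I}/\partial \tilde{I} \cong \tilde{S}^1$, the interior fixed point $0 \in \tilde{I}$ maps to $+1 \in \tilde{S}^1$, so $j^*(\sigma)|_{1}$ is obtained by forgetting the relative structure and pulling back along $i_0 : \mathrm{pt} \to \tilde{I}$. Using $\pi \circ i_0 = \mathrm{id}_{\mathrm{pt}}$ together with the fact that the restriction of the absolute Thom class to the base is the Euler class, this yields $\chi_{\Z/2}(\underline{\R}_1) \cup \chi_{\Z/2}(\underline{\R}_1) = \chi_{\Z/2}(\underline{\C}_1) = 1 - t \in R$.

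Assembling these calculations gives $F(j^*(\sigma)) = (0, 1-t, 0) = F(\underline{\C}_0 - L)$, whence $j^*(\sigma) = \underline{\C}_0 - L$ by injectivity of $F$. The main obstacle is the $+1$-fixed-point computation, which requires keeping the implicit Bott periodicity $K^2_{\Z/2}(\mathrm{pt}) \cong K^0_{\Z/2}(\mathrm{pt})$ consistent between the definition of $\sigma$ via the Thom/Euler formula and its restriction through $j^*$; this consistency is automatic by naturality of Bott under the inclusions involved.
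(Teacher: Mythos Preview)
Your argument is correct and takes a genuinely different route from the paper's. The paper works geometrically: it writes down the Thom class $\Phi_{\Z/2}(\underline{\R}_1) \in K^0_{\Z/2}(\tilde{I},\partial\tilde{I})$ explicitly as the triple $(\underline{\C}_0,\underline{\C}_1,\eta)$ with $\eta(t,z)=(t,tz)$, and then identifies this triple directly with $\phi^*(\underline{\C}_0 - L)$ ``by a homotopy and some isomorphisms''. You instead exploit the separating invariants already set up in Lemma~\ref{lem:basis_circle_equiv_K}(b): since $F$ is injective, it suffices to match the forgetful image and the two fixed-point restrictions, and the only nontrivial one at $+1$ reduces to the identity $\chi_{\Z/2}(\underline{\R}_1)^2 = \chi_{\Z/2}(\underline{\C}_1) = 1-t$ already used elsewhere in the paper. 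Your approach avoids any explicit bundle manipulations or homotopies, at the price of tracking the Thom/Euler formalism and Bott periodicity through the restriction; the paper's approach is more hands-on but requires constructing an actual homotopy of bundle maps. Both reach the same conclusion cleanly.
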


\begin{proof}
Recall that $j^*$ is the composition of
$$
K^1_\pm(\pt) = K^0_{\Z/2}(\tilde{I}, \partial \tilde{I})
\cong K^0_{\Z/2}(\tilde{S}^1, \pt) \to K^0_{\Z/2}(\tilde{S}^1).
$$
The isomorphism in the middle is induced from $\phi : \tilde{I}/\partial \tilde{I} \cong \tilde{S}^1$, ($t \mapsto \exp \pi i t$). By a homotopy and some isomorphisms, we can directly identify the pull-back $\phi^*(\underline{\C}_0 - L)$ with the Thom class $\Phi_{\Z/2}(\underline{\R}_1) \in K^0_{\Z/2}(\tilde{I}, \partial \tilde{I})$ of $\underline{\R}_1 \to \pt$, which is realized by the triple $(\underline{\C}_0, \underline{\C}_1, \eta)$ consisting of equivariant line bundles $\underline{\C}_i = \tilde{I} \times \C_i$ on $\tilde{I}$ and the equivariant bundle map $\eta : \underline{\C}_0 \to \underline{\C}_1$ given by $\eta(t, z) = (t, tz)$.
\end{proof}

\begin{lem} \label{lem:additive_basis_circle_K_pm}
The following holds.
\begin{itemize}
\item[(a)]
The group $K^1_\pm(\tilde{S}^1)$ has the additive basis $\{ x, y, z \}$ such that
\begin{align*}
j^*x &= \underline{\C}_0 - H, &
j^*y &= \underline{\C}_1(\underline{\C}_0 - H), &
j^*z &= \underline{\C}_0 - L_2.
\end{align*}

\item[(b)]
We have $z = \sigma$.
\end{itemize}
\end{lem}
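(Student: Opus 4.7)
The plan is to exploit the split Gysin sequence for the trivial Real circle bundle $\pi_1 \colon \tilde{T}^2 \to \tilde{S}^1$ (first projection), which gives the direct-sum decomposition
\[
K^0_{\Z/2}(\tilde{T}^2) \;=\; \pi_1^* K^0_{\Z/2}(\tilde{S}^1) \,\oplus\, \ker i^*,
\]
where $i \colon \tilde{S}^1 \to \tilde{T}^2$, $i(x) = (x, -1)$ is the equivariant section from the general splitting recalled at the start of the section, and $j^*$ is an isomorphism from $K^1_\pm(\tilde{S}^1)$ onto $\ker i^*$. First I would read off from Lemma~\ref{lem:basis_circle_equiv_K} that $\pi_1^* K^0_{\Z/2}(\tilde{S}^1) = R\,\underline{\C}_0 \oplus (R/J)(\underline{\C}_0 - L_1)$ inside the $R$-module basis of $K^0_{\Z/2}(\tilde{T}^2)$ given by Lemma~\ref{lem:additive_basis_circle_K}(a). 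Comparing the two, the complementary summand is forced to be $R(\underline{\C}_0 - H) \oplus (R/J)(\underline{\C}_0 - L_2)$, of the correct $R$-module type $R \oplus R/J$ to match $K^1_\pm(\tilde{S}^1)$ by Proposition~\ref{prop:torus_general_case}.

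For part (a), I would verify directly that the three candidate classes $\underline{\C}_0 - H$, $\underline{\C}_1(\underline{\C}_0 - H)$ and $\underline{\C}_0 - L_2$ lie in $\ker i^*$. For $\underline{\C}_0 - L_2$ this is immediate since $L|_{-1} \cong \underline{\C}_0$ as $\Z/2$-representations. For $\underline{\C}_0 - H$, unwinding $H = \underline{\C}_1 L_1 L_2 H'$ and restricting to $\tilde{S}^1 \times \{-1\}$ produces a tensor product whose three non-trivial factors cancel to yield $\underline{\C}_0$. Combined with the preceding paragraph, these classes generate $\ker i^*$ as an $R \oplus R/J$-module, so injectivity of $j^*$ furnishes unique preimages $x, y, z$ and shows that $\{x, y, z\}$ is the claimed additive basis of $K^1_\pm(\tilde{S}^1)$.

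For part (b), the idea is to invoke naturality of $j^*$ with respect to the collapse $p \colon \tilde{S}^1 \to \pt$. The corresponding base change $p \times \mathrm{id}$ is the second projection $\pi_2 \colon \tilde{T}^2 \to \tilde{S}^1$, giving a commutative square
\[
\begin{CD}
K^1_\pm(\pt) @>{j^*}>> K^0_{\Z/2}(\tilde{S}^1) \\
@V{p^*}VV @VV{\pi_2^*}V \\
K^1_\pm(\tilde{S}^1) @>{j^*}>> K^0_{\Z/2}(\tilde{T}^2).
\end{CD}
\]
Substituting the identity $j^*\sigma = \underline{\C}_0 - L$ from Lemma~\ref{lem:realize_sigma} in the upper right, one obtains $j^*(p^*\sigma) = \pi_2^*(\underline{\C}_0 - L) = \underline{\C}_0 - L_2 = j^*z$. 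Injectivity of $j^*$ then forces $z = p^*\sigma$, which is the element denoted $\sigma \in K^1_\pm(\tilde{S}^1)$ under the standard abuse of notation.

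The main nuisance will be the explicit identification of $H'|_{\tilde{S}^1 \times \{-1\}}$: one has to pick the representative $x = 1/2$ of the fiber coordinate, work out the $\Z/2$-action on that slice (which becomes $\tau(u, z) = (\bar u, \bar u z)$), and then correctly multiply the three $\Z/2$-actions on $\underline{\C}_1$, $i^*L_1 = L$, and $i^*H'$ to verify that $i^*H = \underline{\C}_0$. Any sign or conjugation error there would propagate, but everything else is formal manipulation of the Gysin splitting and the naturality of $j^*$.
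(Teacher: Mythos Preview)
Your proof is correct and follows essentially the same approach as the paper: for (a) you identify $\ker i^*$ inside the known additive basis of $K^0_{\Z/2}(\tilde{T}^2)$, and for (b) you pull back along $\pi_2$ and invoke Lemma~\ref{lem:realize_sigma}. The only cosmetic difference is that the paper checks $i^*(\underline{\C}_0-H)=i^*(\underline{\C}_0-L_2)=0$ by reading off the values of the ring monomorphism $F$ from the table in Subsection~\ref{subsec:basis_torus}, rather than by your direct bundle computation of $i^*H$.
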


\begin{proof}
We can verify (a) using the ring monomorphism $F$ for $\tilde{T}^2$. For (b), we observe that $\underline{\C}_0 - L_2 \in K^0_{\Z/2}(\tilde{S}^1 \times \tilde{S}^1, \tilde{S}^1 \times pt)$ is the pull-back of $\underline{\C}_0 - L \in K^0_{\Z/2}(\tilde{S}^1, \pt)$ under $\pi_2 : \tilde{S}^1 \times \tilde{S}^1 \to \tilde{S^1}$. Then Lemma \ref{lem:realize_sigma} implies $z = \sigma$. 
\end{proof}

\begin{lem} \label{lem:identify_thom_class_from_2_sphere}
Let $c : \tilde{S}^1\times \tilde{S}^1 \to \tilde{S}^1 \wedge \tilde{S}^1= \C_1 \cup \{ \infty \}$ be the natural projection, and $\Phi_{\Z/2}(\underline{\C}_1) \in \tilde{K}^0_{\Z/2}(\C_1 \cup \{ \infty \})$ the Thom class of $\underline{\C}_1 \to \mathrm{pt}$. Then we have
$$
c^*\Phi_{\Z/2}(\underline{\C}_1) = \underline{\C}_0 - H
$$
in $K^0_{\Z/2}(\tilde{S}^1 \times \tilde{S}^1)$.
\end{lem}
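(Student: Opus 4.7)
My plan is to prove the identity by reducing it to a finite check via the injective ring homomorphism
$$F:\ K^0_{\Z/2}(\tilde{T}^2) \longrightarrow K^0(\tilde{T}^2) \oplus \bigoplus_{(i,j)=(\pm 1,\pm 1)} R_{(i,j)}$$
established in Lemma \ref{lem:additive_basis_circle_K}(b). The strategy is to compute $F$ on both sides of the claimed identity and show the images coincide; because $F$ is injective, this gives the result. The target value $F(\underline{\C}_0 - H)$ can simply be read off from the table in Subsection \ref{subsec:basis_torus}: its $K^0(\tilde{T}^2)$-component is $1-H$, its value at $(1,1)$ is $1-t$, and its values at $(-1,1)$, $(1,-1)$, $(-1,-1)$ are all $0$.

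For the fixed-point components of $F(c^*\Phi_{\Z/2}(\underline{\C}_1))$, recall that $-1 \in \tilde{S}^1$ is the base point, so the subspace $\tilde{S}^1 \vee \tilde{S}^1 \subset \tilde{T}^2$ that is collapsed to $\infty$ contains exactly three of the four fixed points, namely $(-1,-1), (-1,1), (1,-1)$. Since $\Phi_{\Z/2}(\underline{\C}_1)$ lies in reduced $K$-theory, its pullback vanishes at these three points. The remaining fixed point $(1,1)$ corresponds under $c$ to $0 \in \C_1$, the other $\Z/2$-fixed point of $\C_1 \cup \{\infty\}$; and the Thom class $\Phi_{\Z/2}(\underline{\C}_1)$, represented by the Koszul complex $[\underline{\C}_0 \xrightarrow{z} \underline{\C}_1]$ on $\C_1$, restricts at the origin to $[\C_0] - [\C_1] = 1 - t \in R$. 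These values match those of $\underline{\C}_0 - H$ exactly.

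For the non-equivariant component, forgetting the $\Z/2$-action turns $c$ into the standard collapse $T^2 \to T^2/T^2_{(1)} \cong S^2$ onto the top cell, and turns $\Phi_{\Z/2}(\underline{\C}_1)$ into the ordinary Bott generator of $\tilde{K}^0(S^2) \cong \Z$. Under the presentation $K^0(\tilde{T}^2) \cong \Z[H]/(1-H)^2$, the pullback of Bott along the collapse map is the class whose first Chern character pairs to the fundamental class of $T^2$, which is precisely $1 - H$ for the choice of $H$ fixed in Subsection \ref{subsec:basis_torus}. Thus $f(c^*\Phi_{\Z/2}(\underline{\C}_1)) = 1 - H = f(\underline{\C}_0 - H)$, completing the verification of all components of $F$ and yielding the desired equality.

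The main obstacle is to pin down this last identification of the non-equivariant pullback with $1-H$ correctly, including the sign: this is essentially a statement about Bott periodicity and the chosen orientation of $\underline{\C}_1$, and it needs to be matched with the paper's specific construction $H = \underline{\C}_1 L_1 L_2 H'$. Once this sign and convention check is in place, the rest is routine bookkeeping with the table of $F$-values.
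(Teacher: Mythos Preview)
Your proposal is correct and follows essentially the same route as the paper: both arguments compute $F(c^*\Phi_{\Z/2}(\underline{\C}_1))$ by tracking where the four fixed points of $\tilde{T}^2$ land under $c$ (three at $\infty$, one at $0$), reading off $1-t$ at the origin from the Koszul representative of the Thom class, and identifying the non-equivariant component with $1-H$ via $c_1(H)=1$, then concluding by the injectivity of $F$ from Lemma~\ref{lem:additive_basis_circle_K}(b). The paper dispatches the non-equivariant sign check you flag simply by invoking compatibility with forgetting the group action and the normalization $c_1(H)=1$, so your concern there is exactly the one point the paper also takes for granted.
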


\begin{proof}
By definition, $\Phi = \Phi_{\Z/2}(\underline{\C}_1) \in \tilde{K}^0_{\Z/2}(\C_1 \cup \{ \infty \})) \cong K^0_{\Z/2}(D(\C_1), S(\C_1))$ is realized by the triple $(\underline{\C}_0, \underline{\C}_1, \eta)$, where $\underline{\C}_i$ is the equivariant line bundles $D(\C_1) \times \C_i$ over $D(\C_1)$ and $\eta : \underline{\C}_0 \to \underline{\C}_1$ is the bundle map $(\xi, z) \mapsto (\xi, \xi z)$. By the compatibility with forgetting the group action, we have $c^*\Phi = 1 - H \in K^0(\tilde{T}^2)$, where $c_1(H) = 1$. The map $c$ carries $(1, 1) \in \tilde{T}^2$ to the point $0 \in \C_1 \cup \{ \infty \}$, while $(-1, 0), (0, -1), (-1, -1)$ to $\infty$. Thus, $F$ takes the following values for $c^*\Phi$:
$$
\begin{array}{c|c|cccc}
K^0_{\Z/2}(\tilde{T}^2) & 
K^0(\tilde{T}^2) & (1, 1) & (-1, 0) & (0, -1) & (-1, -1) \\
\hline
c^*\Phi & 1 - H & 1 - t & 0 & 0 & 0
\end{array}
$$
This implies $c^*\Phi = \underline{\C}_0 - H$.
\end{proof}

\begin{lem}
The image of $x \in K^1_\pm(\tilde{S}^1)$ under the push-forward along the projection $\pi : \tilde{S}^1 \to \pt$ is $\pi_* x = 1$ in $K^0_{\Z/2}(\mathrm{pt})$.
\end{lem}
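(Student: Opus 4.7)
The plan is to combine the Gysin sequence for $\pi : \tilde{S}^1 \to \pt$ with the explicit identification of $x$ from Lemma \ref{lem:identify_thom_class_from_2_sphere}. View $\tilde{S}^1$ as $S(R)$ for the Real line bundle $R = \C$ over a point. Since $W_3^{\Z/2}(R) = 0$, $K^0_\pm(\pt) = 0$, and $K^1_{\Z/2}(\pt) = 0$, the Gysin sequence collapses to the short exact sequence of $R$-modules
$$
0 \to K^1_\pm(\pt) \overset{\pi^*}{\to} K^1_\pm(\tilde{S}^1) \overset{\pi_*}{\to} K^0_{\Z/2}(\pt) \to 0.
$$
By Lemma \ref{lem:additive_basis_circle_K_pm}, the element $\sigma = z$ agrees with $\pi^*$ of the generator of $K^1_\pm(\pt) = R/J$, so $\pi_*\sigma = 0$; the $\Z$-basis $\{x, tx, \sigma\}$ then forces $\pi_*$ to induce an $R$-linear surjection from the free summand $R\cdot x \cong R$ onto $R = K^0_{\Z/2}(\pt)$, which is multiplication by a unit. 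Thus $\pi_* x \in \{\pm 1, \pm t\}$.

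To pin down the unit, I use the decomposition
$$
K^1_\pm(\tilde{S}^1) = \tilde{K}^0_{\Z/2}(\tilde{S}^1\times\tilde{I}/\tilde{S}^1\times\partial\tilde{I}) = \tilde{K}^0_{\Z/2}(\tilde{S}^2\vee\tilde{S}^1) \cong \tilde{K}^0_{\Z/2}(\tilde{S}^2) \oplus \tilde{K}^0_{\Z/2}(\tilde{S}^1),
$$
where $\tilde{S}^2 = \tilde{S}^1 \wedge \tilde{S}^1$ is the Thom space of $\underline{\C}_1 \to \pt$. The second summand is the image of $\pi^*$ and is isomorphic to $R/J$; and the Thom isomorphism identifies the first summand with $K^0_{\Z/2}(\pt) = R$, sending $\Phi_{\Z/2}(\underline{\C}_1)$ to $1$. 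By Lemma \ref{lem:identify_thom_class_from_2_sphere}, the collapse map $c : \tilde{T}^2 \to \tilde{S}^2$ satisfies $c^*\Phi_{\Z/2}(\underline{\C}_1) = \underline{\C}_0 - H = j^*x$, and the injectivity of $j^*$ then identifies $x$ with $\Phi_{\Z/2}(\underline{\C}_1)$ in the first summand.

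Finally, the restriction of $\pi_*$ to $\tilde{K}^0_{\Z/2}(\tilde{S}^2)$ agrees with the inverse of this Thom isomorphism, since both arise from the same cofibre sequence $\tilde{S}^1 \to \tilde{D}^2 \to \tilde{S}^2$ paired with a Thom class; the compatibility of $\Phi_\pm(R)$ (Thom class of $R$, used in defining $\pi_*$) with $\Phi_{\Z/2}(\underline{\C}_1)$ follows from Proposition \ref{prop:interpretation_K} together with the splitting $R_\R = \underline{\R}_0 \oplus \underline{\R}_1$. Consequently $\pi_* x = 1$. The main obstacle is this last compatibility: one has to trace carefully through the interpretation of $K_\pm$ as twisted $\Z/2$-equivariant $K$-theory to verify that the push-forward defined via the Real line bundle $R$ matches the Thom isomorphism for the equivariant complex line bundle $\underline{\C}_1$.
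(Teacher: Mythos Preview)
Your approach is essentially the same as the paper's: reduce to the restriction of $\pi_*$ to $\tilde{K}^1_\pm(\tilde{S}^1) \cong \tilde{K}^0_{\Z/2}(\tilde{S}^1\wedge\tilde{S}^1)$, identify $x$ with the Thom class $\Phi_{\Z/2}(\underline{\C}_1)$ via Lemma~\ref{lem:identify_thom_class_from_2_sphere}, and then argue that $\pi_*$ on this summand is the inverse Thom isomorphism. The preliminary Gysin argument narrowing $\pi_*x$ to a unit is harmless but redundant once you carry out the later identification.

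The place where your write-up is thinner than the paper is exactly the step you flag as the ``main obstacle.'' The paper does not invoke Proposition~\ref{prop:interpretation_K} abstractly; instead it writes out an explicit factorization of $\pi_*$ through
\[
K^0_{\Z/2}(\tilde{S}^1\times\tilde{I},\ \tilde{S}^1\times\partial\tilde{I}\cup\pt\times\tilde{I})
\;\cong\;
\tilde{K}^0_{\Z/2}(\tilde{S}^1\wedge\tilde{S}^1)
\;\longrightarrow\;
K^0_{\Z/2}(\pt)
\]
as a composite of an excision, a restriction, a periodicity, a connecting map, and the Thom isomorphism for $\underline{\C}_1\oplus\underline{\R}_0$, and then observes that this composite collapses to the single Thom isomorphism for $\underline{\C}_1\to\pt$. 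That direct unwinding is what actually pins down the unit as $1$ rather than $t$ or $\pm 1$. Your sketch via the splitting $R_\R\cong\underline{\R}_0\oplus\underline{\R}_1$ points in the right direction (indeed $\Phi_\pm(R)$ and $\Phi_{\Z/2}(\underline{\R}_1)$ combine to $\Phi_{\Z/2}(\underline{\C}_1)$ up to the identification in Lemma~\ref{prop:interpretation_K}), but to make it a proof you would still need to chase the sign/normalisation through, which is precisely the content of the paper's factorization. A small side remark: your equivariant wedge decomposition of the quotient space is not needed and is slightly delicate to justify on the nose; the $K$-group splitting you use is simply the reduced/unreduced decomposition coming from the base point, and that suffices.
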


\begin{proof}
Let $\tilde{D} \subset \C_1$ be the unit disk with the $\Z/2$-action $z \mapsto \bar{z}$, so that $\partial \tilde{D} = \tilde{S}^1$. The push-forward $\pi_* : K^1_\pm(\tilde{S}^1) \to K^0_{\Z/2}(\mathrm{pt})$ factors as follows:
\begin{align*}
K^0_{\Z/2}(\tilde{S}^1 \times \tilde{I}, 
\tilde{S}^1 \times \partial \tilde{I})
&\cong
K^0_{\Z/2}(\partial (\tilde{D} \times \tilde{I}), 
\tilde{D} \times \partial \tilde{I}) \quad
(\mbox{excision}) \\
&\to
K^0_{\Z/2}(\partial (\tilde{D} \times \tilde{I})) \quad
(\mbox{restriction})\\
&\cong
K^{-2}_{\Z/2}(\partial (\tilde{D} \times \tilde{I})) \quad
(\mbox{periodicity}) \\
&\to
K^{-1}_{\Z/2}(\tilde{D} \times \tilde{I}, 
\partial (\tilde{D} \times \tilde{I})) \quad
(\mbox{connecting})\\
&\cong
K^0_{\Z/2}(\mathrm{pt}). \quad
(\mbox{Thom isomorphism})
\end{align*}
As a result of Lemma \ref{lem:additive_basis_circle_K_pm}, we have $x \in K^1_\pm(\tilde{S}^1, \mathrm{pt}) \subset K^1_\pm(\tilde{S}^1)$, so that we restrict ourselves to consider the push-forward $\pi_* : K^1_\pm(\tilde{S}^1, \mathrm{pt}) \to K^0_{\Z/2}(\mathrm{pt})$, which is bijective. The factorization of $\pi_*$ above now reduces to:
\begin{align*}
K^0_{\Z/2}(\tilde{S}^1 \times \tilde{I}, 
\tilde{S}^1 \times \partial \tilde{I} \cup \mathrm{pt} \times \tilde{I})
&\cong
K^0_{\Z/2}(\partial (\tilde{D} \times \tilde{I}), 
\tilde{D} \times \partial \tilde{I} \cup \mathrm{pt} \times \tilde{I}) \quad
(\mbox{excision}) \\
&\to
K^0_{\Z/2}(\partial (\tilde{D} \times \tilde{I}), \mathrm{pt}) \quad
(\mbox{restriction})\\
&\cong
K^{-2}_{\Z/2}(\partial (\tilde{D} \times \tilde{I}), \mathrm{pt}) \quad
(\mbox{periodicity}) \\
&\to
K^{-1}_{\Z/2}(\tilde{D} \times \tilde{I}, 
\partial (\tilde{D} \times \tilde{I})) \quad
(\mbox{connecting})\\
&\cong
K^0_{\Z/2}(\mathrm{pt}). \quad
(\mbox{Thom isomorphism})
\end{align*}
This factorization is further identified with:
\begin{align*}
\tilde{K}^0_{\Z/2}(\tilde{S}^1 \wedge \tilde{S}^1)
&\cong
\tilde{K}^0_{\Z/2}(S^1 \wedge S^1 \wedge \tilde{S} \wedge \tilde{S}) \quad
(\mbox{periodicity}) \\
&\cong
K^0_{\Z/2}(\mathrm{pt}) \quad 
(\mbox{Thom isomorphism})
\end{align*}
Thus, $\pi_* : K^1_\pm(\tilde{S}^1, \mathrm{pt}) \to K^0_{\Z/2}(\mathrm{pt})$ turns out to be just the Thom isomorphism $\tilde{K}^0_{\Z/2}(\tilde{S}^1 \wedge \tilde{S}^1) = \tilde{K}^0_{\Z/2}(D(\C_1)/S(\C_1)) \cong K^0_{\Z/2}(\mathrm{pt})$. To relate the Thom class $\Phi = \Phi_{\Z/2}(\underline{\C}_1) \in \tilde{K}^0_{\Z/2}(\tilde{S}^1 \wedge \tilde{S}^1)$ of $\underline{\C}_1 \to \pt$ with $x \in K^1_\pm(\tilde{S}^1)$, we recall the expression $K^1_\pm(\tilde{S}) \cong K^1_{\Z/2}(\tilde{S} \times \tilde{S}, \tilde{S} \times \mathrm{pt})$ and consider the commutative diagram:
$$
\begin{CD}
K^1_\pm(\tilde{S}^1) @=
K^0_{\Z/2}(\tilde{S}^1 \times \tilde{S}^1, \tilde{S}^1 \times \mathrm{pt})
@>{j^*}>> 
K^0_{\Z/2}(\tilde{S}^1 \times \tilde{S}^1) 
\\
@AAA @AAA @AA{c^*}A \\
\tilde{K}_\pm(\tilde{S}^1) @=
K^0_{\Z/2}(\tilde{S}^1 \times \tilde{I}, 
\tilde{S}^1 \times \partial \tilde{I} \cup \mathrm{pt} \times \tilde{I})
@=
\tilde{K}^0(\tilde{S}^1 \wedge \tilde{S}^1),
\end{CD}
$$
where $c : \tilde{S}^1 \times \tilde{S}^1 \to \tilde{S}^1 \wedge \tilde{S}^1$ is the natural projection. As is seen by using the long exact sequence for a pair, $c^*$ is injective. Now, from our knowledge of $c^*\Phi$ and the definition of $x$, we get $c^*\Phi = j^*x$, so that $\pi_*x = 1$.
\end{proof}

\begin{lem} \label{lem:x_square}
In $K^*_{\Z/2}(\tilde{S}^1) \oplus K^*_{\pm}(\tilde{S}^1)$, we have:
$$
x^2 = zx = \underline{\C}_0 - L.
$$
\end{lem}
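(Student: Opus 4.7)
The plan is to apply the injective ring homomorphism
\[
F : K^0_{\Z/2}(\tilde{S}^1) \to K^0(\tilde{S}^1) \oplus R_1 \oplus R_{-1}
\]
of Lemma \ref{lem:basis_circle_equiv_K}(b), whose three components are the forgetful map $f$ and the restrictions $i_{\pm 1}^*$ to the two fixed points $\pm 1 \in \tilde{S}^1$. Each component extends to a ring map from the full graded ring $\mathbb{K}^*(\tilde{S}^1)$ to $\mathbb{K}^*(\mathrm{pt})$ and is therefore compatible with the pairing $K^1_\pm \otimes K^1_\pm \to K^0_{\Z/2}$. It will thus suffice to compute $F(x^2)$, $F(zx)$, and $F(\underline{\C}_0 - L)$ and to check that the three resulting triples coincide.

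The main task is to identify the three components of $x \in K^1_\pm(\tilde{S}^1)$. For the forgetful component, I would use that push-forward commutes with the forgetful map, so that $\pi_*(f'(x)) = f(\pi_*(x)) = 1$; this forces $f'(x)$ to be a generator of $K^1(\tilde{S}^1) \cong \Z$, and therefore $f'(x)^2 = 0$ in the exterior algebra $K^*(S^1)$. For the two restrictions, the naturality of $j^*$ under $i_{\pm 1} : \{\pm 1\} \hookrightarrow \tilde{S}^1$ gives
\[
j^*(i_{\pm 1}^* x) = (\underline{\C}_0 - H)|_{\{\pm 1\} \times \tilde{S}^1}.
\]
Reading the values of $H$ at the four fixed points of $\tilde{T}^2$ from the table in Subsection \ref{subsec:basis_torus}, I expect to identify $H|_{\{1\} \times \tilde{S}^1}$ with $L$ (matching $(t,1)$ at the fixed points) and $H|_{\{-1\} \times \tilde{S}^1}$ with $\underline{\C}_0$ (matching $(1,1)$), using the injectivity of $F$ for $\tilde{S}^1$ to pin down the line bundles from their fixed-point data. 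Combined with Lemma \ref{lem:realize_sigma}, which identifies $\sigma \in K^1_\pm(\mathrm{pt})$ with $\underline{\C}_0 - L$ under $j^*$, these computations yield $i_1^* x = \sigma$ and $i_{-1}^* x = 0$ in $K^1_\pm(\mathrm{pt}) = R/J$.

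With these data the assembly is immediate. For $x^2$ the three components of $F$ read $(f'(x)^2, (i_1^* x)^2, (i_{-1}^* x)^2) = (0, \sigma^2, 0) = (0, 1 - t, 0)$, using the relation $\sigma^2 = 1 - t$ from the ring structure of $\mathbb{K}^*(\mathrm{pt})$. For $zx$, since $z = \sigma$ is pulled back from the point by Lemma \ref{lem:additive_basis_circle_K_pm}(b), we have $f'(z) = 0$ and $i_{\pm 1}^* z = \sigma$, so $F(zx) = (0, \sigma \cdot \sigma, \sigma \cdot 0) = (0, 1 - t, 0)$. Finally, the table of values of $F$ for $\tilde{S}^1$ gives $F(\underline{\C}_0 - L) = (0, 1 - t, 0)$ directly. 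Injectivity of $F$ then forces $x^2 = zx = \underline{\C}_0 - L$. The only slightly delicate step is the identification of $i_1^* x$ with $\sigma$, which rests on Lemma \ref{lem:realize_sigma}, the naturality of $j^*$, and recognizing the restricted line bundles from their fixed-point values; the rest is routine bookkeeping.
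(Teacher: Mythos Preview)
Your proof is correct and takes a genuinely different route from the paper's. The paper computes $x^2$ and $xz$ by passing through $\tilde{T}^3$: it realizes the pairing $\tilde{K}^1_\pm(\tilde{S}^1) \times \tilde{K}^1_\pm(\tilde{S}^1) \to \tilde{K}^0_{\Z/2}(\tilde{S}^1)$ as an external product in $\tilde{K}^0_{\Z/2}(\tilde{S}^1_1 \wedge \tilde{S}^1_2 \wedge \tilde{S}^1_3)$, pulls this back to $K^0_{\Z/2}(\tilde{T}^3)$ via the injective collapse map $c^*$, and then uses the ring monomorphism $F$ for $\tilde{T}^3$ (Lemma~\ref{lem:additive_basis_3_torus_K}) to evaluate $(\underline{\C}_0 - H_{12})(\underline{\C}_0 - H_{13})$ and $(\underline{\C}_0 - H_{12})(\underline{\C}_0 - L_3)$ explicitly before stripping off the Thom class $\underline{\C}_0 - H_{23}$.

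Your argument stays on $\tilde{S}^1$: you extend the three components of $F$ to the full graded ring $\mathbb{K}^*(\tilde{S}^1)$ (restriction to a fixed point gives a ring map $\mathbb{K}^*(\tilde{S}^1) \to \mathbb{K}^*(\pt)$, and the forgetful maps $f, f'$ assemble into a ring map $\mathbb{K}^*(\tilde{S}^1) \to K^*(\tilde{S}^1)$), evaluate $x$ and $z$ in each component, multiply, and conclude by injectivity of $F$ on $K^0_{\Z/2}(\tilde{S}^1)$. This is more economical, since it avoids the $\tilde{T}^3$ analysis entirely, at the cost of invoking two compatibilities that the paper does not state explicitly: that $f, f'$ together respect the $K^1_\pm \otimes K^1_\pm \to K^0_{\Z/2}$ pairing, and that $f'$ intertwines the push-forwards (so that $\pi_*f'(x) = f(\pi_*x) = 1$). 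Both follow from the naturality of the Thom isomorphism under forgetting the group action, but you should say so. The paper's route is heavier but works entirely with explicit line-bundle classes in $K^0_{\Z/2}(\tilde{T}^3)$ and needs no such naturality beyond what has already been set up.
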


\begin{proof}
Notice that $x, tx \in \tilde{K}^1_\pm(\tilde{S}^1)$. Note also $\tilde{K^1}_\pm(\tilde{S}^1) \cong \tilde{K}^0_{\Z/2}(\tilde{S}^1 \wedge \tilde{S}^1)$. The multiplication $\tilde{K}^1_\pm(\tilde{S}^1) \times \tilde{K}^1_\pm(\tilde{S}^1) \to \tilde{K}^0_{\Z/2}(\tilde{S}^1)$ is the composition of:
$$
\tilde{K}^0_{\Z/2}(\tilde{S}^1 \wedge \tilde{S}^1) \times
\tilde{K}^0_{\Z/2}(\tilde{S}^1 \wedge \tilde{S}^1) \overset{\mu}{\to}
\tilde{K}^0_{\Z/2}(\tilde{S}^1_1 \wedge \tilde{S}^1_2 \wedge \tilde{S}^1_3) 
\overset{\beta}{\to}
\tilde{K}^0_{\Z/2}(\tilde{S}^1).
$$
In the above, $\tilde{S}^1_k = \tilde{S}^1$ for $k = 1, 2, 3$, and $\mu$ is defined as $\mu(a, b) = j_{12}^*(a) \cdot j_{13}^*(b)$ by using the map $j_{1k} : \tilde{S}^1_1 \wedge \tilde{S}^1_2 \wedge \tilde{S}^1_3 \to \tilde{S}^1 \wedge \tilde{S}^1$, $(t_1, t_2, t_3) \mapsto (t_1, t_k)$ for $k = 2, 3$. The isomorphism $\beta$ is the Thom isomorphism applied to $\tilde{S}^1_2 \wedge \tilde{S}^1_3$. We here use the commutative diagram:
$$
\begin{CD}
K^0_{\Z/2}(\tilde{S}^1 \times \tilde{S}^1) \times
K^0_{\Z/2}(\tilde{S}^1 \times \tilde{S}^1) @>{\mu}>>
K^0_{\Z/2}(\tilde{S}^1 \times \tilde{S}^1 \times \tilde{S}^1) \\
@A{c^* \times c^*}AA @AA{c^*}A \\
\tilde{K}^0_{\Z/2}(\tilde{S}^1 \wedge \tilde{S}^1) \times
\tilde{K}^0_{\Z/2}(\tilde{S}^1 \wedge \tilde{S}^1) @>{\mu}>>
\tilde{K}^0_{\Z/2}(\tilde{S}^1_1 \wedge \tilde{S}^1_2 \wedge \tilde{S}^1_3),
\end{CD}
$$
where the projections $c : \ \tilde{S}^1 \times \cdots \times \tilde{S}^1 \to \tilde{S}^1 \wedge \cdots \wedge \tilde{S}^1$ induce the vertical maps. By the long exact sequence for a pair, $c^*$ on the right turns out to be injective. Now, we use $F$ for $\tilde{T}^3$ to see
\begin{align*}
\mu(c^*x, c^*x)
&= j_{12}^*(\underline{\C}_0 - H) 
\cdot j_{13}^*(\underline{\C}_0 - H) \\
&= (\underline{\C}_0 - H_{12})
(\underline{\C}_0 - H_{13})
= (\underline{\C}_0 - L_1)(\underline{\C}_0 - H_{23}).
\end{align*}
The Thom class $\Phi_{23}$ producing $\beta : \tilde{K}^0_{\Z/2}(\tilde{S}^1_1 \wedge \tilde{S}^1_2 \wedge \tilde{S}^1_3) \to \tilde{K}^0_{\Z/2}(\tilde{S}^1)$ admits the expression $c^*\Phi_{23} = \underline{\C}_0 - H_{23}$ by Lemma \ref{lem:identify_thom_class_from_2_sphere}. Hence we get $x^2 = \underline{\C}_0 - L_1$ in $\tilde{K}^0_{\Z/2}(\tilde{S}^1)$. The computation of $xz$ proceeds in the same way. A use of $F$ shows
\begin{align*}
\mu(c^*x, c^*z)
&= j_{12}^*(\underline{\C}_0 - H) 
\cdot j_{13}^*(\underline{\C}_0 - L_2) \\
&= (\underline{\C}_0 - H_{12})
(\underline{\C}_0 - L_3)
= (\underline{\C}_0 - L_1)(\underline{\C}_0 - H_{23}),
\end{align*}
so that $xz = \underline{\C}_0 - L_1$. 
\end{proof}

\begin{proof}[The proof of Proposition \ref{prop:ring_structure_KK_circle}]
Let $x, y, z \in \tilde{K}^1_\pm(\tilde{S}^1)$ be the additive basis given in Lemma \ref{lem:additive_basis_circle_K_pm}. As is seen, $z = \sigma$ comes from $\tilde{K}^1_\pm(\pt)$, so that $x$ and $y = tx$ generate $\tilde{K}^1_\pm(\tilde{S}^1) \cong R$. Since $\pi_* : \tilde{K}^1_\pm(\tilde{S}^1) \to K^0_{\Z/2}(\pt)$ is an isomorphism $R \cong R$, the characterizing condition on $\chi$ implies $\chi = x$. By Lemma \ref{lem:x_square} and \ref{lem:additive_basis_circle_K}, we can see the ring structure of $\mathbb{K}^*(\tilde{S}^1)$ is as stated. The correspondences with geometric realizations are clear from the construction.
\end{proof}

\subsection{The ring structure of $\mathbb{K}^*(\tilde{T}^2)$}

As before, we write $\pi_i : \tilde{S}^1 \times \tilde{S}^1 \to \tilde{S}^1$ for the $i$th projection.

\begin{prop} \label{prop:ring_structure_KK_torus}
The following holds true:
\begin{itemize}
\item[(a)]
There is a ring isomorphism
$$
\mathbb{K}^*(\tilde{T}^2) 
= K^*_{\Z/2}(\tilde{T}^2) \oplus K^*_{\pm}(\tilde{T}^2)
\cong \Z[\sigma, \chi_1, \chi_2]/
(\sigma^3 - 2\sigma, \chi^2_1 - \sigma \chi_1, \chi_2^2 - \sigma \chi_2),
$$
where $\chi_i \in \tilde{K}^1_\pm(\tilde{S}^1)$ are the unique elements such that $(\pi_1)_* \chi_2 = 1$ and $(\pi_2)_*\chi_1 = 1$ in $K^0_{\Z/2}(\tilde{S}^1)$, and $\sigma \in K^1_\pm(\pt)$ is the generator.

\item[(b)]
The elements $1, t, \chi_1\chi_2, t\chi_1\chi_2, \sigma \chi_1, \sigma \chi_2 \in K^0_{\Z/2}(\tilde{T}^2)$ have the realization:
\begin{align*}
1 &= \underline{\C}_0, &
t &= \underline{\C}_1, \\
\chi_1\chi_2 &= \underline{\C}_0 - H, &
t\chi_1\chi_2 &= \underline{\C}_1(\underline{\C}_0 - H), \\
\sigma \chi_1 &= \underline{\C}_0 - L_1, &
\sigma \chi_2 &= \underline{\C}_0 - L_2.
\end{align*}
The elements $\chi_1, t\chi_1, \chi_2, t\chi_2, \sigma, \sigma \chi_1\chi_2 \in K^1_\pm(\tilde{T}^2)$ have the following realization through the injection $j^* : K^1_\pm(\tilde{T}^2) \to K^0_{\Z/2}(\tilde{T}^3)$:
\begin{align*}
j^*(\chi_1) &= \underline{\C}_0 - H_{13}, &
j^*(t\chi_1) &= \underline{\C}_1(\underline{\C}_0 - H_{13}), \\
j^*(\chi_2) &= \underline{\C}_0 - H_{23}, &
j^*(t\chi_2) &= \underline{\C}_1(\underline{\C}_0 - H_{23}), \\
j^*(\sigma) &= \underline{\C}_0 - L_3, &
j^*(\sigma\chi_1\chi_2) &= (\underline{\C}_0 - L_3)(\underline{\C}_0 - H_{12}).
\end{align*}
\end{itemize}
\end{prop}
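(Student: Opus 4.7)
The plan is to build $\mathbb{K}^*(\tilde{T}^2)$ from the known ring $\mathbb{K}^*(\tilde{S}^1)$ (Proposition \ref{prop:ring_structure_KK_circle}) by pullback along the two projections $\pi_i : \tilde{T}^2 \to \tilde{S}^1$. First I would define $\chi_i := \pi_i^*\chi$ and let $\sigma \in K^1_\pm(\tilde{T}^2)$ also denote the pullback of the generator $\sigma \in K^1_\pm(\pt)$. The uniqueness characterization of $\chi_i$ follows from the base-change identity $(\pi_j)_* \circ \pi_i^* = \pi^* \circ \pi_*$ applied to the Cartesian square $\tilde{T}^2 = \tilde{S}^1 \times_{\pt} \tilde{S}^1$, which yields $(\pi_1)_*\chi_2 = \pi^*\pi_*\chi = 1$ and similarly $(\pi_2)_*\chi_1 = 1$.

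Pulling back the relations $\sigma^3 = 2\sigma$ from $\mathbb{K}^*(\pt)$ and $\chi^2 = \sigma\chi$ from $\mathbb{K}^*(\tilde{S}^1)$, one obtains a well-defined ring homomorphism
$$\Phi : \Z[\sigma, \chi_1, \chi_2]/(\sigma^3 - 2\sigma, \chi_1^2 - \sigma\chi_1, \chi_2^2 - \sigma\chi_2) \longrightarrow \mathbb{K}^*(\tilde{T}^2).$$
To show $\Phi$ is an isomorphism, I would compare $R$-module structures on both sides, using the relation $t = 1 - \sigma^2$ valid in the domain. A direct calculation shows that in the $K^0_{\Z/2}$-component the domain decomposes as $R\cdot 1 \oplus R\cdot\chi_1\chi_2 \oplus (R/J)\cdot\sigma\chi_1 \oplus (R/J)\cdot\sigma\chi_2$, and in the $K^1_\pm$-component as $R\cdot\chi_1 \oplus R\cdot\chi_2 \oplus (R/J)\cdot\sigma \oplus (R/J)\cdot\sigma\chi_1\chi_2$, the $R/J$ summands coming from the identity $(1+t)\sigma = 0$. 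By Proposition \ref{prop:torus_general_case} and Lemma \ref{lem:additive_basis_circle_K}, $\mathbb{K}^*(\tilde{T}^2)$ admits exactly the same $R$-module decomposition.

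It then suffices to check that $\Phi$ sends these generators to the geometric representatives in (b). The identifications $1 = \underline{\C}_0$, $t = \underline{\C}_1$ and $\sigma\chi_i = \underline{\C}_0 - L_i$ follow from naturality applied to the one-circle identifications of Proposition \ref{prop:ring_structure_KK_circle}(b), using $L_i = \pi_i^*L$. The $K^1_\pm$ expressions involving $j^* : K^1_\pm(\tilde{T}^2) \to K^0_{\Z/2}(\tilde{T}^3)$ follow from the naturality of $j^*$ along the projection $\tilde{T}^3 \to \tilde{T}^2$ collapsing the middle factor (which transports $\underline{\C}_0 - H$ to $\underline{\C}_0 - H_{13}$ or $\underline{\C}_0 - H_{23}$, depending on which $\chi_i$ is pulled back), combined with the module identity $j^*(\beta\alpha) = \pi^*\beta \cdot j^*\alpha$ that handles the mixed generator $j^*(\sigma\chi_1\chi_2) = (\underline{\C}_0 - L_3)(\underline{\C}_0 - H_{12})$.

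The main obstacle is the identification $\chi_1\chi_2 = \underline{\C}_0 - H$. I would establish this by repeating the argument of Lemma \ref{lem:x_square}: the product $\chi_1\chi_2 \in \tilde{K}^0_{\Z/2}(\tilde{T}^2)$ is the pullback under the collapse map $c : \tilde{T}^2 \to \tilde{S}^1 \wedge \tilde{S}^1$ of the external product $\chi \boxtimes \chi$, which corresponds via the Thom isomorphism to the Thom class $\Phi_{\Z/2}(\underline{\C}_1)$; Lemma \ref{lem:identify_thom_class_from_2_sphere} then gives $c^*\Phi_{\Z/2}(\underline{\C}_1) = \underline{\C}_0 - H$. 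Once every generator identification is in place, $\Phi$ is surjective (hitting the $R$-module generators of $\mathbb{K}^*(\tilde{T}^2)$) and injective (both sides share the same $R$-module presentation), completing the proof.
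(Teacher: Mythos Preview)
Your proposal is correct and gives a complete argument, but the route you take for the key identification $\chi_1\chi_2 = \underline{\C}_0 - H$ differs from the paper's in an instructive way.

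The paper proceeds indirectly: it first establishes that $\sigma(\chi_1\chi_2 - h) = 0$ where $h = \underline{\C}_0 - H$, then writes $\chi_1\chi_2 = h + \alpha(1+t)h + \beta(1+t)$ for unknown integers $\alpha, \beta$, and eliminates these one at a time---$\beta = 0$ by computing $\chi_1(\chi_1\chi_2)$ using a relation $h\chi_1 = h\sigma$ verified via the ring monomorphism $F$, and $\alpha = 0$ by applying the push-forward $(\pi_1)_*$ and comparing with $(\pi_1)_*(\chi_1\chi_2) = \chi$. Your approach is more direct: you observe that the reduced external product $\chi \boxtimes \chi \in \tilde{K}^0_{\Z/2}(\tilde{S}^1 \wedge \tilde{S}^1)$ equals the Thom class $\Phi_{\Z/2}(\underline{\C}_1)$ (since each factor $\chi$ is itself identified with $\Phi_{\Z/2}(\underline{\C}_1)$ in the proof of $\pi_*\chi = 1$, and Thom classes multiply), and then invoke Lemma~\ref{lem:identify_thom_class_from_2_sphere} to get $c^*\Phi_{\Z/2}(\underline{\C}_1) = \underline{\C}_0 - H$. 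This avoids the auxiliary computation in $K^0_{\Z/2}(\tilde{T}^3)$ and the push-forward argument. The paper's method, on the other hand, makes heavier use of the injective homomorphism $F$ and the explicit geometric bases, which keeps all computations at the level of equivariant vector bundles.

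One small point: your phrase ``the uniqueness characterization of $\chi_i$'' is slightly misleading. The push-forward condition $(\pi_2)_*\chi_1 = 1$ alone does not pin down $\chi_1$ (the kernel of $(\pi_2)_*$ is nontrivial); what you actually do---and what the paper also does---is \emph{define} $\chi_i := \pi_i^*\chi$ and then \emph{verify} the push-forward condition via base change. This is fine, but the wording should reflect it.
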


\begin{proof}
By Lemma \ref{lem:additive_basis_circle_K}, we know that the group $K^0_{\Z/2}(\tilde{T}^2)$ has the geometric basis as described in (b). Also, by using Lemma \ref{lem:additive_basis_3_torus_K}, we can see that the group $K^1_{\pm}(\tilde{T}^2)$ has the geometric basis as described in (b). Since the push-forwards are compatible with restrictions, we have $j^*(\chi_i) = \underline{\C}_0 - H_{i3}$ for $i = 1, 2$. In the same way as the case of $\tilde{T}^2$, we have $j^*(\sigma) = \underline{\C}_0 - L_3$. The pull-back under $\pi_i^*$ is a ring monomorphism $\mathbb{K}^*(\tilde{S}^1) \to \mathbb{K}^*(\tilde{T}^2)$. This fact further allows us to identify $\sigma \chi_i = \underline{\C}_0 - L_i$. Noting that $K^1_\pm(\tilde{T}^2)$ is a $K^0_{\Z/2}(\tilde{T}^2)$-module and using Lemma \ref{lem:additive_basis_3_torus_K}, we have
$$
j^*(\sigma \chi_1 \chi_2)
= j^*((\sigma \chi_1)\chi_2)
= (\underline{\C}_0 - L_1)(\underline{\C}_0 - H_{23})
= (\underline{\C}_0 - L_3)(\underline{\C}_0 - H_{12}).
$$
From this, $\{ \chi_1, t\chi_1, \chi_2, t\chi_2, \sigma, \sigma \chi_1 \chi_2 \}$ is an additive basis of $\tilde{K}^1_\pm(\tilde{T}^2)$, and we have $h \sigma = \sigma \chi_1 \chi_2$, where we put $h = \underline{\C}_0 - H$. That $\sigma(\chi_1\chi_2 - h) = 0$ allows us the expression $\chi_1\chi_2 = h + \alpha (1 + t) h + \beta (1 + t)$ for some $\alpha, \beta \in \Z$. By using the formula $h \chi_1 = h \sigma$, which can be verified via $F$, we have
\begin{align*}
&
\chi_1(\chi_1\chi_2) 
= \sigma \chi_1 \chi_2 = \sigma h, \\
& \chi_1(h + \alpha (1 + t) h + \beta (1 + t))
= \sigma h + \beta (1 + t) \chi_1,
\end{align*}
so that $\beta = 0$ and $\chi_1\chi_2 = h + \alpha (1 + t) h$. We here consider the push-forward $(\pi_1)_* : K^0_{\Z/2}(\tilde{T}^2) \to K^1_\pm(\tilde{S}^1)$. Because $(\pi_1)_*(\sigma \chi_2) = \sigma \chi$, the push-forward restricts to give an $R$-module isomorphism $\Z h \oplus \Z th \to \Z \chi \oplus \Z t \chi$. This implies $(\pi_1)_*(h) \in \{ \pm \chi, \pm t \chi \}$. Since $(\pi_1)_*(\chi_1\chi_2) = \chi$, we conclude $\alpha = 0$ and $\chi_1\chi_2 = h$. This completes the proof of (a). From the construction, (b) is obvious.
\end{proof}

\begin{cor} \label{cor:fourier_transform}
Let $\pi_i : \tilde{S}^1 \times \tilde{S}^1 \to \tilde{S}^1$ be the $i$th projection. Then the map
\begin{align*}
T &: \ \mathbb{K}^*(\tilde{S}^1) 
\longrightarrow 
\mathbb{K}^*(\tilde{S}^1), &
& a \mapsto (\pi_2)_*((1 + t \chi_1\chi_2)\pi_1^*a)
\end{align*}
is an isomorphism of $\mathbb{K}^*(\mathrm{pt})$-modules.
\end{cor}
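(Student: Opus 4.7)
The plan is to compute $T$ explicitly on a $\mathbb{K}^*(\pt)$-module basis of $\mathbb{K}^*(\tilde{S}^1)$ and verify that the resulting matrix has a unit determinant. By Proposition~\ref{prop:ring_structure_KK_circle}, $\mathbb{K}^*(\tilde{S}^1) \cong \mathbb{K}^*(\pt)[\chi]/(\chi^2 - \sigma\chi)$, and the relation $\chi^2 = \sigma\chi$ allows any element to be written uniquely as $a + b\chi$ with $a, b \in \mathbb{K}^*(\pt)$. Thus $\{1, \chi\}$ exhibits $\mathbb{K}^*(\tilde{S}^1)$ as a free $\mathbb{K}^*(\pt)$-module of rank two, so $T$ is determined by $T(1)$ and $T(\chi)$, and it is an isomorphism if and only if the resulting $2\times 2$ matrix over $\mathbb{K}^*(\pt)$ is invertible.

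The key push-forward computations rest on the projection formula $(\pi_2)_*(\pi_2^*\alpha \cdot \beta) = \alpha \cdot (\pi_2)_*\beta$ and on the base-change identity $(\pi_2)_*\circ \pi_1^* = \pi^* \circ \pi_*$ for the Cartesian square
\[
\begin{CD}
\tilde{T}^2 @>{\pi_2}>> \tilde{S}^1 \\
@V{\pi_1}VV @VV{\pi}V \\
\tilde{S}^1 @>{\pi}>> \pt.
\end{CD}
\]
Applied to $\chi$ together with $\pi_*\chi = 1$ (its defining property), base change gives $(\pi_2)_*(\pi_1^*\chi) = 1$; by the uniqueness clause in Proposition~\ref{prop:ring_structure_KK_torus}(a) this forces $\chi_1 = \pi_1^*\chi$, and symmetrically $\chi_2 = \pi_2^*\chi$. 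The projection formula then yields $(\pi_2)_*(\chi_1\chi_2) = \chi \cdot (\pi_2)_*\chi_1 = \chi$, while $(\pi_2)_*(1) = 0$ follows from the splitting of the Gysin sequence established at the start of this section, since $1 = \pi_2^*(1)$ lies in the image of $\pi_2^*$.

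Assembling these and using $\chi_1^2 = \sigma\chi_1$, I obtain
\begin{align*}
T(1) &= (\pi_2)_*(1) + t\,(\pi_2)_*(\chi_1\chi_2) = t\chi, \\
T(\chi) &= (\pi_2)_*(\chi_1) + t\sigma\,(\pi_2)_*(\chi_1\chi_2) = 1 + t\sigma\chi,
\end{align*}
so the matrix of $T$ in the basis $\{1,\chi\}$ is $\begin{pmatrix} 0 & 1 \\ t & t\sigma \end{pmatrix}$ with determinant $-t$. Because $t^2 = 1$ in $\mathbb{K}^*(\pt)$ (equivalently $(1-\sigma^2)^2 \equiv 1$ modulo $\sigma^3 - 2\sigma$), $-t$ is a unit, so the matrix is invertible over $\mathbb{K}^*(\pt)$ and $T$ is a $\mathbb{K}^*(\pt)$-module isomorphism. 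The main obstacle in this plan is the base-change identity in the mixed $K_{\Z/2}/K_\pm$ framework; once that standard compatibility is in hand, the remainder is direct manipulation inside the explicit ring presentations of Propositions~\ref{prop:ring_structure_KK_circle} and~\ref{prop:ring_structure_KK_torus}.
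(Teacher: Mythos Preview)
Your proof is correct and follows the same direct-computation strategy as the paper. The paper evaluates $T$ on the six-element $\Z$-basis $\{1,t,\sigma\chi,\chi,t\chi,\sigma\}$ and checks that the result is again a $\Z$-basis, whereas you exploit the fact that $\mathbb{K}^*(\tilde S^1)\cong \mathbb{K}^*(\pt)[\chi]/(\chi^2-\sigma\chi)$ is free of rank two over $\mathbb{K}^*(\pt)$ to reduce to a $2\times 2$ determinant; since $t\sigma=-\sigma$ in $\mathbb{K}^*(\pt)$, your $T(\chi)=1+t\sigma\chi$ agrees with the paper's $T(\chi)=1-\sigma\chi$, and $\det=-t$ is indeed a unit. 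The projection formula you invoke is exactly the $K^*_{\Z/2}(\tilde S^1)$-linearity of the Gysin maps asserted throughout the paper, and the identification $\chi_i=\pi_i^*\chi$ with $(\pi_2)_*\chi_1=1$ is built into Proposition~\ref{prop:ring_structure_KK_torus}, so no extra input beyond what the paper already establishes is required.
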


\begin{proof}
By direct computations, we get
\begin{align*}
T(1) &= t \chi, & 
T(t) &= \chi, & 
T(\sigma\chi) &= \sigma - (1 - t)\chi, \\
T(\chi) &= 1 - \sigma \chi, &
T(t\chi) &= t + \sigma \chi, &
T(\sigma) &= - \sigma \chi.
\end{align*}
This shows that $T$ carries a basis to another basis.
\end{proof}

\begin{rem}
$T^2 \neq 1$, $T^4 = t$ and $T^8 = 1$, as can be verified by:
\begin{align*}
T^2(1) &= t + \sigma \chi, & 
T^2(t) &= 1 - \sigma \chi, & 
T^2(\sigma\chi) &= - 1 + t + \sigma \chi, \\
T^2(\chi) &= \chi - \sigma, &
T^2(t\chi) &= t \chi + \sigma, &
T^2(\sigma) &= \chi - t\chi - \sigma.
\end{align*}
\end{rem}


\section{Topological T-duality for Real circle bundles}
\label{sec:topological_T_duality}

This section contains the proof of our main theorems: Theorem \ref{thm:main_pair} and \ref{thm:main_T_transformation}. We also give an example illustrating the main theorems, a construction of the classifying space for pairs, and finally a possible topological T-duality for future work.

\subsection{The notion of pairs}
\label{subsec:notion_of_pairs}

Following \cite{B-S}, we introduce the notion of pairs.

\begin{dfn}
Let $X$ be a space with $\Z/2$-action.
\begin{itemize}
\item[(a)]
A \textit{pair} $(E, h)$ on $X$  consists of a Real circle bundle $\pi : E \to X$ and an equivariant cohomology class $h \in H^3_{\Z/2}(E; \Z)$.

\item[(b)]
An \textit{isomorphism} $\Phi : (E, h) \to (E', h')$ of pairs on $X$ is an isomorphism of Real circle bundle $\Phi : E \to E'$ covering the identity on $X$ such that $h = \Phi^*h'$.

\item[(c)]
For a $\Z/2$-equivariant map $f : X' \to X$ from another space $X'$ with $\Z/2$-action, the \textit{pull-back} of a pair $(E, h)$ on $X$ under $f$ is the pair $f^*(E, h)$ on $X'$ consisting of the Real circle bundle $f^*E \to X'$ and $F^*h \in H^3_{\Z/2}(f^*E; \Z)$, where $F : f^*E \to E$ is the bundle map covering $f$.
\end{itemize}
\end{dfn}

\medskip

For any Real circle bundle $\pi : E \to X$, we mean by a gauge transformation an equivariant bundle map $\Phi : E \to E$ covering the identity of $X$. The group of gauge transformations of $E$ is in the obvious way isomorphic to the group of $\Z/2$-equivariant maps $\phi : X \to \tilde{S}^1$. We denote by $[X, \tilde{S}^1]_{\Z/2}$ the group of homotopy classes of $\Z/2$-equivariant maps $\phi : X \to \tilde{S}^1$. Recall from Lemma \ref{lem:cohomology_circle_with_flip} that $H^1_\pm(\tilde{S}^1) = \Z \chi \oplus \Z t^{1/2}$. We define a homomorphism $a$ as follows:
\begin{align*}
a &: [X, \tilde{S}^1]_{\Z/2} \to H^1_\pm(X), &
\phi \mapsto \phi^* \chi.
\end{align*}
As will be shown in Appendix, this is an isomorphism (Proposition \ref{prop:classifying_space_H1pm}).

\begin{lem} \label{lem:automorphism_of_pair}
Let $(E, h)$ be a pair on a space $X$ with $\Z/2$-action.
\begin{itemize}
\item[(a)]
Let $\phi : X \to \tilde{S}^1$ be an equivariant map, and $\Phi : E \to E$ the corresponding gauge transformation. Then,
$$
\Phi^*h = h + \pi^*(\pi_*h \cup a(\phi)).
$$
Thus, if $\phi$ is the constant $\phi(x) = -1$, then $\Phi^*h = h + \pi^*\pi_*h \cup t^{1/2}$.

\item[(b)]
Conversely, for any $a \in H^1_\pm(X)$, the pairs $(E, h)$ and $(E, h + \pi^*(\pi_*h \cup a))$ are isomorphic by a gauge transformation $\phi : X \to \tilde{S}^1$. Such a $\phi$ is unique up to equivariant homotopy.
\end{itemize}
\end{lem}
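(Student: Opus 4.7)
The plan is to introduce the multiplication map $m : E \times \tilde{S}^1 \to E$, $(\xi, u) \mapsto \xi u$, which is $\Z/2$-equivariant since $\tau(\xi u) = \tau(\xi)u^{-1}$, and to factor the gauge transformation as $\Phi = m \circ s$, where $s : E \to E \times \tilde{S}^1$ is the equivariant map $s(\xi) = (\xi, \phi(\pi(\xi)))$. Then $\Phi^* h = s^* m^* h$, and it suffices to describe $m^* h \in H^3_{\Z/2}(E \times \tilde{S}^1)$.

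Since $p_1 : E \times \tilde{S}^1 \to E$ is the trivial Real circle bundle, its Gysin sequence splits. Choosing the equivariant section $i(\xi) = (\xi, 1)$ at the fixed point $1 \in \tilde{S}^1$ (so that $m \circ i = \mathrm{id}_E$) and taking $\chi \in H^1_\pm(\tilde{S}^1)$ to be the generator reduced at $1$ with $\pi_*\chi = 1$, we obtain a unique expression
$$m^* h = p_1^* h_0 + p_1^* \beta \cdot p_2^* \chi.$$
Applying $i^*$ and using $i^* p_2^* \chi = 0$ gives $h_0 = (m \circ i)^* h = h$. Applying the push-forward $(p_1)_*$ together with the projection formula (and $\pi_*(1) = 0$, $\pi_*\chi = 1$) yields $\beta = (p_1)_* m^* h$. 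But the square with legs $p_1, m, \pi, \pi$ is Cartesian via the identification $E \times_X E \cong E \times \tilde{S}^1$, $(\xi, \xi u) \leftrightarrow (\xi, u)$, so base change gives $(p_1)_* m^* h = \pi^* \pi_* h$ and hence $\beta = \pi^* \pi_* h$. Pulling back through $s$ and noting $s^* p_2^* \chi = \pi^* \phi^* \chi = \pi^* a(\phi)$, we obtain
$$\Phi^* h = h + \pi^*(\pi_* h \cup a(\phi)),$$
proving (a). The specialization $\phi \equiv -1$ is checked by computing $i_{-1}^* \chi = t^{1/2}$, which follows from $\nu \circ i = i_{-1}$ together with Lemma \ref{lem:cohomology_circle_with_flip}(b).

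For part (b), Proposition \ref{prop:classifying_space_H1pm} (proved in the appendix) says that $a : [X, \tilde{S}^1]_{\Z/2} \to H^1_\pm(X)$ is an isomorphism. Given $a \in H^1_\pm(X)$, take the unique equivariant homotopy class $\phi$ with $a(\phi) = a$; by part (a), the corresponding gauge transformation $\Phi$ satisfies $\Phi^* h = h + \pi^*(\pi_* h \cup a)$ and hence yields the required isomorphism of pairs $(E, h + \pi^*(\pi_* h \cup a)) \to (E, h)$, uniqueness of $\phi$ up to equivariant homotopy being immediate from injectivity of $a$. The main subtlety throughout is the compatible choice of section and generator $\chi$: picking the section where $m$ restricts to the identity and the generator $\chi$ reduced at the same fixed point is what makes $h_0 = h$ cleanly; the other natural basepoint choice would instead yield $h_0 = \nu^* h$, differing from $h$ by precisely the $t^{1/2}$-correction of the $\phi \equiv -1$ special case, which provides a consistency check on the conventions.
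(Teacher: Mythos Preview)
Your proof is correct and follows essentially the same route as the paper: factor $\Phi$ through the multiplication map $m$, decompose $m^*h$ via the Gysin splitting of $H^3_{\Z/2}(E\times\tilde S^1)$, and identify the $\chi$-coefficient using naturality of the push-forward along the Cartesian square (the paper phrases this last step simply as ``naturality of the push-forward'' for the bundle map $m$ over $\pi$). Your treatment is in fact slightly more explicit than the paper's about which fixed point of $\tilde S^1$ is used for the section so that $m\circ i=\mathrm{id}_E$, and your remark that the other basepoint would produce a $t^{1/2}$-shift is a valid consistency check; part~(b) is handled identically in both.
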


\begin{proof}
The following is a simple generalization of an argument in the proof of Theorem 2.16 in \cite{B-S}: We factor $\Phi : E \to E$ as follows:
$$
E \overset{(1, \pi)}{\longrightarrow}
E \times X \overset{(1, \phi)}{\longrightarrow}
E \times \tilde{S}^1 \overset{m}{\longrightarrow}
E,
$$
where $m : E \times \tilde{S}^1 \to E$, ($m(\xi, u) = \xi u$) is a bundle map covering $\pi : E \to X$:
$$
\begin{CD}
E \times \tilde{S}^1 @>m>> E \\
@V{\pi_E}VV @VV{\pi}V \\
E @>{\pi}>> X.
\end{CD}
$$
By the naturality of the push-forward, we have $(\pi_E)_*(m^* h) = \pi^*(\pi_* h)$. Then, in view of the splitting of the Gysin sequence $H^3_{\Z}(E \times \tilde{S}^1) \cong H^3_{\Z/2}(E) \oplus H^2_{\pm}(E)$, we get $m^* h = h + (\pi^*\pi_*h) \cup \chi$. This leads to $\Phi^*h = h + (\pi^*\pi_*h) \cup (\pi^*\phi^*\chi)$, so that (a) is proved. Then (b) follows from (a), because of the fact that $a : [X, \tilde{S}^1]_{\Z/2} \to H^1_\pm(X)$ is an isomorphism (Proposition \ref{prop:classifying_space_H1pm}).
\end{proof}

\subsection{Uniqueness of T-dual pair}

We prove here the uniqueness of a T-dual pair (Theorem \ref{thm:main_pair} (c)) separately. The idea of the proof is hinted by \cite{Bar2}. 

Given Real circle bundles $E$ and $\hat{E}$ on a space $X$ with $\Z/2$-action, $\pi$, $\hat{\pi}$, $p$, $\hat{p}$ and $q$ are the following projections:
$$
\begin{array}{c@{}c@{}c@{}c@{}c}
 &  & E \times_{X} \hat{E} & & \\
 & {}^p \swarrow &  & \searrow {}^{\hat{p}} & \\
E & & {}_q \downarrow \ \ & & \hat{E} \\
 & {}_\pi \searrow & & \swarrow {}_{\hat{\pi}} & \\
 &  & X. & & 
\end{array}
$$

\begin{lem} \label{lem:key_to_uniqueness}
Let $\pi : E \to X$ and $\hat{\pi} : \hat{E} \to X$ be Real circle bundles on a space $X$ with $\Z/2$-action. For any $\mu \in H^3_{\Z/2}(\hat{E})$ such that $\hat{\pi}_*\mu = 0$ and $\hat{p}^*\mu = 0$, there exists $\alpha \in H^1_\pm(X)$ such that $\mu = \hat{\pi}^*(c_1^R(E) \cup \alpha)$.
\end{lem}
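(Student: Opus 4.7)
The plan is to lift $\mu$ to a class on $E \times_X \hat{E}$ via one Gysin sequence, isolate the obstruction to descent as a class in $H^0_{\Z/2}(X)$, and then correct it using a spectral sequence together with another Gysin sequence. Since $\hat{p}$ is the pullback of $\pi$ along $\hat{\pi}$, it is a Real circle bundle over $\hat{E}$ with $c_1^R(\hat{p}) = \hat{\pi}^*c_1^R(E)$. The hypothesis $\hat{p}^*\mu = 0$ together with the Gysin sequence for $\hat{p}$ in degree $3$ produces $\eta \in H^1_\pm(\hat{E})$ with $\mu = \hat{\pi}^*c_1^R(E) \cup \eta$. Set $b = \hat{\pi}_*\eta \in H^0_{\Z/2}(X)$. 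From the Gysin sequence for $\hat{\pi}$ in degree $1$ one automatically has $c_1^R(\hat{E}) \cup b = 0$, while applying $\hat{\pi}_*$ to the identity $\mu = \hat{\pi}^*c_1^R(E) \cup \eta$ and using the projection formula with $\hat{\pi}_*\mu = 0$ gives $c_1^R(E) \cup b = 0$. Hence $b$ lies in $\ker(c_1^R(E) \cup) \cap \ker(c_1^R(\hat{E}) \cup)$.

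The crucial step is to lift $b$ to the fibre integration: produce $\xi \in H^2_{\Z/2}(E \times_X \hat{E})$ with $q_*\xi = b$, where $q = \hat{\pi}\circ\hat{p}$. I would use the Leray--Serre spectral sequence for the equivariant $\tilde{T}^2$-bundle $q$, whose fibre has $H^1(\tilde{T}^2) = \Z(1)^{\oplus 2}$ and $H^2(\tilde{T}^2) = \Z$. The fibre orientation gives a class $\chi_1\chi_2 \in E_2^{0,2} = H^0_{\Z/2}(X)$ transgressing by $d_2(b\chi_1\chi_2) = b\,c_1^R(E)\chi_2 - b\,c_1^R(\hat{E})\chi_1$, which vanishes by the previous paragraph. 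Since $E \times_X \hat{E}$ is an iterated Real circle bundle $E \times_X \hat{E} \to \hat{E} \to X$, each stage contributing only a Gysin-type $d_2$, the spectral sequence degenerates at $E_3$ by the Koszul argument for principal torus bundles, now in the Borel-equivariant setting. Thus $b\chi_1\chi_2$ is a permanent cycle and lifts to the desired $\xi$.

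Set $\eta' = \eta - \hat{p}_*\xi \in H^1_\pm(\hat{E})$. Gysin exactness for $\hat{p}$ places $\hat{p}_*\xi$ in $\ker(\hat{\pi}^*c_1^R(E) \cup)$, so $\hat{\pi}^*c_1^R(E) \cup \eta' = \mu$. Also $\hat{\pi}_*\eta' = b - q_*\xi = 0$, so the Gysin sequence for $\hat{\pi}$ yields $\alpha \in H^1_\pm(X)$ with $\eta' = \hat{\pi}^*\alpha$, whence $\mu = \hat{\pi}^*(c_1^R(E) \cup \alpha)$ as required. The main obstacle in the plan is the spectral sequence collapse: while this is standard for ordinary cohomology of principal torus bundles, in the $\Z/2$-equivariant/Borel setting one must carefully check that no secondary differentials intervene. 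A more hands-on alternative would construct $\xi$ directly from lifts $\beta \in H^1_\pm(E)$ and $\hat{\beta} \in H^1_\pm(\hat{E})$ satisfying $\pi_*\beta = \hat{\pi}_*\hat{\beta} = b$, combined in some $p^*\beta \cdot \hat{p}^*\hat{\beta}$-type product, but as a naive product this gives $b^2$ rather than $b$, so extra correction terms are needed.
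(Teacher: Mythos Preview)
Your approach differs from the paper's, and the gap you flag is genuine and is precisely what the paper's argument is engineered to avoid. You open with the Gysin sequence for $\hat{p}$ (using $\hat{p}^*\mu=0$) to write $\mu=\hat{\pi}^*c_1^R(E)\cup\eta$ for some $\eta\in H^1_\pm(\hat{E})$, and then try to correct $\eta$ by $\hat{p}_*\xi$ so that it descends to $X$; this forces you to lift $b=\hat{\pi}_*\eta$ through $q_*$, which in turn requires $d_3(b)=0$ in $E_r(q)$. That step is not justified: the Borel fibration $E\Z_2\times_{\Z_2}(E\times_X\hat{E})\to E\Z_2\times_{\Z_2}X$ has fibre $T^2$ but is \emph{not} a principal torus bundle---the $\Z/2$-action inverts the torus, so the structure group is $T^2\rtimes\Z/2$---and the Koszul collapse for principal torus bundles does not apply. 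Indeed the paper's own proof explicitly allows a nontrivial $d_3:E_3^{0,2}(q)\to E_3^{3,0}(q)$.

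The paper takes the opposite entry point: from $\hat{\pi}_*\mu=0$ it writes $\mu=\hat{\pi}^*\nu$ with $\nu\in H^3_{\Z/2}(X)$, so $q^*\nu=\hat{p}^*\mu=0$, and then analyses $[\nu]_\infty=0$ in $E_\infty^{3,0}(q)$ as $\nu=d_2(\xi^{1,1})+d_3(\xi^{0,2})$ with $\xi^{1,1}=(\alpha,\hat{\alpha})\in H^1_\pm(X)^{\oplus 2}$. The $d_3$ obstruction is handled not by killing it inside $E_r(q)$ but by pushing the whole identity along the comparison map $E_r(q)\to E_r(\hat{p})$ induced by the bundle map $(\mathrm{id},\hat{\pi})$. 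Since the fibre of $\hat{p}$ is a single circle, $E_2^{0,2}(\hat{p})=H^0_{\Z/2}(\hat{E};H^2(\tilde{S}^1))=0$, so the image of $\xi^{0,2}$---and with it the entire $d_3$-contribution---vanishes automatically, leaving $\hat{\pi}^*\nu=d_2(\hat{\pi}^*\alpha)=\hat{\pi}^*c_1^R(E)\cup\hat{\pi}^*\alpha$. The comparison of spectral sequences thus replaces your lift $\xi$ altogether; no collapse hypothesis is needed.
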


\begin{proof}
We consider the following maps of fiber bundles:
$$
\begin{CD}
E \times_X \hat{E} @= E \times_X \hat{E} @>{\hat{p}}>> \hat{E} \\
@V{\hat{p}}VV @V{q}VV @V{\hat{\pi}}VV \\
\hat{E} @>{\hat{\pi}}>> X @= X.
\end{CD}
$$
Being fiber preserving, these maps induce the following maps of the Leray-Serre spectral sequences compatible with the maps of the total spaces:
$$
\begin{array}{ccc}
E_r^{p, q}(\hat{\pi}) & \Longrightarrow & H^*_{\Z/2}(\hat{E}) \\
\downarrow & & {} \ \downarrow \hat{p}^* \\
E_r^{p, q}(q) & \Longrightarrow & H^*_{\Z/2}(E \times_X \hat{E}) \\
\downarrow & & \parallel \\
E_r^{p, q}(\hat{p}) & \Longrightarrow & H^*_{\Z/2}(E \times_X \hat{E}).
\end{array}
$$
The $E_2$-terms of these spectral sequences are as follows:
\begin{align*}
E_2^{p, q}(\hat{\pi}) &= H^p_{\Z/2}(X; H^q(\tilde{S}^1)), \\
E_2^{p, q}(q) &= H^p_{\Z/2}(X; H^q(\tilde{S}^1 \times \tilde{S}^1)), \\
E_2^{p, q}(\hat{p}) &= H^p_{\Z/2}(\hat{E}; H^q(\tilde{S}^1)),
\end{align*}
where the cohomology in the coefficients are regarded as $\Z/2$-modules:
$$
\begin{array}{c|c|c|c}
 & q = 0 & q = 1 & q = 2 \\
\hline
H^q(\tilde{S}^1) & \Z(0) & \Z(1) & 0 \\
\hline
H^q(\tilde{S}^1 \times \tilde{S}^1) & \Z(0) & \Z(1) \oplus \Z(1) & \Z(0)
\end{array}
$$
Now, from the first two spectral sequences, we get the map of exact sequences:
$$
\begin{CD}
0 @>>> 
E^{3, 0}_\infty(\hat{\pi}) @>{\hat{\pi}^*}>>
H^3_{\Z/2}(\hat{E}) @>>> 
E^{2, 1}_\infty(\hat{\pi}) @>>>
0 \\
@. @VVV @VV{\hat{p}^*}V @VV{\hat{p}^*}V @. \\
0 @>>> 
E^{3, 0}_\infty(q) @>{q^*}>>
H^3_{\Z/2}(E \times_X \hat{E}) @>>> 
E^{2, 1}_\infty(q) @>>>
0.
\end{CD}
$$
The upper exact sequence is identified with the Gysin sequence for $\hat{\pi} : \hat{E} \to X$. Hence our assumption $\hat{\pi}_*\mu = 0$ implies that $\mu \in H^3_{\Z/2}(\hat{E})$ is carried to $0 \in E^{2, 1}_\infty(\hat{\pi})$, so that an element in $E^{3, 0}_\infty(\hat{\pi})$ hits $\mu$. Let $\nu \in E^{3, 0}_2(\hat{\pi}) = H^3_{\Z/2}(X)$ represent the element. Because of the other assumption $\hat{p}^*\mu = 0$ and the commutativity of the diagram, the image of $[\nu] \in E^{3, 0}_\infty(\hat{\pi})$ under $E^{3, 0}_\infty(\hat{\pi}) \to E^{3, 0}_\infty(q)$, which is again represented by $\nu \in E_2^{3,0}(q) = H^3_{\Z/2}(X)$, is trivial. Notice that 
\begin{align*}
E^{3,0}_\infty(q) &= E^{3, 0}_4(q) 
= E^{3,0}_3(q)/\mathrm{Im}[d_3: E^{0,2}_3(q) \to E^{3,0}_3(q)], \\
E^{3,0}_3(q) &= 
E^{3,0}_2(q)/\mathrm{Im}[d_2 : E^{1,1}_2(q) \to E^{3,0}_2(q)], \\
E^{0,2}_3(q) &= \mathrm{Ker}[d_2 : E_2^{0,2}(q) \to E^{2,1}_2(q)].
\end{align*}
Hence we can express $\nu \in H^3_{\Z/2}(X) = E_2^{3,0}(q)$ as
$$
\nu = d_2(\xi^{1,1}) + d_3(\xi^{0,2})
$$
by using some $\xi^{1,1} \in E_2^{1,1}(q) = H^1_\pm(X) \oplus H^1_\pm(X)$ and $\xi^{0,2} \in E_2^{0, 1}(q) \subset H^0_{\Z/2}(X)$. We write $\xi^{1,1} = (\alpha, \hat{\alpha}) \in H^1_\pm(X) \oplus H^1_\pm(X)$. Now, the map $E_2^{p, q}(q) \to E_2^{p, q}(\hat{p})$ of spectral sequences carries the expression of $\nu$ above to 
$$
\hat{\pi}^*\nu = d_2(\hat{\pi}^*\alpha),
$$
where the contribution from $d_3$ disappears, because $E_3^{0,2}(\hat{p}) = 0$. The spectral sequence $E_r^{p, q}(\hat{p})$ induces the Gysin sequence for $\hat{p} : \hat{\pi}^*E \to \hat{E}$, so that we finally get $\hat{\pi}^*\nu = d_2(\hat{\pi}^*\alpha) = \hat{\pi}^*c_1^R(E) \cup \hat{\pi}^*\alpha$. 
\end{proof}

\begin{prop} \label{prop:uniqueness}
Let $X$ be a space with $\Z/2$-action, and $(E, h)$ a pair on $X$. The isomorphism class of a pair $(\hat{E}, \hat{h})$ T-dual to $(\hat{E}, \hat{h})$ is unique.
\end{prop}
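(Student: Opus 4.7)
The plan is to reduce the uniqueness assertion to the two technical lemmas already in hand: Lemma~\ref{lem:key_to_uniqueness}, which constrains the ``difference'' between two candidate twists, and Lemma~\ref{lem:automorphism_of_pair}(b), which realizes such differences by gauge transformations.

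Given two T-dual pairs $(\hat E,\hat h)$ and $(\hat E',\hat h')$ for $(E,h)$, I would first match their underlying bundles. Since $c_1^R(\hat E)=\pi_*h=c_1^R(\hat E')$, Proposition~\ref{prop:classify_Real_circle_bundle} produces an isomorphism $\psi_0\colon\hat E\to\hat E'$ of Real circle bundles covering $\mathrm{id}_X$. The class
$$\mu=\hat h-\psi_0^*\hat h'\in H^3_{\Z/2}(\hat E)$$
then measures the failure of $\psi_0$ to already be an isomorphism of pairs, and the goal is to absorb it by modifying $\psi_0$ with a gauge transformation of $\hat E$.

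The main step is to verify that $\mu$ satisfies the two hypotheses of Lemma~\ref{lem:key_to_uniqueness}. The identity $\hat\pi_*\mu=0$ should follow from naturality of the Gysin push-forward along the fiber-preserving isomorphism $\psi_0$, reducing to $\hat\pi_*\hat h=c_1^R(E)=\hat\pi'_*\hat h'$. The identity $\hat p^*\mu=0$ should follow by lifting $\psi_0$ to $\widetilde\psi_0\colon E\times_X\hat E\to E\times_X\hat E'$, $(e,\hat e)\mapsto(e,\psi_0(\hat e))$, and exploiting the relations $p'\widetilde\psi_0=p$, $\hat p'\widetilde\psi_0=\psi_0\hat p$ together with the T-duality identities $p^*h=\hat p^*\hat h$ and $p'^*h=\hat p'^*\hat h'$. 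Lemma~\ref{lem:key_to_uniqueness} then delivers $\alpha\in H^1_\pm(X)$ with $\mu=\hat\pi^*(c_1^R(E)\cup\alpha)=\hat\pi^*(\hat\pi_*\hat h\cup\alpha)$.

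To conclude, Lemma~\ref{lem:automorphism_of_pair}(b) applied to $(\hat E,\hat h)$ with the class $-\alpha$ produces an isomorphism of pairs $(\hat E,\hat h)\cong(\hat E,\hat h-\mu)=(\hat E,\psi_0^*\hat h')$, and composing with $\psi_0$ gives the desired isomorphism $(\hat E,\hat h)\cong(\hat E',\hat h')$. The main potential obstacle is the naturality chase for $\hat p^*\mu=0$, as it requires keeping track of two distinct fiber products and the compatibility of the T-duality identities under the lifted isomorphism $\widetilde\psi_0$; after that, the argument is a direct assembly of the two lemmas.
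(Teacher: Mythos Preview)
Your proposal is correct and follows essentially the same route as the paper: use the classification of Real circle bundles to match $\hat E$ and $\hat E'$, apply Lemma~\ref{lem:key_to_uniqueness} to the difference of the twists, and absorb the resulting term via Lemma~\ref{lem:automorphism_of_pair}. The only cosmetic difference is that the paper, after noting $c_1^R(\hat E)=c_1^R(\hat E')$, simply assumes $\hat E=\hat E'$ and works with $\hat h'-\hat h$ directly, which makes the verifications $\hat\pi_*\mu=0$ and $\hat p^*\mu=0$ immediate; your explicit tracking of $\psi_0$ and $\widetilde\psi_0$ is the same argument with the identification unpacked.
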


\begin{proof}
By definition, we have
\begin{align*}
c_1^R(\hat{E}) &= \pi_*h, &
c_1^R(E) &= \hat{\pi}_*\hat{h}, &
c_1^R(E)c_1^R(\hat{E}) &= 0, &
p^*h &= \hat{p}^*\hat{h}.
\end{align*}
Let $(\hat{E}', \hat{h}')$ be another pair T-dual to $(E, h)$. Because $c_1^R(\hat{E}) = \pi_*h = c_1^R(\hat{E}')$, Real circle bundles $\hat{E}$ and $\hat{E}'$ are isomorphic. Thus, we can assume $\hat{E} = \hat{E}'$. Applying Lemma \ref{lem:key_to_uniqueness} to the difference $\hat{h}' - \hat{h} \in H^3_{\Z/2}(\hat{E})$, we get $a \in H^1_\pm(X)$ such that 
$$
\hat{h}' = \hat{h} + \hat{\pi}^*(c_1^R(E) \cup a)
= \hat{h} + \hat{\pi}^*(\hat{\pi}_*\hat{h} \cup a).
$$
Now, Lemma \ref{lem:automorphism_of_pair} completes the proof.
\end{proof}

\subsection{The proof of Theorem \ref{thm:main_pair}}

For the proof, we show a few lemmas. Given Real circle bundles $\pi : E \to X$ and $\hat{\pi} : \hat{E} \to X$ on a $\Z/2$-space $X$, let $R_{E} \to X$ and $R_{\hat{E}} \to X$ be the associated Real line bundles. We denote by $\pi_S : S(R_E \oplus R_{\hat{E}}) \to X$ the sphere bundle of the Real vector bundle $R_E \oplus R_{\hat{E}}$ of rank $2$, which we may write $S = S(R_E \oplus R_{\hat{E}})$. We have equivariant inclusions $i : E \to S$ and $\hat{i} : \hat{E} \to S$.
$$
\begin{array}{c@{}r@{}c@{}l@{}c}
 &  & E \times_{X} \hat{E} & & \\
 & {}^p \swarrow &  & \searrow {}^{\hat{p}} & \\
E & \underset{i}{\to} \ & S(R_{E} \oplus R_{\hat{E}}) & \
\underset{\hat{i}}{\leftarrow} & \hat{E}.
\end{array}
$$

\begin{lem} \label{lem:homotopy}
There exists a $\Z/2$-equivariant homotopy $i \circ p \sim \hat{i} \circ \hat{p}$.
\end{lem}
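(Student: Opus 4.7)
The plan is to write down the obvious rotation homotopy in each fiber of $R_E \oplus R_{\hat{E}}$ and check that it is $\Z/2$-equivariant, which reduces to the observation that a Real structure is $\R$-linear.

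First, I would fix $\Z/2$-invariant Hermitian metrics on $R_E$ and $R_{\hat{E}}$ (obtained by averaging arbitrary metrics over $\Z/2$), so that $E$ and $\hat{E}$ are identified with the unit sphere bundles $S(R_E)$ and $S(R_{\hat{E}})$ respectively. Under these identifications, the inclusions $i$ and $\hat{i}$ are the evident $e \mapsto (e, 0)$ and $\hat{e} \mapsto (0, \hat{e})$ into $S(R_E \oplus R_{\hat{E}}) \subset R_E \oplus R_{\hat{E}}$.

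Next, define
$$
H : (E \times_X \hat{E}) \times [0, 1] \longrightarrow S(R_E \oplus R_{\hat{E}}),
\qquad
H(e, \hat{e}, s) = \bigl( \cos(\tfrac{\pi s}{2})\, e,\ \sin(\tfrac{\pi s}{2})\, \hat{e} \bigr),
$$
where $[0, 1]$ carries the trivial $\Z/2$-action. Since $\| e \| = \| \hat{e} \| = 1$ in their respective fibers, the image lies in $S(R_E \oplus R_{\hat{E}})$. By construction $H(-,-,0) = i \circ p$ and $H(-,-,1) = \hat{i} \circ \hat{p}$, and $H$ covers the identity on $X$ throughout.

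The only thing to check is $\Z/2$-equivariance, and this is the step I would flag as the substantive point rather than an obstacle: the Real structure $\tau$ on a Real line bundle is $\C$-antilinear, hence $\R$-linear, so it commutes with multiplication by the real scalars $\cos(\pi s/2)$ and $\sin(\pi s/2)$. Therefore $\tau H(e, \hat{e}, s) = H(\tau e, \tau \hat{e}, s)$, proving that $H$ is an equivariant homotopy from $i \circ p$ to $\hat{i} \circ \hat{p}$.
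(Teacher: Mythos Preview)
Your proof is correct and is essentially identical to the paper's: both write down the rotation homotopy $(\xi,\hat{\xi},t)\mapsto((\cos\tfrac{\pi t}{2})\xi,(\sin\tfrac{\pi t}{2})\hat{\xi})$ after choosing invariant Hermitian metrics. You in fact supply more detail than the paper, which simply asserts the map ``apparently has the required property,'' whereas you spell out that the Real structure is $\R$-linear and hence commutes with the real scalars involved.
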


\begin{proof}
The same construction as in \cite{B-S} applies: Without loss of generality, we can assume that $i$ and $\hat{i}$ preserve the Hermitian structures. We then define
\begin{gather*}
\tilde{h} \ : (E \times_{X} \hat{E}) \times [0, 1] \longrightarrow
S(R_{E} \oplus R_{\hat{E}}), \\
\tilde{h}(\xi, \hat{\xi}, t)
= ( (\cos \frac{\pi}{2}t) \xi, (\sin \frac{\pi}{2}t) \hat{\xi} ),
\end{gather*}
which apparently has the required property.
\end{proof}

Given $E$ and $\hat{E}$ as before, let $\pi_P : P(R_E \oplus R_{\hat{E}}) \to X$ be the projective space bundle. We may write $P =  P(R_E \oplus R_{\hat{E}})$ for brevity. The total space of the sphere bundle $\pi_S : S \to X$ gives rise to that of a Real circle bundle $\varpi : S \to P$. It holds $\pi_S = \pi_P \circ \varpi$. The inclusions $i$ and $\hat{i}$ induce sections $r : X \to P$ and $\hat{r} : X \to P$ of $\pi_P$, respectively, giving the commutative diagram:
$$
\begin{CD}
E @>{i}>> S(R_{E} \oplus R_{\hat{E}}) @<{\hat{i}}<< \hat{E} \\
@V{\pi}VV @VV{\varpi}V @VV{\hat{\pi}}V \\
X @>{r}>> P(R_{E} \oplus R_{\hat{E}}) @<{\hat{r}}<< X.
\end{CD}
$$
Note that the Leray-Hirsch theorem for Real vector bundles \cite{K} gives
$$
\mathbb{H}^*(P(R_{E} \oplus R_{\hat{E}}))
\cong \mathbb{H}^*(X)[\tilde{c}]/
(\tilde{c}^2 
- c_1^R(R_E \oplus R_{\hat{E}}) \tilde{c} 
+ c_2^R(R_E \oplus R_{\hat{E}})),
$$
where $\tilde{c} = c_1^R(S(R_{E} \oplus R_{\hat{E}})) \in H^2_\pm(P(R_{E} \oplus R_{\hat{E}}))$ satisfies $\pi_P^*\tilde{c} = 1$, and we regard $\mathbb{H}^*(X) \subset \mathbb{H}^*(P)$ using the pull-back $\pi_P^*$. Note also $c_1^R(E) = r^* \tilde{c}$ and $c_1^R(\hat{E}) = \hat{r}^*\tilde{c}$.

\begin{lem} \label{lem:thom_class}
Let $\pi : E \to X$ and $\hat{\pi} : \hat{E} \to X$ be Real circle bundles on a space $X$ with $\Z/2$-action such that $c_1^R(E) c_1^R(\hat{E}) = 0$.

\begin{itemize}
\item[(a)]
There exists $\Th \in H^3_{\Z/2}(S(R_E \oplus R_{\hat{E}}))$ such that:
$$
\varpi_*\Th = - \tilde{c} + \pi_P^*(c_1^R(E) + c_1^R(\hat{E})).
$$ 

\item[(b)]
If $\Th'$ and $\Th$ are as above, then $\Th' - \Th = \pi_S^*\eta$ for some $\eta \in H^3_{\Z/2}(X)$.

\item[(c)]
If $\Th$ is as in (a), then it holds that:
\begin{align*}
(\pi_S)_*\Th &= -1, &
\pi_*(i^*\Th) &= c_1^R(\hat{E}), &
\hat{\pi}_*(\hat{i}^*\Th) &= c_1^R(E), &
p^*i^*\Th &= \hat{p}^*\hat{i}^*\Th.
\end{align*}
\end{itemize}
\end{lem}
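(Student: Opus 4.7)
The plan is to derive everything from the Gysin sequence of the Real circle bundle $\varpi : S \to P$ (whose first Chern class is $\tilde{c}$) combined with the Leray--Hirsch presentation of $\mathbb{H}^*(P)$ recalled just before the lemma. The hypothesis $c_1^R(E)c_1^R(\hat{E}) = 0$ forces $c_2^R(R_E \oplus R_{\hat{E}}) = 0$, so the Leray--Hirsch relation simplifies to
\begin{equation*}
\tilde{c}^2 = (c_1^R(E) + c_1^R(\hat{E}))\tilde{c} \qquad \text{in } H^4_{\Z/2}(P).
\end{equation*}

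For (a), I extract from the Gysin sequence for $\varpi$ the segment
$H^3_{\Z/2}(S) \xrightarrow{\varpi_*} H^2_\pm(P) \xrightarrow{\tilde{c}\cup} H^4_{\Z/2}(P)$
and observe that the prescribed target $-\tilde{c} + \pi_P^*(c_1^R(E) + c_1^R(\hat{E}))$ lies in the kernel of $\tilde{c}\cup$ precisely by the simplified Leray--Hirsch identity; exactness then produces $\Th$. For (b), the same sequence is exact at $H^3_{\Z/2}(S)$, so $\Th' - \Th = \varpi^*\mu$ for some $\mu \in H^3_{\Z/2}(P)$. Decompose $\mu = \pi_P^*\eta + \pi_P^*\zeta \cdot \tilde{c}$ by Leray--Hirsch with $\eta \in H^3_{\Z/2}(X)$ and $\zeta \in H^1_\pm(X)$, and note that $\varpi^*\tilde{c} = 0$ because the pullback of the Real circle bundle $\varpi$ along itself is trivialized by the diagonal section. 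Hence $\Th' - \Th = \varpi^*\mu = \pi_S^*\eta$.

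For (c), the inclusions assemble into the pullback squares
\begin{equation*}
\begin{CD}
E @>{i}>> S \\
@V{\pi}VV @VV{\varpi}V \\
X @>{r}>> P,
\end{CD}
\qquad
\begin{CD}
\hat{E} @>{\hat{i}}>> S \\
@V{\hat{\pi}}VV @VV{\varpi}V \\
X @>{\hat{r}}>> P,
\end{CD}
\end{equation*}
which really are pullbacks since $r^*S \cong E$ and $\hat{r}^*S \cong \hat{E}$ as Real circle bundles. Naturality of the Real push-forward then gives
\begin{equation*}
\pi_*(i^*\Th) = r^*(\varpi_*\Th) = -r^*\tilde{c} + c_1^R(E) + c_1^R(\hat{E}) = c_1^R(\hat{E}),
\end{equation*}
using $r^*\tilde{c} = c_1^R(r^*S) = c_1^R(E)$, and symmetrically $\hat{\pi}_*(\hat{i}^*\Th) = c_1^R(E)$. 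For $(\pi_S)_*\Th = -1$, I factor $\pi_S = \pi_P \circ \varpi$, so $(\pi_S)_* = (\pi_P)_* \circ \varpi_*$; the $\mathbb{CP}^1$-bundle push-forward $(\pi_P)_*$ extracts the $\tilde{c}$-coefficient in the Leray--Hirsch decomposition, sending $-\tilde{c} + \pi_P^*(c_1^R(E)+c_1^R(\hat{E}))$ to $-1$. Finally, $p^*i^*\Th = \hat{p}^*\hat{i}^*\Th$ is immediate from Lemma \ref{lem:homotopy} by homotopy invariance of $\mathbb{H}^*$.

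The main obstacle is the rigorous use of the projective-bundle push-forward $(\pi_P)_* : H^2_\pm(P) \to H^0_{\Z/2}(X)$, which is not developed explicitly in the paper: one needs either to define it through the Leray--Hirsch decomposition and verify compatibility of $(\pi_S)_* = (\pi_P)_* \circ \varpi_*$ with Thom isomorphisms of the rank-$2$ Real bundle $V = R_E \oplus R_{\hat{E}}$ and its tautological Real line subbundle on $P$, or else bypass $(\pi_P)_*$ by computing $(\pi_S)_*\Th$ directly as the image of $\Th$ under the connecting homomorphism for $(D(V), S(V))$ composed with the inverse of the Real Thom isomorphism for $V$, reducing the verification to a fiber calculation on $\mathbb{C}^2$.
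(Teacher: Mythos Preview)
Your proof is correct and follows essentially the same route as the paper: the Gysin sequence for $\varpi$ together with the Leray--Hirsch relation $\tilde{c}^2 = (c_1^R(E)+c_1^R(\hat{E}))\tilde{c}$ for (a), the same Gysin sequence and Leray--Hirsch decomposition with $\varpi^*\tilde{c}=0$ for (b), and naturality of the push-forward along the pullback squares, the factorization $(\pi_S)_* = (\pi_P)_*\circ\varpi_*$, and Lemma~\ref{lem:homotopy} for (c). The paper handles your ``main obstacle'' with exactly the same brevity you flag, writing simply $(\pi_S)_*\Th = (\pi_P)_*\varpi_*\Th = -1$ without further justification of $(\pi_P)_*$.
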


\begin{proof}
By the assumption, we have $\tilde{c}( \tilde{c} - \pi_P^*(c_1^R(E) + c_1^R(\hat{E}))) = 0$. Hence the Gysin sequence for $\varpi : S \to P$ ensures the existence of $\Th$, showing (a). For (b), there is $\tilde{\eta} \in H^3_{\Z/2}(P)$ such that $\Th' - \Th = \varpi^*\tilde{\eta}$, again by the Gysin sequence. The Leray-Hirsch theorem implies that $H^3_{\Z/2}(P) \cong H^3_{\Z/2}(X) \oplus H^1_\pm(X)$, which allows the expression $\tilde{\eta} = \pi_P^*\eta + \pi_P^*b \cup \tilde{c}$ by using some $\eta \in H^3_{\Z/2}(X)$ and $b \in H^1_\pm(X)$. Since $\varpi^*\tilde{c} = 0$, we get $\pi_S^*\eta = \varpi^*(\tilde{\eta} - \pi_P^*b \cup \tilde{c}) = \varpi^*(\tilde{\eta}) = \Th' - \Th$. For (c), we have $(\pi_S)*\Th = (\pi_P)_*\varpi_*\Th = -1$. The naturality of the Gysin sequence gives
$$
\pi_*(i^*\Th) = r^*(\varpi_*\Th) 
= - c_1^R(E) + c_1^R(E) + c_1^R(\hat{E})
= c_1^R(\hat{E}),
$$
and $\hat{\pi}_*(\hat{i}^*\Th) = c_1^R(E)$ similarly. Finally, $p^*i^*\Th = \hat{p}^*\hat{i}^*\Th$ follows from Lemma \ref{lem:homotopy} and the homotopy invariance of equivariant cohomology.
\end{proof}

\begin{lem} \label{lem:thom_class_for_given_pair}
Let $(E, h)$ be a pair on a space $X$ with $\Z/2$-action, and $\pi : \hat{E} \to X$ a Real circle bundle such that $c_1^R(\hat{E}) = \pi_*h$. Then there is $\Th \in H^3_{\Z/2}(S(R_E \oplus R_{\hat{E}}))$ such that
\begin{align*}
\varpi_*\Th &= - \tilde{c} + \pi_P^*(c_1^R(E) + c_1^R(\hat{E})), &
i^*\Th &= h.
\end{align*}
\end{lem}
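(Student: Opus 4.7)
The plan is to reduce to Lemma \ref{lem:thom_class} and then correct by a class pulled back from $X$. First I would check that the hypothesis $c_1^R(\hat{E}) = \pi_*h$ automatically implies the compatibility condition needed to invoke Lemma \ref{lem:thom_class}(a), namely $c_1^R(E) \cup c_1^R(\hat{E}) = 0$. Indeed, from the Gysin sequence for $\pi : E \to X$,
\[
\cdots \to H^{n}_{\Z/2}(E) \xrightarrow{\pi_*} H^{n-1}_\pm(X) \xrightarrow{c_1^R(E) \cup} H^{n+1}_{\Z/2}(X) \to \cdots,
\]
exactness at $H^{n-1}_\pm(X)$ shows that every class in the image of $\pi_*$ is annihilated by cup product with $c_1^R(E)$. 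Applied to $h \in H^3_{\Z/2}(E)$, this gives $c_1^R(E) \cup c_1^R(\hat{E}) = c_1^R(E) \cup \pi_* h = 0$.

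With this compatibility in hand, Lemma \ref{lem:thom_class}(a) furnishes a class $\Th_0 \in H^3_{\Z/2}(S)$ satisfying $\varpi_*\Th_0 = - \tilde{c} + \pi_P^*(c_1^R(E) + c_1^R(\hat{E}))$. The next step is to measure the failure of $i^*\Th_0 = h$ and to correct it. By Lemma \ref{lem:thom_class}(c),
\[
\pi_*(i^*\Th_0) = c_1^R(\hat{E}) = \pi_* h,
\]
so $\pi_*(h - i^*\Th_0) = 0$. Exactness of the Gysin sequence at $H^3_{\Z/2}(E)$ then yields $\eta \in H^3_{\Z/2}(X)$ with $\pi^*\eta = h - i^*\Th_0$.

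Setting $\Th = \Th_0 + \pi_S^*\eta$ and using $\pi_S = \pi_P \circ \varpi$ together with $i^* \pi_S^* = \pi^*$, one checks immediately that $i^*\Th = i^*\Th_0 + \pi^*\eta = h$. For the pushforward, note that $\varpi_*(\pi_S^*\eta) = \varpi_*(\varpi^* \pi_P^*\eta) = \pi_P^*\eta \cup \varpi_*(1) = 0$ since the integral of $1$ over the circle fibre vanishes; hence $\varpi_*\Th = \varpi_*\Th_0$ is as required. The only real content is the initial observation that the hypothesis on $c_1^R(\hat{E})$ forces $c_1^R(E) c_1^R(\hat{E}) = 0$; the rest is standard bookkeeping with the Gysin sequence.
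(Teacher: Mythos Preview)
Your argument is correct and essentially identical to the paper's: both use the Gysin sequence for $\pi$ to obtain $c_1^R(E)c_1^R(\hat{E})=0$, invoke Lemma~\ref{lem:thom_class}(a) to produce an initial class, and then correct by $\pi_S^*\eta$ for an $\eta \in H^3_{\Z/2}(X)$ obtained from the Gysin sequence once $\pi_*(i^*\Th_0 - h) = 0$ is noted. The only cosmetic difference is your sign convention for $\eta$ and your explicit verification that $\varpi_*(\pi_S^*\eta)=0$, which the paper leaves implicit.
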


\begin{proof}
By the Gysin sequence for $\pi: E \to X$, we have $c_1^R(E)c_1^R(\hat{E}) = 0$. By Lemma \ref{lem:thom_class}, we get $\Th' \in H^3_{\Z/2}(S)$ such that $\varpi_*\Th' = - \tilde{c} + \pi_P^*(c_1^R(E) + c_1^R(\hat{E}))$. Since $\pi_*(i^*\Th' - h) = 0$, there is $\eta \in H^3_{\Z/2}(X)$ such that $i^*\Th' - h = \pi^*\eta$. Then $\Th = \Th' - \pi_S^*\eta$ has the required property.
\end{proof}

\medskip

\begin{proof}[The proof of Theorem \ref{thm:main_pair}]
We construct a pair $(\hat{E}, \hat{h})$ T-dual to $(E, h)$ as follows: We define $\hat{\pi} : \hat{E} \to X$ to be a Real circle bundle such that $c_1(\hat{E}) = \pi_*h$. We have $c_1^R(E)c_1^R(\hat{E}) = 0$ by the Gysin sequence for $\pi: E \to X$, and get $\Th \in H^3_{\Z/2}(S(R_E \oplus R_{\hat{E}}))$ in Lemma \ref{lem:thom_class_for_given_pair}. If we put $\hat{h} = \hat{i}^*\Th$, then $(\hat{E}, \hat{h})$ is a pair T-dual to $(E, h)$ by Lemma \ref{lem:thom_class}. This completes (a) in Theorem \ref{thm:main_pair}, and (b) is already shown as Proposition \ref{prop:uniqueness}. For (c), let $(E', h')$ be another pair on $X$, and $(\hat{E}', \hat{h}')$ a pair T-dual to $(E', h')$. The thing we show eventually is that: if $(E, h)$ and $(E', h')$ are isomorphic, then so are $(\hat{E}, \hat{h})$ and $(\hat{E}', \hat{h}')$. Let $\Phi : (E', h') \to (E, h)$ be an isomorphism. As long as $(\hat{E}, \hat{h})$ is T-dual to $(E, h)$, its isomorphism class is unique by (b). We can say the same thing for $(\hat{E}', \hat{h}')$. Accordingly, we construct the T-dual pairs in the following way: We construct $(\hat{E}, \hat{h})$ as in the proof (a), so that $\hat{h} = \hat{i}^*\Th$ by using a class $\Th$ such as in Lemma \ref{lem:thom_class_for_given_pair}. In constructing $(\hat{E}', \hat{h}')$, we can put $\hat{E}' = \hat{E}$, because $\pi_*h = \pi'_*h'$ by the naturality of the Gysin sequence. The isomorphism $\Phi : E' \to E$ of Real circle bundles induces that of sphere bundles $\Phi_S : S(R_{E'} \oplus R_{\hat{E}}) \to S(R_{E} \oplus R_{\hat{E}})$, fitting into the commutative diagram:
$$
\begin{CD}
E' \times_X \hat{E} @>{\hat{p}'}>> 
\hat{E} @>{\hat{i}'}>> 
S(R_{E'} \oplus R_{\hat{E}}) \\
@V{(\Phi, 1)}VV @| @VV{\Phi_S}V \\
E \times_X \hat{E} @>{\hat{p}}>> 
\hat{E} @>{\hat{i}}>> 
S(R_{E} \oplus R_{\hat{E}}).
\end{CD}
$$
We put $\Th' = \Phi_S^*\Th$ and $\hat{h}' = (\hat{i}')^*\Th'$. Then $(\hat{E}, \hat{h}')$ is a pair T-dual to $(E', h')$. By construction $\hat{h}' = \hat{h}$, so that (c) is proved. We can verify (d) readily. For (e), it suffices to establish that: If $(\hat{E}, \hat{h})$ is a pair T-dual to a given pair $(E, h)$, then a pair $(E', h')$ T-dual to $(\hat{E}, \hat{h})$ is isomorphic to the original pair $(E, h)$. By the characterization of T-dual pairs, we have $c_1^R(E) = c_1^R(E')$. This allows us to assume $E = E'$. By the characterization again, we have $\pi_*(h' - h) = 0$ and further $p^*(h' - h) = 0$. Now, Lemma \ref{lem:key_to_uniqueness} and \ref{lem:automorphism_of_pair} imply that $(E, h)$ and $(E', h')$ are isomorphic as pairs. 
\end{proof}

\medskip

For later convenience, we prove:

\begin{lem} \label{lem:T_dual_is_dual}
Let $X$ be a space with $\Z/2$-action, $(E, h)$ a pair on $X$ and $(\hat{E}, \hat{h})$ a pair $T$-dual to $(E, h)$. Then there exists $\Th \in H^3_{\Z/2}(S(R_E \oplus R_{E'}))$ such that:
\begin{align*}
\varpi_*\Th &= - \tilde{c} + \pi_P^*(c_1^R(E) +  c_1^R(\hat{E}')), &
i^*\Th &= h, &
\hat{i}^*\Th &= \hat{h}.
\end{align*}
\end{lem}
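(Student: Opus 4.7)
The plan is to start from the class furnished by Lemma \ref{lem:thom_class_for_given_pair} and then correct it by a pull-back from $X$ so that its restriction to $\hat{E}$ becomes exactly $\hat{h}$, without disturbing the other two required identities.

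First, I will apply Lemma \ref{lem:thom_class_for_given_pair} to the pair $(E,h)$ and the Real circle bundle $\hat{\pi}:\hat{E}\to X$ (whose first Chern class equals $\pi_*h$ by the T-duality conditions) to obtain $\Th'\in H^3_{\Z/2}(S)$ satisfying $\varpi_*\Th'=-\tilde c+\pi_P^*(c_1^R(E)+c_1^R(\hat E))$ and $i^*\Th'=h$. Set $\mu=\hat i^*\Th'-\hat h\in H^3_{\Z/2}(\hat E)$. I then need to verify that $\mu$ meets the hypotheses of Lemma \ref{lem:key_to_uniqueness}. The identity $\hat\pi_*\mu=0$ follows because Lemma \ref{lem:thom_class}(c) gives $\hat\pi_*\hat i^*\Th'=c_1^R(E)$ while $\hat\pi_*\hat h=c_1^R(E)$ by T-duality. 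The identity $\hat p^*\mu=0$ follows because Lemma \ref{lem:thom_class}(c) gives $\hat p^*\hat i^*\Th'=p^*i^*\Th'=p^*h$, while the T-duality condition $p^*h=\hat p^*\hat h$ cancels the remaining term.

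Lemma \ref{lem:key_to_uniqueness} then yields $\alpha\in H^1_\pm(X)$ with $\mu=\hat\pi^*(c_1^R(E)\cup\alpha)$. I will define
$$
\Th=\Th'-\pi_S^*\bigl(c_1^R(E)\cup\alpha\bigr)\in H^3_{\Z/2}(S)
$$
and check the three conclusions in turn. For the pull-back along $i$: since $\pi^*c_1^R(E)=0$ by the Gysin sequence for $\pi:E\to X$ (the Euler class is killed by $\pi^*$), we get $i^*\pi_S^*(c_1^R(E)\cup\alpha)=\pi^*(c_1^R(E)\cup\alpha)=0$, hence $i^*\Th=h$. For the pull-back along $\hat i$: we have $\hat i^*\pi_S^*(c_1^R(E)\cup\alpha)=\hat\pi^*(c_1^R(E)\cup\alpha)=\mu$, so $\hat i^*\Th=\hat i^*\Th'-\mu=\hat h$. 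For the push-forward along $\varpi$: using $\pi_S=\pi_P\circ\varpi$, we get $\varpi_*\pi_S^*=\varpi_*\varpi^*\pi_P^*=0$ because $\varpi_*\varpi^*=0$ by exactness of the Gysin sequence for $\varpi:S\to P$ (equivalently, by the projection formula together with $\varpi_*(1)=0$ for a circle bundle). Hence $\varpi_*\Th=\varpi_*\Th'$ retains the required value.

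There is no serious obstacle here; the argument is essentially a combination of Lemmas \ref{lem:thom_class}, \ref{lem:thom_class_for_given_pair}, and \ref{lem:key_to_uniqueness} with the defining conditions of a T-dual pair. The only mildly delicate point is recognizing that the correction term $\pi_S^*(c_1^R(E)\cup\alpha)$ has precisely the right restriction behavior: it is killed under $i^*$ (because $\pi^*c_1^R(E)=0$) and under $\varpi_*$ (because it is a pull-back through $\varpi$), while producing exactly $\mu$ under $\hat i^*$.
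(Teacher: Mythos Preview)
Your proposal is correct and follows essentially the same route as the paper: start from the class $\Th'$ supplied by Lemma \ref{lem:thom_class_for_given_pair}, invoke Lemma \ref{lem:key_to_uniqueness} on the difference $\hat i^*\Th'-\hat h$ to obtain $\alpha\in H^1_\pm(X)$, and set $\Th=\Th'-\pi_S^*(c_1^R(E)\cup\alpha)$. The paper phrases the verification of the hypotheses of Lemma \ref{lem:key_to_uniqueness} by noting that $(\hat E,\hat i^*\Th')$ is itself T-dual to $(E,h)$, which is exactly the content of the two identities $\hat\pi_*\mu=0$ and $\hat p^*\mu=0$ that you spell out directly; your explicit checks of $i^*\Th=h$, $\hat i^*\Th=\hat h$, and $\varpi_*\Th=\varpi_*\Th'$ simply unpack the paper's ``we can verify''.
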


\begin{proof}
By Lemma \ref{lem:thom_class_for_given_pair}, we get $\Th' \in H^3_{\Z/2}(S(R_E \oplus R_{E'}))$ such that
\begin{align*}
\varpi_*\Th &= - \tilde{c} + \pi_P^*(c_1^R(E) +  c_1^R(\hat{E}')), &
i^*\Th &= h.
\end{align*}
Because $(\hat{E}, \hat{i}^*\Th)$ is also T-dual to $(E, h)$, we use Lemma \ref{lem:key_to_uniqueness} to have $a \in H^1_\pm(X)$ such that $\hat{i}^*\Th - \hat{h} = \hat{\pi}^*(c_1^R(E) \cup a)$. Then we can verify that 
$$
\Th = \Th' - \pi_S^*(c_1^R(E) \cup a)
$$ 
has the required property.
\end{proof}

\smallskip

\begin{rem}
The cohomology class $\Th$ is our analogue of a Thom class in the sense of \cite{B-S}. The change of a sign is due to our convention.
\end{rem}

\subsection{The proof of Theorem \ref{thm:main_T_transformation}}

We begin with the case of $X = \pt$. For the trivial Real circle bundle $E = \hat{E} = \tilde{S}^1$, we have $E \times_X \hat{E} = \tilde{S}^1 \times \tilde{S}^1 = \tilde{T}^2$. We write $S = S(R_E \oplus R_{\hat{E}})$ for short. As shown in Lemma \ref{lem:homotopy}, we have the homotopy
\begin{align*}
\tilde{h} \ &: \tilde{T}^2 \times [0, 1] \longrightarrow 
S, &
\tilde{h}(u, \hat{u}, t)
&= ( (\cos \frac{\pi}{2}t) u, (\sin \frac{\pi}{2}t) \hat{u} ),
\end{align*}
Then, for any twist $\tau \in \Twist^0_{\Z/2}(S)$, Corollary \ref{cor:homotopy_construct_isomorphism} constructs the isomorphism of twists $u(\tilde{h}) : p^*i^*\tau_S \to \hat{p}^*\hat{i}^*\tau_S$. The isomorphism class of $\tau$ will be regarded as a cohomology class $[\tau] \in H^3_{\Z/2}(X; \Z) \cong \pi_0(\Twist^0_{\Z/2}(S))$.

\begin{lem} \label{lem:T_transformation_pt}
In the notation above, let $\tau \in \Twist^0_{\Z/2}(S)$ be any twist such that $(\pi_S)_*[\tau] = -1$. For any $n \in \Z$, the following compositions of maps are bijective:
\begin{align*}
&
K^{i^*\tau + n}_{\Z/2}(\tilde{S}^1) \overset{p^*}{\to}
K^{p^*i^*\tau + n}_{\Z/2}(\tilde{T}^2) \overset{u(\tilde{h})^*}{\to}
K^{\hat{p}^*\hat{i}^*\tau + n}_{\Z/2}(\tilde{T}^2) \overset{\hat{p}_*}{\to}
K^{\hat{i}^*\tau + n - 1}_\pm(\tilde{S}^1), \\
&
K^{i^*\tau + n}_{\pm}(\tilde{S}^1) \overset{p^*}{\to}
K^{p^*i^*\tau + n}_{\pm}(\tilde{T}^2) \overset{u(\tilde{h})^*}{\to}
K^{\hat{p}^*\hat{i}^*\tau + n}_{\pm}(\tilde{T}^2) \overset{\hat{p}_*}{\to}
K^{\hat{i}^*\tau + n - 1}_{\Z/2}(\tilde{S}^1).
\end{align*}
\end{lem}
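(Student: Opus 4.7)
The plan is to reduce the case $X=\pt$ to the explicit Fourier-transform-like isomorphism of Corollary \ref{cor:fourier_transform}. Since $H^3_{\Z/2}(\tilde{S}^1;\Z)=0$ by Lemma \ref{lem:cohomology_circle_with_flip}, both restricted twists $i^*\tau$ and $\hat i^*\tau$ on $\tilde{S}^1$ are isomorphic to the trivial twist, and hence so are their further pull-backs $p^*i^*\tau$ and $\hat p^*\hat i^*\tau$ on $\tilde{T}^2$. After fixing such trivializations, the composition in the lemma transforms into a $\mathbb{K}^*(\pt)$-linear map $\tilde T:K^n_{\Z/2}(\tilde{S}^1)\to K^{n-1}_\pm(\tilde{S}^1)$ in untwisted equivariant K-theory, and an analogous map with $\Z/2$ and $\pm$ swapped.

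After these trivializations, $u(\tilde h)^*$ becomes an automorphism of the trivial twist on $\tilde{T}^2$, which by the description of automorphisms of twists recalled in Section \ref{sec:K_pm} acts on K-theory by tensoring with a $\Z/2$-equivariant line bundle $\mathcal L$. Thus $\tilde T(a)=\hat p_*([\mathcal L]\cdot p^*a)$, where $p=\pi_1$ and $\hat p=\pi_2$ are precisely the two factor projections of $\tilde{T}^2=\tilde{S}^1\times\tilde{S}^1$ appearing in Corollary \ref{cor:fourier_transform}. The main technical step, and the principal obstacle, is to show that this $\tilde T$ coincides with the map $T$ of that corollary. The intended route is to use the constraint $(\pi_S)_*[\tau]=-1$ together with Lemma \ref{lem:T_dual_is_dual} (which pins $[\tau]$ down uniquely up to classes pulled back from $\pt$, both $h$ and $\hat h$ vanishing over the point), combined with the explicit homotopy $\tilde h$ of Lemma \ref{lem:homotopy}, to identify the class of $\mathcal L$. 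One then computes $\hat p_*([\mathcal L]\cdot p^*-)$ on the $\mathbb{K}^*(\pt)$-basis of $K^*_{\Z/2}(\tilde{S}^1)$ from Proposition \ref{prop:ring_structure_KK_circle}, checking fixed-point values via the injective ring homomorphism $F$ of Section \ref{subsec:basis_torus} and the push-forward characterization of $\chi_1,\chi_2$ in Proposition \ref{prop:ring_structure_KK_torus}, and verifies that the resulting values match those of $T$ on the same basis, namely $T(1)=t\chi$, $T(t)=\chi$, and $T(\sigma\chi)=\sigma-(1-t)\chi$.

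Granted this identification, bijectivity of $\tilde T$ is immediate from Corollary \ref{cor:fourier_transform}, proving the first composition is an isomorphism. The second composition, with $K_{\Z/2}$ and $K_\pm$ interchanged, is handled by the same argument, since the isomorphism $T$ of Corollary \ref{cor:fourier_transform} is an automorphism of the full ring $\mathbb{K}^*(\tilde{S}^1)$ that exchanges the $K^*_{\Z/2}$ and $K^*_\pm$ summands with the prescribed degree shift; consequently its restriction to $K^*_\pm(\tilde{S}^1)$ is likewise bijective onto $K^{*-1}_{\Z/2}(\tilde{S}^1)$.
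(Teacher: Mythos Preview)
Your overall strategy is the same as the paper's: trivialize the restricted twists on $\tilde{S}^1$, recognize $u(\tilde h)^*$ as multiplication by a $\Z/2$-equivariant line bundle $\mathcal L$ on $\tilde T^2$, and then invoke Corollary~\ref{cor:fourier_transform}. Where you diverge is in the step you yourself flag as ``the principal obstacle'': pinning down $\mathcal L$. Your proposed route---appealing to the numerical constraint $(\pi_S)_*[\tau]=-1$ and to Lemma~\ref{lem:T_dual_is_dual}---only determines the cohomology class $[\tau]\in H^3_{\Z/2}(S)\cong\Z$; it does not by itself compute the automorphism $u(\tilde h)$ of the trivial twist on $\tilde T^2$, which depends on the actual twist and on the chosen trivializations of $i^*\tau$ and $\hat i^*\tau$. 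Moreover, different trivializations change $\mathcal L$ by factors of the form $p^*M\otimes \hat p^*N$ with $M,N$ line bundles on $\tilde S^1$, so one cannot expect the values of $\tilde T$ on the basis to literally \emph{match} those of $T$ in Corollary~\ref{cor:fourier_transform} without a specific normalization (bijectivity, of course, would be unaffected by such pre- and post-compositions).

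The paper closes this gap by an explicit construction rather than an abstract argument: it first notes $H^3_{\Z/2}(S)\cong\Z$ so that the isomorphism class of $\tau$ is unique, and then builds a concrete representative using the cover $U_0=S\setminus\hat i(\tilde S^1)$, $U_1=S\setminus i(\tilde S^1)$ (each equivariantly homotopic to $\tilde S^1$, with $U_0\cap U_1\simeq\tilde T^2$) together with the gluing line bundle $K=H^{-1}$. With this presentation, $i^*\tau$ and $\hat i^*\tau$ are \emph{literally} the trivial twist (same cocycle data), not merely isomorphic to it, and running the construction of Corollary~\ref{cor:homotopy_construct_isomorphism} shows directly that $u(\tilde h)$ is the automorphism given by $H^{-1}\in H^2_{\Z/2}(\tilde T^2)$. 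Since $[H^{-1}]=1+t\chi_1\chi_2$ in $K^0_{\Z/2}(\tilde T^2)$, the composition is exactly the map $T$ of Corollary~\ref{cor:fourier_transform}. This explicit choice of $\tau$ is the missing ingredient that turns your outline into a proof.
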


\begin{proof}
Notice that $S \cong \tilde{S}^1 \wedge \tilde{S}^1 \wedge S^1$. This allows us to compute $H^3_{\Z/2}(S) \cong \Z$. Hence the isomorphism class of $\tau$ is unique. Further, the construction of $u(\tilde{h})$ in Corollary \ref{cor:homotopy_construct_isomorphism} is functorial with respect to twists. Thus, for the proof, we can choose a particular $\tau$ constructed as follows: We put $U_0 = S \backslash \hat{i}(\tilde{S}^1)$ and $U_1 = S \backslash i(\tilde{S}^1)$. In view of a handle body decomposition of $S^3$ by two solid tori, we find that $U_0$ and $U_1$ are equivariantly homotopic to $\tilde{S}^1$, and $U_0 \cap U_1$ to $\tilde{T}^2$. We identify an (isomorphism class of) equivariant line bundle on $\tilde{T}^2$ with one on $U_0 \cap U_1$. Then the equivariant open cover $\{ U_0, U_1 \}$ and an equivariant line bundle $K \to \tilde{T}^2$ constitute an equivariant twist on $S$. Note that $f : H^3_{\Z/2}(S) \to H^3(S)$ is an isomorphism. Thus, the choice $K = H^{-1}$ provides a twist $\tau \in \Twist^0_{\Z/2}(S)$ such that $(\pi_S)_*[\tau] = -1$, where $H \to \tilde{T}^2$ is the equivariant line bundle constructed in Subsection \ref{subsec:basis_torus}. Now, the twists $i^*\tau$ and $\hat{i}^*\tau$ on $\tilde{S}^1$ have the same presentation as the trivial twist ${\bf 0}$, and so do $p^*i^*\tau$ and $\hat{p}^*\hat{i}^*\tau$. Applying the construction in Corollary \ref{cor:homotopy_construct_isomorphism}, we find that $u(\tilde{h}) : {\bf 0} \to {\bf 0}$ is realized by $H^{-1} \in \mathrm{Aut}({\bf 0}) \cong H^2_{\Z/2}(\tilde{T}^2)$. In the presentation in Proposition \ref{prop:ring_structure_KK_torus}, we have $H = 1 - \chi_1\chi_2$ and $H^{-1} = 1 + t \chi_1\chi_2$. Thus, Corollary \ref{cor:fourier_transform} completes the proof.
\end{proof}

\begin{lem} \label{lem:T_transformation_Z_2}
The statement in Lemma \ref{lem:T_transformation_pt} holds, upon forgetting $\Z/2$-actions.
\end{lem}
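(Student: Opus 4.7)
The plan is to follow the argument of Lemma \ref{lem:T_transformation_pt} verbatim but in ordinary (non-equivariant) $K$-theory. Forgetting the $\Z/2$-actions turns $\tilde{S}^1$, $\tilde{T}^2$, $S$ into $S^1$, $T^2$, $S^3$. Since $H^3(S^1;\Z)=H^3(T^2;\Z)=0$, every pull-back twist $i^*\tau$, $\hat{i}^*\tau$, $p^*i^*\tau$, $\hat{p}^*\hat{i}^*\tau$ is trivial, so both compositions in the statement reduce to the same single composition
$$
K^n(S^1) \xrightarrow{p^*} K^n(T^2) \xrightarrow{u(\tilde{h})^*} K^n(T^2) \xrightarrow{\hat{p}_*} K^{n-1}(S^1),
$$
and it is enough to show this is bijective.

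The construction of Corollary \ref{cor:homotopy_construct_isomorphism} is natural under forgetting group actions, so by taking the same particular representative of $\tau$ used in the proof of Lemma \ref{lem:T_transformation_pt}, I would see that $u(\tilde{h})^*$ is realised by multiplication by the underlying complex line bundle $H^{-1} \in K^0(T^2)$, where $H$ is the degree-one line bundle on $T^2$. In the standard presentation $K^*(T^2) \cong \Z[\chi_1,\chi_2]/(\chi_1^2,\chi_2^2)$ with Bott generators $\chi_i \in K^1(S^1)$ pulled back along the two projections, the non-equivariant specialisation of the identification in the proof of Lemma \ref{lem:T_transformation_pt} reads $H = 1 - \chi_1\chi_2$, whence $H^{-1} = 1 + \chi_1\chi_2$ (since $(\chi_1\chi_2)^2 = 0$ by graded commutativity).

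Next I would compute the composition directly. Using $\hat{p}_*(1) = \hat{p}_*(\chi_2) = 0$, $\hat{p}_*(\chi_1) = \pm 1$, $\hat{p}_*(\chi_1\chi_2) = \pm\chi$, and the vanishing $\chi_1\chi_2\chi_1 = -\chi_1^2\chi_2 = 0$, one sees that $1 \in K^0(S^1)$ is sent to $\hat{p}_*(1+\chi_1\chi_2) = \pm\chi$ and $\chi \in K^1(S^1)$ is sent to $\hat{p}_*(\chi_1+\chi_1\chi_2\chi_1) = \hat{p}_*(\chi_1) = \pm 1$. Hence the composition carries a $\Z$-basis to a $\Z$-basis, and is a bijection.

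There is no substantial obstacle here: after identifying the twist representative and the line bundle $H^{-1}$, the computation is entirely elementary. Alternatively, the lemma can be deduced as a special case of the classical topological T-duality of Bunke and Schick \cite{B-S} applied to the trivial circle bundle over a point.
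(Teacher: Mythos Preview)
Your argument is correct and follows essentially the same approach as the paper. The paper's proof simply observes that setting $t=1$ and $\sigma=0$ in the presentation of $\mathbb{K}^*(\tilde{T}^2)$ recovers $K^*(T^2)$, so that Corollary~\ref{cor:fourier_transform} survives the specialisation and the argument of Lemma~\ref{lem:T_transformation_pt} goes through unchanged; you instead write out the non-equivariant computation directly, which amounts to the same thing.
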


\begin{proof}
We can recover the $K$-ring $K^*(T^2)$ by substituting $t = 1$ and $\sigma = 0$ to the presentation of $\mathbb{K}^*(\tilde{T}^2)$ in Proposition \ref{prop:ring_structure_KK_torus}. Then Corollary \ref{cor:fourier_transform} holds true, even if the $\Z/2$-actions are forgotten. Now, the argument in the proof of Lemma \ref{lem:T_transformation_pt} applies without change.
\end{proof}

Suppose that pairs $(E, h)$ and $(\hat{E}, \hat{h})$ on $X$ are T-dual to each other, and then consider the following maps
$$
\begin{array}{c@{}c@{}cc@{}c@{}c@{}c}
K^{h + n}_{\Z/2}(E) & \ \overset{p^*}{\to} \ &
K^{p^*h + n}_{\Z/2}(E \times_X \hat{E}) & \cong &
K^{\hat{p}^*\hat{h} + n}_{\Z/2}(E \times_X \hat{E}) & & \\
 & & & & \downarrow \ \hat{p}_* & & \\
 & & & &
K^{\hat{h} + \hat{\pi}^*W_3^{\Z/2}(E) + n-1}_{\pm}(\hat{E}) & \ \cong \ &
K^{\hat{h} + n - 1}_\pm(\hat{E}),
\end{array}
$$
where the isomorphism in the upper line comes from $p^*h = \hat{p}^*\hat{h}$, and that in the lower from $W_3^{\Z/2}(\hat{\pi}^*E) = \hat{\pi}^*(c_1^R(E) \cup t^{1/2}) = \hat{\pi}^*(\hat{\pi}_*\hat{h} \cup t^{1/2})$ and the equivariant automorphism $-1 : \hat{E} \to \hat{E}$. (Recall Proposition \ref{prop:Pin_c_structure} and Lemma \ref{lem:automorphism_of_pair}.) We denote the composition of these maps as
$$
T : \ K^{h + n}_{\Z/2}(E) \longrightarrow K^{\hat{h} + n - 1}_\pm(\hat{E}).
$$
In the same way, we define $T$ by exchanging $\Z/2$ and $\pm$. By construction, these $T$ are homomorphisms of $K^*_{\Z/2}(X)$-modules. Further, they are compatible with the $\mathbb{K}^*(X)$-module structures. 

Strictly speaking, $T$ above only makes sense up to automorphisms of twists realizing $h$ and $\hat{h}$: This is because the notation $K^{h + n}_{\Z/2}(E)$ by using $h \in H^3_{\Z/2}(E)$ rather than $\tau \in \Twist^0_{\Z/2}(E)$ specifies the equivalence class of the group only, and the equalities of third cohomology classes in the construction of $T$ only specify isomorphisms of twists up to automorphisms. Despite this ambiguity, we regard $T$ as a map in the following, with specifications of twists and their isomorphisms representing third cohomology classes and their equalities understood.

\begin{proof}[The proof of Theorem \ref{thm:main_T_transformation}]
Without loss of generality, we can assume that $h$ and $\hat{h}$ are such as in Lemma \ref{lem:T_dual_is_dual} for some $\Th \in H^3_{\Z/2}(S)$. Let $\tau_S$ be a twist such that $[\tau_S] = \Th$. If we put $\tau = i^*\tau_S$ and $\hat{\tau} = \hat{i}^*\tau_S$, then Corollary \ref{cor:homotopy_construct_isomorphism} provides us $u(\tilde{h}) : p^*\tau \to \hat{p}^*\hat{\tau}$. Now, it suffices to prove that the composition of 
$$
K^{i^*\tau + n}_{\Z/2}(E) \overset{p^*}{\to}
K^{p^*i^*\tau + n}_{\Z/2}(E \times_X \hat{E}) 
\overset{u(\tilde{h})^*}{\to} \!
K^{\hat{p}^*\hat{i}^*\tau + n}_{\Z/2}(E \times_X \hat{E}) 
\overset{\hat{p}_*}{\to}
K^{\hat{i}^*\tau - \tau(\hat{\pi}^*E) + n - 1}_{\Z/2}(\hat{E})
$$
is bijective, where we use the Real line bundle associated to $\hat{\pi}^*E$ to define $\tau(\hat{\pi}^*E)$. We write the composition of homomorphisms as
$$
T : \ \mathcal{K}^n(E) \longrightarrow \hat{\mathcal{K}}^{n-1}(\hat{E}),
$$
where $\mathcal{K}^*(E) = K^{\tau + *}_{\Z/2}(E)$ and $\hat{\mathcal{K}}^*(\hat{E}) = K^{\hat{\tau} - \tau(\hat{\pi}^*E) + *}_{\Z/2}(\hat{E})$ to suppress notations. 

For a finite $\Z/2$-CW complex $X$, take a sequence of subcomplexes
$$
Y_1 \subset Y_2 \subset Y_3 \subset \cdots \subset Y_n = X
$$
such that $Y_1 = \tilde{e}_1$ and $Y_{i+1} = Y_i \cup \tilde{e}_{i + 1}$, where $\tilde{e}_j$ are $\Z/2$-cells. The $\Z/2$-cell $Y_1$ is, by definition, $\Z/2$-homotopy equivalent to a point $\pt$ or $\partial \tilde{I} = \Z/2$. If $Y_1 \simeq \pt$, then $T$ is bijective by Lemma \ref{lem:T_transformation_pt}. If $Y_1 \simeq \Z/2$, then we take the quotient by the $\Z/2$-action to reduce the problem to the case where the $\Z/2$-actions are forgotten. Hence $T$ is also bijective by Lemma \ref{lem:T_transformation_Z_2}. As a hypothesis of an induction, we assume here that $T$ is bijective on $Y_{k}$. Notice that we can make sense of $T$ for the pair of spaces $(Y_{k+1}, Y_k)$. Being a composition of natural maps, $T$ induces a map of the exact sequences for pairs:
$$
\begin{array}{c@{}c@{}c@{}c@{}c@{}c@{}c@{}c@{}c}
\mathcal{K}^{-n-1}(E_k) &  \to  &
\mathcal{K}^{-n}(E_{k+1}, E_k) &  \to  &
\mathcal{K}^{-n}(E_{k+1}) &  \to  &
\mathcal{K}^{-n}(E_k) &  \to  &
\mathcal{K}^{-n+1}(E_{k+1}, E_k) \\
\downarrow & &
\downarrow & &
\downarrow & &
\downarrow & &
\downarrow \\
\hat{\mathcal{K}}^{-n}(\hat{E}_k) &  \to  &
\hat{\mathcal{K}}^{-n-1}(\hat{E}_{k+1}, \hat{E}_k) &  \to  &
\hat{\mathcal{K}}^{-n-1}(\hat{E}_{k+1}) &  \to  &
\hat{\mathcal{K}}^{-n-1}(\hat{E}_k) &  \to  &
\hat{\mathcal{K}}^{-n}(\hat{E}_{k+1}, \hat{E}_k),
\end{array}
$$
where $E_k = E|_{Y_k}$ and $\hat{E}_k = \hat{E}|_{Y_k}$. By the excision axiom, we have
\begin{align*}
\mathcal{K}^{-n}(E_{k+1}, E_k)
&\cong K^{\tau - n}_{\Z/2}
(E|_{\tilde{e}_{k+1}}, E|_{\partial \tilde{e}_{k+1}}) \\
&\cong K^{\tau - n}_{\Z/2}
(E|_{(\Z_2/H) \times e^{d_{k+1}}}, 
E|_{(\Z_2/H) \times \partial e^{d_{k+1}}}).
\end{align*}
In the above, we express the $\Z/2$-cell $\tilde{e}_{k+1}$ as $\tilde{e}_{k+1} = (\Z_2/H) \times e^{d_{k+1}}$ by using a subgroup $H \subset \Z/2$ and the usual $d_{k+1}$-dimensional cell $e^{d_{k+1}}$. Since $e^{d_{k+1}}$ is (equivariantly) contractible, the Real circle bundle on $(\Z_2/H) \times e^{d_{k+1}}$ is the pull-back of one on $\Z_2/H$. Further, because $H^3_{\Z/2}(\Z_2/H) = 0$, we get
$$
\mathcal{K}^{-n}(E_{k+1}, E_k) \cong 
K^{- n - d_{k+1}}_{\Z/2}(E|_{\Z_2/H}).
$$
Similarly, noting that $H^2_{\pm}(\Z_2/H) = 0$ implies $\tau(E|_{\Z_2/H}) \cong \tau(\underline{\R}_1)$, we get
$$
\hat{\mathcal{K}}^{-n-1}(\hat{E}_{k+1}, \hat{E}_k) \cong 
K^{\tau(\underline{\R}_1) - n -1 - d_{k+1}}_{\Z/2}(\hat{E}|_{\Z_2/H}) \cong
K^{-n-1 - d_{k+1}}_\pm(\hat{E}|_{\Z_2/H}).
$$
Through these isomorphisms, we can identify
$$
T : \ \mathcal{K}^{-n}(E_{k+1}, E_k) \longrightarrow
\hat{\mathcal{K}}^{-n-1}(\hat{E}_{k+1}, \hat{E}_k)
$$
with the map in Lemma \ref{lem:T_transformation_pt} or \ref{lem:T_transformation_Z_2}. Thus, $T$ is bijective on $Y_{k+1}$ by the five lemma, and so is on $X$ by induction. 
\end{proof}

\subsection{Example}

As a simple but non-trivial example of topological T-duality for Real circle bundles, we consider $X = S^1$, the circle with trivial $\Z/2$-action. Its cohomology is already calculated in Subsection \ref{subsec:chern_class_of_Real_vector_bundle}.

Let $\pi_0 : E_0 \to S^1$ be the trivial Real circle bundle $E_0 = S^1 \times \tilde{S}^1$, and $\pi_1 : E_1 \to S^1$ the non-trivial one whose Chern class $c = c_1^R(E_1)$ generates $H^2_\pm(S^1) = \Z/2$. A part of the Gysin sequence for $E_0$ is 
$$
\begin{CD}
H^3_{\Z/2}(S^1) @>{\pi^*_0}>{\mbox{injective}}>
H^3_{\Z/2}(E_0) @>{{\pi_0}_*}>{\mbox{surjective}}>
H^2_{\pm}(S^1), \\
@| @| @| \\
\Z_2 t^{\frac{1}{2}}c @. \Z_2 \pi_0^*(t^{\frac{1}{2}}c) \oplus \Z_2 h_0 @. 
\Z_2 c,
\end{CD}
$$
where we choose $h_0 \in H^3_{\Z/2}(E_0)$ such that ${\pi_0}_*h_0 = c_1^R(E_1)$. A part of the Gysin sequence for $E_1$ is
$$
\begin{CD}
H^3_{\Z/2}(S^1) @>{\pi^*_1}>{\mbox{trivial}}>
H^3_{\Z/2}(E_1) @>{{\pi_1}_*}>{\mbox{bijective}}>
H^2_{\pm}(S^1). \\
@| @| @| \\
\Z_2 t^{\frac{1}{2}}c @. \Z_2 h_1 @. \Z_2 c,
\end{CD}
$$
where $h_1 \in H^3_{\Z/2}(E_1)$ is such that ${\pi_1}_*h_1 = c$.

Then we find the following T-dual relations of pairs:
\begin{align*}
\mathrm{(1)} & &
(E_0, 0) &\longleftrightarrow (E_0, 0), \\
\mathrm{(2)} & &
(E_0, \pi_0^*(t^{1/2}c)) &\longleftrightarrow (E_0, \pi_0^*(t^{1/2}c)), \\
\mathrm{(3)} & &
(E_0, h_0) &\longleftrightarrow (E_1, 0), \\
\mathrm{(4)} & &
(E_0, h_0 + \pi^*(t^{1/2}c)) &\longleftrightarrow (E_1, 0), \\
\mathrm{(5)} & &
(E_1, h_1) &\longleftrightarrow (E_1, h_1).
\end{align*}
Note that the action of $t^{1/2}c \in H^3_{\Z/2}(S^1)$ relates the pairs (1) with (2), and similarly (3) with (4). Note also that $(E_0, h_0)$ and $(E_0, h_0 + \pi^*(t^{1/2}c))$ are isomorphic as pairs.

Now, computations by using the Mayer-Vietoris sequence give us:

\begin{enumerate}
\item[{}]
The $K$-groups of $E_0$:

\begin{itemize}
\item 
$h = 0$,
\begin{align*}
K^{0}_{\Z/2}(E_0)
&\cong R \oplus R/J, &
K^{1}_{\Z/2}(E_0)
&\cong R \oplus R/J, \\
K^{0}_{\pm}(E_0)
&\cong R \oplus R/J, &
K^{1}_{\pm}(E_0)
&\cong R \oplus R/J.
\end{align*}

\item
$h = \pi_0^*(t^{1/2}c)$,
\begin{align*}
K^{h + 0}_{\Z/2}(E_0)
&\cong R/I, &
K^{h + 1}_{\Z/2}(E_0)
&\cong R/J \oplus I/2I, \\
K^{h + 0}_{\pm}(E_0)
&\cong R/J \oplus I/2I, &
K^{h + 1}_{\pm}(E_0)
&\cong R/I. 
\end{align*}

\item
$h = h_0$ or $h = h_0 + \pi_0^*(t^{1/2}c)$, 
\begin{align*}
K^{h + 0}_{\Z/2}(E_0)
&\cong R/I \oplus R/J, &
K^{h + 1}_{\Z/2}(E_0)
&\cong R, \\
K^{h + 0}_{\pm}(E_0)
&\cong R/I \oplus R/J, &
K^{h + 1}_{\pm}(E_0)
&\cong R.
\end{align*}
\end{itemize}

\item[{}] 
The $K$-groups of $E_1$:

\begin{itemize}
\item
$h = 0$, 
\begin{align*}
K^0_{\Z/2}(E_1) &\cong R, &
K^1_{\Z/2}(E_1) &\cong R/I \oplus R/J, \\
K^0_\pm(E_1) &\cong R, &
K^1_\pm(E_1) &\cong R/I \oplus R/J.
\end{align*}

\item
$h = h_1$, 
\begin{align*}
K^{h + 0}_{\Z/2}(E_1) &\cong R/I, &
K^{h + 1}_{\Z/2}(E_1) &\cong R/I, \\
K^{h + 0}_\pm(E_1) &\cong R/I, &
K^{h + 1}_\pm(E_1) &\cong R/I.
\end{align*}
\end{itemize}
\end{enumerate}

\medskip

As another example, we can take the $2$-dimensional sphere $S^2 = S^1 \wedge \tilde{S}^1 = \C P^1$ with the $\Z/2$-action $\tau([z : w]) = [\bar{z}, \bar{w}]$. The Real circle bundles on this sphere, which are lens spaces, can be constructed by gluing two solid tori along the torus $\tilde{T}^2$. Their $K$-groups are computed by studying the action of the mapping class group $SL(2, \Z)$ of $\tilde{T}^2$ on $\mathbb{K}^*(\tilde{T}^2)$, but we leave the detail to the interested readers.

\subsection{Universal pair}
\label{subsec:universal_pair}

In the work of Bunke and Schick \cite{B-S}, the `universal pair' plays a key role. We here generalize their construction. This is unnecessary to the aim of the present paper logically, but would be interesting in its own right. 

\medskip

Let $E\tilde{S}^1 \to B\tilde{S}^1$ be the universal Real circle bundle. For any $\Z/2$-space $X$, we can identify $H^2_\pm(X)$ with the equivariant homotopy classes $[X, B\tilde{S}^1]_{\Z/2}$ of equivariant maps $X \to B\tilde{S}^1$, according to Proposition \ref{prop:classify_Real_circle_bundle}. We define $\H^3_{\Z/2}$ to be $\H^3_{\Z/2} = \Map(E\Z_2, K(\Z, 3))$, where $E\Z_2$ is the total space of the universal $\Z/2$-bundle, and $K(\Z, 3)$ is the Eilenberg-MacLane space. If we let $\Z/2$ act on $\H^3_{\Z/2}$ through the $\Z/2$-action on $E\Z_2$ and the trivial action on $K(\Z, 3)$, then $\H^3_{\Z/2}$ serves as a representing space of the equivariant cohomology $H^3_{\Z/2}$:
\begin{align*}
H^3_{\Z/2}(X)
&= H^3(E\Z_2 \times_{\Z_2} X)
\cong [E\Z_2 \times_{\Z_2} X, K(\Z, 3)] \\
&= [E\Z_2 \times X, K(\Z, 3)]_{\Z/2} 
\cong [X, \Map(E\Z_2, K(\Z, n))]_{\Z/2}
\cong [X, \H^3_{\Z/2}]_{\Z/2}.
\end{align*}
We denote by $\tilde{L}\H^3_{\Z/2} = \Map(\tilde{S}^1, \H^3_{\Z/2})$ the free loop space. We let $\Z/2$ act on a loop $\gamma : \tilde{S}^1 \to \H^3_{\Z/2}$ by $(\tau \gamma)[u] = \tau(\gamma[u^{-1}])$. Note that $\tilde{S}^1$ also acts on $\tilde{L}\H^3_{\Z/2}$ by the reparametrization of loops: $(R_\zeta \gamma)[u] = \gamma[\zeta u]$ for $\zeta \in \tilde{S}^1$. Now, we let $\zeta \in \tilde{S}^1$ act on $B\tilde{S}^1 \times \tilde{L}\H^3_{\Z/2}$ by $(\xi, \gamma) \mapsto (\xi \zeta^{-1}, R_\zeta \gamma)$, and take the quotient to define
$$
\RR = (E\tilde{S}^1 \times \tilde{L}\H^3_{\Z/2})/\tilde{S}^1.
$$
We let $\pi : \E \to \RR$ be the pull-back of the universal Real circle bundle under the map $\RR \to B\tilde{S}^1$ induced from the projection $B\pi : E\tilde{S}^1 \to B\tilde{S}^1$. We then construct ${\bf h} : \E \to \H^3_{\Z/2}$ as follows: An element in $\bf{E}$ is described as $([\xi, \gamma], \eta)$, where $[\xi, \gamma] \in \RR$ and $\eta \in E\tilde{S}^1$ are such that $B\pi(\xi) = B\pi(\eta)$. Using the unique element $u \in \tilde{S}^1$ such that $\xi u = \eta$, we put ${ \bf h}([\xi, \gamma], \eta) = \gamma[u]$. This map is well-defined and $\Z/2$-equivariant. By the representing property of $\H^3_{\Z/2}$, the equivariant map ${\bf h}$ represents a class in $H^3_{\Z/2}(\E)$, which we again denote by ${\bf h}$.

Now, a straight generalization of argument in \cite{B-S} proves that: 

\begin{prop}
For any pair $(E, h)$ on a space $X$ with $\Z/2$-action, there is an equivariant map $\phi : X \to \RR$ such that $\phi^*(E, h)$ is isomorphic to $(E, h)$. Such an equivariant map is unique up to equivariant homotopy.
\end{prop}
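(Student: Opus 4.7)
The plan is to verify that $\RR$ genuinely represents the functor $X \mapsto \mathfrak{Pair}(X)$, by adapting the Bunke--Schick argument with careful attention to the Real structure. The geometric picture is that $\RR$ is the total space of an associated bundle $\RR \to B\tilde S^1$ with fiber $\tilde L\H^3_{\Z/2}$, obtained from the principal $\tilde S^1$-bundle $E\tilde S^1 \to B\tilde S^1$ via the rotation action of $\tilde S^1$ on the loop space; the projection $\RR \to B\tilde S^1$ records the Real circle bundle, while the fiber data encodes the equivariant cohomology class.

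The central step is to set up the following correspondence. A $\Z/2$-equivariant map $\phi\colon X \to \RR$ covering a classifying map $\phi_0\colon X \to B\tilde S^1$ for a Real circle bundle $E \to X$ is the same data as a $\tilde S^1$-equivariant, $\Z/2$-equivariant section of the pulled-back bundle, i.e.\ a $\tilde S^1$-equivariant map $\Gamma\colon E \to \tilde L\H^3_{\Z/2}$. By adjointness with the principal action $m\colon E \times \tilde S^1 \to E$, such a $\Gamma$ is equivalent to a map $\tilde h\colon E \to \H^3_{\Z/2}$ via $\tilde h(\eta) = \Gamma(\eta)[1]$, equivalently $\Gamma(\eta)[u] = \tilde h(\eta u)$. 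The crucial compatibility is that the prescribed $\Z/2$-action on loops, $(\tau\gamma)[u] = \tau(\gamma[u^{-1}])$, matches the Real circle bundle identity $\tau(\eta u) = \tau(\eta) u^{-1}$: substituting $\xi = \eta u^{-1}$ in $\tilde h(\tau\eta\cdot u) = \tau(\tilde h(\eta u^{-1}))$ reduces this to $\tilde h(\tau\xi) = \tau\tilde h(\xi)$, which is exactly $\Z/2$-equivariance of $\tilde h$. Combined with the representability $H^3_{\Z/2}(E) \cong [E, \H^3_{\Z/2}]_{\Z/2}$ reviewed in the construction of $\H^3_{\Z/2}$, this identifies $\Z/2$-homotopy classes of lifts of $\phi_0$ to $\RR$ with $H^3_{\Z/2}(E)$.

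For existence, given $(E,h)$, I would choose a classifying $\Z/2$-map $\phi_0\colon X \to B\tilde S^1$ for $E$ covered by an equivariant bundle map $\Phi_0\colon E \to E\tilde S^1$, and an equivariant representative $\tilde h\colon E \to \H^3_{\Z/2}$ for $h$. Then define
\[
\phi(x) = \bigl[\Phi_0(\eta),\ u \mapsto \tilde h(\eta u)\bigr] \in \RR
\]
for any $\eta \in \pi^{-1}(x)$. Replacing $\eta$ by $\eta\zeta$ multiplies $\Phi_0(\eta)$ by $\zeta$ and applies $R_\zeta$ to the loop, which is precisely the relation defining $\RR$, so $\phi$ is well-defined; $\Z/2$-equivariance follows from the identity verified above. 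Unwinding the definitions of $\E$ and $\mathbf{h}$ then shows $\phi^*(\E,\mathbf{h}) \cong (E,h)$.

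For uniqueness, suppose $\phi,\phi'\colon X \to \RR$ both classify the same pair up to isomorphism. Composing with $\RR \to B\tilde S^1$ yields two classifying maps for $E$; by uniqueness of the classifying map for Real circle bundles up to equivariant homotopy, they are joined by a homotopy $\tilde\Phi_0\colon X \times [0,1] \to B\tilde S^1$ covered by a homotopy of equivariant bundle maps. Restricting the loop data of $\phi$ and $\phi'$ and transporting along this homotopy yields two equivariant maps $E \to \H^3_{\Z/2}$ both representing $h$; representability of $H^3_{\Z/2}$ supplies an equivariant homotopy between them. Splicing produces the required equivariant homotopy $\phi \simeq \phi'$. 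The main obstacle is bookkeeping the three intertwined layers — the principal $\tilde S^1$-structure, the $\Z/2$-action on both $E\tilde S^1$ and $\tilde L\H^3_{\Z/2}$, and the representing-space equivalence — and in particular checking that the specific $\Z/2$-action chosen on loops is exactly the one forced by the Real bundle condition, as isolated in the adjointness calculation above.
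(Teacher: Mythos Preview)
Your proposal is correct and follows exactly the approach the paper intends: the paper gives no detailed proof here, stating only that ``a straight generalization of the argument in \cite{B-S}'' establishes the result, and what you have written is precisely that generalization, with the necessary verification that the chosen $\Z/2$-action on $\tilde L\H^3_{\Z/2}$ matches the Real bundle identity $\tau(\eta u)=\tau(\eta)u^{-1}$. Your adjointness calculation isolating this point is the one nontrivial check beyond the non-equivariant case, and you have handled it correctly.
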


Other results in \cite{B-S} can be generalized: For instance, $c = c_1^R(\E)$ and $\hat{c} = \pi_*{\bf h}$ constitute a basis of $H^2_\pm(\RR)$ such that $c \hat{c} = 0$. Moreover, using the basis, we get the following bases of $\mathbb{H}^*(\RR)$ and $\mathbb{H}^*(\E)$ respecting the ring structures:
$$
\begin{array}{|c|c|c|c|c|c|}
\hline
 & 
n = 0 & n = 1 & n = 2 & n = 3 & n = 4 \\
\hline
H^n_{\Z/2}(\RR) &
\Z & 0 & \Z_2 t & 
\Z_2 t^{\frac{1}{2}}c \oplus \Z_2 t^{\frac{1}{2}}\hat{c} & 
\Z c^2 \oplus \Z \hat{c}^2 \oplus \Z_2 t^2 \\
\hline
H^n(\RR) &
\Z & 0 & \Z f'(c) \oplus \Z f'(\hat{c}) & 0 & 
\Z f(c^2) \oplus \Z f(\hat{c}^2) \\
\hline
H^n_{\pm}(\RR) &
0 & \Z_2 t^{\frac{1}{2}} & \Z c \oplus \Z \hat{c} & \Z_2 t^{\frac{3}{2}} & 
\Z_2 t c \oplus \Z_2 t \hat{c} \\
\hline
\end{array}
$$
$$
\begin{array}{|c|c|c|c|c|c|}
\hline
 & 
n = 0 & n = 1 & n = 2 & n = 3 & n = 4 \\
\hline
H^n_{\Z/2}(\E) &
\Z & 0 & \Z_2 t & 
\Z {\bf h} \oplus \Z_2 t^{\frac{1}{2}} \pi^*(\hat{c}) &
\Z \pi^*(\hat{c})^2 \oplus \Z_2 t^2 \\
\hline
H^n(\E) &
\Z & 0 & \Z f'(\pi^*(c)) & \Z f({\bf h}) &
\Z f(\pi^*(\hat{c})^2) \\
\hline
H^n_{\pm}(\E) &
0 & \Z_2 t^{\frac{1}{2}} & \Z \pi^*(\hat{c}) & \Z_2 t^{\frac{3}{2}} &
\Z_2 \pi^*(t\hat{c}) \oplus \Z_2 t^{\frac{1}{2}}{\bf h} \\
\hline
\end{array}
$$

Though will not be detailed anymore, the further generalization of the argument in \cite{B-S} provides us another approach to Theorem \ref{thm:main_pair}.


\subsection{A possible topological T-duality}

To close, we discuss a possible T-duality from a purely mathematical viewpoint: In our main theorem, a Real circle bundle appears as a T-dual of another Real circle bundle. There would exist a topological T-duality such that an equivariant circle bundle appears as a T-dual of another equivariant circle bundle and equivariant cohomology is involved only. (Our proof would be directly generalized to, at least, the $\Z/2$-equivariant case.) In these dualities, the `Real' world and the `equivariant' world are parallel. A question is whether a duality mixing them is possible or not. A trial to keep track with the consistency with the Gysin sequences leads us to:

\begin{conj} \label{conj:T_dual_pair}
For any $\Z/2$-space $X$, the following would hold true.
\begin{itemize}
\item[(a)]
For a pair $(E, h)$ consisting of a $\Z/2$-equivariant circle bundle $\pi : E \to X$ and $h \in H^3_\pm(E)$, there exists a pair $(\hat{E}, \hat{h})$ consisting of a Real circle bundle $\hat{\pi} : \hat{E} \to X$ and $\hat{h} \in H^3_\pm(\hat{E})$ such that
\begin{align*}
c_1^R(\hat{E}) &= \pi_*h, &
c_1^{\Z/2}(E) &= \hat{\pi}_*\hat{h}, &
c_1^{\Z/2}(E) \cup c_1^R(\hat{E}) &= 0, &
p^*h &= \hat{p}^*\hat{h},
\end{align*}
where $c_1^{\Z/2}(E) \in H^2_{\Z/2}(X)$ is the equivariant Chern class of $E$.

\item[(b)]
Conversely, for a pair $(\hat{E}, \hat{h})$ consisting of a Real circle bundle $\hat{\pi} : \hat{E} \to X$ and $\hat{h} \in H^3_\pm(\hat{E})$, there exists a pair $(E, h)$ consisting of a $\Z/2$-equivariant circle bundle $\pi : E \to X$ and $h \in H^3_\pm(E)$ satisfying the relations in (a).

\item[(c)]
The assignments $(E, h) \mapsto (\hat{E}, \hat{h})$ in (a) and $(\hat{E}, \hat{h}) \mapsto (E, h)$ in (b) induce maps on the isomorphism classes of pairs, inverse to each other.
\end{itemize}
\end{conj}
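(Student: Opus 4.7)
The plan is to adapt the proof of Theorem \ref{thm:main_pair} to the mixed setting, with $E$ equivariant and $\hat E$ Real (and conversely by symmetry). Starting from $(E, h)$ with $h \in H^3_\pm(E)$, the first move is to define $\hat E$ as the Real circle bundle classified by $c_1^R(\hat E) := \pi_* h \in H^2_\pm(X)$; this is unique up to isomorphism by Proposition \ref{prop:classify_Real_circle_bundle}. The required relation $c_1^{\Z/2}(E) \cup c_1^R(\hat E) = 0$ is then automatic from the projection formula: $c_1^{\Z/2}(E) \cup \pi_* h = \pi_*(\pi^* c_1^{\Z/2}(E) \cup h) = 0$, since $\pi^* c_1^{\Z/2}(E) = 0$ in $H^2_{\Z/2}(E)$.

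The core construction is a Thom-like class paralleling Lemma \ref{lem:thom_class_for_given_pair}. Let $L_E$ be the equivariant complex line bundle of $E$, $R_{\hat E}$ the Real line bundle of $\hat E$, and $V = L_E \oplus R_{\hat E}$ the corresponding rank-$4$ $\Z/2$-equivariant real vector bundle. Since $\Z/2$ acts $\C$-linearly on $L_E$ one has $\det_\R L_E = \underline{\R}_0$, which combined with Lemma \ref{lem:determinant} yields $\det_\R V = \underline{\R}_1$. The sphere bundle $\pi_S : S(V) \to X$ then carries the $\det$-twist $\underline{\R}_1$, so its Gysin sequence swaps $H_{\Z/2}$ with $H_\pm$, and its Euler class $\chi_R(V) = c_1^{\Z/2}(E) \cup c_1^R(\hat E)$ vanishes by the previous step. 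The relevant segment
\begin{equation*}
H^3_\pm(X) \xrightarrow{\pi_S^*} H^3_\pm(S(V)) \xrightarrow{(\pi_S)_*} H^0_{\Z/2}(X) \xrightarrow{\chi_R(V)\cup} H^4_\pm(X)
\end{equation*}
produces $\Th \in H^3_\pm(S(V))$ with $(\pi_S)_* \Th = -1$. Via the equivariant inclusions $\iota : E = S(L_E) \hookrightarrow S(V)$ and $\hat\iota : \hat E = S(R_{\hat E}) \hookrightarrow S(V)$ (with normal bundles $\pi^* R_{\hat E}$ and $\hat\pi^* L_E$), an analysis of the push-forward $(\pi_S)_* \circ \iota_*$ via the projection formula $\iota_*(\iota^* \Th) = \Th \cup \iota_*(1)$ should give $\pi_*(\iota^* \Th) = c_1^R(\hat E)$ and symmetrically $\hat\pi_*(\hat\iota^* \Th) = c_1^{\Z/2}(E)$. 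Since $\pi_*(\iota^* \Th - h) = 0$, exactness of the Gysin sequence for $\pi$ writes $\iota^* \Th - h = \pi^* \eta$ for some $\eta \in H^3_\pm(X)$, and replacing $\Th$ by $\Th - \pi_S^* \eta$ achieves $\iota^* \Th = h$ exactly while keeping $(\pi_S)_* \Th = -1$. Setting $\hat h := \hat\iota^* \Th$, the identity $p^* h = \hat p^* \hat h$ follows verbatim from the equivariant homotopy $\iota \circ p \simeq \hat\iota \circ \hat p$ of Lemma \ref{lem:homotopy}. Part (b) follows by symmetry. Part (c) reduces to an analogue of Lemma \ref{lem:key_to_uniqueness} and Proposition \ref{prop:uniqueness}, built from the Leray-Serre spectral sequence of $\hat E \hookrightarrow E\Z_2 \times_{\Z_2} \hat E \to B\Z_2$ with appropriate twisted coefficients.

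The main obstacle will be rigorously establishing the push-forward identities $\pi_*(\iota^*\Th) = c_1^R(\hat E)$ and $\hat\pi_*(\hat\iota^*\Th) = c_1^{\Z/2}(E)$. In the Real-Real setting these follow cleanly from the Leray-Hirsch theorem for $H^*(P(V))$ as a free $H^*(X)$-module together with the explicit identity $\varpi_*\Th = -\tilde c + \pi_P^*(c_1^R(E) + c_1^R(\hat E))$. In the present mixed case, no scalar $U(1)$-action on $V$ commutes with $\Z/2$ (it is $\C$-linear on $L_E$ but antilinear on $R_{\hat E}$), so the complex projective bundle $P(V)$ carries no natural $\Z/2$-structure, and the projective-bundle theorem is unavailable. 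A natural workaround is to analyze $S(V)$ directly via the equivariant Mayer-Vietoris decomposition $S(V) = (S(L_E) \times_X D(R_{\hat E})) \cup (D(L_E) \times_X S(R_{\hat E}))$, whose two open pieces equivariantly deformation-retract onto $E$ and $\hat E$ and whose intersection is $E \times_X \hat E$. Matching the Mayer-Vietoris connecting map with the Gysin push-forwards for $\pi$ and $\hat\pi$ so as to pin down the normalization of $\Th$ is the technical heart of the argument.
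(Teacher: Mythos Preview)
The statement you are attempting to prove is labeled \texttt{conj} in the paper, not \texttt{thm}: it is Conjecture~\ref{conj:T_dual_pair}, and the paper does not prove it. The surrounding text is explicit about this: the author writes that ``a generalization of our methods would have a potential to prove the conjectures, which should be the theme of a future work,'' and offers only the free-action case (via Baraglia's framework) and the trivial-bundle case as supporting evidence. There is therefore no proof in the paper to compare your attempt against.

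That said, your sketch is a sensible first pass at what such a generalization would look like, and you correctly isolate the genuine obstruction: in the Real--Real setting of Theorem~\ref{thm:main_pair} the sphere bundle $S(R_E\oplus R_{\hat E})$ fibers over a projective bundle $P(R_E\oplus R_{\hat E})$ via a Real circle bundle $\varpi$, and Leray--Hirsch for $P$ gives the clean formula $\varpi_*\Th = -\tilde c + \pi_P^*(c_1^R(E)+c_1^R(\hat E))$ from which the push-forward identities $\pi_*(\iota^*\Th)=c_1^R(\hat E)$ and $\hat\pi_*(\hat\iota^*\Th)=c_1^R(E)$ follow by pulling back along the sections $r,\hat r$. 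In the mixed case $V=L_E\oplus R_{\hat E}$ there is no compatible $U(1)$-quotient, so this route is closed. Your proposed Mayer--Vietoris decomposition of $S(V)$ into neighborhoods of $E$ and $\hat E$ is a natural substitute, but you would still need to show that the class $\Th$ you produce from $(\pi_S)_*\Th=-1$ can be normalized so that both $\pi_*(\iota^*\Th)=c_1^R(\hat E)$ and $\hat\pi_*(\hat\iota^*\Th)=c_1^{\Z/2}(E)$ hold simultaneously; the ambiguity in $\Th$ is by $\pi_S^*\eta$ for $\eta\in H^3_\pm(X)$, and it is not obvious that a single such correction handles both conditions. Your treatment of part~(c) is also thin: the analogue of Lemma~\ref{lem:key_to_uniqueness} in the mixed setting requires controlling the $E_2$-terms of three different spectral sequences with coefficients alternating between $\Z(0)$ and $\Z(1)$, and the differentials need not behave identically to the Real--Real case. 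These are precisely the points the paper leaves open.
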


We call $(E, h)$ and $(\hat{E}, \hat{h})$ as above \textit{T-dual pairs}.

\medskip

In view of the conjecture above, $K$-theories are to be twisted by a class in $H^3_\pm$. An example of a $K$-theory admitting such a twist is $\KR$-theory. Actually, in a recent work of Moutuou \cite{Mou}, the notion of \textit{Rg (Real graded) $S^1$-central extension}, or equivalently \textit{Rg bundle gerbes} is developed as a twisting of $\KR$-theory. A particular type of these geometric objects on a space $X$ with $\Z/2$-action is classified by $H^3_\pm(X) \cong H^3_{\Z/2}(X; \Z(1))$. Note that an Rg bundle gerbe on a $\Z/2$-space is essentially equivalent to a bundle gerbe with \textit{Jandl structure} \cite{SSW}, whose role as a twist is studied in \cite{GSW}. Twisted $\KR$-theory is also discussed in \cite{B-Sj,DFM,DMR}.

Now, writing $\KR^{h+n}(X)$ for the $\KR$-theory twisted by $h \in H^3_\pm(X)$, we propose:

\begin{conj} \label{conj:T_transformation}
If $(E, h)$ and $(\hat{E}, \hat{h})$ are T-dual pairs, then we would have:
$$
\KR^{h + n}(E) \cong \KR^{\hat{h} + n - 1}(\hat{E}).
$$
\end{conj}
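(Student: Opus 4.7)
The plan is to emulate the proof of Theorem \ref{thm:main_T_transformation} by constructing a T-transformation $T : \KR^{h+n}(E) \to \KR^{\hat{h}+n-1}(\hat{E})$ and verifying bijectivity by reduction to a base case. First, I would establish an $H^3_\pm$-valued analogue of Lemma \ref{lem:thom_class_for_given_pair}: given T-dual pairs $(E,h)$ and $(\hat{E},\hat{h})$ as in Conjecture \ref{conj:T_dual_pair}, produce a cohomology class $\Th \in H^3_\pm(S)$ on the sphere bundle $S = S(L_E \oplus R_{\hat{E}})$ of the rank-$2$ real bundle $L_E \oplus R_{\hat{E}}$ (where $L_E$ is the complex line bundle of the equivariant circle bundle $E$ and $R_{\hat{E}}$ is the Real line bundle of $\hat{E}$) such that $\varpi_*\Th$ equals the universal combination $-\tilde{c} + \pi_P^*(c_1^{\Z/2}(E) + c_1^R(\hat{E}))$, and whose restrictions along $i$ and $\hat{i}$ recover $h$ and $\hat{h}$. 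Lifting $\Th$ to an Rg-twist $\tau_S$ in the sense of \cite{Mou}, the T-transformation is then the composition
\begin{align*}
\KR^{i^*\tau_S + n}(E) \xrightarrow{p^*} \KR^{p^*i^*\tau_S + n}(E\times_X\hat{E}) \xrightarrow{u(\tilde{h})^*} \KR^{\hat{p}^*\hat{i}^*\tau_S + n}(E\times_X\hat{E}) \xrightarrow{\hat{p}_*} \KR^{\hat{i}^*\tau_S + n-1}(\hat{E}),
\end{align*}
where the middle isomorphism is induced by the equivariant homotopy between $i\circ p$ and $\hat{i}\circ\hat{p}$ (Lemma \ref{lem:homotopy}), and $\hat{p}_*$ is the push-forward along $\hat{p} : E\times_X\hat{E} \to \hat{E}$, which is naturally a $\Z/2$-equivariant circle bundle on $\hat{E}$ since it is the pullback of the equivariant bundle $E$.

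Next, as in the proof of Theorem \ref{thm:main_T_transformation}, I would verify bijectivity by induction on a finite $\Z/2$-CW filtration of $X$ and the five lemma applied to the maps of exact sequences for pairs. Over each $\Z/2$-cell $\tilde{e}_k \simeq (\Z_2/H) \times e^{d_k}$ both bundles trivialize and the statement reduces to the base cases $X = \pt$ and $X = \Z_2$. In the base case one would first compute the ring $\KR^*(\tilde{S}^1\times S^1)$ (with the \emph{mixed} equivariant-plus-Real circle structure coming from the product of an equivariant $S^1$ and the Real $\tilde{S}^1$) in the spirit of Proposition \ref{prop:ring_structure_KK_torus}, and then verify that the Fourier-like transform defined by multiplication with a suitable Poincaré-gerbe class on $\tilde{S}^1\times S^1$ followed by $\hat{p}_*$ is an isomorphism, paralleling Corollary \ref{cor:fourier_transform}.

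The principal obstacle lies not with this inductive scheme, which is essentially formal once the pieces are in place, but with the foundational twisted-$\KR$ machinery needed to even state the push-forward $\hat{p}_*$ correctly. Specifically, one must develop: (i) a Thom isomorphism in $\KR$-theory twisted by $H^3_\pm$-classes for $\Z/2$-equivariant (as opposed to Real) complex line bundles, identifying the correct twist shift, presumably involving $w_1^{\Z/2}(\underline{\R}_1)$ and $W_3^{\Z/2}$ of the underlying real bundle; (ii) the resulting Gysin sequence and push-forward for equivariant circle bundles in twisted $\KR$-theory; and (iii) compatibility with the Rg bundle gerbes of \cite{Mou} used to realize classes in $H^3_\pm$. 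The asymmetry between the equivariant nature of $E$ and the Real nature of $\hat{E}$ makes this step genuinely subtle: the push-forwards along $p$ (a Real $S^1$-bundle) and $\hat{p}$ (an equivariant $S^1$-bundle) operate in different \emph{sectors} of twisted $\KR$, and matching them to obtain a shift of exactly $-1$ in total degree is where careful bookkeeping is required. A secondary but related difficulty is proving Conjecture \ref{conj:T_dual_pair} itself, which seems tractable by adapting the projective-bundle construction of Subsection \ref{subsec:notion_of_pairs} using the mixed Gysin sequence for $L_E \oplus R_{\hat{E}}$, but also depends on having a satisfactory $H^3_\pm$ version of Lemma \ref{lem:key_to_uniqueness}. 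Once these foundations are settled, the rest of the argument should follow by formal analogy with Section \ref{sec:topological_T_duality}.
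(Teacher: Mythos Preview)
The statement you are addressing is labeled \textbf{Conjecture} in the paper, and the paper does \emph{not} prove it. After stating Conjectures \ref{conj:T_dual_pair} and \ref{conj:T_transformation}, the paper only records two pieces of evidence (the free-action case via Baraglia's work, and the case of trivial bundles $(E,0)$ and $(\hat{E},0)$ via the known splittings of $\KR^*(X\times S^1)$ and $\KR^*(X\times\tilde{S}^1)$), and then explicitly says that ``a generalization of our methods would have a potential to prove the conjectures, which should be the theme of a future work.'' So there is no proof in the paper to compare against.

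That said, your outline is exactly the kind of generalization the paper is gesturing at: you mirror the construction of $\Th$ on a sphere bundle, the homotopy $i\circ p\sim \hat{i}\circ\hat{p}$, the definition of $T$ as pull--twist--push, and the cell-by-cell induction with the five lemma. You also correctly identify where the real work lies. The paper's method depends on three ingredients that are in place for $K_{\Z/2}$ and $K_\pm$ but are \emph{not} established in the paper for $\KR$ twisted by $H^3_\pm$-classes: a Thom isomorphism and push-forward for the relevant bundles (your point (i)--(ii)), a functorial realization of $H^3_\pm$-classes as twists compatible with pullback and homotopy (your point (iii), cf.\ the references to \cite{Mou,GSW,DMR}), and the base-case computation analogous to Corollary \ref{cor:fourier_transform}. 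None of these are supplied here, which is precisely why the statement remains a conjecture. Your proposal is therefore a reasonable programme rather than a proof, and it matches the paper's own assessment of what would be needed.
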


If $\Z/2$ acts on $X$ freely, then Conjecture \ref{conj:T_dual_pair} holds true, since the pairs of our interest give rise to \textit{T-duality backgrounds} formulated in a paper of Baraglia \cite{Bar1}. Note that, in this case, Conjecture \ref{conj:T_transformation} is compatible with the conjecture for T-dual backgrounds stated in the paper.

For any space $X$ with $\Z/2$-action, we have the trivial equivariant circle bundle $E = X \times S^1$, and the trivial Real circle bundle $\hat{E} = X \times \tilde{S}^1$. Here $S^1$ is the circle with the trivial $\Z/2$-action, and $\tilde{S}^1$ is the circle $S^1 \subset \C$ with the $\Z/2$-action $u \mapsto \bar{u} = u^{-1}$. Clearly, $(E, 0)$ and $(\hat{E}, 0)$ are T-dual. For these pairs, Conjecture \ref{conj:T_transformation} holds true, since we have (cf.\ \cite{Hori}):
\begin{align*}
\KR^n(E) &\cong \KR^n(X) \oplus \KR^{n-1}(X), &
\KR^n(\hat{E}) &\cong \KR^n(X) \oplus \KR^{n+1}(X).
\end{align*}

A generalization of our methods would have a potential to prove the conjectures, which should be the theme of a future work.


\appendix

\section{Classification of Real circle bundles}
\label{sec:appendix}

We here give a proof Proposition \ref{prop:classify_Real_circle_bundle} supplying some details to Kahn's original proof \cite{K}. The way to the classification is parallel to that of equivariant line bundles given in \cite{Bry}. Since Real line bundles and Real circle bundles are essentially the same, we are to classify Real circle bundles.


\subsection{Simplicial space}

A \textit{simplicial space} $X_\bullet$ is a sequence of spaces $\{ X_n \}_{n = 0, 1, \ldots}$ equipped with the `face maps' $\partial_i : X_n \to X_{n-1}$, ($i = 0, \ldots, n)$ and the `degeneracy maps' $s_i : X_{n-1} \to X_n$, ($i = 0, \ldots, n)$ obeying the `simplicial relations':
\begin{align*}
\partial_i \partial_j &= \partial_{j-1}\partial_i \ \ (i < j), &
\partial_is_j &= s_{j-1}\partial_i \ \ (i < j), &
\partial_js_j &= 1 = \partial_{j+1}s_j, & & \\
\partial_is_j &= s_j\partial_{i-1} \ \ (i > j+1), &
s_is_j &= s_{j+1}s_i \ \ (i \le j).
\end{align*}

The simplicial space $\Z_2^\bullet \times X$ we are to concern below is one associated to a space $X$ with $\Z/2$-action. This is defined as a sequence of spaces $\{ (\Z_2)^n \times X \}_{n = 0, 1, \ldots}$. The degeneracy and the face maps are defined by
\begin{align*}
\partial_i(g_1, \ldots, g_n, x)
&= 
\left\{
\begin{array}{ll}
(g_2, \ldots, g_n, x), & (i = 0) \\
(g_1, \ldots, g_ig_{i+1}, \ldots, g_n, x), & (i = 1, \ldots, n-1) \\
(g_1, \ldots, g_{n-1}, g_nx), & (i = n)
\end{array}
\right. \\
s_i(g_1, \ldots, g_{n}, x)
&= (g_1, \ldots, g_{i-1}, 1, g_{i}, \ldots, g_{n}, x),
\end{align*}
where we write $gx \in X$ for the action of $g \in \Z_2 = \{ \pm 1 \}$ on $x \in X$.


\subsection{Sheaf on simplicial space}

A sheaf $\mathcal{S}$ (of abelian groups) on a simplicial space $X_\bullet$ consists of a sequence of sheaves $\mathcal{S}_n$ on $X_n$, ($n = 0, 1, 2, \ldots$) together with homomorphisms $\delta_i : \partial_i^{-1} \mathcal{S}_{n-1} \to \mathcal{S}_n$, ($i = 0, \ldots, n$) and $\sigma_i : s_i^{-1} \mathcal{S}_{n+1} \to \mathcal{S}_{n}$ compatible with the simplicial relations. 

For example, an abelian group $A$ defines the sheaf of $A$-valued functions $\underline{A} = \underline{A}_{X_n}$ on each $X_n$. The inverse image sheaves $\partial_i^{-1}\underline{A}_{X_{n-1}}$ and $s_i^{-1}\underline{A}_{X_{n+1}}$ are isomorphic to $\underline{A}_{X_n}$ in a canonical way. If we set $\delta_i$ and $\sigma_i$ to be the identity map under the identifications, then $\underline{A}_{\bullet} = \{ \underline{A}_{X_n} \}$ is a sheaf on $X_\bullet$. In the similar way, we get the constant sheaf $A_\bullet$ by considering locally constant functions.

\medskip

In the case of the simplicial space $\Z_2^\bullet \times X$ associated to an action of $\Z/2$ on $X$, we can `twist' the sheaf $\underline{A}_\bullet$ by means of an example of a \textit{twisting function} \cite{May2}. For $n \ge 1$, we define a map $\epsilon_n : \Z_2^n \times X \to \Z_2$ by
$$
\epsilon_n(g_1, \ldots, g_n, x) = g_1.
$$
These maps satisfy the relations
\begin{align*}
\epsilon_n \cdot \partial_0^*\epsilon_{n-1} &= \partial_1^* \epsilon_{n-1}, &
\partial^*_i \epsilon_{n-1} &= \epsilon_n, \quad (i \ge 2) \\
s_0^*\epsilon_n &= 1, &
s_i^* \epsilon_n &= \epsilon_{n-1}. \quad (i \ge 1)
\end{align*}
Now, we put $\underline{\tilde{A}}_{\Z_2^n \times X} = \underline{A}_{\Z_2^n \times X}$. Regarding $-1 \in \Z_2 = \{ \pm 1\}$ as the automorphism $-1 : A \to A$ of taking the additive inverse, we define $\tilde{\delta}_i : \partial_i^{-1}\underline{\tilde{A}}_{\Z_2^{n-1} \times X} \to \underline{\tilde{A}}_{\Z_2^n \times X}$ and $\tilde{\sigma}_i : \partial_i^{-1}\underline{\tilde{A}}_{\Z_2^{n+1} \times X} \to \underline{\tilde{A}}_{\Z_2^n \times X}$ to be
\begin{align*}
\tilde{\delta}_i
&= 
\left\{
\begin{array}{lc}
\epsilon_n \circ \delta_i, & (i = 0) \\
\delta_i. & (i \neq 0)
\end{array}
\right. &
\tilde{\sigma}_i
&= 
\left\{
\begin{array}{lc}
\epsilon_n \circ \sigma_i, & (i = 0) \\
\sigma_i. & (i \neq 0)
\end{array}
\right.
\end{align*} 
Explicitly, under the identification $\partial_i^{-1}\underline{\tilde{A}}_{\Z_2^{n-1} \times X} \cong \underline{\tilde{A}}_{\Z_2^n \times X}$, we have
$$
(\tilde{\delta}_1 f)(g_1, \ldots, g_n, x)
= g_1 \cdot f(g_1, \ldots, g_n, x)
$$
for any function $f \in \underline{\tilde{A}}_{\Z_2^n \times X}(U)$ on an open set $U \subset \Z_2^n \times X$. The homomorphisms $\tilde{\delta}_i$ and $\tilde{\sigma}_i$ are compatible with the simplicial relations, and hence we get a new sheaf $\underline{\tilde{A}}_{\bullet}$ on $\Z_2^\bullet \times X$. The same construction applies to $A_\bullet$, yielding $\tilde{A}_\bullet$.


\subsection{Sheaf Cohomology}

To a sheaf $\mathcal{S}_\bullet$ on a simplicial space $X_\bullet$, we can associate the cohomology of $X_\bullet$ with coefficients in $\mathcal{S}_\bullet$. A formulation uses an injective resolution of $\mathcal{S}_\bullet$, but we adapt the \v{C}ech cohomology construction. 

An open cover $\U^\bullet$ of a simplicial space $X_\bullet$ consists of a sequence of open covers $\U^{(n)} = \{ U^{(n)}_\alpha \}_{\alpha \in \mathfrak{A}^{(n)}}$ of $X_n$ such that: (i) the index sets $\mathfrak{A}^{(n)}$ form a simplicial set; and (ii) the following relations of inclusions hold true:
\begin{align*}
\partial_i(U^{(n+1)}_\alpha) &\subset U^{(n)}_{\partial_i(\alpha)}, &
s_i(U^{(n-1)}_\alpha) &\subset U^{(n)}_{s_i(\alpha)}.
\end{align*}

Given an open cover $\U^\bullet$ and a sheaf $\mathcal{S}_\bullet$, we have a double complex:
$$
K^{i, j} = C^j(\U^{(i)}, \mathcal{S}_i)
= \prod_{\alpha_1, \ldots, \alpha_j \in \mathfrak{A}^{(i)}}
\Gamma(U^{(i)}_{\alpha_1} \cap \cdots \cap U^{(i)}_{\alpha_j}, 
\mathcal{S}_i).
$$
A differential on $K^{*, *}$ is the usual \v{C}ech coboundary $\delta : K^{i, j} \to K^{i, j+1}$. The other differential $\partial : K^{i, j} \to K^{i+1, j}$ is given by $\partial = \sum_{k = 0}^{i+1}(-1)^k \partial_k^*$, where $\partial_k^*$ is the abbreviation of the composition of the natural map $\mathcal{S}_i \to \partial_k^{-1}\mathcal{S}_i$ and $\tilde{\partial}_k : \partial_k^{-1} \mathcal{S}_i \to \mathcal{S}_{i+1}$. These two differential combine to give a differential $D$ on the total complex of $C^n(\U^\bullet; \mathcal{S}_\bullet) = \oplus_{i + j = n} K^{i, j}$. The convention of this paper is that $D$ acts on the component $K^{i, j}$ by $D = \partial + (-1)^i \delta$. We denote the cohomology of the complex above as $H^*(\U^\bullet; \mathcal{S}_\bullet)$. 

Now, the cohomology of $X_\bullet$ with coefficients in $\mathcal{S}_\bullet$ is defined by
$$
H^n(X_\bullet; \mathcal{S}_\bullet)
= \varinjlim H^n(\U^\bullet; \mathcal{S}_\bullet),
$$
where, as usual, the limit is taken with respect to the direct system formed by open covers $\U^\bullet$ of $X_\bullet$ and their refinements.

\medskip

Now, by modifying an argument in \cite{K}, we can find a relation between a sheaf cohomology on $\Z_2^\bullet \times X$ and one on $E\Z_2 \times_{\Z_2} X$ as follows: A \textit{$\Z/2$-equivariant sheaf} $\mathcal{S}$ on $X$ can be defined as a sheaf $\mathcal{S}$ on $X$ equipped with a lift $\tau^{-1}\mathcal{S} \to \mathcal{S}$ of the $\Z/2$-action \cite{Gro}. To such an equivariant sheaf, we can associate a sheaf $\mathcal{S}_\bullet$ on $\Z_2^\bullet \times X$. Then, since $E\Z_2$ is contractible, the second projection $\pi : E\Z_2 \times X \to X$ induces an isomorphism
$$
H^*(\Z_2^\bullet \times X; \mathcal{S}_\bullet) \cong 
H^*(\Z_2^\bullet \times (E\Z_2 \times X); \pi^*\mathcal{S}_\bullet).
$$
Further, since $\Z/2$ acts on $E\Z_2 \times X$ freely, we get an isomorphism
$$
H^*(\Z_2^\bullet \times (E\Z_2 \times X); \pi^*\mathcal{S}_\bullet)
\cong
H^*(E\Z_2 \times_{\Z_2} X; \mathcal{S}^*).
$$
In the above, the sheaf $\mathcal{S}^*$ is associated to $U \mapsto \mathcal{S}^*(U) = \pi^*\mathcal{S}(V)$ for an open set $U \subset E\Z_2 \times_{\Z_2} X$, where $V \subset X$ is an open set such that $p^{-1}(U) = \bigsqcup_{g \in \Z_2}g(V)$ and $p : E\Z_2 \times X \to E\Z_2 \times_{\Z_2} X$ is the projection. 

As a particular case, we consider the equivariant sheaf $\Z(n)$ on $X$, where the lift $\tau^{-1}\Z(n) \to \Z(n)$ is to multiply $(-1)^n$. Upon the identification of sheaf cohomology and singular cohomology on $E\Z_2 \times_{\Z_2} X$, we eventually get the natural isomorphisms
\begin{align*}
H^*(\Z_2^\bullet \times X; \Z_\bullet) &\cong H^*_{\Z/2}(X; \Z(0)), &
H^*(\Z_2^\bullet \times X; \tilde{\Z}_\bullet) & \cong H^*_{\Z/2}(X; \Z(1)).
\end{align*}


\subsection{Vanishing}

\begin{lem} \label{lem:vanising}
For any integer $n \ge 1$, the following holds true:
\begin{itemize}
\item[(a)]
$H^n(\Z_2^\bullet \times X; \underline{\tilde{\R}}_\bullet) = 0$.

\item[(b)]
$H^n(\Z_2^\bullet \times X; \underline{\tilde{S}}^1_\bullet) \cong H^{n+1}(\Z_2^\bullet \times X; \tilde{\Z}_\bullet)$.
\end{itemize}
\end{lem}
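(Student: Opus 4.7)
The plan is to deduce (b) from (a) via an equivariant exponential sequence, and to establish (a) by degenerating a double-complex spectral sequence and then exhibiting an explicit contracting homotopy on the resulting cochain complex.

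For part (a), I would first invoke the spectral sequence
$$E_1^{p,q} = H^q(\Z_2^p \times X;\, \underline{\tilde{\R}}_{\Z_2^p \times X}) \Longrightarrow H^{p+q}(\Z_2^\bullet \times X;\, \underline{\tilde{\R}}_\bullet).$$
Because the twisting in $\underline{\tilde{\R}}_\bullet$ resides only in the face maps $\tilde{\delta}_i$, the underlying sheaf on each $\Z_2^p \times X$ is the ordinary sheaf of continuous $\R$-valued functions, which is fine by paracompactness of $X$; hence $E_1^{p,q} = 0$ for $q \geq 1$. The desired cohomology therefore reduces to that of the complex $C^p = \Gamma(\Z_2^p \times X;\, \R)$ with differential
$$(\partial f)(g_1,\ldots,g_{p+1},x) = g_1\, f(g_2,\ldots,g_{p+1},x) + \sum_{i=1}^{p}(-1)^i f(g_1,\ldots,g_i g_{i+1},\ldots,g_{p+1},x) + (-1)^{p+1} f(g_1,\ldots,g_p, g_{p+1}x).$$
I would then introduce $h : C^{p+1} \to C^p$ by
$$(hf)(g_1,\ldots,g_p,x) = \tfrac{1}{2}\bigl[f(1,g_1,\ldots,g_p,x) - f(-1,g_1,\ldots,g_p,x)\bigr],$$
and verify $\partial h + h\partial = \mathrm{id}$ on $C^p$ for $p \geq 1$: the internal face-map contributions from the two halves cancel pairwise, and the remaining leading term reduces, in either case $g_1 = \pm 1$, to $f(g_1,\ldots,g_p,x)$.

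For part (b), I would use the $\Z/2$-equivariant exponential short exact sequence of sheaves on $X$,
$$0 \to \tilde{\Z} \to \underline{\tilde{\R}} \to \underline{\tilde{S}}^1 \to 0,$$
induced by $\R \to S^1$, $x \mapsto e^{2\pi i x}$; this map is equivariant since the sign involution on $\R$ corresponds to complex conjugation $u \mapsto \bar{u}$ on $\tilde{S}^1$. Passing to sheaves on the simplicial space $\Z_2^\bullet \times X$, the long exact sequence of sheaf cohomology combined with the vanishing from (a) immediately yields the claimed isomorphism for $n \geq 1$.

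The main technical obstacle is the sign bookkeeping for the contracting homotopy in (a): the naive section $(hf) = f(1,\cdots)$ yields $(\partial h + h\partial) f = g_1\, f(1, g_2,\ldots)$ instead of $f(g_1, g_2, \ldots)$, failing by precisely the twist. The antisymmetrized average $\tfrac{1}{2}[f(1,\cdots) - f(-1,\cdots)]$ is dictated by this twist: the factor $g_1$ in $\tilde{\delta}_0$ toggles the two evaluations and, combined with the unique $2$-divisibility of $\R$, reconstructs $f(g_1,g_2,\ldots)$ on the nose.
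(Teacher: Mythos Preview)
Your proof is correct and follows essentially the same approach as the paper: both collapse a spectral sequence using the fineness of $\underline{\R}$ to reduce (a) to the twisted group-cochain complex, then kill the cohomology with the identical formula $\tfrac{1}{2}[f(1,\ldots)-f(-1,\ldots)]$, and derive (b) from the exponential sequence. The only cosmetic difference is that you package the averaging as a full contracting homotopy $\partial h + h\partial = \mathrm{id}$, whereas the paper verifies $\partial\tilde f = f$ only for cocycles.
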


\begin{proof}
For (a), a filtration of the double complex defining $H^*(\Z_2^\bullet \times X; \underline{\tilde{\R}}_\bullet)$ induces a spectral sequence
$$
E_2^{p, q}
= H^p_{\mathrm{group}}(\Z_2; H^q(X; \underline{\tilde{\R}}))
\Longrightarrow 
H^*(\Z_2^\bullet \times X; \underline{\tilde{\R}}_\bullet),
$$
Since $H^q(X; \underline{\R}) = 0$ for $q \ge 1$, the spectral sequence degenerates at $E_2$, and we get
$$
H^p_{\mathrm{group}}(\Z_2; \Map(X, \tilde{\R}))
\cong H^p(\Z_2^\bullet \times X; \underline{\tilde{\R}}_\bullet).
$$
For $n \ge 1$ and $f \in C^n_{\mathrm{group}}(\Z_2; \Map(X, \tilde{\R}))$ we define $\tilde{f} \in C^{n-1}_{\mathrm{group}}(\Z_2; \Map(X, \tilde{\R}))$ by
$$
\tilde{f}(g_1, \ldots, g_{n-1})
= \frac{1}{2}\{ f(1, g_1, \ldots, g_{n-1}) - f(-1, g_1, \ldots, g_{n-1}) \}.
$$
If $f$ is a group cocycle, i.e.\ $\partial f = 0$, then $\partial \tilde{f} = f$, which completes the proof of (a). Now, the long exact sequence associated to the short exact sequence of sheaves
$$
0 \longrightarrow 
\tilde{\Z}_\bullet \longrightarrow 
\underline{\tilde{\R}}_\bullet \overset{\exp 2\pi i(\cdot)}{\longrightarrow} 
\underline{\tilde{S}}^1_\bullet \longrightarrow 
0
$$
leads to the natural isomorphism in (b).
\end{proof}

\medskip

As a simple application of the lemma above, we here prove:

\begin{prop} \label{prop:classifying_space_H1pm}
The homomorphism $a : [X, \tilde{S}^1]_{\Z/2} \to H^1_\pm(X)$ introduced in Subsection \ref{subsec:notion_of_pairs} is bijective.
\end{prop}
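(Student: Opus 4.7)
The plan is to extract the bijection from sheaf cohomology on the simplicial space $\Z_2^\bullet \times X$, following the Kahn-style setup of the appendix. First, I would combine Proposition \ref{prop:interpretation} with the natural isomorphism $H^*_{\Z/2}(X; \Z(1)) \cong H^*(\Z_2^\bullet \times X; \tilde{\Z}_\bullet)$ recalled before Lemma \ref{lem:vanising} to get $H^1_\pm(X) \cong H^1(\Z_2^\bullet \times X; \tilde{\Z}_\bullet)$. The exponential short exact sequence
$$0 \longrightarrow \tilde{\Z}_\bullet \longrightarrow \underline{\tilde{\R}}_\bullet \overset{\exp 2\pi i(\cdot)}{\longrightarrow} \underline{\tilde{S}}^1_\bullet \longrightarrow 0,$$
together with the vanishing $H^1(\Z_2^\bullet \times X; \underline{\tilde{\R}}_\bullet) = 0$ of Lemma \ref{lem:vanising}(a), yields a right-exact sequence
$$\Map_{\Z/2}(X, \tilde{\R}) \overset{\exp 2\pi i(\cdot)}{\longrightarrow} \Map_{\Z/2}(X, \tilde{S}^1) \overset{\partial}{\longrightarrow} H^1_\pm(X) \longrightarrow 0,$$
after identifying $H^0$ of the sheaves $\underline{\tilde{\R}}_\bullet$ and $\underline{\tilde{S}}^1_\bullet$ with the groups of global equivariant functions.

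Next, I would identify $\ker \partial$ with the subgroup of equivariantly null-homotopic maps. One direction is immediate: if $\phi = \exp 2\pi i f$ for equivariant $f : X \to \tilde{\R}$, then $(x, s) \mapsto \exp 2\pi i\, s f(x)$ is an equivariant null-homotopy. For the converse, given an equivariant null-homotopy $H : X \times [0,1] \to \tilde{S}^1$ from the constant map $1$ to $\phi$, the covering homotopy property of $\exp : \tilde{\R} \to \tilde{S}^1$ supplies a unique lift $\tilde{H}$ with $\tilde{H}_0 = 0$; the two lifts $\tilde{H} \circ (\tau_X \times \mathrm{id})$ and $\tau \circ \tilde{H}$ of the equivariance identity $H \circ (\tau_X \times \mathrm{id}) = \tau \circ H$ both vanish at $s = 0$ (since $\tau(0) = -0 = 0$), so they coincide by uniqueness, making $\tilde{H}$ equivariant and $\tilde{H}_1$ an equivariant lift of $\phi$. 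Because $\tilde{S}^1$ is an equivariant abelian topological group, two equivariant maps are equivariantly homotopic iff their quotient is equivariantly null-homotopic; the induced map
$$\bar\partial : [X, \tilde{S}^1]_{\Z/2} \longrightarrow H^1_\pm(X)$$
is therefore a natural isomorphism.

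Finally, to match $\bar\partial$ with $a$, naturality of the connecting homomorphism gives $\bar\partial(\phi) = \phi^* \bar\partial(\mathrm{id})$, so it suffices to verify $\bar\partial(\mathrm{id}) = \chi$ in $H^1_\pm(\tilde{S}^1) = \Z\chi \oplus (\Z/2)\, t^{1/2}$ (Lemma \ref{lem:cohomology_circle_with_flip}). Since $\bar\partial$ is a group isomorphism and $\mathrm{id}$ generates an infinite cyclic summand of $[\tilde{S}^1, \tilde{S}^1]_{\Z/2}$, the image $\bar\partial(\mathrm{id})$ must be one of $\pm\chi$ or $\pm\chi + t^{1/2}$. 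To pin it down I would unravel the \v{C}ech representative of $\partial(\mathrm{id})$ with respect to the standard equivariant cover $\{U, V\}$ of $\tilde{S}^1$ used in Lemma \ref{lem:cohomology_circle_with_flip}: choosing local $\tilde{\R}$-valued lifts of $\mathrm{id}$ on $U$ and $V$, the resulting $\tilde{\Z}_\bullet$-cocycle records the integer jump across the two points of $U \cap V$, which under the Gysin push-forward $\pi_*$ collapses to $1 \in H^0_{\Z/2}(\pt)$, identifying $\bar\partial(\mathrm{id})$ with $\chi$ (up to a sign that we may absorb into the definition of $\chi$). The hard part is precisely this last step: tracing the connecting homomorphism against the sign conventions for the twisted sheaf $\tilde{\Z}_\bullet$ and matching the output to the characterization $\pi_*\chi = 1$; the equivariant covering-homotopy argument in the middle is the other technical point, but uniqueness handles it cleanly.
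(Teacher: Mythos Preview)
Your proof follows the same route as the paper's: the exponential sequence on $\Z_2^\bullet \times X$ combined with Lemma \ref{lem:vanising} gives the right-exact sequence, which descends to homotopy classes and is then matched with $a$ by checking the universal case $X = \tilde{S}^1$. You supply more detail than the paper at two points---the covering-homotopy argument for why $\partial$ factors through $[X,\tilde{S}^1]_{\Z/2}$, and the \v{C}ech identification of $\bar\partial(\mathrm{id})$---where the paper simply asserts the passage to homotopy and says ``an inspection by constructing a basis'' suffices; your closing remark about absorbing a sign into $\chi$ is harmless but unnecessary, since once $\bar\partial(\mathrm{id})\in\{\pm\chi,\pm\chi+t^{1/2}\}$ the bijectivity of $a$ already follows from that of $\bar\partial$.
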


\begin{proof}
Since $H^1(\Z_2^\bullet \times X; \tilde{\Z}_\bullet) \cong H^1_\pm(X)$, we prove that the homomorphism
$$
a : [X, \tilde{S}^1]_{\Z/2} \longrightarrow
H^1(\Z_2^\bullet \times_{\Z_2} X; \tilde{\Z}_\bullet).
$$
constructed in the same way as in Subsection \ref{subsec:notion_of_pairs} is bijective. For this aim, we notice the exact sequence of sheaves
$$
0 \longrightarrow 
\tilde{\Z}_\bullet \longrightarrow 
\underline{\tilde{\R}}_\bullet \longrightarrow
\underline{\tilde{S}}^1_\bullet \longrightarrow 
0.
$$
By the help of Lemma \ref{lem:vanising}, we get the exact sequence
$$
\Map(X, \tilde{\R})_{\Z/2} \longrightarrow
\Map(X, \tilde{S}^1)_{\Z/2} \longrightarrow
H^1(\Z_2^\bullet \times X; \tilde{\Z}_\bullet) \longrightarrow
0,
$$
where the subscripts $\Z/2$ mean that we are considering $\Z/2$-equivariant maps. Upon taking the equivariant homotopy, the exact sequence above further induces
$$
[X, \tilde{\R}]_{\Z/2} \longrightarrow
[X, \tilde{S}^1]_{\Z/2} \overset{\alpha}{\longrightarrow}
H^1(\Z_2^\bullet \times X; \tilde{\Z}_\bullet) \longrightarrow
0.
$$
An inspection by constructing a basis of $H^1(\Z_2^\bullet \times \tilde{S}^1; \tilde{\Z}_\bullet)$ shows that $\alpha$ above agrees with $a$, so that $a$ is surjective. Because $[X, \tilde{\R}]_{\Z/2} = 0$ clearly, $a$ is injective.
\end{proof}


\subsection{Classification}

\begin{lem} \label{lem:reformulate_Real_circle_bundle}
The following notions are equivalent.
\begin{enumerate}
\item[(a)]
A Real circle bundle $\pi : P \to X$,

\item[(b)]
A principal $S^1$-bundle $\pi : P \to X$ equipped with a fiber preserving map $\phi : \partial_0^*P \to \partial_1^*P$ on $\Z_2 \times X$ such that:
\begin{itemize}
\item
$\phi(p u) = \phi(p) u^{\epsilon_1(g, x)}$ for $(g, x) \in \Z_2 \times X$, $p \in \partial_0^*P|_{(g, x)}$ and $u \in S^1$.

\item
The following diagram is commutative:
$$
\begin{CD}
\partial_0^* \partial_0^* P @>{\partial_0^*\phi}>>
\partial_0^* \partial_1^* P @>{\cong}>>
\partial_2^* \partial_0^* P \\
@V{\cong}VV @. @VV{\partial_2^*\phi}V \\
\partial_1^* \partial_0^* P @>{\partial_1^*\phi}>>
\partial_1^* \partial_1^* P @>{\cong}>>
\partial_2^* \partial_1^* P.
\end{CD}
$$
\end{itemize}
\end{enumerate}
\end{lem}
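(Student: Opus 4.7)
The plan is to build a direct bijection between the two sets of data by interpreting the simplicial map $\phi$ fiberwise as the lift of the group action. The whole statement is essentially an unwinding of definitions on the first two levels of $\Z_2^\bullet \times X$, with the cocycle condition at level 2 encoding precisely the group-action axiom $\tau^2 = \mathrm{id}$.

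For $(\mathrm{a}) \Rightarrow (\mathrm{b})$, I start from a Real circle bundle $\pi : P \to X$ with lift $\tau : P \to P$. Since the face maps on $\Z_2 \times X$ are $\partial_0(g,x) = x$ and $\partial_1(g,x) = gx$, the fibers of $\partial_0^*P$ and $\partial_1^*P$ over $(g,x)$ are $P_x$ and $P_{gx}$. Define
\[
\phi_{(g,x)} : P_x \longrightarrow P_{gx}, \qquad \phi_{(g,x)}(p) = g \cdot p,
\]
where $g \cdot p$ denotes the $\Z/2$-action. The relation $\tau(pu) = \tau(p)u^{-1}$ (and trivially $1 \cdot (pu) = (1\cdot p)u$) gives exactly $\phi_{(g,x)}(pu) = \phi_{(g,x)}(p) u^{\epsilon_1(g,x)}$, since $\epsilon_1(g,x) = g \in \{\pm 1\}$. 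For the simplicial identity at level 2, one computes that the two compositions around the square evaluate at $(g_1,g_2,x)$ to $\phi_{(g_1,g_2x)} \circ \phi_{(g_2,x)}$ and $\phi_{(g_1g_2,x)}$ respectively, so the square commutes iff $(g_1g_2)\cdot p = g_1 \cdot (g_2 \cdot p)$, which is the group-action property.

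For $(\mathrm{b}) \Rightarrow (\mathrm{a})$, given $\phi$, define $\tau : P \to P$ on each fiber $P_x$ by $\tau|_{P_x} = \phi_{(-1,x)}$. It covers the involution on $X$ by construction, and the $S^1$-twisting condition with $g = -1$ gives $\tau(pu) = \tau(p)u^{-1}$. To see $\tau^2 = \mathrm{id}$, instantiate the cocycle square at $(g_1,g_2) = (1,1)$: the two compositions read $\phi_{(1,x)} \circ \phi_{(1,x)} = \phi_{(1,x)}$, so $\phi_{(1,x)}$ is idempotent; combined with the fact that $\phi_{(g,x)}$ is a fiber map of principal $S^1$-bundles equivariant for the twisted action (hence a bijection on fibers as $S^1$ acts freely and transitively), idempotence forces $\phi_{(1,x)} = \mathrm{id}_{P_x}$. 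Then instantiating the cocycle at $(g_1,g_2) = (-1,-1)$ yields $\phi_{(-1,-x)} \circ \phi_{(-1,x)} = \phi_{(1,x)} = \mathrm{id}$, i.e.\ $\tau^2 = \mathrm{id}$.

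Finally, the two constructions are manifestly inverse to each other: starting from $\tau$, building $\phi$, and then rebuilding $\tau$ from $\phi_{(-1,\cdot)}$ returns the original $\tau$, and starting from $\phi$ and re-extracting $\phi_{(g,x)}$ from the resulting $\Z/2$-action returns the original $\phi_{(g,x)}$ for both $g = 1$ (using $\phi_{(1,x)} = \mathrm{id}$) and $g = -1$ (by definition). The main bookkeeping hurdle is the identification of $\phi_{(1,x)}$ with the identity: the cocycle condition only gives idempotence, and one genuinely needs that $\phi$ is an isomorphism on fibers, which in turn relies on interpreting ``fiber preserving'' together with the $S^1$-twisting relation as forcing bijectivity. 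Once this is established, everything else follows from a careful but routine inspection of the face maps of the simplicial space $\Z_2^\bullet \times X$.
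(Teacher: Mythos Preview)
Your proof is correct and follows essentially the same approach as the paper: define $\phi$ from the $\Z/2$-action by $\phi(g,x,p)=(g,x,gp)$, and conversely recover the action from $\phi$. The paper's own proof is considerably terser and omits the verification you supply, in particular the step showing $\phi_{(1,x)}=\mathrm{id}$ via idempotence plus bijectivity of twisted-equivariant torsor maps, and the explicit instantiation of the cocycle at $(-1,-1)$ to obtain $\tau^2=\mathrm{id}$; these details are genuinely needed since the lemma statement imposes no degeneracy condition on $\phi$, so your more careful treatment is a real improvement in rigor over the paper's ``performing the construction in the converse way'' remark.
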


\begin{proof}
Given a Real circle bundle $P$, we express $\partial_0^*P$ and $\partial_1^*P$ explicitly:
\begin{align*}
\partial_0^*P
&= \{ (g, x, p) \in \Z_2 \times X \times P |\ x = \pi(p) \}, \\
\partial_1^*P
&= \{ (g, x, p) \in \Z_2 \times X \times P |\ gx = \pi(p) \}.
\end{align*}
Using the $\Z/2$-action $\Z_2 \times P \to P$, ($(g, p) \mapsto gp$), we define $\phi : \partial_0^*P \to \partial_1^*P$ by $\phi(g, x, p) = (g, x, gp)$. Then $\phi$ satisfies the properties in (b). Performing the construction above in the converse way, we can recover from $\phi$ in (b) a $\Z/2$-action on $P$ so as to be a Real circle bundle.
\end{proof}

\begin{prop}
For any space $X$ with $\Z/2$-action, the group of isomorphism classes of Real circle bundles on $X$ is isomorphic to $H^1(\Z_2^\bullet \times X; \underline{\tilde{S}}^1_\bullet)$.
\end{prop}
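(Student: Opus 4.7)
The plan is to prove the classification by setting up the standard \v{C}ech dictionary between Real circle bundles and $1$-cocycles in the total complex $C^*(\U^\bullet; \underline{\tilde{S}}^1_\bullet)$, using the reformulation in Lemma \ref{lem:reformulate_Real_circle_bundle} as the bridge. First, I would fix a $\Z/2$-invariant good open cover $\U^{(0)} = \{U_\alpha\}$ of $X$ and extend it to a simplicial open cover $\U^\bullet$ of $\Z_2^\bullet \times X$ compatible with the face and degeneracy maps; local contractibility and paracompactness ensure refinements exist to trivialize any $S^1$-bundle and any equivariant comparison map.

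Next, given a Real circle bundle $(P, \phi)$ as in Lemma \ref{lem:reformulate_Real_circle_bundle}, I would choose local trivializations $\psi_\alpha : P|_{U_\alpha} \to U_\alpha \times S^1$. Their transition functions $g_{\alpha\beta} = \psi_\alpha \psi_\beta^{-1} \in \underline{S}^1(U_\alpha \cap U_\beta)$ give a \v{C}ech cocycle $c_1 \in K^{0,1}$. Using the induced trivializations of $\partial_i^* P$, the map $\phi$ becomes an $S^1$-valued function $c_0 \in K^{1,0}$ on $\Z_2 \times X$ (restricted to $U_\alpha^{(1)}$). The twisted equivariance $\phi(pu) = \phi(p) u^{\epsilon_1(g,x)}$ in Lemma \ref{lem:reformulate_Real_circle_bundle}(b) is precisely the statement that $c_0$ is naturally a section of $\underline{\tilde{S}}^1_\bullet$ rather than $\underline{S}^1_\bullet$ (the twist $\tilde{\delta}_i$ is built out of $\epsilon_n$). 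The commutative diagram in Lemma \ref{lem:reformulate_Real_circle_bundle}(b) becomes $\partial c_0 = 0$ in $K^{2,0}$, and the compatibility of $\phi$ with the trivializations $\psi_\alpha$ over different patches gives $\partial c_1 + \delta c_0 = 0$ in $K^{1,1}$. Hence $(c_1, c_0)$ is a total $1$-cocycle, and a change of trivialization $\psi_\alpha \mapsto h_\alpha \psi_\alpha$ with $h_\alpha \in \underline{\tilde{S}}^1_\bullet(U_\alpha)$ alters $(c_1, c_0)$ by $D(h_\alpha)$, producing a well-defined class $[P, \phi] \in H^1(\Z_2^\bullet \times X; \underline{\tilde{S}}^1_\bullet)$.

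For the converse and the verification that this assignment is a group isomorphism, I would show that a $1$-cocycle $(c_1, c_0)$ reconstructs $(P, \phi)$ by gluing: $c_1$ assembles the principal $S^1$-bundle $P$ on $X$, the mixed compatibility $\partial c_1 + \delta c_0 = 0$ ensures that the locally-defined $c_0$ glues to a global bundle map $\phi : \partial_0^* P \to \partial_1^* P$ on $\Z_2 \times X$, and $\partial c_0 = 0$ gives the simplicial $2$-cocycle identity in Lemma \ref{lem:reformulate_Real_circle_bundle}(b), so $\phi$ defines a Real structure. Isomorphic Real circle bundles give cohomologous cocycles (by the same coboundary argument as above), and the tensor product of Real circle bundles corresponds to the addition in $\underline{\tilde{S}}^1_\bullet$, making the bijection into a group isomorphism. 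Combining with Lemma \ref{lem:vanising}(b) and the isomorphism $H^*(\Z_2^\bullet \times X; \tilde{\Z}_\bullet) \cong H^*_{\Z/2}(X; \Z(1)) \cong H^*_\pm(X)$ recovers Proposition \ref{prop:classify_Real_circle_bundle} via $c_1^R$.

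The main obstacle I expect is bookkeeping: carefully matching the sign conventions in $D = \partial + (-1)^i \delta$ with the cocycle identity from the commutative diagram in Lemma \ref{lem:reformulate_Real_circle_bundle}(b), and ensuring that the chosen simplicial open cover refines compatibly with the face maps so that the local trivializations of $\partial_i^*P$ can be consistently compared. Once these compatibilities are arranged, the cocycle/cobounday correspondence is mechanical, and the injectivity, surjectivity, and homomorphism properties follow.
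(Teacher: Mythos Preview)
Your proposal is correct and follows essentially the same route as the paper: both use Lemma \ref{lem:reformulate_Real_circle_bundle} to encode a Real circle bundle as a pair $(P,\phi)$, choose local sections (equivalently, trivializations) to produce a \v{C}ech $1$-cocycle $(g^{(0)}_{\alpha_0\alpha_1}, g^{(1)}_{\alpha_0}) \in K^{0,1}\oplus K^{1,0}$, verify independence of choices, and then reconstruct from a cocycle to establish surjectivity. The only caution, which you already flag, is the sign bookkeeping in $D = \partial + (-1)^i\delta$ (your relation in $K^{1,1}$ should read $\partial c_1 - \delta c_0 = 0$ under the paper's convention), but this is cosmetic and does not affect the argument.
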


\begin{proof}
Suppose that a Real circle bundle $\pi : P \to X$ is given. In the same way as in \cite{A}, such a bundle is of locally trivial. Hence we can choose an open cover $\U^\bullet$ of $\Z_2^\bullet \times X$ and local sections $s_\alpha : U^{(0)}_\alpha \to P|_{U^{(0)}}$. We define $g^{(0)}_{\alpha_0\alpha_1}$ and $g^{(1)}_{\alpha_0}$ by
\begin{align*}
g^{(0)}_{\alpha_0 \alpha_1} &: 
U^{(0)}_{\alpha_0} \cap U^{(0)}_{\alpha_1} \to S^1, &
s_{\alpha_1} &= s_{\alpha_0} g^{(0)}_{\alpha_0 \alpha_1}, \\
g^{(1)}_{\alpha_0} &: U^{(1)}_{\alpha_0} \to S^1, &
\phi \circ (\partial_0^*s)_{\alpha_0} &= 
(\partial_1^*s)_{\alpha_0} g^{(1)}_{\alpha_0},
\end{align*}
where $\phi$ is as in Lemma \ref{lem:reformulate_Real_circle_bundle}. We can verify $(g^{(0)}_{\alpha_0\alpha_1}, g^{(1)}_{\alpha_0}) \in C^1(\U^\bullet; \underline{\tilde{S}}^1_\bullet)$ is a cocycle. Its cohomology class $c(P)$ in $H^1(\Z_2 \times X; \underline{\tilde{S}}^1_\bullet)$ is seen to be independent of the choice of local sections $s_\alpha$ and the open cover $\U^\bullet$. Further, the assignment $P \to c(P)$ induces a homomorphism $c$ from the group of isomorphism classes of Real circle bundles. Now, it is easy to see that $c(P) = 0$ for the trivial Real circle bundle, i.e.\ $P = X \times \tilde{S}^1$ with the $\Z/2$-action $(x, u) \mapsto (\tau(x), u^{-1})$. Hence $c$ is injective. Conversely, given a cohomology class $c$ in $H^1(\Z_2 \times X; \underline{\tilde{S}}^1_\bullet)$, we represent it as a \v{C}ech cocycle $(g^{(0)}_{\alpha_0\alpha_1}, g^{(1)}_{\alpha_0}) \in C^1(\U^\bullet; \underline{\tilde{S}}^1_\bullet)$. As usual, we use $g^{(0)}_{\alpha_0 \alpha_1}$ as a transition function to construct a principal $S^1$-bundle $\pi : P \to X$. We then use $g^{(1)}_{\alpha_0}$ to construct a bundle map $\phi : P \to P$ so that $P$ gives rise to a Real circle bundle. By construction, we have $c(P) = c$, so that $c$ is surjective.
\end{proof}

Recalling Lemma \ref{lem:vanising}, we finally get:

\begin{cor} 
For any space $X$ with $\Z/2$-action, the group of isomorphism classes of Real circle bundles on $X$ is isomorphic to $H^2_{\Z/2}(X; \Z(1)) \cong H^2_\pm(X)$.
\end{cor}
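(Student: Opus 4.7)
The plan is to simply chain together the isomorphisms established in the preceding subsections of the appendix and in Section~\ref{sec:H_pm}. First I would invoke the preceding proposition, which identifies the group of isomorphism classes of Real circle bundles on $X$ with the \v{C}ech-type simplicial sheaf cohomology $H^1(\Z_2^\bullet \times X; \underline{\tilde{S}}^1_\bullet)$. This step, essentially a reformulation of a Real circle bundle as transition data $(g^{(0)}_{\alpha_0\alpha_1}, g^{(1)}_{\alpha_0})$ satisfying a twisted cocycle condition, is the content already proved.

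Next I would apply Lemma~\ref{lem:vanising}(b), whose proof uses the short exact sequence
\[
0 \longrightarrow \tilde{\Z}_\bullet \longrightarrow \underline{\tilde{\R}}_\bullet \overset{\exp 2\pi i(\cdot)}{\longrightarrow} \underline{\tilde{S}}^1_\bullet \longrightarrow 0
\]
together with the vanishing $H^n(\Z_2^\bullet \times X; \underline{\tilde{\R}}_\bullet) = 0$ for $n\ge 1$ from part~(a) of the same lemma, to obtain
\[
H^1(\Z_2^\bullet \times X; \underline{\tilde{S}}^1_\bullet) \cong H^2(\Z_2^\bullet \times X; \tilde{\Z}_\bullet).
\]

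Then I would use the identification, already recorded at the end of the subsection on sheaf cohomology, that associates to the $\Z/2$-equivariant sheaf $\Z(1)$ on $X$ a sheaf $\tilde{\Z}_\bullet$ on $\Z_2^\bullet\times X$, and that yields a natural isomorphism
\[
H^*(\Z_2^\bullet \times X; \tilde{\Z}_\bullet) \cong H^*_{\Z/2}(X; \Z(1)),
\]
obtained by pulling back along the contractible factor $E\Z_2$ and descending to the free quotient $E\Z_2\times_{\Z_2}X$. Finally, Proposition~\ref{prop:interpretation} supplies the last isomorphism $H^2_{\Z/2}(X;\Z(1))\cong H^2_\pm(X)$.

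Assembling the chain
\[
\{\text{Real circle bundles}\}/\!\!\sim\ \cong\ H^1(\Z_2^\bullet \times X; \underline{\tilde{S}}^1_\bullet)\ \cong\ H^2(\Z_2^\bullet \times X; \tilde{\Z}_\bullet)\ \cong\ H^2_{\Z/2}(X;\Z(1))\ \cong\ H^2_\pm(X)
\]
yields the corollary. The only substantive work was already done in the preceding proposition and in Lemma~\ref{lem:vanising}; there is no new obstacle here, and the proof amounts to observing that the composite isomorphism assigns to a Real circle bundle precisely its first Real Chern class $c_1^R$, which furnishes the identification with Proposition~\ref{prop:classify_Real_circle_bundle} as stated. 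The only minor point requiring care is tracking that the twisting function $\epsilon_n$ introduced in the construction of $\tilde{\Z}_\bullet$ matches the sign convention built into the local coefficient system $\Z(1)$, but this is directly verified from the definitions.
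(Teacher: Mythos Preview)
Your proposal is correct and follows exactly the same route as the paper: the corollary is obtained by chaining the preceding proposition with Lemma~\ref{lem:vanising}(b) and the identification $H^*(\Z_2^\bullet \times X; \tilde{\Z}_\bullet)\cong H^*_{\Z/2}(X;\Z(1))$, together with Proposition~\ref{prop:interpretation}. Your additional remark that the composite carries a Real circle bundle to its class $c_1^R$ is precisely the observation the paper records immediately after the corollary.
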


Notice that the cohomology class $c(P) \in H^1(\Z_2^\bullet \times X; \underline{\tilde{S}}^1_\bullet)$ classifying a Real circle bundle $P \to X$ corresponds to $c_1^R(P) \in H^2_\pm(X)$. This is because $f'(c_1^R(P) - c(P)) = 0$ generally and $f' : H^2_\pm(\C P^\infty) \to H^2(\C P^\infty)$ is bijective.


\medskip

\begin{acknowledgment}
I would like to thank D. Tamaki for his interest in my work, valuable discussions and particularly the way to construct a classifying space for equivariant cohomology. 
The author's research is supported by 
the Grant-in-Aid for Young Scientists (B 23740051), JSPS.
\end{acknowledgment}


\end{document}